\def\C{\mathbb C}
\def\O{\mathcal O}
\def\x{\mathbf x}
\def\D{\mathbf D}
\def\1{\mathbf 1}
\def\O{\mathcal O}
\def\1{\bold 1}
\def\div{\mathrm{div}\,}
\def\eps{\varepsilon}
\def\le{\leqslant}
\def\ge{\geqslant}
\theoremstyle{theorem}
\newtheorem{theorem}{Theorem}[section]
\newtheorem{proposition}[theorem]{Proposition}
\newtheorem{lemma}[theorem]{Lemma}
\newtheorem{condition}[theorem]{Condition}
\newtheorem{remark}[theorem]{Remark}
\newtheorem{corollary}[theorem]{Corollary}
\numberwithin{equation}{section}
\theoremstyle{plain}
\newtoks\thehProclaim
\newtheorem*{Proclaim}{\the\thehProclaim}
\begin{document}

\title[Homogenization of the Dirichlet problem]{Homogenization of the Dirichlet problem for~elliptic systems: Two-parametric error estimates}

\author{Yu.~M.~Meshkova and T.~A.~Suslina}

\thanks{Supported by RFBR (grant no.~16-01-00087). The first author was supported by
``{Native Towns}'', a social investment program of PJSC ``{Gazprom Neft}'', by the ``{Dynasty}'' \ foundation, and by the Rokhlin grant.}

\keywords{Periodic differential operators, elliptic systems, homogenization, operator error estimates.}

\address{Chebyshev Laboratory, St. Petersburg State University, 14th Line V.O., 29b, St.~Petersburg, 199178, Russia}
\email{y.meshkova@spbu.ru,\quad juliavmeshke@yandex.ru}

 \address{St.~Petersburg State University, 7/9 Universitetskaya nab., St.~Petersburg, 199034, Russia}

\email{t.suslina@spbu.ru,\quad suslina@list.ru}

\subjclass[2000]{Primary 35B27}

\begin{abstract}
Let $\mathcal{O}\subset\mathbb{R}^d$ be a bounded domain of class $C^{1,1}$.
In $L_2(\mathcal{O};\mathbb{C}^n)$, we study a selfadjoint matrix elliptic second order differential operator
$B_{D,\varepsilon}$, $0<\varepsilon\leqslant 1$, with the Dirichlet boundary condition. The principal part of the operator is given in a factorized form. The operator involves lower order terms with unbounded coefficients. The coefficients of $B_{D,\varepsilon}$ are periodic and depend on $\mathbf{x}/\varepsilon$.
We study the generalized resolvent $\left(B_{D,\varepsilon}-\zeta Q_0(\cdot/\varepsilon)\right)^{-1}$, where $Q_0$ is a periodic bounded and positive definite matrix-valued function, and $\zeta$ is a complex-valued parameter. We obtain approximations for the generalized resolvent in the $L_2(\mathcal{O};\mathbb{C}^n)$-operator norm and in the norm of operators acting from $L_2(\mathcal{O};\mathbb{C}^n)$ to the Sobolev space $H^1(\mathcal{O};\mathbb{C}^n)$, with two-parametric error estimates (depending on $\varepsilon$ and $\zeta$).
\end{abstract}
\maketitle

\section*{Introduction}
\setcounter{section}{0}
\setcounter{equation}{0}

The paper concerns homogenization theory of periodic differential operators (DO's). A broad literature is devoted to homogenization problems.
First of all, we mention the books \cite{BeLP,BaPa,OShaY,ZhKO}.

\subsection{Statement of the problem}
Let $\Gamma$ be a lattice in $\mathbb{R}^d$, and let $\Omega$ be the cell of  $\Gamma$.
 For $\Gamma$-periodic functions in $\mathbb{R}^d$, we use the notation  $\psi ^\varepsilon (\mathbf{x}):=\psi (\mathbf{x}/\varepsilon)$ and $\overline{\psi}:=\vert \Omega\vert ^{-1}\int _\Omega \psi (\mathbf{x})\,d\mathbf{x}$.

Let $\mathcal{O}\subset\mathbb{R}^d$ be a bounded domain of class  $C^{1,1}$. In $L_2(\mathcal{O};\mathbb{C}^n)$, we consider a
selfadjoint matrix strongly elliptic second order DO  $\mathcal{B}_{D,\varepsilon}$, $0<\varepsilon\leqslant 1$,
with the Dirichlet boundary condition.
The principal part $A_{D,\varepsilon}$ of the operator $\mathcal{B}_{D,\varepsilon}$ is given in a factorized form
$A_{D,\varepsilon}=b(\mathbf{D})^*g^\varepsilon (\mathbf{x})b(\mathbf{D})$,
where $b(\mathbf{D})$ is a matrix homogeneous first order DO and
$g(\mathbf{x})$ is a bounded and positive definite $\Gamma$-periodic
matrix-valued function in $\mathbb{R}^d$.  (The precise assumptions on $b(\mathbf{D})$ and $g(\mathbf{x})$ are given below in Subsection~\ref{Subsection Operator A_eps}.) The homogenization problem for the operator $A_{D,\varepsilon}$
was studied in \cite{PSu,Su13,Su15}. In the present paper, we consider a more general class
of selfadjoint DO's $\mathcal{B}_{D,\varepsilon}$ involving lower order terms:
\begin{equation}
\label{B_D,eps in introduction}
\mathcal{B}_{D,\varepsilon}=b(\mathbf{D})^* g^\varepsilon (\mathbf{x}) b(\mathbf{D})
+\sum_{j=1}^d\bigl(a_j^\varepsilon (\mathbf{x})D_j+D_ja_j^\varepsilon(\mathbf{x})^*\bigr)
+Q^\varepsilon (\mathbf{x}).
\end{equation}
Here $a_j(\mathbf{x})$, $j=1,\dots,d$, and $Q(\mathbf{x})$ are $\Gamma$-periodic matrix-valued functions; in general, they are unbounded.
(The precise assumptions on the coefficients are given below in Subsection~\ref{Subsection lower order terms}). The precise definition
of the operator $\mathcal{B}_{D,\varepsilon}$ is given in terms of the corresponding quadratic form defined on the Sobolev space
$H^1_0(\mathcal{O};\mathbb{C}^n)$.

The coefficients of the operator~\eqref{B_D,eps in introduction} oscillate rapidly
for small $\varepsilon$. A typical homogenization problem for the operator $\mathcal{B}_{D,\varepsilon}$ is to approximate
the resolvent $(\mathcal{B}_{D,\varepsilon}- z I)^{-1}$ or the generalized resolvent $(\mathcal{B}_{D,\varepsilon}- z Q_0^\varepsilon)^{-1}$
for small  $\varepsilon$. Here $Q_0(\mathbf{x})$ is a positive definite and bounded $\Gamma$-periodic matrix-valued function.

\subsection{A survey of the results on the operator error estimates}
 In a series of papers \cite{BSu, BSu05, BSu06}, M.~Sh.~Birman and T.~A.~Suslina developed an operator-theoretic (spectral) approach
to homogenization problems. They studied the operator
\begin{equation}
\label{A_eps = in introduction}
A_\varepsilon =b(\mathbf{D})^*g^\varepsilon (\mathbf{x})b(\mathbf{D})
\end{equation}
acting in $L_2(\mathbb{R}^d;\mathbb{C}^n)$. In \cite{BSu}, it was shown that
the resolvent $(A_\varepsilon +I)^{-1}$ converges in the $L_2(\mathbb{R}^d;\mathbb{C}^n)$-operator norm to
the resolvent of the effective operator $A^0=b(\mathbf{D})^*g^0b(\mathbf{D})$, as  $\varepsilon\rightarrow 0$.
Here $g^0$ is a constant positive effective matrix. It was proved that
\begin{equation}
\label{A_e res ots in introduction}
\Vert (A_\varepsilon +I)^{-1}-(A^0 +I)^{-1}\Vert _{L_2(\mathbb{R}^d)\rightarrow L_2(\mathbb{R}^d)}
\leqslant C\varepsilon.
\end{equation}
In \cite{BSu06}, approximation for the resolvent $(A_\varepsilon +I)^{-1}$ in the norm of operators acting from
$L_2(\mathbb{R}^d;\mathbb{C}^n)$ to the Sobolev space $H^1(\mathbb{R}^d;\mathbb{C}^n)$ was obtained:
\begin{equation}
\label{appr res A_eps with corrector inintroduction}
\Vert (A_\varepsilon +I)^{-1}-(A^0+I)^{-1}-\varepsilon K(\varepsilon)\Vert _{L_2(\mathbb{R}^d)\rightarrow H^1(\mathbb{R}^d)}
\leqslant C\varepsilon.
\end{equation}
Here $K(\varepsilon)$ is a corrector. The operator $K(\varepsilon )$ involves rapidly oscillating factors and so depends on $\varepsilon$.
Herewith, $\Vert \varepsilon K(\varepsilon )\Vert _{L_2\rightarrow H^1}=O(1)$.
Estimates \eqref{A_e res ots in introduction} and \eqref{appr res A_eps with corrector inintroduction} are order-sharp.
The constants in estimates are controlled explicitly in terms of the problem data.
Such inequalities are called \textit{operator error estimates} in homogenization theory.
The method of \cite{BSu, BSu05, BSu06} is based on the scaling transformation, the Floquet-Bloch theory, and the analytic perturbation theory.

Later the spectral method was adapted by T.~A.~Suslina \cite{SuAA,SuAA14} to the case of the operator
\begin{equation}
\label{B_eps in introduction}
\mathcal{B}_\varepsilon =A_\varepsilon +\sum _{j=1}^d \left( a_j^\varepsilon (\mathbf{x})D_j +D_j (a_j^\varepsilon (\mathbf{x}))^*\right) +Q^\varepsilon(\mathbf{x})
\end{equation}
acting in $L_2(\mathbb{R}^d;\mathbb{C}^n)$.
It is convenient to fix a real-valued parameter $\lambda$ so that the operator
${B}_\varepsilon:= \mathcal{B}_\varepsilon + \lambda Q_0^\varepsilon$
is positive definite. In \cite{SuAA}, the following analogs of estimates \eqref{A_e res ots in introduction} and \eqref{appr res A_eps with corrector inintroduction} were obtained:
\begin{align}
\label{B__eps appr res in introduction}
\Vert & {B}_\varepsilon^{-1}-({B}^0)^{-1}\Vert _{L_2(\mathbb{R}^d)\rightarrow L_2(\mathbb{R}^d)}
\leqslant C\varepsilon,\\
\label{B__eps appr res with corrector in introduction}
\Vert & {B}_\varepsilon^{-1}-({B}^0)^{-1} -\varepsilon \mathcal{K}(\varepsilon)\Vert _{L_2(\mathbb{R}^d)\rightarrow H^1(\mathbb{R}^d)}
\leqslant C\varepsilon.
\end{align}
Here $B^0$ is the corresponding effective operator
and $\mathcal{K}(\varepsilon)$ is the corresponding corrector.

A different approach to operator error estimates was suggested by V.~V.~Zhikov.
In \cite{Zh1,Zh2,ZhPas}, estimates of the form \eqref{A_e res ots in introduction} and \eqref{appr res A_eps with corrector inintroduction}
were obtained for the acoustics operator and the operator of elasticity theory.
The method (``modified method of first order approximation''   or ``shift method'')
was based on analysis of the first order approximation to the solution and introduction of an additional parameter.
In \cite{Zh1,Zh2,ZhPas}, in addition to problems in $\mathbb{R}^d$,  homogenization problems in a bounded domain
$\mathcal{O}\subset \mathbb{R}^d$ with the Dirichlet or Neumann boundary conditions were studied.
Further results of V.~V.~Zhikov, S.~E.~Pastukhova, and their collaborators are discussed in the recent survey \cite{ZhPasUMN}.

In the presence of lower order terms, homogenization problem for the operator
\eqref{B_eps in introduction} in $\mathbb{R}^d$ was studied by D.~I.~Borisov~\cite{Bo} (this work precedes \cite{SuAA}). The effective operator was
constructed and error estimates \eqref{B__eps appr res in introduction}, \eqref{B__eps appr res with corrector in introduction} were obtained.
Moreover, it was assumed that the coefficients depend on both fast and slow variables. However, in \cite{Bo}
the coefficients of the operator $\mathcal{B}_\varepsilon$ were assumed to be sufficiently smooth.
We also mention the very recent paper \cite{Se} by N.~N.~Senik, where the non-selfadjoint second order elliptic operator (involving lower order terms)
on an infinite cylinder was studied. The coefficients oscillate along the cylinder and belong to some classes of multipliers;
estimates of the form \eqref{B__eps appr res in introduction} and \eqref{B__eps appr res with corrector in introduction} were obtained.

Operator error estimates for the Dirichlet and Neumann problems for second order elliptic equations (without lower order terms)
in a bounded domain with sufficiently smooth boundary were studied by many authors. Apparently, the first result is due to Sh.~Moskow and M.~Vogelius who proved an estimate
\begin{equation}
\label{A_D,eps L2 ots in introduction}
\Vert A_{D,\varepsilon}^{-1}-(A_D^0)^{-1}\Vert _{L_2(\mathcal{O})\rightarrow L_2(\mathcal{O})}\leqslant C\varepsilon,
\end{equation}
see \cite[Corollary~2.2]{MoV}. Here the operator $A_{D,\varepsilon}$ acts in $L_2(\mathcal{O})$, where $\mathcal{O}\subset \mathbb{R}^2$,
and is given by $-\div g^\varepsilon (\mathbf{x})\nabla$ with the Dirichlet condition on  $\partial\mathcal{O}$. The matrix-valued
function  $g(\mathbf{x})$ is assumed to be infinitely smooth. In the case of the Neumann boundary condition,
a similar estimate was obtained in \cite[Corollary~1]{MoV2}. Also, in that paper the authors found approximation with corrector for the inverse operator in the norm of operators acting from $L_2(\mathcal{O})$ to the Sobolev space $H^1(\mathcal{O})$, with error estimate of order $O(\sqrt{\varepsilon})$.
The order of this estimate is worse than in $\mathbb{R}^d$ because of the boundary influence.

For arbitrary dimension, homogenization problems in a bounded domain with sufficiently smooth boundary
were studied in  \cite{Zh1,Zh2}, and \cite{ZhPas}.
The acoustics and elasticity operators with the Dirichlet or Neumann boundary conditions and without any smoothness assumptions on coefficients
were considered. The authors obtained approximation with corrector for the inverse operator
 in the $(L_2\rightarrow H^1)$-norm with error estimate of order $O(\sqrt{\varepsilon})$.
The analog of estimate \eqref{A_D,eps L2 ots in introduction}, but of order $O(\sqrt{\varepsilon})$, was deduced.
(In the case of the Dirichlet problem for the acoustics equation, the  $(L_2\rightarrow L_2)$-estimate was improved in \cite{ZhPas},
but the order was not sharp.) Similar results for the operator $-\div g^\varepsilon (\mathbf{x})\nabla$
in a smooth bounded domain $\mathcal{O}\subset\mathbb{R}^d$ with the Dirichlet or Neumann boundary conditions were
obtained by G.~Griso \cite{Gr1,Gr2} with the help of the  ``unfolding''  method. In \cite{Gr2},
sharp-order estimate \eqref{A_D,eps L2 ots in introduction} (for the same operator) was proved.
For elliptic systems similar results were independently obtained in \cite{KeLiS} and in \cite{PSu,Su13}.
Further results and a detailed survey can be found in \cite{Su_SIAM,Su15}.
Let us only mention the forthcoming paper \cite{SZ}, where estimate of the form \eqref{A_D,eps L2 ots in introduction}
and the $(L_2 \to H^1)$-approximation were obtained for the elasticity operator with mixed (Dirichlet and Neumann) boundary conditions.

Operator error estimates for the second order matrix elliptic operator (with lower order terms) in a bounded domain $\mathcal{O}$ with the Dirichlet or
Neumann conditions were recently established by Q.~Xu \cite{Xu, Xu2, Xu3}.
Some results were even obtained for problems in Lipschitz domains.
However, in those papers a rather restrictive condition of uniform ellipticity was imposed.
We compare our results with the results of \cite{Xu3} below in Subsection~\ref{Main results}.

Up to now, we have discussed the results about approximation of the resolvent at a fixed regular point. Approximations for the resolvent
 $(A_\varepsilon -\zeta I)^{-1}$ of the operator \eqref{A_eps = in introduction}
with error estimates depending on $\varepsilon$ and $\zeta\in\mathbb{C}\setminus\mathbb{R}_+$ were
recently obtained by T.~A.~Suslina  \cite{Su15}. In \cite{Su15}, the operators $A_{D,\varepsilon}$ and $A_{N,\varepsilon}$
given by the expression \eqref{A_eps = in introduction} in a bounded domain with the Dirichlet or Neumann boundary conditions
were also studied. Approximations for the resolvents of these operators with two-parametric error estimates
(with respect to $\varepsilon$ and $\zeta$) were obtained.
Note that investigation of the two-parametric estimates was stimulated by the study of homogenization for parabolic systems,
based on the following representation of the operator exponential
\begin{equation}\label{exp}
e^{-A_{D,\varepsilon}t}=-\frac{1}{2\pi i}\int _\gamma e^{-\zeta t}(A_{D,\varepsilon}-\zeta I)^{-1}\,d\zeta,
\end{equation}
where $\gamma\subset\mathbb{C}$ is a positively oriented contour enclosing the spectrum of $A_{D,\varepsilon}$.
(A similar representation holds for $e^{-A_{N,\varepsilon}t}$.) Details can be found in \cite{MSu16}.

The present paper relies on the following two-parametric estimates for the operator ${B}_\varepsilon$ obtained in \cite{MSu15}:
\begin{align}
\label{B_eps with zeta in introduction}
&\Vert ({B}_\varepsilon -\zeta Q_0^\varepsilon )^{-1}-({B}^0-\zeta\overline{Q_0})^{-1}\Vert _{L_2(\mathbb{R}^d)\rightarrow L_2(\mathbb{R}^d)}\leqslant C(\phi)\varepsilon\vert \zeta\vert ^{-1/2},\\
\label{B_eps with zeta and corrector in introduction}
&\Vert ( {B}_\varepsilon -\zeta Q_0^\varepsilon )^{-1}-( {B}^0-\zeta\overline{Q_0})^{-1}-\varepsilon K(\varepsilon;\zeta)\Vert _{L_2(\mathbb{R}^d)\rightarrow H^1(\mathbb{R}^d)}
\leqslant C(\phi)\varepsilon .
\end{align}
Here $\phi =\mathrm{arg}\,\zeta\in (0,2\pi)$ and $\vert\zeta\vert\geqslant 1$. The dependence of constants in estimates on $\phi$ is traced.
Estimates \eqref{B_eps with zeta in introduction} and \eqref{B_eps with zeta and corrector in introduction} are uniform with respect to
$\phi$ in any domain of the form
\begin{equation}
\label{sector in introduction}
\lbrace\zeta=\vert\zeta\vert e^{i\phi}\in\mathbb{C}: \; \vert\zeta\vert\geqslant 1, \; \phi _0\leqslant \phi\leqslant 2\pi-\phi _0\rbrace
\end{equation}
with arbitrarily small $\phi _0 >0$. (In \cite{MSu15}, error estimates in the case where $\phi \in (0,2\pi)$ and $\vert\zeta\vert < 1$
were also obtained.) For details, see Section \ref{Section problem in R^d} below.

\subsection{Main results\label{Main results}} Before we formulate the results, it is convenient to turn to the positive definite operator $B_{D,\varepsilon}=\mathcal{B}_{D,\varepsilon}+\lambda Q_0^\varepsilon $, choosing an appropriate constant $\lambda$.
Let $B_D^0$ be the corresponding effective operator.
Main results are the following estimates:
\begin{align}
\label{main result 1}
\Vert &(B_{D,\varepsilon}-\zeta Q_0^\varepsilon )^{-1}-(B_D^0-\zeta \overline{Q_0})^{-1}\Vert _{L_2(\mathcal{O})\rightarrow L_2(\mathcal{O})}
\leqslant C(\phi)\varepsilon\vert\zeta\vert ^{-1/2},
\\
\label{main result 2}
\begin{split}
\Vert &(B_{D,\varepsilon}-\zeta Q_0^\varepsilon )^{-1}-(B_D^0-\zeta \overline{Q_0})^{-1}-\varepsilon K_D(\varepsilon;\zeta)\Vert _{L_2(\mathcal{O})\rightarrow H^1(\mathcal{O})}
\leqslant C(\phi)\bigl(\varepsilon ^{1/2}\vert \zeta\vert ^{-1/4}+\varepsilon\bigr),
\end{split}
\end{align}
for $\zeta\in\mathbb{C}\setminus\mathbb{R}_+$, $\vert\zeta\vert\geqslant 1$, and sufficiently small $\varepsilon$.
The constants $C(\phi)$ are controlled explicitly in terms of the problem data and the angle $\phi =\mathrm{arg}\,\zeta$.
Estimates \eqref{main result 1} and \eqref{main result 2} are uniform with respect to $\phi$ in any domain
\eqref{sector in introduction} with arbitrarily small $\phi _0 >0$.

For fixed $\zeta$, estimate \eqref{main result 1} has sharp order $O(\varepsilon)$.
The order of estimate \eqref{main result 2} is worse than in $\mathbb{R}^d$ (see \eqref{B_eps with zeta and corrector in introduction})
because of the boundary influence. The order of the $(L_2\rightarrow H^1)$-estimate can be improved up to the sharp order $O(\varepsilon )$ by passing to a strictly interior subdomain or by taking into account the boundary layer correction term. (See Theorems \ref{Theorem with Diriclet corrector} and \ref{Theorem O'} below.)

In the general case, the corrector in \eqref{main result 2} contains a smoothing operator.
We distinguish the cases where a simpler corrector can be used.
Besides estimates for the generalized resolvent, we also find approximation in the $(L_2\rightarrow L_2)$-norm for the operator
$g^\varepsilon b(\mathbf{D})(B_{D,\varepsilon}-\zeta Q_0^\varepsilon )^{-1}$ corresponding to the flux.
For completeness, we find approximations for the
generalized resolvent in a larger domain of the parameter $\zeta$; the corresponding error estimates have
a different behavior with respect to $\zeta$. (See Section~\ref{Section Another approximation} below.)

When this work was finished, the authors learned about very recent paper \cite{Xu3}, where close results were obtained.
Let us compare the results. On the one hand, there are some advantages of our research.
First, we study the operator  \eqref{B_D,eps in introduction} which is strongly elliptic, while in \cite{Xu3} (as well as in \cite{KeLiS, Xu, Xu2})
a rather restrictive condition of uniform ellipticity is imposed.
Second, we admit lower order terms with unbounded coefficients (from appropriate $L_p(\Omega)$-classes), while in
\cite{Xu3} these coefficients are assumed to be bounded. Third, we obtain two-parametric error estimates (with respect to $\varepsilon$
and $\zeta$), while in \cite{Xu3} estimates are one-parametric (with respect to $\varepsilon$).
On the other hand, there are several advantages of \cite{Xu3}: some results are obtained in the case of Lipschitz domains;
the operator may be non-selfadjoint (only the principal part of the operator is assumed to be selfadjoint).

\subsection{Method} The proofs rely on the method developed in~ \cite{PSu,Su13,Su15}. It is based on consideration of the
associated problem in $\mathbb{R}^d$, application of the results
\eqref{B_eps with zeta in introduction}, \eqref{B_eps with zeta and corrector in introduction}
(obtained in \cite{MSu15}), introduction of the boundary layer correction term, and a carefull analysis of this term.
 We base our argument upon the employment of the Steklov smoothing operator (borrowed from \cite{ZhPas})
 and estimates in the $\varepsilon$-neighborhood of the boundary.
 We trace the dependence of estimates on the spectral parameter carefully.
 Additional technical difficulties (as compared with \cite{Su15}) are related to taking
 lower order terms with unbounded coefficients into account.
 First we prove estimate \eqref{main result 2}, and next we prove \eqref{main result 1}, using \eqref{main result 2} and the
 duality arguments.

Approximations in a larger domain of the parameter $\zeta$ are deduced from the already proved estimates at the point $\zeta =-1$ and appropriate resolvent identities.

The results of the present paper will be applied to study homogenization
 for the solution $\mathbf{u}_\varepsilon(\mathbf{x},t)$, $\mathbf{x}\in \mathcal{O}$, $t\ge 0$, of the first initial boundary value problem:
\begin{equation*}
\begin{cases}
Q_0^\varepsilon(\mathbf{x}) \partial _t\mathbf{u}_\varepsilon(\mathbf{x},t)
= - B_{D,\varepsilon}\mathbf{u}_\varepsilon(\mathbf{x},t),\quad \mathbf{x}\in \mathcal{O},\; t>0;
\\
\mathbf{u}_\varepsilon(\mathbf{x},t)=0,\quad \mathbf{x} \in \partial\mathcal{O},\; t>0;
\quad Q_0^\varepsilon(\mathbf{x})\mathbf{u}_\varepsilon(\mathbf{x},0)=\boldsymbol{\varphi}(\mathbf{x}),
\end{cases}
\end{equation*}
where $\boldsymbol{\varphi}\in L_2(\mathcal{O};\mathbb{C}^n)$. A separate paper \cite{MSu3} will be devoted to this subject.
The method will be based on using the representation
\begin{equation*}
\mathbf{u}_\varepsilon(\cdot,t)=-\frac{1}{2\pi i}\int _\gamma e^{-\zeta t}(B_{D,\varepsilon}-\zeta Q_0 ^\varepsilon )^{-1}\boldsymbol{\varphi}\,d\zeta ,
\end{equation*}
where $\gamma\subset\mathbb{C}$ is a suitable contour; cf. \eqref{exp}.

\subsection{Plan of the paper} The paper consists of eleven sections. In Section~\ref{Section problem in R^d}, we introduce the class of operators $B_\varepsilon$ acting in $L_2(\mathbb{R}^d;\mathbb{C}^n)$ and formulate the results about approximations for the generalized resolvent $(B_\varepsilon -\zeta Q_0^\varepsilon )^{-1}$ obtained in \cite{MSu15}. In Section~\ref{Chapter 2 Dirichlet}, the class of operators  $B_{D,\varepsilon}$ is described, the effective operator $B_D^0$ is defined, and the main results are formulated. Section~\ref{Section Lemmas} contains auxiliary material. In Section~\ref{Section Proof Dirischlet corrector}, we prove the $(L_2\rightarrow H^1)$-approximation with
the boundary layer correction term. In Section~\ref{Section proof of L_2->H^1 theorem}, approximation~\eqref{main result 2} and
approximation for the flux are obtained. In Section~\ref{Section proof of L_2 rightarrow L_2 theorem}, the
$(L_2\rightarrow L_2)$-estimate \eqref{main result 1} is proved. In Section~\ref{Chapter Special cases}, we distinguish the cases where
the smoothing operator can be removed. In Section~\ref{Chapter Interior subdomain}, we find approximations for the generalized resolvent  in a strictly interior subdomain. Estimates in a larger domain of the spectral parameter are obtained in Section~\ref{Section Another approximation}.
 Section~\ref{more} contains more results: mainly, they concern some improvements of the behavior of the right-hand sides in estimates
 with respect to $\phi=\mathrm{arg}\,\zeta$.
Applications of the general results can be found in Section~\ref{Section Examples}.

\subsection{Notation} Let $\mathfrak{H}$ and $\mathfrak{H}_*$ be complex separable Hilbert spaces. The symbols $(\cdot ,\cdot)_\mathfrak{H}$ and $\Vert \cdot\Vert _\mathfrak{H}$ stand for the inner product and the norm in $\mathfrak{H}$, respectively. The symbol $\Vert \cdot\Vert _{\mathfrak{H}\rightarrow\mathfrak{H}_*}$ denotes the norm of a linear continuous operator acting from  $\mathfrak{H}$ to $\mathfrak{H}_*$.

The symbols $\langle \cdot ,\cdot\rangle$ and $\vert \cdot\vert$ stand for the inner product and
the norm in $\mathbb{C}^n$, $\mathbf{1}_n$ is the unit $(n\times n)$-matrix.
For an $(m\times n)$-matrix $a$, the symbol
$\vert a\vert$ denotes the norm of $a$ viewed as a linear operator from $\mathbb{C}^n$ to $\mathbb{C}^m$.
For $z\in\mathbb{C}$, we denote by $z^*$ the complex conjugate number; this nonstandard notation is employed because
we write $\overline{g}$ for the mean value of a periodic function $g$.
Next, we use the notation $\mathbf{x}=(x_1,\dots , x_d)\in\mathbb{R}^d$, $iD_j=\partial _j =\partial /\partial x_j$, $j=1,\dots,d$, $\mathbf{D}=-i\nabla=(D_1,\dots ,D_d)$. The $L_p$-classes of $\mathbb{C}^n$-valued functions in a domain $\mathcal{O}\subset\mathbb{R}^d$ are denoted by $L_p(\mathcal{O};\mathbb{C}^n)$, $1\leqslant p\leqslant \infty$.
The Sobolev spaces of $\mathbb{C}^n$-valued functions in a domain $\mathcal{O}\subset\mathbb{R}^d$ are denoted by $H^s(\mathcal{O};\mathbb{C}^n)$. Next, $H^1_0(\mathcal{O};\mathbb{C}^n)$ is the closure of $C_0^\infty (\mathcal{O};\mathbb{C}^n)$ in $H^1(\mathcal{O};\mathbb{C}^n)$. If $n=1$, we write simply $L_p(\mathcal{O})$, $H^s(\mathcal{O})$, etc.,
but sometimes, if this does not lead to confusion, we use this short notation  also for
spaces of vector-valued or matrix-valued functions.

We use the notation $\mathbb{R}_+=[0,\infty)$. Different constants in estimates are denoted by
 $c$, $\mathfrak{c}$, $C$, $\mathcal{C}$, $\mathfrak{C}$, $\beta$, $\gamma$, $k$, $\kappa$ (possibly, with indices or marks).

\section{Homogenization problem for elliptic operator acting in~$L_2(\mathbb{R}^d;\mathbb{C}^n)$}
\label{Section problem in R^d}

\subsection{Lattices in $\mathbb{R}^d$} Let $\Gamma \subset \mathbb{R}^d$ be a lattice generated by a basis  $\mathbf{a}_1,\dots ,\mathbf{a}_d \in \mathbb{R}^d$, i.~e.,
$\Gamma =\bigl\{\mathbf{a}\in \mathbb{R}^d : \mathbf{a}=\sum _{j=1}^d \nu _j \mathbf{a}_j,\; \nu _j\in \mathbb{Z}\bigr\}$,
and let $\Omega$ be the elementary cell of $\Gamma$:
$\Omega =
\bigl\{\mathbf{x}\in \mathbb{R}^d :\mathbf{x}=\sum _{j=1}^d \tau _j \mathbf{a}_j , \; -\frac{1}{2}<\tau _j<\frac{1}{2}\bigr\}$.
By $\vert \Omega \vert $ we denote the Lebesgue measure of $\Omega$: $\vert \Omega \vert =\mathrm{meas}\,\Omega$.
The basis $\mathbf{b}_1,\dots ,\mathbf{b}_d$ in $\mathbb{R}^d$ dual to the basis $\mathbf{a}_1,\dots ,\mathbf{a}_d$
is defined by the relations $\langle \mathbf{b}_i ,\mathbf{a}_j \rangle =2\pi \delta _{ij}$. This basis generates
the lattice $\widetilde{\Gamma}$ dual to $\Gamma$.
Denote $2r_0:=\min_{0\ne {\mathbf b} \in \widetilde{\Gamma}} |{\mathbf b}|$ and $2r_1:=\mathrm{diam}\,\Omega$.

By $\widetilde{H}^1(\Omega)$ we denote the subspace of all functions in
 $H^1(\Omega)$ whose $\Gamma$-periodic extension to $\mathbb{R}^d$ belongs to $H^1_{\mathrm{loc}}(\mathbb{R}^d)$. If $h (\mathbf{x})$~is a~$\Gamma$-periodic measurable matrix-valued function in $\mathbb{R}^d$, we put $h ^\varepsilon (\mathbf{x}):= h (\mathbf{x}/\varepsilon)$, $\varepsilon >0$; $\overline{h}:=\vert \Omega\vert ^{-1}\int _\Omega h(\mathbf{x})\,d\mathbf{x}$, and $\underline{h}:=\left(\vert \Omega\vert ^{-1}\int _\Omega h(\mathbf{x})^{-1}\,d\mathbf{x}\right)^{-1}$. Here, in the definition of  $\overline{h}$ it is assumed that $h\in L_{1,\mathrm{loc}}(\mathbb{R}^d)$, and in the definition of $\underline{h}$ it is assumed that the matrix $h(\mathbf{x})$ is square and nondegenerate, and  $h^{-1}\in L_{1,\mathrm{loc}}(\mathbb{R}^d)$. By $[h^\varepsilon ]$ we denote the operator of multiplication by the matrix-valued function $h^\varepsilon (\mathbf{x})$.

\subsection{The Steklov smoothing} The Steklov smoothing operator $S_\varepsilon^{(k)}$, $\varepsilon >0$, acts in $L_2(\mathbb{R}^d;\mathbb{C}^k)$ (where $k\in\mathbb{N}$) and is given by
\begin{equation}
\label{S_eps}
\begin{split}
(S_\varepsilon^{(k)} \mathbf{u})(\mathbf{x}):=\vert \Omega \vert ^{-1}\int _\Omega \mathbf{u}(\mathbf{x}-\varepsilon \mathbf{z})\,d\mathbf{z},\quad \mathbf{u}\in L_2(\mathbb{R}^d;\mathbb{C}^k).
\end{split}
\end{equation}
We shall omit the index $k$ in the notation and write simply $S_\varepsilon$. Obviously,
$S_\varepsilon \mathbf{D}^\alpha \mathbf{u}=\mathbf{D}^\alpha S_\varepsilon \mathbf{u}$ for $\mathbf{u}\in H^s(\mathbb{R}^d;\mathbb{C}^k)$  and any multiindex $\alpha$ such that $\vert \alpha\vert \leqslant s$.
Note that
\begin{equation}
\label{S_eps <= 1}
\Vert S_\varepsilon \Vert _{H^l(\mathbb{R}^d)\rightarrow H^l(\mathbb{R}^d)}\leqslant 1,\quad l\in \mathbb{Z}_+.
\end{equation}
We need the following properties of the operator $S_\varepsilon$
(see  \cite[Lemmas 1.1 and 1.2]{ZhPas} or \cite[Propositions 3.1 and 3.2]{PSu}).

\begin{proposition}
\label{Proposition S__eps - I}
For any function  $\mathbf{u}\in H^1(\mathbb{R}^d;\mathbb{C}^k)$ we have
\begin{equation*}
\Vert S_\varepsilon \mathbf{u}-\mathbf{u}\Vert _{L_2(\mathbb{R}^d)}\leqslant \varepsilon r_1\Vert \mathbf{D}\mathbf{u}\Vert _{L_2(\mathbb{R}^d)},
\quad \varepsilon >0.
\end{equation*}
\end{proposition}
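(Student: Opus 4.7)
The plan is to treat $S_\varepsilon$ as a convolution operator and pass to the Fourier domain, where the estimate reduces to a pointwise symbol bound. Substituting $\mathbf{y}=\varepsilon\mathbf{z}$ in \eqref{S_eps} shows that $S_\varepsilon\mathbf{u}=\kappa_\varepsilon*\mathbf{u}$ with the kernel $\kappa_\varepsilon(\mathbf{y})=|\Omega|^{-1}\varepsilon^{-d}\chi_{\varepsilon\Omega}(\mathbf{y})$, which satisfies $\int_{\mathbb{R}^d}\kappa_\varepsilon=1$. Its Fourier transform is
\begin{equation*}
\wh{\kappa}_\varepsilon(\bxi)=|\Omega|^{-1}\int_\Omega e^{-i\varepsilon\langle\bxi,\mathbf{z}\rangle}\,d\mathbf{z},
\end{equation*}
so that $\widehat{S_\varepsilon\mathbf{u}}(\bxi)=\wh{\kappa}_\varepsilon(\bxi)\wh{\mathbf{u}}(\bxi)$.

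The next step is the pointwise symbol estimate $|\wh{\kappa}_\varepsilon(\bxi)-1|\leq \varepsilon r_1|\bxi|$. Using $|e^{is}-1|\leq |s|$ for real $s$, one writes
\begin{equation*}
|\wh{\kappa}_\varepsilon(\bxi)-1|\leq |\Omega|^{-1}\int_\Omega |e^{-i\varepsilon\langle\bxi,\mathbf{z}\rangle}-1|\,d\mathbf{z}\leq \varepsilon|\bxi|\cdot|\Omega|^{-1}\int_\Omega|\mathbf{z}|\,d\mathbf{z}\leq \varepsilon r_1|\bxi|,
\end{equation*}
provided one knows that $|\mathbf{z}|\leq r_1$ for every $\mathbf{z}\in\Omega$. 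This is the only geometric subtlety: $\Omega$ is centrally symmetric about the origin (from its parametrization $\tau_j\in(-1/2,1/2)$), so for $\mathbf{z}\in\Omega$ one has $-\mathbf{z}\in\Omega$ as well, whence $2|\mathbf{z}|=|\mathbf{z}-(-\mathbf{z})|\leq \mathrm{diam}\,\Omega=2r_1$.

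Finally, Plancherel's theorem gives
\begin{equation*}
\|S_\varepsilon\mathbf{u}-\mathbf{u}\|_{L_2(\mathbb{R}^d)}^2=\int_{\mathbb{R}^d}|\wh{\kappa}_\varepsilon(\bxi)-1|^2|\wh{\mathbf{u}}(\bxi)|^2\,d\bxi\leq \varepsilon^2 r_1^2\int_{\mathbb{R}^d}|\bxi|^2|\wh{\mathbf{u}}(\bxi)|^2\,d\bxi=\varepsilon^2 r_1^2\|\mathbf{D}\mathbf{u}\|_{L_2(\mathbb{R}^d)}^2,
\end{equation*}
which is the claim. An alternative would be a direct density argument: for $\mathbf{u}\in C_0^\infty$ one writes $\mathbf{u}(\mathbf{x}-\varepsilon\mathbf{z})-\mathbf{u}(\mathbf{x})=-\varepsilon\int_0^1\langle\mathbf{z},\nabla\mathbf{u}(\mathbf{x}-t\varepsilon\mathbf{z})\rangle\,dt$, inserts into \eqref{S_eps}, applies Jensen's inequality twice (over $\Omega$ and over $[0,1]$), uses $|\mathbf{z}|\leq r_1$, and then interchanges the order of integration, exploiting translation invariance of Lebesgue measure to eliminate the shift. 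There is no real obstacle here beyond the centrally-symmetric geometry of $\Omega$; the argument is standard and does not require much care with the unbounded lower-order coefficients or other machinery of the paper.
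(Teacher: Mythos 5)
Your proof is correct. The paper does not prove Proposition~\ref{Proposition S__eps - I} itself but cites it from [ZhPas, Lemmas 1.1--1.2] and [PSu, Propositions 3.1--3.2], where the argument given is essentially your ``alternative'': the fundamental theorem of calculus applied to $\mathbf{u}(\mathbf{x}-\varepsilon\mathbf{z})-\mathbf{u}(\mathbf{x})$, Cauchy--Schwarz/Jensen, and translation invariance of Lebesgue measure. Your primary Fourier route is a valid and perhaps slicker alternative: the operator $S_\varepsilon - I$ is a Fourier multiplier with symbol $\widehat{\kappa}_\varepsilon(\boldsymbol{\xi})-1$, and the estimate reduces to the pointwise bound $|\widehat{\kappa}_\varepsilon(\boldsymbol{\xi})-1|\le\varepsilon r_1|\boldsymbol{\xi}|$ plus Plancherel. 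You correctly identify the one nontrivial geometric point, namely that $\sup_{\mathbf{z}\in\Omega}|\mathbf{z}|\le r_1$ (rather than $2r_1$) because the elementary cell $\Omega$ is centrally symmetric; this is exactly what makes the constant $r_1$ (and not $\mathrm{diam}\,\Omega$) appear in the statement. Both the Fourier argument and the direct density argument you sketch are complete and give the same sharp constant.
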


\begin{proposition}
\label{Proposition f^eps S_eps}
Let $h$ be a $\Gamma$-periodic function in $\mathbb{R}^d$ such that $h\in L_2(\Omega)$.
Then the operator  $[h^\varepsilon ]S_\varepsilon $ is continuous in $L_2(\mathbb{R}^d;\mathbb{C}^k)$, and
\begin{equation*}
\Vert [h^\varepsilon]S_\varepsilon \Vert _{L_2(\mathbb{R}^d)\rightarrow L_2(\mathbb{R}^d)}\leqslant \vert \Omega \vert ^{-1/2}\Vert h\Vert _{L_2(\Omega)},\quad \varepsilon >0.
\end{equation*}
\end{proposition}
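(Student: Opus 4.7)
The plan is to bound $\|[h^\varepsilon]S_\varepsilon\mathbf u\|_{L_2(\mathbb R^d)}^2$ directly from the definition of $S_\varepsilon$, reducing matters to a pointwise-in-$\mathbf y$ integral of $|h|^2$ over a translated cell, which by periodicity equals $\|h\|_{L_2(\Omega)}^2$.

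First I would apply the Cauchy--Schwarz inequality to the defining formula \eqref{S_eps} to get the pointwise estimate
\begin{equation*}
|(S_\varepsilon \mathbf u)(\mathbf x)|^2 \le |\Omega|^{-1}\int_\Omega |\mathbf u(\mathbf x-\varepsilon\mathbf z)|^2\,d\mathbf z.
\end{equation*}
Multiplying by $|h^\varepsilon(\mathbf x)|^2$ and integrating over $\mathbf x\in\mathbb R^d$, then applying Fubini (all integrands being nonnegative), yields
\begin{equation*}
\|[h^\varepsilon]S_\varepsilon\mathbf u\|_{L_2(\mathbb R^d)}^2 \le |\Omega|^{-1}\int_\Omega \int_{\mathbb R^d} |h^\varepsilon(\mathbf x)|^2\,|\mathbf u(\mathbf x-\varepsilon\mathbf z)|^2\,d\mathbf x\,d\mathbf z.
\end{equation*}

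Next I would make the substitution $\mathbf y=\mathbf x-\varepsilon\mathbf z$ in the inner integral, so that the integrand becomes $|h(\mathbf y/\varepsilon+\mathbf z)|^2|\mathbf u(\mathbf y)|^2$, and swap the order of integration once more. The key step is then to observe that, for each fixed $\mathbf y$, the change of variable $\mathbf w=\mathbf y/\varepsilon+\mathbf z$ gives
\begin{equation*}
\int_\Omega |h(\mathbf y/\varepsilon+\mathbf z)|^2\,d\mathbf z = \int_{\Omega+\mathbf y/\varepsilon}|h(\mathbf w)|^2\,d\mathbf w = \|h\|_{L_2(\Omega)}^2,
\end{equation*}
where the last equality uses $\Gamma$-periodicity of $|h|^2$ together with the fact that $\Omega$ is a fundamental cell of $\Gamma$, so any translate of $\Omega$ has the same integral (decompose the translate into pieces and shift each by an element of $\Gamma$). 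Combining the bounds gives
\begin{equation*}
\|[h^\varepsilon]S_\varepsilon\mathbf u\|_{L_2(\mathbb R^d)}^2 \le |\Omega|^{-1}\|h\|_{L_2(\Omega)}^2\,\|\mathbf u\|_{L_2(\mathbb R^d)}^2,
\end{equation*}
and taking square roots yields the claim.

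The argument is almost entirely a Cauchy--Schwarz plus Fubini manipulation; the only nontrivial point is the invariance of $\int|h|^2$ under $\mathbf a$-translations of $\Omega$ for arbitrary $\mathbf a\in\mathbb R^d$, and this is where I would be careful. A clean way to justify it is to first verify it on continuous periodic functions (by partitioning $\Omega+\mathbf a$ into finitely many pieces and translating each by a suitable element of $\Gamma$ to reassemble $\Omega$) and then extend to $h\in L_2(\Omega)$ by density. Everything else is routine.
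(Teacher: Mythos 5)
Your argument is correct and is essentially the standard proof of this fact: the paper itself does not prove the proposition but cites \cite[Lemmas 1.1 and 1.2]{ZhPas} and \cite[Propositions 3.1 and 3.2]{PSu}, where exactly this Cauchy--Schwarz/Fubini/periodicity computation is carried out. The only point you flag as delicate --- invariance of $\int|h|^2$ over translates of the cell --- follows directly for $|h|^2\in L_1(\Omega)$ by decomposing $\Omega+\mathbf a$ modulo $\Gamma$ into measurable pieces and shifting each by a lattice vector, so no density argument via continuous functions is needed.
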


\subsection{The class of operators $A_\varepsilon$}
\label{Subsection Operator A_eps}
In $L_2(\mathbb{R}^d;\mathbb{C}^n)$, we consider the operator $A_\varepsilon$ given by the differential expression  $A_\varepsilon =b(\mathbf{D})^*g^\varepsilon (\mathbf{x})b(\mathbf{D})$. Here $g(\mathbf{x})$ is a $\Gamma$-periodic $(m\times m)$-matrix-valued function (in general, with complex entries). We assume that $g(\mathbf{x})>0$ and $g,g^{-1}\in L_\infty (\mathbb{R}^d)$. Next, $b(\mathbf{D})$ is the DO given  by
$b(\mathbf{D})=\sum _{j=1}^d b_jD_j$, where $b_j$, $j=1,\dots ,d$, are constant $(m\times n)$-matrices (in general, with complex entries). It is assumed that  $m\geqslant n$ and that the symbol $b(\boldsymbol{\xi})=\sum _{j=1}^d b_j\xi_j$ of the operator $b(\mathbf{D})$
has maximal rank:
$\mathrm{rank}\,b(\boldsymbol{\xi})=n$ for $0\neq \boldsymbol{\xi}\in\mathbb{R}^d$.
This condition is equivalent to the estimates
\begin{equation}
\label{<b^*b<}
\alpha _0\mathbf{1}_n \leqslant b(\boldsymbol{\theta})^*b(\boldsymbol{\theta})
\leqslant \alpha _1\mathbf{1}_n,\quad
\boldsymbol{\theta}\in \mathbb{S}^{d-1},\quad
0<\alpha _0\leqslant \alpha _1<\infty,
\end{equation}
with some positive constants $\alpha _0$ and $\alpha _1$.
From \eqref{<b^*b<} it follows that
\begin{equation}
\label{b_l <=}
\vert b_j\vert \leqslant \alpha _1^{1/2},\quad j=1,\dots ,d.
\end{equation}

The precise definition of the operator $A_\varepsilon$ is given in terms of the quadratic form
\begin{equation*}
\mathfrak{a}_\varepsilon [\mathbf{u},\mathbf{u}]=\int _{\mathbb{R}^d}\langle g^\varepsilon (\mathbf{x})b(\mathbf{D})\mathbf{u},b(\mathbf{D})\mathbf{u}\rangle \,d\mathbf{x},\quad \mathbf{u}\in H^1(\mathbb{R}^d;\mathbb{C}^n).
\end{equation*}
Under the above assumptions, this form is closed and nonnegative.
Using the Fourier transformation and condition \eqref{<b^*b<}, it is easy to check that
\begin{equation}
\label{<a_eps<}
\alpha _0\Vert g^{-1}\Vert ^{-1}_{L_\infty}\Vert \mathbf{D}\mathbf{u}\Vert ^2_{L_2(\mathbb{R}^d)}\leqslant \mathfrak{a}_\varepsilon [\mathbf{u},\mathbf{u}]\leqslant \alpha _1\Vert g\Vert _{L_\infty}\Vert \mathbf{D}\mathbf{u}\Vert^2_{L_2(\mathbb{R}^d)},\quad
 \mathbf{u}\in H^1(\mathbb{R}^d;\mathbb{C}^n).
\end{equation}
Let $c_1:=\alpha _0 ^{-1/2}\Vert g^{-1}\Vert ^{1/2}_{L_\infty}$. Then the lower estimate
 \eqref{<a_eps<} can be written as
 \begin{equation}
\label{Du <= c_1^2a}
\Vert\mathbf{D}\mathbf{u}\Vert ^2_{L_2(\mathbb{R}^d)}\leqslant c_1^2 \mathfrak{a}_\varepsilon [\mathbf{u},\mathbf{u}],\quad \mathbf{u}\in H^1(\mathbb{R}^d;\mathbb{C}^n).
\end{equation}

\subsection{The operator $B_\varepsilon$}
\label{Subsection lower order terms}
We study a selfadjoint operator $B_\varepsilon$ whose principal part coincides with $A_\varepsilon$.
To define the lower order terms, we introduce $\Gamma$-periodic $(n\times n)$-matrix-valued functions $a_j$, $j=1,\dots ,d$, (in general, with complex entries) such that
\begin{align}
\label{a_j cond}
a_j \in L_\rho (\Omega ), \quad\rho =2 \ \mbox{for}\ d=1,\quad\rho >d\ \mbox{for}\ d\geqslant 2,\quad  j=1,\dots ,d.
\end{align}
Next, let $Q$ and $Q_0$ be $\Gamma$-periodic Hermitian $(n\times n)$-matrix-valued functions (with complex entries) such that
\begin{align}
\label{Q condition}
&Q\in L_s(\Omega ),\quad s=1 \ \mbox{for}\ d=1,\quad s >d/2\ \mbox{for}\ d\geqslant 2;\\
& Q_0(\mathbf{x})>0;\quad Q_0,Q_0^{-1}\in L_\infty (\mathbb{R}^d).\nonumber
\end{align}
(Our assumptions on $Q$ correspond to Example 2.4 from \cite{MSu15}.)
For convenience of further references, the following set of parameters is called the  {\it ``initial data''}:
\begin{equation}
\label{problem data}
\begin{split}
&d,\,m,\,n,\,\rho ,\,s ;\,\alpha _0,\, \alpha _1 ,\,\Vert g\Vert _{L_\infty},\, \Vert g^{-1}\Vert _{L_\infty},\, \Vert a_j\Vert _{L_\rho (\Omega)},\, j=1,\dots ,d;\\
&\Vert Q\Vert _{L_s(\Omega)};\, \Vert Q_0\Vert _{L_\infty},\,\Vert Q_0^{-1}\Vert _{L_\infty};\;\text{the parameters of the lattice}\;\Gamma .
\end{split}
\end{equation}

Consider the following quadratic form
\begin{equation}
\label{b_eps =}
\begin{aligned}
\mathfrak{b}_\varepsilon [\mathbf{u},\mathbf{u}]=\mathfrak{a}_\varepsilon [\mathbf{u},\mathbf{u}]+2\mathrm{Re}\,\sum _{j=1}^d
\left(a_j^\varepsilon D_j \mathbf{u},\mathbf{u}\right)_{L_2(\mathbb{R}^d)}
+\left((Q^\varepsilon +\lambda Q_0^\varepsilon)\mathbf{u},\mathbf{u}\right)_{L_2(\mathbb{R}^d)},
 \quad \mathbf{u}\in H^1(\mathbb{R}^d;\mathbb{C}^n).
\end{aligned}
\end{equation}
We fix a constant $\lambda$ so that the form $\mathfrak{b}_\varepsilon$ is nonnegative (see \eqref{lambda =} below).
Let us check that the form $\mathfrak{b}_\varepsilon$ is closed.
By the H\"older inequality and the Sobolev embedding theorem, it is easily seen  (see  \cite[(5.11)--(5.14)]{SuAA}) that
for any $\nu>0$ there exist constants $C_j(\nu)>0$ such that
\begin{equation*}
\Vert a_j^*\mathbf{u}\Vert ^2 _{L_2(\mathbb{R}^d)}\leqslant \nu \Vert \mathbf{D}\mathbf{u}\Vert ^2_{L_2(\mathbb{R}^d)}+C_j(\nu )\Vert \mathbf{u}\Vert ^2 _{L_2(\mathbb{R}^d)},
\quad \mathbf{u}\in H^1(\mathbb{R}^d;\mathbb{C}^n),\quad  j=1,\dots ,d.
\end{equation*}
Using the change of variable $\mathbf{y}:=\varepsilon ^{-1}\mathbf{x}$ and denoting $\mathbf{u}(\mathbf{x})=:\mathbf{v}(\mathbf{y})$, we deduce
\begin{equation*}
\begin{split}
\Vert   (a_j^\varepsilon )^*\mathbf{u}\Vert ^2_{L_2(\mathbb{R}^d)}&=\int _{\mathbb{R}^d}\vert a_j(\varepsilon ^{-1}\mathbf{x})^*\mathbf{u}(\mathbf{x})\vert ^2\,d\mathbf{x}
=\varepsilon ^d\int _{\mathbb{R}^d}\vert a_j(\mathbf{y})^*\mathbf{v}(\mathbf{y})\vert ^2\,d\mathbf{y}
\\
&\leqslant \varepsilon ^d\nu \int _{\mathbb{R}^d}\vert \mathbf{D}_{\mathbf{y}}\mathbf{v}(\mathbf{y})\vert ^2\,d\mathbf{y}
+\varepsilon ^d C_j(\nu)\int _{\mathbb{R}^d}\vert \mathbf{v}(\mathbf{y})\vert ^2\,d\mathbf{y}
\\
&\leqslant \nu \Vert \mathbf{D}\mathbf{u}\Vert ^2_{L_2(\mathbb{R}^d)}+C_j(\nu)\Vert \mathbf{u}\Vert ^2_{L_2(\mathbb{R}^d)},\quad \mathbf{u}\in H^1(\mathbb{R}^d;\mathbb{C}^n),\quad 0<\varepsilon\leqslant 1.
\end{split}
\end{equation*}
Hence, by \eqref{<a_eps<}, for any $\nu >0$ there exists a constant $C(\nu)>0$ such that
\begin{equation}
\label{sum a-j u}
\sum _{j=1}^d \Vert (a_j^\varepsilon)^*\mathbf{u}\Vert ^2 _{L_2(\mathbb{R}^d)}
\leqslant \nu
\mathfrak{a}_\varepsilon [\mathbf{u},\mathbf{u}]
+C(\nu)\Vert \mathbf{u}\Vert ^2_{L_2(\mathbb{R}^d)},\quad\mathbf{u}\in H^1(\mathbb{R}^d;\mathbb{C}^n),
\quad 0<\varepsilon\leqslant 1.
\end{equation}
If $\nu$ is fixed, then $C(\nu)$ depends only on $d$, $\rho$, $\alpha _0$,  the norms $\Vert g^{-1}\Vert _{L_\infty}$, $\Vert a_j\Vert _{L_\rho (\Omega)}$, $j=1,\dots ,d$, and the parameters of the lattice $\Gamma$.
From \eqref{Du <= c_1^2a} and \eqref{sum a-j u} it follows that
\begin{equation}
\label{2Re sum j <=}
2\biggl\vert \mathrm{Re}\,\sum _{j=1}^d (D_j\mathbf{u},(a_j^\varepsilon)^*\mathbf{u})_{L_2(\mathbb{R}^d)}\biggr\vert
\leqslant\frac{1}{4}\mathfrak{a}_\varepsilon [\mathbf{u},\mathbf{u}]+c_2\Vert \mathbf{u}\Vert ^2_{L_2(\mathbb{R}^d)},
\quad \mathbf{u}\in H^1(\mathbb{R}^d;\mathbb{C}^n),\quad  0<\varepsilon\leqslant 1,
\end{equation}
where $c_2 :=8c_1^2C(\nu _0)$ with $\nu _0:=2^{-6}\alpha _0\Vert g^{-1}\Vert ^{-1}_{L_\infty}$.

Next, by condition \eqref{Q condition} on $Q$, for any $\nu >0$ there exists a constant $C_Q(\nu)>0$ such that
\begin{equation}
\label{(Qu,u)<=}
\vert (Q^\varepsilon \mathbf{u},\mathbf{u})_{L_2(\mathbb{R}^d)}\vert \leqslant\nu \Vert \mathbf{D}\mathbf{u}\Vert ^2_{L_2(\mathbb{R}^d)}+C_Q(\nu)\Vert \mathbf{u}\Vert ^2_{L_2(\mathbb{R}^d)},\quad
\mathbf{u}\in H^1(\mathbb{R}^d;\mathbb{C}^n),\quad 0<\varepsilon\leqslant 1 .
\end{equation}
For $\nu$ fixed,  $C_Q(\nu)$ is controlled in terms of $d$, $s$, $\Vert Q\Vert _{L_s(\Omega)}$, and the parameters of the lattice $\Gamma$.

As in \cite[Subsection~2.8]{MSu15}, we fix $\lambda$ in \eqref{b_eps =} as follows:
\begin{equation}
\label{lambda =}
\lambda := (C_Q(\nu _*)+c_2)\Vert Q_0^{-1}\Vert _{L_\infty}\quad\text{for}\;\nu _* :=2^{-1}\alpha _0\Vert g^{-1}\Vert ^{-1}_{L_\infty}.
\end{equation}

Combining \eqref{Du <= c_1^2a}, \eqref{2Re sum j <=},  \eqref{(Qu,u)<=} with $\nu=\nu_*$,  and \eqref{lambda =},
we deduce the following lower estimate for the form \eqref{b_eps =}:
\begin{align}
\label{b_eps >=}
\mathfrak{b}_\varepsilon[\mathbf{u},\mathbf{u}]\geqslant \frac{1}{4}\mathfrak{a}_\varepsilon [\mathbf{u},\mathbf{u}]\geqslant c_*\Vert \mathbf{D}\mathbf{u}\Vert ^2_{L_2(\mathbb{R}^d)},\quad\mathbf{u}\in H^1(\mathbb{R}^d;\mathbb{C}^n);
\quad
c_* :=\frac{1}{4}\alpha _0\Vert g^{-1}\Vert ^{-1}_{L_\infty}.
\end{align}
From \eqref{<a_eps<}, \eqref{2Re sum j <=}, and \eqref{(Qu,u)<=} with $\nu=1$ it follows that
\begin{equation}
\label{b_eps <=}
\mathfrak{b}_\varepsilon [\mathbf{u},\mathbf{u}]\leqslant c_3\Vert \mathbf{u}\Vert ^2 _{H^1(\mathbb{R}^d)},
 \quad
\mathbf{u}\in H^1(\mathbb{R}^d;\mathbb{C}^n),
\end{equation}
where $c_3:= \max\{\frac{5}{4}\alpha _1\Vert g\Vert _{L_\infty}+1; C_Q(1)+\lambda\Vert Q_0\Vert _{L_\infty}+c_2\}$.

Thus, the form $\mathfrak{b}_\varepsilon$ is closed and nonnegative.
The selfadjoint operator in $L_2(\mathbb{R}^d;\mathbb{C}^n)$ generated by this form is denoted by $B_\varepsilon$. Formally, we have
\begin{equation}
\label{B_eps}
B_\varepsilon = b(\mathbf{D})^* g^\varepsilon (\mathbf{x})b(\mathbf{D})+\sum _{j=1}^d \left(
a_j^\varepsilon (\mathbf{x})D_j +D_j a_j^\varepsilon (\mathbf{x})^*\right)
+Q^\varepsilon (\mathbf{x}) +\lambda Q_0^\varepsilon (\mathbf{x}).
\end{equation}

\subsection{The effective matrix}
The effective operator for $A_\varepsilon =b(\mathbf{D})^*g^\varepsilon (\mathbf{x})b(\mathbf{D})$ is given by
 $A^0=b(\mathbf{D})^*g^0b(\mathbf{D})$, where $g^0$ is a constant $(m\times m)$-matrix called the effective matrix.
Suppose that a $\Gamma$-periodic $(n\times m)$-matrix-valued function $\Lambda (\mathbf{x})$ is the (weak) solution of the problem
\begin{equation}
\label{Lambda problem}
b(\mathbf{D})^*g(\mathbf{x})(b(\mathbf{D})\Lambda (\mathbf{x})+\mathbf{1}_m)=0,\quad \int _{\Omega }\Lambda (\mathbf{x})\,d\mathbf{x}=0.
\end{equation}
Then the effective matrix is given by
\begin{align}
\label{g^0}
&g^0=\vert \Omega \vert ^{-1}\int _{\Omega} \widetilde{g}(\mathbf{x})\,d\mathbf{x},
\\
\label{tilde g}
&\widetilde{g}(\mathbf{x}):=g(\mathbf{x})(b(\mathbf{D})\Lambda (\mathbf{x})+\mathbf{1}_m).
\end{align}

From \eqref{Lambda problem} it easily follows that
\begin{equation}
\label{b(D)Lambda<=}
\Vert b(\mathbf{D})\Lambda\Vert _{L_2(\Omega)}
\leqslant \vert \Omega\vert ^{1/2} m^{1/2}\Vert g\Vert _{L_\infty}^{1/2}\Vert g^{-1}\Vert _{L_\infty}^{1/2}.
\end{equation}
We also need the following estimates
proved in \cite[(6.28) and Subsection~7.3]{BSu05}:
\begin{align}
\label{Lambda <=}
&\Vert \Lambda \Vert _{L_2(\Omega)}\leqslant \vert \Omega \vert ^{1/2}M_1; \quad M_1 :=m^{1/2}(2r_0)^{-1}\alpha _0^{-1/2}\Vert g\Vert ^{1/2}_{L_\infty}\Vert g^{-1}\Vert ^{1/2}_{L_\infty},\\
\label{DLambda<=}
&\Vert \mathbf{D}\Lambda \Vert _{L_2(\Omega)}\leqslant \vert \Omega \vert ^{1/2}M_2;\quad M_2 :=m^{1/2}\alpha _0^{-1/2}\Vert g\Vert ^{1/2}_{L_\infty}\Vert g^{-1}\Vert ^{1/2}_{L_\infty}.
\end{align}

It can be checked that $g^0$ is positive definite.
 The effective matrix satisfies the estimates known as the Voigt--Reuss bracketing
(see, e.~g., \cite[Chapter~3, Theorem~1.5]{BSu}).

\begin{proposition}
Let $g^0$ be the effective matrix \eqref{g^0}. Then
\begin{equation}
\label{Foigt-Reiss}
\underline{g}\leqslant g^0\leqslant \overline{g}.
\end{equation}
If $m=n$, then $g^0=\underline{g}$.
\end{proposition}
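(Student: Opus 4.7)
The plan is to derive the Voigt--Reuss bracketing from two variational characterizations of $g^0$, one primal and one dual. Set $\eta(\mathbf{x}):=b(\mathbf{D})\Lambda(\mathbf{x})+\mathbf{1}_m$ so that $\widetilde g=g\eta$ and $|\Omega|\,g^0=\int_\Omega g\eta\,d\mathbf{x}$. Testing the defining equation \eqref{Lambda problem} against $\Lambda\xi$ for $\xi\in\mathbb{C}^m$ yields the identity $\int_\Omega\langle g\eta\xi,\,b(\mathbf{D})(\Lambda\xi)\rangle\,d\mathbf{x}=0$, which, added to $|\Omega|\langle g^0\xi,\xi\rangle=\int_\Omega\langle g\eta\xi,\xi\rangle\,d\mathbf{x}$, rewrites the effective matrix as the energy
\[
\langle g^0\xi,\xi\rangle=|\Omega|^{-1}\int_\Omega\langle g(\mathbf{x})(\xi+b(\mathbf{D})(\Lambda\xi)),\,\xi+b(\mathbf{D})(\Lambda\xi)\rangle\,d\mathbf{x}.
\]
A standard first-variation argument identifies this as the primal minimum
\[
\langle g^0\xi,\xi\rangle=\min_{\phi\in\widetilde H^1(\Omega;\mathbb{C}^n)}|\Omega|^{-1}\int_\Omega\langle g(\xi+b(\mathbf{D})\phi),\,\xi+b(\mathbf{D})\phi\rangle\,d\mathbf{x}.
\]
Inserting the trial function $\phi\equiv 0$ gives the Voigt upper bound $\langle g^0\xi,\xi\rangle\leq\langle\overline g\,\xi,\xi\rangle$.

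For the Reuss lower bound I would pass to the dual quadratic form via Legendre duality. Applying $\langle g\zeta,\zeta\rangle=\max_\tau\bigl(2\,\mathrm{Re}\,\langle\zeta,\tau\rangle-\langle g^{-1}\tau,\tau\rangle\bigr)$ pointwise inside the integral and exchanging $\min_\phi$ with $\max_\tau$ (a standard saddle-point step for a convex--concave quadratic functional), one sees that the inner minimization in $\phi$ forces the constraint $b(\mathbf{D})^*\tau=0$ in the periodic sense, leaving the dual characterization
\[
\langle(g^0)^{-1}\zeta,\zeta\rangle=\min_{\tau}\,|\Omega|^{-1}\int_\Omega\langle g(\mathbf{x})^{-1}\tau(\mathbf{x}),\,\tau(\mathbf{x})\rangle\,d\mathbf{x},
\]
where the minimum runs over $\tau\in L_2(\Omega;\mathbb{C}^m)$ with $\overline{\tau}=\zeta$ and $b(\mathbf{D})^*\tau=0$. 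The constant field $\tau(\mathbf{x})\equiv\zeta$ is admissible, giving $\langle(g^0)^{-1}\zeta,\zeta\rangle\leq\langle\overline{g^{-1}}\zeta,\zeta\rangle=\langle\underline g^{\,-1}\zeta,\zeta\rangle$; inversion delivers $g^0\geq\underline g$.

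For the equality case $m=n$ I would construct $\Lambda$ explicitly. Since the symbol $b(\boldsymbol{\theta})$ is then invertible for $\boldsymbol{\theta}\neq 0$, the periodic equation $b(\mathbf{D})\Lambda(\mathbf{x})=f(\mathbf{x})$ has a unique mean-zero $\widetilde H^1$-solution whenever $\overline{f}=0$. Taking $f(\mathbf{x}):=g(\mathbf{x})^{-1}\underline g-\mathbf{1}_m$, which has mean zero because $\overline{g^{-1}}\,\underline g=\mathbf{1}_m$, produces a $\Lambda$ for which $g(b(\mathbf{D})\Lambda+\mathbf{1}_m)\equiv\underline g$ is a constant matrix; in particular $b(\mathbf{D})^*g(b(\mathbf{D})\Lambda+\mathbf{1}_m)=0$, so by uniqueness of the solution to \eqref{Lambda problem} this is our $\Lambda$, and averaging yields $g^0=\underline g$.

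The main technical obstacle is the rigorous derivation of the dual variational characterization. As a backup avoiding an explicit minimax, I would instead attack the constrained minimization on the dual side directly by Lagrange multipliers, identify the minimizer as $\tau^*(\mathbf{x})=\widetilde g(\mathbf{x})(g^0)^{-1}\zeta$, and compute the dual energy using the identity $\int_\Omega\langle g\eta\xi,\eta\xi\rangle\,d\mathbf{x}=\int_\Omega\langle g\eta\xi,\xi\rangle\,d\mathbf{x}$ already established in the primal step; a short calculation then shows this energy equals $\langle(g^0)^{-1}\zeta,\zeta\rangle$.
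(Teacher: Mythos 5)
The paper itself gives no proof of this proposition -- it is quoted from \cite{BSu} (Chapter~3, Theorem~1.5) -- and your primal--dual variational argument is essentially the classical proof behind that citation, so you are on the same route in substance, and your argument is correct. The primal step is sound: testing \eqref{Lambda problem} with $\Lambda\xi$ gives $\langle g^0\xi,\xi\rangle=|\Omega|^{-1}\int_\Omega\langle g(\xi+b(\mathbf{D})(\Lambda\xi)),\,\xi+b(\mathbf{D})(\Lambda\xi)\rangle\,d\mathbf{x}$, convexity identifies this with the cell minimum, and $\phi=0$ yields $g^0\leqslant\overline{g}$. For the Reuss bound the only step needing care is the min--max exchange, but your backup does close it: with $\eta:=b(\mathbf{D})\Lambda+\mathbf{1}_m$ and $\tau^*:=\widetilde{g}(g^0)^{-1}\zeta$ one has $g^{-1}\tau^*=(g^0)^{-1}\zeta+b(\mathbf{D})\bigl(\Lambda(g^0)^{-1}\zeta\bigr)$, so for every admissible variation $\sigma$ (zero mean, $b(\mathbf{D})^*\sigma=0$ weakly) the cross term $\int_\Omega\langle g^{-1}\tau^*,\sigma\rangle\,d\mathbf{x}$ vanishes; hence, by strict convexity, $\tau^*$ is the global minimizer, its energy is $\langle(g^0)^{-1}\zeta,\zeta\rangle$ by the identity $|\Omega|^{-1}\int_\Omega\eta^*g\eta\,d\mathbf{x}=g^0$, and the constant trial field $\tau\equiv\zeta$ gives $(g^0)^{-1}\leqslant\underline{g}^{\,-1}$, i.e.\ $g^0\geqslant\underline{g}$. (One can also bypass duality entirely via the pointwise Cauchy--Schwarz bound $|\langle\eta\xi,\zeta\rangle|\leqslant\langle g\eta\xi,\eta\xi\rangle^{1/2}\langle g^{-1}\zeta,\zeta\rangle^{1/2}$, integration over $\Omega$ with $\overline{\eta}=\mathbf{1}_m$, and the choice $\xi=(g^0)^{-1}\zeta$.) Your treatment of $m=n$ is also correct: since $b(\boldsymbol{\xi})$ is then invertible for $\boldsymbol{\xi}\neq0$, the equation $b(\mathbf{D})\Lambda=g^{-1}\underline{g}-\mathbf{1}_m$ with mean-zero right-hand side has a unique mean-zero solution in $\widetilde{H}^1(\Omega)$, uniqueness for \eqref{Lambda problem} identifies it with the corrector, and then $\widetilde{g}\equiv\underline{g}$, whence $g^0=\underline{g}$.
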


Inequalities \eqref{Foigt-Reiss} imply that
\begin{equation}
\label{|g^0|<=}
\vert g^0\vert \leqslant \Vert g\Vert _{L_\infty},\quad \vert (g^0)^{-1}\vert \leqslant \Vert g^{-1}\Vert _{L_\infty}.
\end{equation}

Now we distinguish the cases where one of the inequalities in \eqref{Foigt-Reiss}
becomes an identity, see \cite[Chapter~3, Propositions~1.6 and~1.7]{BSu}.

\begin{proposition}
The identity $g^0=\overline{g}$ is equivalent to the relations
\begin{equation}
\label{overline-g}
b(\D)^* {\mathbf g}_j(\x) =0,\ \ j=1,\dots,m,
\end{equation}
where ${\mathbf g}_j(\x)$, $j=1,\dots,m,$ are the columns of the matrix $g(\x)$.
\end{proposition}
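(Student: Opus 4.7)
The plan is to establish a compact identity relating $g^0-\overline{g}$ to the quadratic form built from $b(\D)\Lambda$ and $g$, and then to read off the equivalence directly from this identity together with the definition of $\Lambda$.

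First, directly from \eqref{g^0} and \eqref{tilde g}, I can write
\begin{equation*}
g^0-\overline{g} = \vert\Omega\vert^{-1}\int_\Omega g(\x)\, b(\D)\Lambda(\x)\,d\x.
\end{equation*}
Since both $g^0$ and $\overline{g}$ are Hermitian, taking the adjoint of this identity also gives
\begin{equation*}
g^0-\overline{g} = \vert\Omega\vert^{-1}\int_\Omega (b(\D)\Lambda(\x))^* g(\x)\,d\x.
\end{equation*}

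Next I would use the weak form of the cell problem \eqref{Lambda problem} with test function $\Lambda$ itself (which is an admissible test matrix since each column of $\Lambda$ lies in $\widetilde H^1(\Omega;\C^n)$). In matrix form this yields
\begin{equation*}
\int_\Omega (b(\D)\Lambda)^* g(\x)\bigl(b(\D)\Lambda+\mathbf{1}_m\bigr)\,d\x = 0,
\end{equation*}
so that
\begin{equation*}
\int_\Omega (b(\D)\Lambda)^* g(\x)\,d\x = -\int_\Omega (b(\D)\Lambda)^* g(\x)\, b(\D)\Lambda \,d\x.
\end{equation*}
Combining this with the second representation of $g^0-\overline{g}$ above gives the key identity
\begin{equation*}
\overline{g}-g^0 = \vert\Omega\vert^{-1}\int_\Omega (b(\D)\Lambda(\x))^* g(\x)\, b(\D)\Lambda(\x)\,d\x,
\end{equation*}
which is a positive semidefinite $(m\times m)$-matrix (this incidentally gives the upper bound in \eqref{Foigt-Reiss}).

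From this identity, $g^0=\overline{g}$ is equivalent to the integral on the right being the zero matrix, which, by the pointwise positive definiteness of $g(\x)$, is equivalent to $b(\D)\Lambda(\x)=0$ a.e.\ on $\Omega$. It remains to show that this last condition is in turn equivalent to \eqref{overline-g}. If $b(\D)\Lambda\equiv 0$, then the defining equation \eqref{Lambda problem} reduces to $b(\D)^* g(\x)=0$, which columnwise is exactly $b(\D)^*\mathbf{g}_j(\x)=0$ for $j=1,\dots,m$. Conversely, if $b(\D)^*\mathbf{g}_j=0$ for all $j$, then $\Lambda\equiv 0$ satisfies both the equation and the zero-mean normalization in \eqref{Lambda problem}; by uniqueness of the solution of that cell problem we get $\Lambda=0$, hence trivially $b(\D)\Lambda=0$.

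The only step requiring any care is the matrix bookkeeping in passing from the columnwise weak formulation of \eqref{Lambda problem} to the matrix identity used above, and taking the adjoint of the scalar-product formula for $g^0-\overline{g}$ while keeping track of Hermiticity; otherwise the argument is essentially a one-line computation once the identity $\overline{g}-g^0 = \vert\Omega\vert^{-1}\int_\Omega (b(\D)\Lambda)^* g\,b(\D)\Lambda\,d\x$ is in place.
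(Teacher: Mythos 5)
Your argument is correct and complete. Note that the paper itself offers no proof of this proposition: it is quoted from \cite[Chapter~3, Propositions~1.6 and 1.7]{BSu}, so there is no ``paper proof'' to compare against; your write-up supplies the standard argument in a self-contained way. The key identity $\overline{g}-g^0=\vert\Omega\vert^{-1}\int_\Omega (b(\D)\Lambda)^*g\,b(\D)\Lambda\,d\x$ is exactly the right pivot: combined with the uniform positive definiteness of $g$ it reduces $g^0=\overline{g}$ to $b(\D)\Lambda=0$ a.e., and the two implications to and from \eqref{overline-g} (reading the cell equation \eqref{Lambda problem} in its weak, columnwise form, and using uniqueness of the zero-mean periodic solution for the converse) are handled properly. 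One small streamlining: you do not actually need to invoke Hermiticity of $g^0$ to pass to the second representation of $g^0-\overline{g}$ --- taking the adjoint of the tested identity $\int_\Omega (b(\D)\Lambda)^*g\,(b(\D)\Lambda+\mathbf{1}_m)\,d\x=0$ (using only $g=g^*$) gives $\int_\Omega g\,b(\D)\Lambda\,d\x=-\int_\Omega (b(\D)\Lambda)^*g\,b(\D)\Lambda\,d\x$ directly, which yields the key identity and, as a by-product, both the Hermiticity of $g^0$ and the upper Voigt--Reuss bound. Also, when you say the condition $b(\D)\Lambda\equiv 0$ turns \eqref{Lambda problem} into $b(\D)^*g=0$, it is worth being explicit that this equation, and hence \eqref{overline-g}, is understood distributionally (against periodic test functions), since $g$ is merely bounded; your weak-formulation bookkeeping already covers this.
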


\begin{proposition} The identity $g^0 =\underline{g}$ is equivalent to the relations
\begin{equation}
\label{underline-g}
{\mathbf l}_j(\x) = {\mathbf l}_j^0 + b(\D) {\mathbf f}_j(\x),\ \ {\mathbf l}_j^0\in \C^m,\ \
{\mathbf f}_j \in \widetilde{H}^1(\Omega;\C^m),\ \ j =1,\dots,m,
\end{equation}
where ${\mathbf l}_j(\x)$, $j=1,\dots,m,$ are the columns of the matrix $g(\x)^{-1}$.
\end{proposition}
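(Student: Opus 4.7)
My plan is to derive the equivalence from the dual (Voigt--Reuss) inequality $g^0\ge\underline{g}$ by analyzing its equality case via Cauchy--Schwarz, rather than by a direct variational argument. The key intermediate object is $\widetilde{g}(\x)$ from \eqref{tilde g}: since $b(\D)^*\widetilde{g}(\x)=0$ by \eqref{Lambda problem}, and since $\int_\Omega b(\D)\mathbf{v}\,d\x=0$ for any periodic $\mathbf{v}$, one has for every $\bxi\in\C^m$ the two identities
\begin{equation*}
\int_\Omega\langle g(\x)^{-1}\widetilde{g}(\x)\bxi,\widetilde{g}(\x)\bxi\rangle\,d\x=|\Omega|\,\langle g^0\bxi,\bxi\rangle
=\int_\Omega\langle g(\x)^{-1}\widetilde{g}(\x)\bxi,g^0\bxi\rangle\,d\x,
\end{equation*}
the first by direct substitution $g^{-1}\widetilde{g}\bxi=b(\D)\Lambda\bxi+\bxi$ and orthogonality of $b(\D)\Lambda\bxi$ to the constants (via $b(\D)^*\widetilde{g}\bxi=0$), the second because $b(\D)\Lambda\bxi$ has zero mean.

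Next I would apply the Cauchy--Schwarz inequality in the weighted space $L_2(\Omega;g^{-1}d\x)$ to the pair $\widetilde{g}(\x)\bxi$ (non-constant) and $g^0\bxi$ (constant). Using the two identities above, this gives
\begin{equation*}
|\Omega|^2\langle g^0\bxi,\bxi\rangle^2\le |\Omega|\langle g^0\bxi,\bxi\rangle\cdot\int_\Omega\langle g(\x)^{-1}g^0\bxi,g^0\bxi\rangle\,d\x,
\end{equation*}
which, after cancellation and using $\int_\Omega g^{-1}d\x=|\Omega|\,\underline{g}^{-1}$, rearranges to $\langle(g^0)^{-1}\beeta,\beeta\rangle\le\langle\underline{g}^{-1}\beeta,\beeta\rangle$ with $\beeta=g^0\bxi$, i.e., $g^0\ge\underline{g}$. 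The identity $g^0=\underline{g}$ is therefore equivalent to Cauchy--Schwarz becoming an equality for every $\bxi$. In $L_2(\Omega;g^{-1}d\x)$, this equality forces $\widetilde{g}(\x)\bxi$ to be proportional to the constant $g^0\bxi$, and comparing means (which are equal, $\overline{\widetilde{g}\bxi}=g^0\bxi$) forces the proportionality constant to be $1$, so $\widetilde{g}(\x)\bxi=g^0\bxi$ for a.e.~$\x$, i.e., $g(\x)^{-1}g^0\bxi=\bxi+b(\D)(\Lambda(\x)\bxi)$.

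Taking $\bxi=(g^0)^{-1}\e_j$ and writing $\f_j(\x):=\Lambda(\x)(g^0)^{-1}\e_j\in\widetilde{H}^1(\Omega;\C^m)$, $\mathbf{l}_j^0:=(g^0)^{-1}\e_j\in\C^m$, the last relation becomes exactly \eqref{underline-g}, establishing the forward direction. For the converse, assume \eqref{underline-g}: then for each constant $\beeta=\sum\eta_j\e_j$, the vector $\mathbf{v}_{\beeta}:=\sum\eta_j\f_j\in\widetilde{H}^1(\Omega;\C^m)$ satisfies $b(\D)\mathbf{v}_{\beeta}+\bxi_*=g(\x)^{-1}\beeta$ with $\bxi_*:=\sum\eta_j\mathbf{l}_j^0$; multiplying by $g(\x)$ and applying $b(\D)^*$ gives $b(\D)^*g(b(\D)\mathbf{v}_{\beeta}+\bxi_*)=0$, so by uniqueness in \eqref{Lambda problem} (modulo constants), $\mathbf{v}_{\beeta}=\Lambda\bxi_*$ up to a constant vector. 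Consequently $\widetilde{g}(\x)\bxi_*=\beeta$, whence $g^0\bxi_*=\beeta$, so $(g^0)^{-1}\beeta=\bxi_*$ for all $\beeta$; on the other hand, taking means in \eqref{underline-g} yields $\overline{\mathbf{l}_j}=\mathbf{l}_j^0$, so $\bxi_*=\overline{g^{-1}}\beeta=\underline{g}^{-1}\beeta$, giving $(g^0)^{-1}=\underline{g}^{-1}$. The main obstacle is making the two identities in the first step and the orthogonality argument clean; once those are in hand, the equivalence reduces to the well-known equality case of Cauchy--Schwarz.
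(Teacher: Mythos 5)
Your proof is correct, and it is worth noting that the paper itself does not prove this proposition: it is quoted with a reference to \cite[Chapter~3, Propositions~1.6 and~1.7]{BSu}, so there is no in-text argument to match. Your route is essentially the standard duality argument behind that citation: the two identities $\int_\Omega\langle g^{-1}\widetilde g\boldsymbol\xi,\widetilde g\boldsymbol\xi\rangle\,d\mathbf x=\vert\Omega\vert\langle g^0\boldsymbol\xi,\boldsymbol\xi\rangle=\int_\Omega\langle g^{-1}\widetilde g\boldsymbol\xi,g^0\boldsymbol\xi\rangle\,d\mathbf x$ are both valid, the Cauchy--Schwarz step in $L_2(\Omega;g^{-1}d\mathbf x)$ re-derives the lower Voigt--Reuss bound $\underline g\leqslant g^0$, and the equality analysis correctly forces $\widetilde g(\mathbf x)\boldsymbol\xi=g^0\boldsymbol\xi$ a.e.\ for every $\boldsymbol\xi$ (this is exactly the fact, invoked later in Subsection~7.4 via \cite[Remark~3.5]{BSu05}, that $\widetilde g\equiv g^0$ when $g^0=\underline g$), from which the representation of the columns of $g^{-1}$ follows with $\mathbf l_j^0=(g^0)^{-1}\mathbf e_j$ and $\mathbf f_j=\Lambda(g^0)^{-1}\mathbf e_j$. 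The converse, via uniqueness modulo constants of the cell problem, is also sound; that uniqueness rests on $\mathrm{rank}\,b(\boldsymbol\theta)=n$ (so $b(\mathbf D)\mathbf z=0$ for periodic $\mathbf z$ forces $\mathbf z$ constant), which you use implicitly and could state.

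Two small bookkeeping remarks. First, the justification of the first identity is the weak equation $b(\mathbf D)^*(\widetilde g\boldsymbol\xi)=0$ tested against $\Lambda\boldsymbol\xi\in\widetilde H^1(\Omega;\mathbb{C}^n)$, i.e.\ $L_2$-orthogonality of $\widetilde g\boldsymbol\xi$ to $b(\mathbf D)(\Lambda\boldsymbol\xi)$; your parenthesis cites the right equation, but the phrase ``orthogonality to the constants'' actually pertains to the second identity (zero mean of $b(\mathbf D)\Lambda\boldsymbol\xi$). Second, since $\Lambda$ is an $(n\times m)$-matrix-valued function, $\mathbf f_j=\Lambda(g^0)^{-1}\mathbf e_j$ is $\mathbb{C}^n$-valued, so the natural space is $\widetilde H^1(\Omega;\mathbb{C}^n)$, consistent with $b(\mathbf D)$ acting on $\mathbb{C}^n$-valued functions; the $\mathbb{C}^m$ appearing in the displayed statement (and copied into your proof) is a slip of the source, harmless when $m=n$ but otherwise to be read as $\mathbb{C}^n$.
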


\subsection{The effective operator}
\label{Subsection Effective operator}
In order to define the effective operator for $B_\varepsilon$, consider \hbox{a~$\Gamma$-periodic} $(n\times n)$-matrix-valued function $\widetilde{\Lambda}(\mathbf{x})$ which is the (weak) solution of the problem
\begin{equation}
\label{tildeLambda_problem}
b(\mathbf{D})^*g(\mathbf{x})b(\mathbf{D})\widetilde{\Lambda }(\mathbf{x})+\sum \limits _{j=1}^dD_ja_j(\mathbf{x})^*=0,\quad \int _{\Omega }\widetilde{\Lambda }(\mathbf{x})\,d\mathbf{x}=0.
\end{equation}
The following estimates for $\widetilde{\Lambda}$
were proved in \cite[(7.49)--(7.52)]{SuAA}:
\begin{align}
\label{tilde Lambda<=}
&\Vert \widetilde{\Lambda}\Vert _{L_2(\Omega)}\leqslant (2r_0)^{-1}C_an^{1/2}\alpha _0^{-1}\Vert g^{-1}\Vert _{L_\infty},
\\
\label{D tilde Lambda}
&\Vert \mathbf{D}\widetilde{\Lambda}\Vert _{L_2(\Omega)}\leqslant C_a n^{1/2}\alpha _0^{-1}\Vert g^{-1}\Vert _{L_\infty},
\\
\label{b(D) tilde Lambda <=}
&\Vert b(\mathbf{D})\widetilde{\Lambda}\Vert _{L_2(\Omega)}
\leqslant C_a n^{1/2}\alpha_0 ^{-1/2}\Vert g^{-1}\Vert _{L_\infty},
\end{align}
where $C_a^2 :=\sum _{j=1}^d \int _\Omega \vert a_j(\mathbf{x})\vert ^2\,d\mathbf{x}$.
Next, we define constant matrices $V$ and $W$ as follows:
\begin{align}
\label{V}
&V :=\vert \Omega \vert ^{-1}\int _{\Omega}(b(\mathbf{D})\Lambda (\mathbf{x}))^*g(\mathbf{x})(b(\mathbf{D})\widetilde{\Lambda}(\mathbf{x}))\,d\mathbf{x},\\
\label{W}
&W :=\vert \Omega \vert ^{-1}\int _{\Omega} (b(\mathbf{D})\widetilde{\Lambda}(\mathbf{x}))^*g(\mathbf{x})(b(\mathbf{D})\widetilde{\Lambda}(\mathbf{x}))\,d\mathbf{x}.
\end{align}
The effective operator for the operator \eqref{B_eps} is given by
\begin{equation}
\label{B^0}
B^0 =b(\mathbf{D})^*g^0b(\mathbf{D})-b(\mathbf{D})^*V-V^*b(\mathbf{D})
+\sum \limits _{j=1}^d (\overline{a_j+a_j^*})D_j-W+\overline{Q}+\lambda \overline{Q_0}.
\end{equation}
The operator $B^0$ is the elliptic second order operator with constant coefficients with the symbol
\begin{equation}
\label{L(xi)}
L(\boldsymbol{\xi})=b(\boldsymbol{\xi})^*g^0 b(\boldsymbol{\xi})-b(\boldsymbol{\xi})^*V-V^*b(\boldsymbol{\xi})+\sum\limits _{j=1}^d (\overline{a_j+a_j^*})\xi _j+\overline{Q}-W+\lambda \overline{Q_0}.
\end{equation}

\begin{lemma}
The symbol \eqref{L(xi)} of the operator \eqref{B^0} satisfies
\begin{equation}
\label{L(xi)<=}
c_*\vert \boldsymbol{\xi}\vert ^2 \mathbf{1}_n\leqslant
L(\boldsymbol{\xi})\leqslant C_L(\vert \boldsymbol{\xi}\vert ^2+1)\mathbf{1}_n ,\quad\boldsymbol{\xi}\in \mathbb{R}^d,
\end{equation}
where $c_*$ is defined in \eqref{b_eps >=}.
The constant $C_L$ depends only on the initial data  \eqref{problem data}.
\end{lemma}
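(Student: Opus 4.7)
The plan is to prove the two inequalities separately; in both steps the constants are controlled by the ``initial data'' \eqref{problem data} via the uniform cell bounds already recorded in the paper.

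For the upper bound, the plan is straightforward termwise estimation of \eqref{L(xi)}. By \eqref{<b^*b<} and \eqref{|g^0|<=} one has $|b(\boldsymbol{\xi})^*g^0 b(\boldsymbol{\xi})|\leqslant \alpha_1\|g\|_{L_\infty}|\boldsymbol{\xi}|^2$. Cauchy--Schwarz inside the integrals \eqref{V}, \eqref{W}, combined with \eqref{b(D)Lambda<=} and \eqref{b(D) tilde Lambda <=}, yields uniform bounds on $|V|$ and $|W|$. H\"older's inequality on the cell gives $|\overline{a_j}|\leqslant C\|a_j\|_{L_\rho(\Omega)}$ and $|\overline{Q}|\leqslant C\|Q\|_{L_s(\Omega)}$, while trivially $|\overline{Q_0}|\leqslant \|Q_0\|_{L_\infty}$. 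The linear-in-$\boldsymbol{\xi}$ contributions $-b(\boldsymbol{\xi})^*V-V^*b(\boldsymbol{\xi})$ and $\sum_j\xi_j\overline{a_j+a_j^*}$ are then absorbed into $C_L(|\boldsymbol{\xi}|^2+1)\mathbf{1}_n$ by Young's inequality. I expect no real obstacle here.

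For the lower bound, the plan is a cell-level variational characterization of $\langle L(\boldsymbol{\xi})\boldsymbol{\eta},\boldsymbol{\eta}\rangle$. For $\mathbf{w}\in\widetilde{H}^1(\Omega;\mathbb{C}^n)$ with $\int_\Omega \mathbf{w}\,d\mathbf{x}=0$, introduce
\[
\Phi(\boldsymbol{\xi},\boldsymbol{\eta},\mathbf{w}):=|\Omega|^{-1}\!\int_\Omega\!\Bigl\{\langle g(b(\boldsymbol{\xi})\boldsymbol{\eta}+b(\mathbf{D})\mathbf{w}),b(\boldsymbol{\xi})\boldsymbol{\eta}+b(\mathbf{D})\mathbf{w}\rangle +2\mathrm{Re}\sum_{j=1}^d\langle a_j(\xi_j\boldsymbol{\eta}+D_j\mathbf{w}),\boldsymbol{\eta}\rangle\Bigr\}d\mathbf{x}.
\]
The Euler--Lagrange equation in $\mathbf{w}$ is a combination of the weak forms of \eqref{Lambda problem} (tested with $\boldsymbol{\alpha}=b(\boldsymbol{\xi})\boldsymbol{\eta}$) and of \eqref{tildeLambda_problem}, so $\Phi(\boldsymbol{\xi},\boldsymbol{\eta},\cdot)$ is minimized at $\mathbf{w}_\ast:=\Lambda b(\boldsymbol{\xi})\boldsymbol{\eta}+\widetilde{\Lambda}\boldsymbol{\eta}$. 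Substituting $\mathbf{w}=\mathbf{w}_\ast$, then using \eqref{Lambda problem} to annihilate the cross-integrals $\int_\Omega\langle g(b(\mathbf{D})\Lambda+\mathbf{1}_m)b(\boldsymbol{\xi})\boldsymbol{\eta},b(\mathbf{D})\mathbf{w}_\ast\rangle d\mathbf{x}$, the definition \eqref{g^0}, and the weak form of \eqref{tildeLambda_problem} tested against $\Lambda b(\boldsymbol{\xi})\boldsymbol{\eta}$ and $\widetilde{\Lambda}\boldsymbol{\eta}$ (which converts the $a_j$-integrals against $D_j\mathbf{w}_\ast$ into $-\langle b(\boldsymbol{\xi})^*V\boldsymbol{\eta},\boldsymbol{\eta}\rangle$- and $-\langle W\boldsymbol{\eta},\boldsymbol{\eta}\rangle$-type contributions via \eqref{V}, \eqref{W}), one verifies the identification
\[
\Phi(\boldsymbol{\xi},\boldsymbol{\eta},\mathbf{w}_\ast)+\langle(\overline{Q}+\lambda\overline{Q_0})\boldsymbol{\eta},\boldsymbol{\eta}\rangle=\langle L(\boldsymbol{\xi})\boldsymbol{\eta},\boldsymbol{\eta}\rangle.
\]

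It thus suffices to establish the pointwise-in-$\mathbf{w}$ bound $\Phi(\boldsymbol{\xi},\boldsymbol{\eta},\mathbf{w})+\langle(\overline{Q}+\lambda\overline{Q_0})\boldsymbol{\eta},\boldsymbol{\eta}\rangle\geqslant c_*|\boldsymbol{\xi}|^2|\boldsymbol{\eta}|^2$, which is a direct cell-level reprise of the derivation of \eqref{b_eps >=}. The $g$-quadratic part of $\Phi$ expands without a surviving cross-term, since $\overline{b(\mathbf{D})\mathbf{w}}=0$ by periodicity, and Fourier series on the cell combined with \eqref{<b^*b<} shows that this part dominates $4c_*\bigl(|\boldsymbol{\xi}|^2|\boldsymbol{\eta}|^2+|\Omega|^{-1}\|\mathbf{D}\mathbf{w}\|_{L_2(\Omega)}^2\bigr)$. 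The $a_j$-cross terms are controlled by Cauchy--Schwarz and Young's inequality as in \eqref{sum a-j u}--\eqref{2Re sum j <=}; note that Sobolev embedding is not needed here because $\boldsymbol{\eta}$ is constant on the cell, so $\|a_j^*\boldsymbol{\eta}\|_{L_2(\Omega)}\leqslant |\boldsymbol{\eta}|\,\|a_j\|_{L_2(\Omega)}$ directly. The $Q$-contributions are handled by the obvious cell analog of \eqref{(Qu,u)<=}. The very choice of $\lambda$ in \eqref{lambda =} was engineered to guarantee the resulting cancellation, yielding the claim. The main technical obstacle I anticipate is precisely the bookkeeping in the variational identification of the previous paragraph: the matrices $V$ and $W$ in \eqref{L(xi)} do not emerge from the principal quadratic part of $\Phi$, but rather from applying the cell problem \eqref{tildeLambda_problem} to rewrite $a_j$-integrals against $D_j\mathbf{w}_\ast$ as $g$-quadratic expressions in $b(\mathbf{D})\Lambda$ and $b(\mathbf{D})\widetilde{\Lambda}$; once this is done carefully the remaining estimate is essentially the one already carried out for $\mathfrak{b}_\varepsilon$ in Subsection~\ref{Subsection lower order terms}.
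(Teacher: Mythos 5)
Your upper bound coincides with the paper's own argument (termwise estimation of \eqref{L(xi)}, with $|V|$ controlled via \eqref{b(D)Lambda<=}, \eqref{b(D) tilde Lambda <=} and $W\geqslant 0$), so there is nothing to discuss there. For the lower bound the paper gives no proof at all — it cites \cite[(2.30)]{MSu15} — so your attempt at a self-contained argument is a genuinely different route, and its first half is in fact correct: with $\mathbf{w}_*=\Lambda b(\boldsymbol{\xi})\boldsymbol{\eta}+\widetilde{\Lambda}\boldsymbol{\eta}$ the $g$-cross term vanishes by the weak form of \eqref{Lambda problem} tested with $\widetilde{\Lambda}\boldsymbol{\eta}$, the weak form of \eqref{tildeLambda_problem} tested with $\mathbf{w}_*$ turns the $a_j$-integrals into $-2\,\mathrm{Re}\,\langle b(\boldsymbol{\xi})^*V\boldsymbol{\eta},\boldsymbol{\eta}\rangle-2\langle W\boldsymbol{\eta},\boldsymbol{\eta}\rangle$ via \eqref{V}, \eqref{W}, and $|\Omega|^{-1}\int_\Omega\langle g(\mathbf{1}_m+b(\mathbf{D})\Lambda)\mathbf{c},(\mathbf{1}_m+b(\mathbf{D})\Lambda)\mathbf{c}\rangle\,d\mathbf{x}=\langle g^0\mathbf{c},\mathbf{c}\rangle$ by \eqref{Lambda problem} and \eqref{g^0}; this does reproduce $\langle L(\boldsymbol{\xi})\boldsymbol{\eta},\boldsymbol{\eta}\rangle$.

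The gap is in the last step, which is where the whole content of the lower bound sits. The inequality you need, $\Phi(\boldsymbol{\xi},\boldsymbol{\eta},\mathbf{w})+\langle(\overline{Q}+\lambda\overline{Q_0})\boldsymbol{\eta},\boldsymbol{\eta}\rangle\geqslant c_*|\boldsymbol{\xi}|^2|\boldsymbol{\eta}|^2$ for all periodic $\mathbf{w}$, is not ``a direct cell-level reprise of the derivation of \eqref{b_eps >=}''. The inequalities \eqref{sum a-j u}, \eqref{(Qu,u)<=}, and hence the choice \eqref{lambda =} of $\lambda$, are formulated for $H^1(\mathbb{R}^d)$-functions, with the lower-order terms paired against the \emph{full} function $\mathbf{u}$ and with non-explicit constants $C(\nu_0)$, $C_Q(\nu_*)$; in your cell functional the lower-order terms are paired only against the constant $\boldsymbol{\eta}$, and re-running Cauchy--Schwarz and Young on the cell (after first replacing $g$ by $\Vert g^{-1}\Vert_{L_\infty}^{-1}\mathbf{1}_m$ — note the cross term $2\,\mathrm{Re}\int_\Omega\langle g\,b(\boldsymbol{\xi})\boldsymbol{\eta},b(\mathbf{D})\mathbf{w}\rangle\,d\mathbf{x}$ does \emph{not} vanish for variable $g$, only after this replacement) leaves you needing an explicit comparison of the type $C_a^2(3c_*|\Omega|)^{-1}+|\overline{Q}|\leqslant C_Q(\nu_*)+c_2$. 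Nothing stated in the paper guarantees this, so ``the very choice of $\lambda$ was engineered to guarantee the resulting cancellation'' is an assertion, not a proof: $\lambda$ was engineered for $\mathfrak{b}_\varepsilon$, a form with a different structure. The statement you want is true, and the clean way to get it with the same $c_*$ and $\lambda$ is to transfer the inequality \eqref{b_eps >=} itself rather than its derivation: insert $\mathbf{u}_{\varepsilon,R}(\mathbf{x})=\chi_R(\mathbf{x})e^{i\langle\boldsymbol{\xi},\mathbf{x}\rangle}\bigl(\boldsymbol{\eta}+\varepsilon\mathbf{w}(\mathbf{x}/\varepsilon)\bigr)$ into \eqref{b_eps >=} and let $\varepsilon\to0$ and then $R\to\infty$; per unit volume the left-hand side tends exactly to $\Phi(\boldsymbol{\xi},\boldsymbol{\eta},\mathbf{w})+\langle(\overline{Q}+\lambda\overline{Q_0})\boldsymbol{\eta},\boldsymbol{\eta}\rangle$ and the right-hand side to $c_*\bigl(|\boldsymbol{\xi}|^2|\boldsymbol{\eta}|^2+|\Omega|^{-1}\Vert\mathbf{D}\mathbf{w}\Vert^2_{L_2(\Omega)}\bigr)$, which gives the required bound uniformly in $\mathbf{w}$. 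Either supply that limiting argument (or the equivalent Floquet--Bloch one), or do as the paper does and simply invoke \cite[(2.30)]{MSu15}.
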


\begin{proof} The lower estimate \eqref{L(xi)<=} is proved in \cite[(2.30)]{MSu15}. Let us check the upper estimate.
By \eqref{<b^*b<},  \eqref{|g^0|<=}, and \eqref{L(xi)},
\begin{equation}
\label{lemma L(xi) proof 1}
L(\boldsymbol{\xi})
\leqslant \alpha _1 \Vert g\Vert _{L_\infty}\vert \boldsymbol{\xi}\vert ^2 \mathbf{1}_n
+2\vert V\vert \alpha _1^{1/2}\vert \boldsymbol{\xi}\vert \mathbf{1}_n
+2\Bigl(\sum _{j=1}^d \vert \overline{a_j}\vert ^2\Bigr)^{1/2}\vert \boldsymbol{\xi}\vert \mathbf{1}_n
+\left(\vert \overline{Q}\vert +\lambda \vert \overline{Q_0}\vert \right)\mathbf{1}_n.
\end{equation}
We have taken into account that, obviously, the matrix \eqref{W} is nonnegative.
According to \eqref{b(D)Lambda<=}, \eqref{b(D) tilde Lambda <=}, and \eqref{V},
\begin{equation}
\label{lemma L(xi) proof 2}
\vert V\vert \leqslant
\vert\Omega\vert ^{-1}\Vert g\Vert _{L_\infty}\Vert b(\mathbf{D})\Lambda\Vert _{L_2(\Omega)}\Vert b(\mathbf{D})\widetilde{\Lambda}\Vert _{L_2(\Omega)}\leqslant C_V,
\end{equation}
where $C_V :=\vert \Omega \vert ^{-1/2}\alpha _0^{-1/2} C_a m^{1/2} n^{1/2}\Vert g\Vert _{L_\infty}^{3/2}\Vert g^{-1}\Vert ^{3/2}_{L_\infty}$.
Clearly,
\begin{equation}
\label{lemma L(xi) proof 3}
\sum _{j=1}^d\vert \overline{a_j}\vert ^2\leqslant \vert \Omega \vert ^{-1}C_a^2,\quad \vert \overline{Q}\vert \leqslant\vert \Omega \vert ^{-1/s}\Vert Q\Vert _{L_s(\Omega)},\quad \vert \overline{Q_0}\vert \leqslant \Vert Q_0\Vert _{L_\infty}.
\end{equation}
Now relations \eqref{lemma L(xi) proof 1}--\eqref{lemma L(xi) proof 3} imply the upper estimate \eqref{L(xi)<=} with the constant
$C_L :=\max \lbrace \alpha _1 \Vert g\Vert _{L_\infty};\vert \Omega \vert ^{-1/s}\Vert Q\Vert _{L_s(\Omega)}+\lambda \Vert Q_0\Vert _{L_\infty}\rbrace
+\alpha _1^{1/2}C_V+\vert \Omega \vert ^{-1/2}C_a$.
\end{proof}

\begin{corollary}
The quadratic form $\mathfrak{b}^0$ of the operator \eqref{B^0} satisfies
\begin{equation}
\label{b0 estimates from corollary}
c_*\Vert \mathbf{D}\mathbf{u}\Vert ^2_{L_2(\mathbb{R}^d)}\leqslant \mathfrak{b}^0[\mathbf{u},\mathbf{u}]\leqslant C_L\Vert \mathbf{u}\Vert ^2_{H^1(\mathbb{R}^d)},\quad\mathbf{u}\in H^1(\mathbb{R}^d;\mathbb{C}^n).
\end{equation}
\end{corollary}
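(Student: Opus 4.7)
The plan is to reduce both bounds to the already-proven pointwise matrix estimate \eqref{L(xi)<=} on the symbol $L(\boldsymbol{\xi})$ by passing to the Fourier side. Since $B^0$ is a second-order differential operator with constant coefficients whose symbol is exactly $L(\boldsymbol{\xi})$ (given by \eqref{L(xi)}), its associated quadratic form admits the Plancherel representation
\begin{equation*}
\mathfrak{b}^0[\mathbf{u},\mathbf{u}]=\int _{\mathbb{R}^d}\langle L(\boldsymbol{\xi})\widehat{\mathbf{u}}(\boldsymbol{\xi}),\widehat{\mathbf{u}}(\boldsymbol{\xi})\rangle\,d\boldsymbol{\xi},\qquad \mathbf{u}\in H^1(\mathbb{R}^d;\mathbb{C}^n),
\end{equation*}
where $\widehat{\mathbf{u}}$ is the Fourier transform of $\mathbf{u}$. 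This identity is routine: one verifies it first for Schwartz-class $\mathbf{u}$ by pairing $B^0\mathbf{u}$ with $\mathbf{u}$ via Plancherel, and then extends to $H^1(\mathbb{R}^d;\mathbb{C}^n)$ by density, using that each term in \eqref{B^0} defines a continuous form on $H^1$.

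Given this representation, the two-sided pointwise bound \eqref{L(xi)<=} yields
\begin{equation*}
c_*\int _{\mathbb{R}^d}\vert \boldsymbol{\xi}\vert ^2\vert \widehat{\mathbf{u}}(\boldsymbol{\xi})\vert ^2\,d\boldsymbol{\xi}\leqslant \mathfrak{b}^0[\mathbf{u},\mathbf{u}]\leqslant C_L\int _{\mathbb{R}^d}(\vert \boldsymbol{\xi}\vert ^2+1)\vert \widehat{\mathbf{u}}(\boldsymbol{\xi})\vert ^2\,d\boldsymbol{\xi},
\end{equation*}
and Plancherel's identity converts the extreme integrals into $\Vert \mathbf{D}\mathbf{u}\Vert ^2_{L_2(\mathbb{R}^d)}$ on the left and $\Vert \mathbf{D}\mathbf{u}\Vert ^2_{L_2(\mathbb{R}^d)}+\Vert \mathbf{u}\Vert ^2_{L_2(\mathbb{R}^d)}=\Vert \mathbf{u}\Vert ^2_{H^1(\mathbb{R}^d)}$ on the right, which is exactly \eqref{b0 estimates from corollary}.

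There is no real obstacle here; the only thing to be careful about is justifying the Fourier representation of $\mathfrak{b}^0$ on all of $H^1$. The lower-order first-order term $\sum_j\overline{a_j+a_j^*}D_j$ and the zero-order terms $-W+\overline{Q}+\lambda\overline{Q_0}$ have bounded constant-matrix coefficients, so they obviously give continuous Hermitian forms on $H^1$; the principal part $b(\mathbf{D})^*g^0b(\mathbf{D})$ together with the cross terms $b(\mathbf{D})^*V+V^*b(\mathbf{D})$ is continuous on $H^1$ by \eqref{<b^*b<} and \eqref{lemma L(xi) proof 2}. So the form extends continuously from Schwartz functions, and the Fourier identity persists by density.
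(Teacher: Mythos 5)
Your proof is correct and uses exactly the intended argument: the paper gives no explicit proof of this corollary precisely because it follows from the symbol estimate \eqref{L(xi)<=} via the Plancherel representation of the constant-coefficient form, with the same constants $c_*$ and $C_L$. Nothing further is needed.
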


\subsection{The results about approximation of the generalized resolvent}

In this subsection, we formulate the results
proved in \cite[Theorems 5.1 and 5.2]{MSu15}.

\begin{theorem}[\cite{MSu15}]
\label{Th L2 Rd}
Suppose that the assumptions of Subsections~\textnormal{\ref{Subsection Operator A_eps}--\ref{Subsection Effective operator}} are satisfied.
Let $\zeta \in\mathbb{C}\setminus \mathbb{R}_+$, $\zeta =\vert \zeta \vert e^{i\phi}$, $\phi \in (0,2\pi)$, and $\vert \zeta \vert \geqslant 1$.
Denote
\begin{equation}
\label{c(phi)}
c(\phi):=\begin{cases}
\vert \sin \phi \vert ^{-1}, &\phi\in (0,\pi /2)\cup (3\pi /2 ,2\pi),\\
1, &\phi\in [\pi /2,3\pi /2].
\end{cases}
\end{equation}
Then for $0<\varepsilon \leqslant 1$ we have
\begin{equation}
\label{1.44a}
\Vert(B_\varepsilon -\zeta Q_0^\varepsilon )^{-1}-(B^0-\zeta\overline{Q_0})^{-1}\Vert _{L_2(\mathbb{R}^d)\rightarrow L_2(\mathbb{R}^d)}\leqslant C_1 c(\phi)^2 \varepsilon \vert \zeta \vert ^{-1/2}.
\end{equation}
The constant $C_1$ depends only on the initial data \eqref{problem data}.
\end{theorem}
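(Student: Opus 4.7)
The plan is to employ the Steklov smoothing (shift) method of Zhikov--Pastukhova, adapted for the presence of the spectral parameter $\zeta$. I would construct a first-order approximation
\begin{equation*}
\mathbf{u}_\varepsilon^{(1)} := \mathbf{u}_0 + \varepsilon \Lambda^\varepsilon b(\mathbf{D}) S_\varepsilon \mathbf{u}_0 + \varepsilon \widetilde{\Lambda}^\varepsilon S_\varepsilon \mathbf{u}_0,\qquad \mathbf{u}_0 := (B^0 - \zeta\overline{Q_0})^{-1}\mathbf{f},
\end{equation*}
to the resolvent $\mathbf{u}_\varepsilon := (B_\varepsilon - \zeta Q_0^\varepsilon)^{-1}\mathbf{f}$. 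The Steklov smoothing $S_\varepsilon$ from \eqref{S_eps} is essential: by Proposition~\ref{Proposition f^eps S_eps}, it turns multiplication by the rapidly oscillating $L_2$-periodic correctors $\Lambda^\varepsilon$, $\widetilde{\Lambda}^\varepsilon$ into bounded operators on $L_2(\mathbb{R}^d;\mathbb{C}^n)$, a crucial point since the coefficients $g$, $a_j$, $Q$ are only integrable, not smooth.

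Next, I would compute the discrepancy $\mathbf{p}_\varepsilon := (B_\varepsilon - \zeta Q_0^\varepsilon)\mathbf{u}_\varepsilon^{(1)} - \mathbf{f}$, invoking the cell equations \eqref{Lambda problem}, \eqref{tildeLambda_problem} for $\Lambda, \widetilde{\Lambda}$ and the explicit form \eqref{B^0} of $B^0$. After cancellation of the principal oscillating terms, $\mathbf{p}_\varepsilon$ reorganizes into a divergence $\sum_j D_j F_j^\varepsilon$ plus subprincipal contributions, with $F_j^\varepsilon$ controlled in $L_2$ by $C\varepsilon (\|\mathbf{u}_0\|_{H^2(\mathbb{R}^d)}+\|\mathbf{u}_0\|_{H^1(\mathbb{R}^d)})$ via Propositions~\ref{Proposition S__eps - I} and~\ref{Proposition f^eps S_eps}. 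The bound $\|\mathbf{u}_0\|_{H^2(\mathbb{R}^d)}\leq Cc(\phi)\|\mathbf{f}\|_{L_2}$ follows from the symbol estimate \eqref{L(xi)<=} by Fourier analysis. Combining the resulting $H^{-1}$-smallness of $\mathbf{p}_\varepsilon$ with the coercivity \eqref{b_eps >=} of $\mathfrak{b}_\varepsilon$ (shifted to absorb the $\zeta Q_0^\varepsilon$-term, which contributes an additional factor $c(\phi)$ when testing against the difference) yields the $(L_2\to H^1)$-approximation
\begin{equation*}
\|\mathbf{u}_\varepsilon - \mathbf{u}_\varepsilon^{(1)}\|_{H^1(\mathbb{R}^d)} \leq C c(\phi)^2 \varepsilon \|\mathbf{f}\|_{L_2(\mathbb{R}^d)}.
\end{equation*}

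Finally, to obtain the sharper $L_2\to L_2$ estimate \eqref{1.44a} with the gain $|\zeta|^{-1/2}$, I would argue by duality. Set $\mathbf{v}_\varepsilon := (B_\varepsilon - \zeta^* Q_0^\varepsilon)^{-1}\mathbf{h}$ and $\mathbf{v}_0 := (B^0 - \zeta^*\overline{Q_0})^{-1}\mathbf{h}$; using selfadjointness of $B_\varepsilon, B^0$ and hermiticity of $Q_0^\varepsilon, \overline{Q_0}$, we have
\begin{equation*}
\bigl(((B_\varepsilon - \zeta Q_0^\varepsilon)^{-1} - (B^0 - \zeta\overline{Q_0})^{-1})\mathbf{f},\mathbf{h}\bigr)_{L_2} = (\mathbf{f}, \mathbf{v}_\varepsilon - \mathbf{v}_0)_{L_2}.
\end{equation*}
I would then decompose $\mathbf{v}_\varepsilon - \mathbf{v}_0$ into its corrector part $\varepsilon\Lambda^\varepsilon b(\mathbf{D}) S_\varepsilon \mathbf{v}_0 + \varepsilon\widetilde{\Lambda}^\varepsilon S_\varepsilon \mathbf{v}_0$ and a residual, the latter being controlled in $H^1$ by the preceding step applied at the parameter $\zeta^*$. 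The corrector contribution is at most $C\varepsilon\|\mathbf{f}\|_{L_2}\|b(\mathbf{D})\mathbf{v}_0\|_{L_2}$ by Proposition~\ref{Proposition f^eps S_eps}, and the Fourier bound $\|b(\mathbf{D})\mathbf{v}_0\|_{L_2} \leq Cc(\phi)|\zeta|^{-1/2}\|\mathbf{h}\|_{L_2}$ (obtained from \eqref{L(xi)<=} by interpolating the effective resolvent between its $L_2\to L_2$ and $L_2\to H^2$ bounds) produces exactly the factor $c(\phi)^2\varepsilon|\zeta|^{-1/2}$; the residual is handled analogously, with an extra half-derivative transferred onto $\mathbf{u}_\varepsilon$ via a parallel approximation of $(B_\varepsilon-\zeta Q_0^\varepsilon)^{-1}\mathbf{f}$. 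The main obstacle is the delicate bookkeeping: one must balance the powers of $c(\phi)$ (keeping them quadratic) and control the contributions of the unbounded lower-order terms through the form estimates \eqref{sum a-j u} and \eqref{(Qu,u)<=} so that the final constant depends only on the initial data \eqref{problem data} and not on $\varepsilon$ or $\zeta$.
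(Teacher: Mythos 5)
First, a point of orientation: the paper itself does not prove Theorem~\ref{Th L2 Rd}; it is imported from \cite{MSu15}, where it is obtained by the operator-theoretic (spectral) approach going back to Birman and Suslina --- scaling transformation, Floquet--Bloch theory and analytic perturbation theory near the spectral threshold, combined with resolvent identities to track the dependence on $\zeta$. Your plan is instead the Zhikov--Pastukhova shift method with Steklov smoothing, i.e.\ the technique this paper uses for the bounded-domain theorems, now run in $\mathbb{R}^d$. That is a legitimately different route, and its first stage is sound: the first-order approximation built with $S_\varepsilon$, the cell problems \eqref{Lambda problem}, \eqref{tildeLambda_problem}, the bound $\Vert \mathbf{u}_0\Vert _{H^2(\mathbb{R}^d)}\leqslant Cc(\phi)\Vert \mathbf{f}\Vert _{L_2}$ coming from \eqref{L(xi)<=}, and the energy estimate do yield an $(L_2\to H^1)$ error of order $c(\phi)^2\varepsilon$, consistent with Theorem~\ref{Theorem H1 Rd}.

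The genuine gap is in the last step, where the factor $\vert\zeta\vert ^{-1/2}$ in \eqref{1.44a} must be earned. As you structure the duality argument --- split $\mathbf{v}_\varepsilon -\mathbf{v}_0$ into the corrector plus a residual and bound the corrector via $\Vert b(\mathbf{D})\mathbf{v}_0\Vert _{L_2}\leqslant Cc(\phi)\vert\zeta\vert ^{-1/2}\Vert\mathbf{h}\Vert$ --- the residual is controlled only by your $H^1$ estimate, i.e.\ by $Cc(\phi)^2\varepsilon\Vert\mathbf{h}\Vert$, so pairing it with $\mathbf{f}$ gives $Cc(\phi)^2\varepsilon\Vert\mathbf{f}\Vert\,\Vert\mathbf{h}\Vert$ with no gain in $\vert\zeta\vert$; the phrase about ``an extra half-derivative transferred onto $\mathbf{u}_\varepsilon$'' is exactly where the proof has to happen and is not carried out. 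Moreover, the description of the discrepancy as $\sum_j D_jF_j^\varepsilon$ with $\Vert F_j^\varepsilon\Vert _{L_2}\leqslant C\varepsilon\Vert\mathbf{u}_0\Vert _{H^2}$ overlooks the term $\zeta (Q_0^\varepsilon -\overline{Q_0})\mathbf{u}_0$, which is neither a divergence nor small in $L_2$; its natural $H^{-1}$ bound (Lemma~\ref{Lemma Q0-overline Q0}) costs $C\varepsilon\vert\zeta\vert\,\Vert\mathbf{u}_0\Vert _{H^1}\leqslant C\varepsilon c(\phi)\vert\zeta\vert ^{1/2}\Vert\mathbf{f}\Vert$, and composing with the $(H^{-1}\to L_2)$ resolvent bound again gives only $c(\phi)^2\varepsilon$. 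To close the argument one must evaluate the residual by testing the discrepancy identity against the (adjoint) solution and exploit both scales simultaneously --- $\Vert\cdot\Vert _{L_2}\leqslant Cc(\phi)\vert\zeta\vert ^{-1}$ and $\Vert\mathbf{D}\cdot\Vert _{L_2}\leqslant Cc(\phi)\vert\zeta\vert ^{-1/2}$ for $\mathbf{u}_\varepsilon$ and $\mathbf{v}_0$ --- together with a bilinear (two-sided) form of the $O(\varepsilon)$ estimate for $[Q_0^\varepsilon -\overline{Q_0}]$ and analogous two-factor estimates for the contributions of $a_j^\varepsilon$ and $Q^\varepsilon$, so that every discrepancy term picks up at least $\vert\zeta\vert ^{-1/2}$ from one of its two factors. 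Until that bookkeeping is actually performed, the stated rate $C_1c(\phi)^2\varepsilon\vert\zeta\vert ^{-1/2}$ is not established by your sketch.
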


Next, we introduce a corrector
\begin{equation}
\label{K(eps;zeta)}
K(\varepsilon ;\zeta):=\left([\Lambda ^\varepsilon] b(\mathbf{D})+[\widetilde{\Lambda}^\varepsilon] \right)S _\varepsilon (B^0-\zeta \overline{Q_0})^{-1}.
\end{equation}
The corrector \eqref{K(eps;zeta)} is a bounded operator acting from $L_2(\mathbb{R}^d;\mathbb{C}^n)$ to $H^1(\mathbb{R}^d;\mathbb{C}^n)$.
This can be easily checked by using Proposition~\ref{Proposition f^eps S_eps} and relations $\Lambda ,\widetilde{\Lambda}\in \widetilde{H}^1(\Omega)$. Note that $\Vert \varepsilon K(\varepsilon ;\zeta)\Vert _{L_2(\mathbb{R}^d)\rightarrow H^1(\mathbb{R}^d)}=O(1)$ for small $\varepsilon$ and fixed  $\zeta$.

\begin{theorem}[\cite{MSu15}]
\label{Theorem H1 Rd}
Suppose that the assumptions of Theorem~\textnormal{\ref{Th L2 Rd}} are satisfied. Let $K(\varepsilon ;\zeta)$ be the operator~\eqref{K(eps;zeta)}. Then for $0<\varepsilon \leqslant 1$, $\zeta \in\mathbb{C}\setminus \mathbb{R}_+$, and $\vert \zeta \vert \geqslant 1$ we have
\begin{align*}
&\Vert (B_\varepsilon -\zeta Q_0^\varepsilon )^{-1}-(B^0-\zeta \overline{Q_0})^{-1}-\varepsilon K(\varepsilon ;\zeta )\Vert _{L_2(\mathbb{R}^d)\rightarrow L_2(\mathbb{R}^d)}
\leqslant C_2 c(\phi )^2\varepsilon \vert \zeta \vert ^{-1/2},
\\
&\Vert \mathbf{D}\left(
(B_\varepsilon -\zeta Q_0^\varepsilon )^{-1}-(B^0-\zeta \overline{Q_0})^{-1}-\varepsilon K(\varepsilon ;\zeta )
\right)
\Vert _{L_2(\mathbb{R}^d)\rightarrow L_2(\mathbb{R}^d)}
\leqslant C_3 c(\phi )^2\varepsilon .
\end{align*}
The constants $C_2$ and $C_3$ are controlled explicitly in terms of the initial data~\eqref{problem data}.
\end{theorem}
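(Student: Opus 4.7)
The plan is the first-order-approximation and energy-estimate scheme, adapted to the generalized resolvent with careful tracking of the spectral parameter. For $\mathbf{F}\in L_2(\mathbb{R}^d;\mathbb{C}^n)$, set $\mathbf{u}_0:=(B^0-\zeta\overline{Q_0})^{-1}\mathbf{F}$, $\mathbf{u}_\varepsilon:=(B_\varepsilon-\zeta Q_0^\varepsilon)^{-1}\mathbf{F}$, and take as first approximation
\[
\mathbf{w}_\varepsilon := \mathbf{u}_0+\varepsilon\,\Lambda^\varepsilon S_\varepsilon b(\mathbf{D})\mathbf{u}_0+\varepsilon\,\widetilde{\Lambda}^\varepsilon S_\varepsilon\mathbf{u}_0,
\]
so that $\mathbf{r}_\varepsilon:=\mathbf{u}_\varepsilon-\mathbf{w}_\varepsilon$ is exactly the error to be estimated in $H^1(\mathbb{R}^d;\mathbb{C}^n)$. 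The preliminary input is the sectorial a~priori bound $|\zeta|\,\Vert\mathbf{u}_0\Vert_{L_2}+|\zeta|^{1/2}\Vert\mathbf{D}\mathbf{u}_0\Vert_{L_2}+\Vert\mathbf{D}^2\mathbf{u}_0\Vert_{L_2}\le C\,c(\phi)\,\Vert\mathbf{F}\Vert_{L_2}$, which follows from the symbol bound \eqref{L(xi)<=} by Plancherel.

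The core computation is to evaluate the discrepancy $\boldsymbol{\Psi}_\varepsilon:=(B_\varepsilon-\zeta Q_0^\varepsilon)\mathbf{w}_\varepsilon-\mathbf{F}$. Expanding the Leibniz derivatives of $\Lambda^\varepsilon(\cdot)$ and $\widetilde{\Lambda}^\varepsilon(\cdot)$, invoking the cell-problem equations \eqref{Lambda problem} and \eqref{tildeLambda_problem}, and using the definitions \eqref{g^0}, \eqref{V}, \eqref{W}, \eqref{B^0} of the effective coefficients, the leading-order terms reassemble into $(B^0-\zeta\overline{Q_0})\mathbf{u}_0=\mathbf{F}$ and cancel. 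The residue splits into smoothing remainders of the form $h^\varepsilon(S_\varepsilon-I)\mathbf{D}\mathbf{u}_0$ (controlled by $\varepsilon\Vert\mathbf{D}^2\mathbf{u}_0\Vert_{L_2}$ through Propositions~\ref{Proposition S__eps - I} and \ref{Proposition f^eps S_eps}) and genuine $\varepsilon$-divergences $\varepsilon\,b(\mathbf{D})^*\bigl(\widetilde h^\varepsilon S_\varepsilon b(\mathbf{D})\mathbf{u}_0\bigr)$ with mean-zero periodic $\widetilde h\in L_2(\Omega)$. The most delicate terms come from $a_j^\varepsilon$ and $Q^\varepsilon$ acting on the corrector layers $\varepsilon\Lambda^\varepsilon S_\varepsilon b(\mathbf{D})\mathbf{u}_0$ and $\varepsilon\widetilde{\Lambda}^\varepsilon S_\varepsilon\mathbf{u}_0$: these are cast as products of $\Gamma$-periodic $L_\rho(\Omega)$- or $L_s(\Omega)$-factors with Steklov-smoothed Sobolev-regular arguments and are bounded in $L_2$ (or $H^{-1}$) via Proposition~\ref{Proposition f^eps S_eps} combined with the Hölder/Sobolev inequalities already used to prove \eqref{sum a-j u} and \eqref{(Qu,u)<=}. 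The outcome is a decomposition $\boldsymbol{\Psi}_\varepsilon=\boldsymbol{\Psi}^{(0)}_\varepsilon+b(\mathbf{D})^*\boldsymbol{\Phi}_\varepsilon$ with $\Vert\boldsymbol{\Psi}^{(0)}_\varepsilon\Vert_{L_2}+\Vert\boldsymbol{\Phi}_\varepsilon\Vert_{L_2}\le C\,c(\phi)\,\varepsilon\,\Vert\mathbf{F}\Vert_{L_2}$.

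Pairing $(B_\varepsilon-\zeta Q_0^\varepsilon)\mathbf{r}_\varepsilon=-\boldsymbol{\Psi}_\varepsilon$ with $\mathbf{r}_\varepsilon$ and combining real and imaginary parts with the coercivity \eqref{b_eps >=} and the positivity of $Q_0$ yields the sectorial inequality
\[
c_*\Vert\mathbf{D}\mathbf{r}_\varepsilon\Vert^2_{L_2}+c(\phi)^{-1}|\zeta|\,\Vert Q_0^{-1}\Vert^{-1}_{L_\infty}\Vert\mathbf{r}_\varepsilon\Vert^2_{L_2}\;\le\; C\bigl(\Vert\boldsymbol{\Psi}^{(0)}_\varepsilon\Vert_{L_2}\Vert\mathbf{r}_\varepsilon\Vert_{L_2}+\Vert\boldsymbol{\Phi}_\varepsilon\Vert_{L_2}\Vert\mathbf{D}\mathbf{r}_\varepsilon\Vert_{L_2}\bigr).
\]
Elementary quadratic analysis of this inequality (using $|\zeta|\ge 1$) produces $\Vert\mathbf{D}\mathbf{r}_\varepsilon\Vert_{L_2}\le C_3\,c(\phi)^2\,\varepsilon\,\Vert\mathbf{F}\Vert_{L_2}$ directly, while the $|\zeta|$-weighted term produces $\Vert\mathbf{r}_\varepsilon\Vert_{L_2}\le C_2\,c(\phi)^2\,\varepsilon\,|\zeta|^{-1/2}\,\Vert\mathbf{F}\Vert_{L_2}$. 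The second power of $c(\phi)$ reflects that one factor is spent on the a priori bound for $\mathbf{u}_0$ (entering $\boldsymbol{\Psi}_\varepsilon$) and the other on the sectorial inversion of $B_\varepsilon-\zeta Q_0^\varepsilon$.

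The main obstacle is the discrepancy bookkeeping in the presence of the unbounded lower-order coefficients $a_j$ and $Q$. Every residual term must be cast either as an $L_2$-function, to pair with $\mathbf{r}_\varepsilon$, or as a $b(\mathbf{D})^*$-divergence, to pair with $b(\mathbf{D})\mathbf{r}_\varepsilon$ (equivalently with $\mathbf{D}\mathbf{r}_\varepsilon$, by the maximal-rank hypothesis \eqref{<b^*b<}), with its periodic factor controlled in the sharp $L_\rho$-, $L_s$-, or $L_2$-norm and its smoothed factor carrying the correct Sobolev norm of $\mathbf{u}_0$. Only when the periodic factors are separated by Proposition~\ref{Proposition f^eps S_eps} and the smoothed factors are estimated through the $|\zeta|$-dependent a~priori bound for $\mathbf{u}_0$ does the chain of inequalities close without losing powers of $c(\phi)$, $\varepsilon$, or $|\zeta|$.
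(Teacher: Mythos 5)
First, a point of orientation: Theorem \ref{Theorem H1 Rd} is not proved in this paper at all — it is imported verbatim from \cite{MSu15} (see the sentence opening the subsection containing Theorem \ref{Th L2 Rd}), and the proof there rests on the operator-theoretic (spectral) method: scaling transformation, Floquet--Bloch decomposition, and analytic perturbation theory near the bottom of the spectrum. Your route is instead the Zhikov--Pastukhova ``shift method'' (first-order approximation plus energy estimate), which is a legitimate alternative strategy; so you are not reconstructing the source's argument but proposing a genuinely different one.

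Within that strategy, however, there is a genuine gap at the central step. After substituting $\mathbf{w}_\varepsilon$ and invoking the cell equations \eqref{Lambda problem} and \eqref{tildeLambda_problem}, the dominant part of the discrepancy is the flux mismatch $b(\mathbf{D})^*\bigl[\widetilde{g}^{\,\varepsilon}S_\varepsilon b(\mathbf{D})\mathbf{u}_0-g^0\,b(\mathbf{D})\mathbf{u}_0\bigr]$, together with its analogues built from $V$, $W$, and $\overline{a_j+a_j^*}$ for the lower-order terms. These terms carry \emph{no} explicit factor of $\varepsilon$; you list them as ``genuine $\varepsilon$-divergences $\varepsilon\,b(\mathbf{D})^*(\widetilde h^{\,\varepsilon}S_\varepsilon b(\mathbf{D})\mathbf{u}_0)$'', but the factor $\varepsilon$ is precisely what must be extracted. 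Mean-zero alone does not suffice: one needs that the columns of $\widetilde g-g^0$ are simultaneously mean-zero and $b(\mathbf{D})^*$-divergence-free (the latter by \eqref{Lambda problem}), hence representable through a periodic matrix potential whose application produces the gain of $\varepsilon$; this is the key lemma in every shift-method proof (cf.\ \cite{ZhPas}, \cite{BSu06}), and without it your decomposition $\boldsymbol{\Psi}_\varepsilon=\boldsymbol{\Psi}^{(0)}_\varepsilon+b(\mathbf{D})^*\boldsymbol{\Phi}_\varepsilon$ with both pieces of order $\varepsilon$ is unjustified. The corresponding representations for the lower-order discrepancies, where the periodic factors lie only in $L_\rho(\Omega)$ or $L_s(\Omega)$, are an additional nontrivial point. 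The remaining ingredients — the sectorial a priori bounds for $\mathbf{u}_0$, the separation of periodic factors via Proposition \ref{Proposition f^eps S_eps}, and the real/imaginary-part energy argument yielding the powers $c(\phi)^2$ and $|\zeta|^{-1/2}$ — are sound in outline and consistent with how analogous estimates are closed elsewhere in this paper (e.g., in the proof of Theorem \ref{Theorem with Diriclet corrector}).
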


We also need estimates in the case where $\zeta\in\mathbb{C}\setminus \mathbb{R}_+$ and $|\zeta|< 1$. The following result is a particular case
of Theorem~9.1 from~\cite{MSu15}.

\begin{theorem}[\cite{MSu15}]
\label{Theorem Dr appr Rd}
Suppose that the assumptions of Subsections~\textnormal{\ref{Subsection Operator A_eps}--\ref{Subsection Effective operator}} are satisfied.
 Let  $\zeta= \vert \zeta \vert e^{i\phi} \in\mathbb{C}\setminus \mathbb{R}_+$, $|\zeta|<1$, and
$0<\varepsilon\leqslant 1$. Let $K(\varepsilon;\zeta)$ be the operator \eqref{K(eps;zeta)}. Then
\begin{align}
\label{Dr appr in Rd}
\Vert & (B_\varepsilon -\zeta Q_0^\varepsilon )^{-1}-(B^0-\zeta \overline{Q_0})^{-1}\Vert _{L_2(\mathbb{R}^d)\rightarrow L_2(\mathbb{R}^d)}
\leqslant \widehat{C}_1  c(\phi)^2 \varepsilon \vert \zeta  \vert ^{-2},
\\
\nonumber
\Vert &(B_\varepsilon -\zeta Q_0^\varepsilon )^{-1}-(B^0-\zeta \overline{Q_0})^{-1}-\varepsilon K(\varepsilon ;\zeta)\Vert _{L_2(\mathbb{R}^d)\rightarrow H^1(\mathbb{R}^d)}
\leqslant \widehat{C}_2   c(\phi)^2 \varepsilon \vert \zeta  \vert ^{-2}.
\end{align}
The constants $\widehat{C}_1$ and $\widehat{C}_2$ depend only on the initial data \eqref{problem data}.
\end{theorem}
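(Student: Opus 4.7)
The plan is to deduce the estimates for $|\zeta|<1$, $\zeta\in\mathbb{C}\setminus\mathbb{R}_+$, from Theorems~\ref{Th L2 Rd} and~\ref{Theorem H1 Rd} applied at the reference point $\zeta_0=-1$ (where $c(\pi)=1$ and both theorems give $O(\varepsilon)$ bounds), transporting the estimate to a general $\zeta$ by a resolvent-identity argument. Writing $R_\varepsilon(\zeta):=(B_\varepsilon-\zeta Q_0^\varepsilon)^{-1}$ and $R^0(\zeta):=(B^0-\zeta\overline{Q_0})^{-1}$, I would first record the pointwise a priori bounds
\begin{equation*}
\|R_\varepsilon(\zeta)\|_{L_2\to L_2},\ \|R^0(\zeta)\|_{L_2\to L_2}\le C_Q\,c(\phi)\,|\zeta|^{-1},
\end{equation*}
which follow by conjugation with $(Q_0^\varepsilon)^{\pm1/2}$ to reduce to the resolvent of a non-negative selfadjoint operator, for which $\|(T-\zeta)^{-1}\|\le \mathrm{dist}(\zeta,\mathbb{R}_+)^{-1}\le c(\phi)|\zeta|^{-1}$. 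Combined with the form inequality~\eqref{b_eps >=} (and the analogous bound from~\eqref{b0 estimates from corollary}) and the identity $B_\varepsilon R_\varepsilon(\zeta)=I+\zeta Q_0^\varepsilon R_\varepsilon(\zeta)$, these upgrade to $L_2\to H^1$ bounds of the same order $c(\phi)|\zeta|^{-1}$ in the regime $|\zeta|<1$.

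The transport identity comes from writing $(B^0-\zeta\overline{Q_0})-(B_\varepsilon-\zeta Q_0^\varepsilon)=\Delta_0-(\zeta+1)(\overline{Q_0}-Q_0^\varepsilon)$ with $\Delta_0:=(B^0+\overline{Q_0})-(B_\varepsilon+Q_0^\varepsilon)$, and using the resolvent identities $R_\varepsilon(\zeta)=[I+(\zeta+1)R_\varepsilon(\zeta)Q_0^\varepsilon]R_\varepsilon(-1)$, $R^0(\zeta)=R^0(-1)[I+(\zeta+1)\overline{Q_0}R^0(\zeta)]$, which yield
\begin{multline*}
R_\varepsilon(\zeta)-R^0(\zeta)=[I+(\zeta+1)R_\varepsilon(\zeta)Q_0^\varepsilon]\,[R_\varepsilon(-1)-R^0(-1)]\,[I+(\zeta+1)\overline{Q_0}R^0(\zeta)]\\-(\zeta+1)\,R_\varepsilon(\zeta)(\overline{Q_0}-Q_0^\varepsilon)R^0(\zeta).
\end{multline*}
Since $|\zeta+1|\le 2$ for $|\zeta|<1$, the first summand is controlled by $C\,c(\phi)^2|\zeta|^{-2}\varepsilon$ using Theorem~\ref{Th L2 Rd} at $\zeta_0=-1$ and the pointwise bounds above.

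The hard part is the second summand, which at first sight lacks an $\varepsilon$ factor. The remedy is to exploit that $Q_0-\overline{Q_0}$ is $\Gamma$-periodic with zero mean: solving a Poisson problem on the torus gives bounded $\Gamma$-periodic $(n\times n)$-matrix-valued functions $\boldsymbol{\Psi}_1,\dots,\boldsymbol{\Psi}_d$ with $\sum_j\partial_{x_j}\boldsymbol{\Psi}_j=Q_0-\overline{Q_0}$. Consequently $Q_0^\varepsilon-\overline{Q_0}=\varepsilon\sum_j\partial_{x_j}\boldsymbol{\Psi}_j^\varepsilon$ in the distributional sense, and integration by parts gives
\begin{equation*}
|\langle(\overline{Q_0}-Q_0^\varepsilon)\mathbf{u},\mathbf{v}\rangle_{L_2}|\le C\varepsilon\bigl(\|\mathbf{u}\|_{H^1}\|\mathbf{v}\|_{L_2}+\|\mathbf{u}\|_{L_2}\|\mathbf{v}\|_{H^1}\bigr).
\end{equation*}
Applying this with $\mathbf{u}=R^0(\zeta)\mathbf{f}$ and $\mathbf{v}=R_\varepsilon(\overline{\zeta})\mathbf{g}$ (using $R_\varepsilon(\zeta)^*=R_\varepsilon(\overline{\zeta})$) and inserting the $L_2\to H^1$ bounds from the first step yields the missing $\varepsilon c(\phi)^2|\zeta|^{-2}$ bound on the second summand, completing the $L_2\to L_2$ estimate.

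For the $L_2\to H^1$ estimate with corrector, I would start from the second assertion of Theorem~\ref{Theorem H1 Rd} at $\zeta_0=-1$ and transport it via $R^0(\zeta)-R^0(-1)=(\zeta+1)R^0(\zeta)\overline{Q_0}R^0(-1)$, which translates into
\begin{equation*}
K(\varepsilon;\zeta)-K(\varepsilon;-1)=(\zeta+1)\bigl([\Lambda^\varepsilon]b(\mathbf{D})+[\widetilde{\Lambda}^\varepsilon]\bigr)S_\varepsilon R^0(\zeta)\overline{Q_0}R^0(-1).
\end{equation*}
Bounds on the prefactor $([\Lambda^\varepsilon]b(\mathbf{D})+[\widetilde{\Lambda}^\varepsilon])S_\varepsilon$ follow from Proposition~\ref{Proposition f^eps S_eps} and~\eqref{S_eps <= 1}; the remaining corrections mirror the $L_2\to L_2$ argument, now measured in $H^1$. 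The main bookkeeping obstacle will be to verify that every term in the resulting decomposition gains an $\varepsilon$ factor --- either from the $\zeta_0=-1$ approximation (Theorem~\ref{Theorem H1 Rd}) or from the $\boldsymbol{\Psi}^\varepsilon$ integration by parts --- while losing at most two factors $c(\phi)|\zeta|^{-1}$ from the outer generalized resolvents.
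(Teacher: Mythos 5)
The paper does not prove this theorem at all (it is imported as a particular case of Theorem~9.1 of \cite{MSu15}), and your proposal is sound and uses exactly the mechanism the authors themselves employ in Section~\ref{Section Another approximation} for the bounded-domain analogue: a priori generalized-resolvent bounds of order $c(\phi)\vert\zeta\vert^{-1}$ valid for $\vert\zeta\vert<1$, the resolvent identity anchored at $\zeta=-1$ (cf.\ \eqref{tozd gen res}, \eqref{A sec. 9}), and the gain of a factor $\varepsilon$ from the zero-mean term $Q_0^{\varepsilon}-\overline{Q_0}$ viewed as an $H^{1}\to H^{-1}$ operator, which is precisely Lemma~\ref{Lemma Q0-overline Q0} (your divergence/integration-by-parts construction is one standard proof of it). The only details your sketch leaves implicit for the $(L_2\to H^1)$ estimate with corrector --- the bounds $\Vert (B_\varepsilon-\zeta Q_0^\varepsilon)^{-1}\Vert_{H^{-1}(\mathbb{R}^d)\to H^{1}(\mathbb{R}^d)}\leqslant C c(\phi)\vert\zeta\vert^{-1}$ and $\Vert (B^{0}-\zeta\overline{Q_0})^{-1}\Vert_{L_2(\mathbb{R}^d)\to H^{2}(\mathbb{R}^d)}\leqslant C c(\phi)\vert\zeta\vert^{-1}$ needed for the term $(1+\zeta)(B_\varepsilon-\zeta Q_0^\varepsilon)^{-1}(Q_0^\varepsilon-\overline{Q_0})(B^{0}-\zeta\overline{Q_0})^{-1}$ and for the corrector difference --- follow from the same coercivity/duality arguments and the symbol estimate \eqref{L(xi)<=} (cf.\ \eqref{dvoistvennost}, \eqref{****.5}), so they are routine bookkeeping rather than a gap.
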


\section{Statement of the problem. Main results}
\label{Chapter 2 Dirichlet}

\subsection{Statement of the problem}

Let $\mathcal{O}\subset \mathbb{R}^d$ be a bounded domain of class $C^{1,1}$. In $L_2(\mathcal{O};\mathbb{C}^n)$, we consider
the operator $B_{D,\varepsilon}$, $0<\varepsilon\leqslant 1$, formally given by the differential expression \eqref{B_eps}
with the Dirichlet boundary condition. The precise definition of the operator $B_{D,\varepsilon}$ is given in terms of the quadratic form
\begin{equation}
\label{b_D,eps}
\begin{split}
\mathfrak{b}_{D,\varepsilon }[\mathbf{u},\mathbf{u}]
&=(g^\varepsilon b(\mathbf{D})\mathbf{u},b(\mathbf{D})\mathbf{u})_{L_2(\mathcal{O})}+2\mathrm{Re}\,\sum _{j=1}^d ( D_j \mathbf{u},(a_j^\varepsilon)^*\mathbf{u})_{L_2(\mathcal{O})}
\\
&+(Q^\varepsilon \mathbf{u},\mathbf{u})_{L_2(\mathcal{O})}
+\lambda (Q_0^\varepsilon \mathbf{u},\mathbf{u})_{L_2(\mathcal{O})},\quad \mathbf{u}\in H^1_0(\mathcal{O};\mathbb{C}^n).
\end{split}
\end{equation}
We extend $\mathbf{u}\in H^1_0(\mathcal{O};\mathbb{C}^n)$ by zero to $\mathbb{R}^d\setminus \mathcal{O}$.
Then $\mathbf{u}\in H^1(\mathbb{R}^d;\mathbb{C}^n)$. By  \eqref{b_eps >=} and \eqref{b_eps <=},
\begin{equation}
\label{b_D,eps ots}
c_*\Vert \mathbf{D}\mathbf{u}\Vert ^2_{L_2(\mathcal{O})}\leqslant \mathfrak{b}_{D,\varepsilon}[\mathbf{u},\mathbf{u}]
\leqslant c_3 \Vert \mathbf{u}\Vert ^2_{H^1(\mathcal{O})},\quad \mathbf{u}\in H^1 _0(\mathcal{O};\mathbb{C}^n).
\end{equation}
Combining this with the Friedrichs inequality, we deduce
\begin{equation}
\label{b D,eps >= H1-norm}
\mathfrak{b}_{D,\varepsilon}[\mathbf{u},\mathbf{u}]\geqslant c_*(\mathrm{diam}\,\mathcal{O})^{-2}\Vert \mathbf{u}\Vert ^2_{L_2(\mathcal{O})},\quad \mathbf{u}\in H^1_0(\mathcal{O};\mathbb{C}^n).
\end{equation}
Thus, the form $\mathfrak{b}_{D,\varepsilon}$ is closed and positive definite.
It generates a selfadjoint operator in $L_2(\mathcal{O};\mathbb{C}^n)$, which is denoted by $B_{D,\varepsilon}$.
By \eqref{b_D,eps ots} and \eqref{b D,eps >= H1-norm},
\begin{equation}
\label{H^1-norm <= BDeps^1/2}
\Vert \mathbf{u}\Vert _{H^1(\mathcal{O})}\leqslant c_4 \Vert B_{D,\varepsilon}^{1/2}\mathbf{u}\Vert _{L_2(\mathcal{O})},\quad \mathbf{u}\in H^1_0(\mathcal{O};\mathbb{C}^n);\quad
c_4 :=c_*^{-1/2}\left(1+(\mathrm{diam}\,\mathcal{O})^{2}\right)^{1/2}.
\end{equation}

We also need to introduce an auxiliary operator $\widetilde{B}_{D,\varepsilon}$.
 We factorize the matrix $Q_0(\mathbf{x})^{-1}$: there exists a $\Gamma$-periodic matrix-valued function $f(\mathbf{x})$ such that
$f, f^{-1} \in L_\infty({\mathbb R}^d)$ and
\begin{equation}
\label{Q_0=}
Q_0(\mathbf{x})^{-1}=f(\mathbf{x})f(\mathbf{x})^*.
\end{equation}
(For instance, one can take $f(\mathbf{x}) = Q_0(\mathbf{x})^{-1/2}$.)
Let $\widetilde{B}_{D,\varepsilon}$ be the selfadjoint operator in $L_2(\mathcal{O};\mathbb{C}^n)$ generated by the quadratic form
\begin{equation}
\label{tilde b D,eps=}
\widetilde{\mathfrak{b}}_{D,\varepsilon }[\mathbf{u},\mathbf{u}]:=\mathfrak{b}_{D,\varepsilon}[f^\varepsilon \mathbf{u},f^\varepsilon \mathbf{u}],
\quad \mathrm{Dom}\,\widetilde{\mathfrak{b}}_{D,\varepsilon}=\lbrace \mathbf{u}\in L_2(\mathcal{O};\mathbb{C}^n) : f^\varepsilon \mathbf{u}\in H^1_0(\mathcal{O};\mathbb{C}^n)\rbrace .
\end{equation}
Formally, $\widetilde{B}_{D,\varepsilon}=(f^\varepsilon )^*B_{D,\varepsilon}f^\varepsilon $.  Note that
\begin{align}
\label{B deps and tilde B D,eps tozd resolvent}
(B_{D,\varepsilon}-\zeta Q_0^\varepsilon)^{-1}=f^\varepsilon (\widetilde{B}_{D,\varepsilon}-\zeta I)^{-1}(f^\varepsilon)^* .
\end{align}

{\it Our goal} is to approximate the generalized resolvent $(B_{D,\varepsilon}-\zeta Q_0^\varepsilon )^{-1}$ and to prove two-parametric error estimates  (with respect to $\varepsilon$ and $\zeta$). We assume that $\zeta \in \mathbb{C}\setminus\mathbb{R}_+$.
In other words, we are interested in the behavior of the generalized solution $\mathbf{u}_\varepsilon \in H^1_0(\mathcal{O};\mathbb{C}^n)$ of the problem
\begin{equation}
\label{Dirichlet problem}
B_\eps \mathbf{u}_\varepsilon - \zeta Q_0^\varepsilon  \mathbf{u}_\varepsilon = \mathbf{F} \ \textrm{in}\  {\mathcal O};
\quad \mathbf{u}_\varepsilon \vert _{\partial \mathcal{O}}=0,
\end{equation}
where $\mathbf{F}\in L_2(\mathcal{O};\mathbb{C}^n)$, for small $\varepsilon$. We have
$\mathbf{u}_\varepsilon =(B_{D,\varepsilon }-\zeta Q_0^\varepsilon )^{-1}\mathbf{F}$.

\begin{lemma}
\label{Lemma estimates u_D,eps}
Let $\zeta \in \mathbb{C}\setminus \mathbb{R}_+$. Suppose that $\mathbf{u}_\varepsilon$ is the generalized solution of  problem \eqref{Dirichlet problem}. Then for $0<\varepsilon \leqslant 1$ we have
\begin{align}
\label{lemma u_eps 1}
&\Vert \mathbf{u}_\varepsilon \Vert _{L_2(\mathcal{O})}\leqslant c(\phi)\vert \zeta \vert ^{-1}\Vert Q_0^{-1}\Vert _{L_\infty}\Vert \mathbf{F}\Vert _{L_2(\mathcal{O})},\\
\label{lemma u_eps 2}
&\Vert \mathbf{D}\mathbf{u}_\varepsilon \Vert _{L_2(\mathcal{O})}\leqslant \mathcal{C}_1 c(\phi)\vert \zeta \vert ^{-1/2}\Vert \mathbf{F}\Vert _{L_2(\mathcal{O})}.
\end{align}
In operator terms,
\begin{align}
\label{2.10a}
&\Vert (B_{D,\varepsilon}-\zeta Q_0^\varepsilon )^{-1} \Vert _{L_2(\mathcal{O})\rightarrow L_2(\mathcal{O})}\leqslant c(\phi)\vert \zeta \vert ^{-1}\Vert Q_0^{-1}\Vert _{L_\infty},\\
\label{2.10b}
&\Vert \mathbf{D}(B_{D,\varepsilon}-\zeta Q_0^\varepsilon )^{-1} \Vert _{L_2(\mathcal{O})\rightarrow L_2(\mathcal{O})}\leqslant \mathcal{C}_1 c(\phi)\vert \zeta \vert ^{-1/2}.
\end{align}
Here $c(\phi)$ is given by \eqref{c(phi)} and
$ \mathcal{C}_1 := 2\alpha _0 ^{-1/2}\Vert g^{-1}\Vert _{L_\infty}^{1/2}
\Vert Q_0^{-1}\Vert ^{1/2}_{L_\infty}\left(1+\Vert Q_0\Vert _{L_\infty}\Vert Q_0^{-1}\Vert _{L_\infty}\right)^{1/2}$.
\end{lemma}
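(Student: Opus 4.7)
The plan is two-step. First I would establish the $L_2$-bound \eqref{2.10a} via the auxiliary selfadjoint operator $\wt B_{D,\varepsilon}$; second, I would deduce the gradient bound \eqref{2.10b} by testing the variational identity for $\mathbf{u}_\varepsilon$ against itself.

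For the first step I would exploit the identity \eqref{B deps and tilde B D,eps tozd resolvent},
\begin{equation*}
(B_{D,\varepsilon}-\zeta Q_0^\varepsilon)^{-1}=f^\varepsilon(\wt B_{D,\varepsilon}-\zeta I)^{-1}(f^\varepsilon)^*,
\end{equation*}
which reduces the task to estimating the resolvent of the selfadjoint operator $\wt B_{D,\varepsilon}$. By \eqref{tilde b D,eps=} combined with the positivity \eqref{b D,eps >= H1-norm}, the quadratic form of $\wt B_{D,\varepsilon}$ is positive, hence $\sigma(\wt B_{D,\varepsilon})\subset\mathbb{R}_+$. The spectral theorem then gives
\begin{equation*}
\|(\wt B_{D,\varepsilon}-\zeta I)^{-1}\|_{L_2(\mathcal{O})\to L_2(\mathcal{O})}\leq \mathrm{dist}(\zeta,\mathbb{R}_+)^{-1}.
\end{equation*}
A short planar-geometry check yields $\mathrm{dist}(\zeta,\mathbb{R}_+)=|\zeta|$ for $\Re\zeta\leq 0$ and $\mathrm{dist}(\zeta,\mathbb{R}_+)=|\zeta||\sin\phi|$ for $\Re\zeta>0$, i.e.\ $\mathrm{dist}(\zeta,\mathbb{R}_+)=|\zeta|/c(\phi)$ in either case. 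The factorization \eqref{Q_0=} furnishes $\|f^\varepsilon\|_{L_\infty}^2=\|Q_0^{-1}\|_{L_\infty}$, so composing the three bounds gives \eqref{2.10a}, and the pointwise version \eqref{lemma u_eps 1} follows at once by evaluating at $\mathbf{F}$.

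For the second step I would take $\eeta=\mathbf{u}_\varepsilon$ in the variational identity
\begin{equation*}
\mathfrak{b}_{D,\varepsilon}[\mathbf{u}_\varepsilon,\eeta]-\zeta(Q_0^\varepsilon\mathbf{u}_\varepsilon,\eeta)_{L_2(\mathcal{O})}=(\mathbf{F},\eeta)_{L_2(\mathcal{O})}
\end{equation*}
defining the generalized solution. Since $\mathfrak{b}_{D,\varepsilon}[\mathbf{u}_\varepsilon,\mathbf{u}_\varepsilon]$ and $(Q_0^\varepsilon\mathbf{u}_\varepsilon,\mathbf{u}_\varepsilon)_{L_2(\mathcal{O})}$ are both real, the real part of this identity yields
\begin{equation*}
\mathfrak{b}_{D,\varepsilon}[\mathbf{u}_\varepsilon,\mathbf{u}_\varepsilon]\leq\|\mathbf{F}\|_{L_2(\mathcal{O})}\|\mathbf{u}_\varepsilon\|_{L_2(\mathcal{O})}+|\zeta|\|Q_0\|_{L_\infty}\|\mathbf{u}_\varepsilon\|^2_{L_2(\mathcal{O})}.
\end{equation*}
Substituting the just-established bound \eqref{lemma u_eps 1} for $\|\mathbf{u}_\varepsilon\|_{L_2(\mathcal{O})}$ into both terms on the right and using $c(\phi)\geq 1$ to absorb one extra factor of $c(\phi)$ into the product $\|Q_0\|_{L_\infty}\|Q_0^{-1}\|_{L_\infty}$, the right-hand side is controlled by $c(\phi)^2|\zeta|^{-1}\|Q_0^{-1}\|_{L_\infty}(1+\|Q_0\|_{L_\infty}\|Q_0^{-1}\|_{L_\infty})\|\mathbf{F}\|^2_{L_2(\mathcal{O})}$. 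Invoking the coercivity $\mathfrak{b}_{D,\varepsilon}[\mathbf{u}_\varepsilon,\mathbf{u}_\varepsilon]\geq c_*\|\mathbf{D}\mathbf{u}_\varepsilon\|^2_{L_2(\mathcal{O})}$ from \eqref{b_D,eps ots} and taking square roots produces \eqref{lemma u_eps 2} with precisely the constant stated, since $c_*^{-1/2}=2\alpha_0^{-1/2}\|g^{-1}\|^{1/2}_{L_\infty}$.

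There is no serious obstacle in this argument; it is essentially a packaged combination of selfadjointness, the spectral theorem, and the coercivity already built into the setup. The only minor subtlety is ensuring the linear (rather than quadratic) dependence of the constant in \eqref{2.10b} on $c(\phi)$, which is achieved via the elementary absorption $1+c(\phi)X\leq c(\phi)(1+X)$ valid for $X\geq 0$ and $c(\phi)\geq 1$.
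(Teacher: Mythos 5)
Your proposal is correct and follows essentially the same route as the paper: the $L_2$-bound via the factorization $(B_{D,\varepsilon}-\zeta Q_0^\varepsilon)^{-1}=f^\varepsilon(\widetilde{B}_{D,\varepsilon}-\zeta I)^{-1}(f^\varepsilon)^*$ together with the spectral bound $\mathrm{dist}\{\zeta;\mathbb{R}_+\}^{-1}=c(\phi)|\zeta|^{-1}$, and the gradient bound by substituting $\boldsymbol{\eta}=\mathbf{u}_\varepsilon$ in the integral identity and using the coercivity \eqref{b_D,eps ots} with \eqref{b_eps >=}. Your explicit handling of the real part and the absorption $1+c(\phi)X\leqslant c(\phi)(1+X)$ simply spells out the computation the paper leaves implicit, and it reproduces the stated constant $\mathcal{C}_1$ exactly.
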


\begin{proof}
From \eqref{Q_0=}, \eqref{B deps and tilde B D,eps tozd resolvent}, and the inequality $\widetilde{B}_{D,\varepsilon}> 0$
it follows that
\begin{equation*}
\begin{split}
\Vert \mathbf{u}_\varepsilon \Vert _{L_2(\mathcal{O})} &\leqslant \Vert f\Vert^2_{L_\infty}\Vert (\widetilde{B}_{D,\varepsilon}-\zeta I)^{-1}\Vert _{L_2(\mathcal{O})\rightarrow L_2(\mathcal{O})}\Vert \mathbf{F}\Vert _{L_2(\mathcal{O})}\\
&\leqslant \mathrm{dist}\,\lbrace \zeta ;\mathbb{R}_+\rbrace \Vert Q_0^{-1}\Vert _{L_\infty}\Vert \mathbf{F}\Vert _{L_2(\mathcal{O})}
=c(\phi)\vert \zeta \vert ^{-1}\Vert Q_0^{-1}\Vert _{L_\infty}\Vert \mathbf{F}\Vert _{L_2(\mathcal{O})},
\end{split}
\end{equation*}
which implies \eqref{lemma u_eps 1}.
To check \eqref{lemma u_eps 2}, we write down the integral identity for $\mathbf{u}_\varepsilon$:
\begin{equation*}
\mathfrak{b}_{D,\varepsilon}[\mathbf{u}_\varepsilon ,\boldsymbol{\eta}]-\zeta (Q_0^\varepsilon \mathbf{u}_\varepsilon ,\boldsymbol{\eta})_{L_2(\mathcal{O})}=(\mathbf{F},\boldsymbol{\eta})_{L_2(\mathcal{O})},\quad \boldsymbol{\eta}\in H^1_0(\mathcal{O};\mathbb{C}^n).
\end{equation*}
Substituting $\boldsymbol{\eta}= \mathbf{u}_\varepsilon$ and using the lower estimate \eqref{b_D,eps ots}, \eqref{lemma u_eps 1}, and~\eqref{b_eps >=},
we arrive at \eqref{lemma u_eps 2}.
\end{proof}

\subsection{The form $\mathfrak{b}_{N,\varepsilon}$}
Apart from the form \eqref{b_D,eps}, we need the quadratic form $\mathfrak{b}_{N,\varepsilon}$ defined by the same
expression, but on the class $H^1(\mathcal{O};\mathbb{C}^n)$:
\begin{equation}
\label{b mathfrak eps =}
\begin{split}
\mathfrak{b}_{N,\varepsilon} [\mathbf{u},\mathbf{u}]&=(g^\varepsilon b(\mathbf{D})\mathbf{u},b(\mathbf{D})\mathbf{u})_{L_2(\mathcal{O})}+2\mathrm{Re}\,\sum _{j=1}^d ( D_j \mathbf{u},(a_j^\varepsilon)^*\mathbf{u})_{L_2(\mathcal{O})}\\
&+(Q^\varepsilon \mathbf{u},\mathbf{u})_{L_2(\mathcal{O})}+\lambda (Q_0^\varepsilon \mathbf{u},\mathbf{u})_{L_2(\mathcal{O})},\quad \mathbf{u}\in H^1(\mathcal{O};\mathbb{C}^n).
\end{split}
\end{equation}
This form corresponds to the Neumann problem.
Let us estimate the form~\eqref{b mathfrak eps =} from above.
By \eqref{b_l <=}, we have
\begin{equation}
\label{b mathfrak eps <= start}
\begin{split}
\mathfrak{b}_{N,\varepsilon} [\mathbf{u},\mathbf{u}]
&\leqslant
d\alpha_1\Vert g\Vert _{L_\infty}\Vert \mathbf{D}\mathbf{u}\Vert ^2_{L_2(\mathcal{O})}
+\sum _{j=1}^d \int _\mathcal{O}\vert a_j^\varepsilon (\mathbf{x})\vert ^2\vert \mathbf{u}(\mathbf{x})\vert ^2\,d\mathbf{x}
+\Vert \mathbf{D}\mathbf{u}\Vert ^2_{L_2(\mathcal{O})}\\
&+\int _{\mathcal{O}}\vert Q^\varepsilon (\mathbf{x})\vert \vert \mathbf{u}(\mathbf{x})\vert ^2\,d\mathbf{x}
+\lambda\Vert Q_0\Vert _{L_\infty}\Vert \mathbf{u}\Vert ^2_{L_2(\mathcal{O})}.
\end{split}
\end{equation}
From the H\"older inequality it follows that
\begin{equation}
\label{int o aj Holder}
\int _\mathcal{O}\vert a_j^\varepsilon (\mathbf{x})\vert ^2\vert \mathbf{u}(\mathbf{x})\vert ^2\,d\mathbf{x}
\leqslant
\left(\int _\mathcal{O}\vert a_j^\varepsilon(\mathbf{x})\vert ^\rho\,d\mathbf{x}\right)^{2/\rho}
\Vert \mathbf{u}\Vert _{L_q(\mathcal{O})}^2,
\end{equation}
where $\rho$ is as in \eqref{a_j cond}, $q=\infty$ for $d=1$, and $q=2\rho/(\rho -2 )$ for $d\geqslant 2$.
Next, we cover the domain $\mathcal{O}$ by the union of cells of the lattice $\varepsilon\Gamma$
intersecting $\mathcal{O}$ (here $0<\varepsilon\leqslant 1$). Let $N_\varepsilon$ be the number of cells in this covering. Clearly, this union of cells
is contained in the domain $\widetilde{\mathcal{O}}$ which is the $2r_1$-neighborhood of $\mathcal{O}$, where $2r_1=\mathrm{diam}\,\Omega$. Therefore, $N_\varepsilon\leqslant \mathfrak{c}_1\varepsilon ^{-d}$, where $\mathfrak{c}_1$ depends only on the domain $\mathcal{O}$
and the parameters of the lattice $\Gamma$. We have
\begin{equation}
\label{int O aj <= c1}
\int _\mathcal{O}\vert a_j^\varepsilon (\mathbf{x})\vert ^\rho \,d\mathbf{x}\leqslant \mathfrak{c}_1\varepsilon ^{-d}\int _{\varepsilon\Omega}\vert a_j^\varepsilon (\mathbf{x})\vert ^\rho \,d\mathbf{x}
=\mathfrak{c}_1\Vert a_j\Vert ^\rho _{L_\rho (\Omega)}.
\end{equation}
Relations \eqref{int o aj Holder} and \eqref{int O aj <= c1} imply that
\begin{equation}
\label{int O a_j <= ... p=}
\int _\mathcal{O}\vert a_j^\varepsilon (\mathbf{x})\vert ^2\vert \mathbf{u}(\mathbf{x})\vert ^2\,d\mathbf{x}
\leqslant \mathfrak{c}_1 ^{2/\rho}\Vert a_j\Vert ^2 _{L_\rho (\Omega)}\Vert \mathbf{u}\Vert ^2 _{L_q(\mathcal{O})}.
\end{equation}
By the continuous embedding $H^1(\mathcal{O};\mathbb{C}^n)\hookrightarrow L_q(\mathcal{O};\mathbb{C}^n)$, we have
\begin{equation}
\label{2.17 new}
\Vert \mathbf{u}\Vert _{L_q(\mathcal{O})}
\leqslant C(q,\mathcal{O})\Vert \mathbf{u}\Vert _{H^1(\mathcal{O})},
\end{equation}
where $C(q,\mathcal{O})$ is the corresponding embedding constant.
From \eqref{int O a_j <= ... p=} and \eqref{2.17 new} it follows that
\begin{equation}
\label{2.18 new}
\sum _{j=1}^d \int _\mathcal{O}\vert a_j^\varepsilon (\mathbf{x})\vert ^2\vert \mathbf{u}(\mathbf{x})\vert ^2\,d\mathbf{x}
\leqslant \mathfrak{c}_1^{2/\rho}C(q,\mathcal{O})^2\widehat{C}_a^2\Vert \mathbf{u}\Vert ^2 _{H^1(\mathcal{O})},\quad \mathbf{u}\in H^1(\mathcal{O};\mathbb{C}^n).
\end{equation}
Here
$\widehat{C}_a^2:=\sum _{j=1}^d\Vert a_j\Vert ^2 _{L_\rho (\Omega)}$.
Similarly to \eqref{int o aj Holder}--\eqref{2.18 new}, by \eqref{Q condition}, we obtain
\begin{equation}
\label{2.19 new}
\int _\mathcal{O}\vert Q^\varepsilon (\mathbf{x})\vert \vert \mathbf{u}(\mathbf{x})\vert ^2\,d\mathbf{x}
\leqslant \mathfrak{c}_1^{1/s}\Vert Q\Vert _{L_s(\Omega)}C(\check{q},\mathcal{O})^2\Vert \mathbf{u}\Vert ^2 _{H^1(\mathcal{O})},
\end{equation}
where $\check{q}=\infty$ for $d=1$ and $\check{q}=2s/(s-1)$ for $d\geqslant 2$.

Relations \eqref{b mathfrak eps <= start}, \eqref{2.18 new}, and \eqref{2.19 new} imply that
\begin{equation}
\label{b mathfrak eps <= c2}
\begin{split}
\mathfrak{b}_{N,\varepsilon} [\mathbf{u},\mathbf{u}]
\leqslant \mathfrak{c}_2\Vert \mathbf{u}\Vert ^2 _{H^1(\mathcal{O})},\quad \mathbf{u}\in H^1(\mathcal{O};\mathbb{C}^n),
\end{split}
\end{equation}
where $\mathfrak{c}_2 :=1+d\alpha _1 \Vert g\Vert _{L_\infty}
+\mathfrak{c}_1^{2/\rho}C(q,\mathcal{O})^2\widehat{C}_a^2
+\mathfrak{c}_1^{1/s}\Vert Q\Vert _{L_s(\Omega)}C(\check{q},\mathcal{O})^2
+\lambda\Vert Q_0\Vert _{L_\infty}$.

\subsection{The homogenized problem} In $L_2(\mathcal{O};\mathbb{C}^n)$, we consider the quadratic form
\begin{equation*}
\begin{split}
\mathfrak{b}_D^0[\mathbf{u},\mathbf{u}]&=(g^0b(\mathbf{D})\mathbf{u},b(\mathbf{D})\mathbf{u})_{L_2(\mathcal{O})}
+2\mathrm{Re}\,\sum _{j=1}^d (\overline{a_j}D_j\mathbf{u},\mathbf{u})_{L_2(\mathcal{O})}
-2\mathrm{Re}\,(V\mathbf{u},b(\mathbf{D})\mathbf{u})_{L_2(\mathcal{O})}
\\
&-(W\mathbf{u},\mathbf{u})_{L_2(\mathcal{O})}
+(\overline{Q}\mathbf{u},\mathbf{u})_{L_2(\mathcal{O})}
+\lambda (\overline{Q_0}\mathbf{u},\mathbf{u})_{L_2(\mathcal{O})},
\quad \mathbf{u}\in H^1_0(\mathcal{O};\mathbb{C}^n).
\end{split}
\end{equation*}
Extending $\mathbf{u}\in H^1_0(\mathcal{O};\mathbb{C}^n)$ by zero to $\mathbb{R}^d\setminus\mathcal{O}$,
using \eqref{b0 estimates from corollary}  and the Friedrichs inequality, we obtain
\begin{align}
\label{b_D^0 ots }
&c_*\Vert \mathbf{D}\mathbf{u}\Vert ^2_{L_2(\mathcal{O})}\leqslant \mathfrak{b}_D^0[\mathbf{u},\mathbf{u}]\leqslant C_L\Vert \mathbf{u}\Vert ^2_{H^1(\mathcal{O})},\quad \mathbf{u}\in H^1_0(\mathcal{O};\mathbb{C}^n),
\\
\label{b_D^0 ots 2}
&\mathfrak{b}_D^0[\mathbf{u},\mathbf{u}]\geqslant c_*(\mathrm{diam}\,\mathcal{O})^{-2}\Vert \mathbf{u}\Vert ^2_{L_2(\mathcal{O})},\quad \mathbf{u}\in H^1_0(\mathcal{O};\mathbb{C}^n).
\end{align}
The selfadjoint operator in $L_2(\mathcal{O};\mathbb{C}^n)$ corresponding to the form $\mathfrak{b}_D^0$
is denoted by $B_D^0$. From \eqref{b_D^0 ots } and \eqref{b_D^0 ots 2} it follows that
\begin{equation}
\label{H^1-norm <= BD0^1/2}
\Vert \mathbf{u}\Vert _{H^1(\mathcal{O})}\leqslant c_4 \Vert (B_D^0)^{1/2}\mathbf{u}\Vert _{L_2(\mathcal{O})},\quad \mathbf{u}\in H^1_0(\mathcal{O};\mathbb{C}^n),
\end{equation}
where $c_4$ is as in \eqref{H^1-norm <= BDeps^1/2}.
Since $\partial\mathcal{O}\in C^{1,1}$, the operator $B_D^0$ is given by the differential expression
\eqref{B^0}
on the domain $H^2(\mathcal{O};\mathbb{C}^n)\cap H^1_0(\mathcal{O};\mathbb{C}^n)$. We have
\begin{equation}
\label{BD0 ^-1 L2->H2}
\Vert (B_D^0)^{-1}\Vert _{L_2(\mathcal{O})\rightarrow H^2(\mathcal{O})}\leqslant \widehat{c}.
\end{equation}
Here $\widehat{c}$ depends only on the initial data \eqref{problem data} and the domain $\mathcal{O}$.
This fact follows from the theorems about regularity of solutions of the strongly elliptic systems (see \cite[Chapter~4]{McL}).

\begin{remark}
Instead of the condition $\partial\mathcal{O}\in C^{1,1}$, one could impose the following implicit condition:
a bounded domain $\mathcal{O}\subset \mathbb{R}^d$ with Lipschitz boundary is such that
estimate \eqref{BD0 ^-1 L2->H2} holds. The results of the paper remain valid for such domain.
In the case of the scalar elliptic operators, wide sufficient conditions on $\partial \mathcal{O}$ ensuring \eqref{BD0 ^-1 L2->H2}
can be found in \textnormal{\cite{KoE}} and \textnormal{\cite[Chapter~7]{MaSh} (}in particular, it suffices that $\partial\mathcal{O}\in C^\alpha$, $\alpha >3/2${\rm)}.
\end{remark}

Let $f_0:= (\overline{Q_0})^{-1/2}$. By \eqref{Q_0=},
\begin{equation}
\label{f_0<=}
\vert f_0\vert \leqslant \Vert f\Vert _{L_\infty}=\Vert Q_0^{-1}\Vert^{1/2}_{L_\infty},\quad \vert f_0^{-1}\vert \leqslant \Vert f^{-1}\Vert _{L_\infty}=\Vert Q_0\Vert ^{1/2}_{L_\infty}.
\end{equation}
In what follows, we need the operator $\widetilde{B}_D^0 :=f_0B_D^0 f_0$. Note that
 \begin{equation}
 \label{B_D0 and tilde B_D0 resolvents}
 (B_D^0-\zeta\overline{Q_0})^{-1}=f_0(\widetilde{B}_D^0-\zeta I)^{-1}f_0.
 \end{equation}

The function $\mathbf{u}_0 = (B_D^0-\zeta \overline{Q_0})^{-1}\mathbf{F}$ is the solution of the ``homogenized problem''
 \begin{equation}
 \label{Dirichlet homogenized problem}
 B^0 \mathbf{u}_0 - \zeta\overline{Q_0}\mathbf{u}_0 =  \mathbf{F}\ \textrm{in}\
 \mathcal{O};\quad \mathbf{u}_0\vert _{\partial \mathcal{O}}=0.
 \end{equation}

 \begin{lemma}
 \label{Lemma hom problem estimates}
For $\zeta \in \mathbb{C}\setminus \mathbb{R}_+$ the solution $\mathbf{u}_0$ of problem \eqref{Dirichlet homogenized problem}
satisfies
 \begin{align}
 &\Vert \mathbf{u}_0\Vert _{L_2(\mathcal{O})}\leqslant c(\phi)\vert \zeta \vert ^{-1}\Vert Q_0 ^{-1}\Vert _{L_\infty}\Vert \mathbf{F}\Vert _{L_2(\mathcal{O})},
\nonumber
 \\
 &\Vert \mathbf{D}\mathbf{u}_0\Vert _{L_2(\mathcal{O})}\leqslant \mathcal{C}_1c(\phi)\vert \zeta\vert ^{-1/2}\Vert \mathbf{F}\Vert _{L_2(\mathcal{O})},
 \nonumber
 \\
 &\Vert \mathbf{u}_0\Vert _{H^2(\mathcal{O})}\leqslant\mathcal{C}_2c(\phi)\Vert \mathbf{F}\Vert _{L_2(\mathcal{O})}.
 \nonumber
 \end{align}
Here the constant $\mathcal{C}_1$ is as in Lemma~\textnormal{\ref{Lemma estimates u_D,eps}} and $\mathcal{C}_2 :=\widehat{c}\Vert Q_0\Vert _{L_\infty}^{1/2}\Vert Q_0^{-1}\Vert _{L_\infty}^{1/2}$. In operator terms,
 \begin{align}
  \label{lemma hom probl 1}
 &\Vert (B_D^0-\zeta \overline{Q_0})^{-1}\Vert _{L_2(\mathcal{O})\rightarrow L_2(\mathcal{O})}\leqslant c(\phi)\vert \zeta \vert ^{-1}\Vert Q_0^{-1}\Vert _{L_\infty},\\
 \label{lemma hom probl 2}
 &\Vert \mathbf{D}(B_D^0-\zeta\overline{Q_0})^{-1}\Vert _{L_2(\mathcal{O})\rightarrow L_2(\mathcal{O})}\leqslant \mathcal{C}_1 c(\phi)\vert \zeta \vert ^{-1/2},\\
   \label{lemma hom probl 3}
 &\Vert (B_D^0-\zeta \overline{Q_0})^{-1}\Vert _{L_2(\mathcal{O})\rightarrow H^2(\mathcal{O})}\leqslant \mathcal{C}_2 c(\phi).
 \end{align}
 \end{lemma}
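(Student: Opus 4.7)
The plan is to replicate the strategy behind Lemma \ref{Lemma estimates u_D,eps} with $B_{D,\varepsilon}$ and $\widetilde{B}_{D,\varepsilon}$ replaced by the effective operators $B_D^0$ and $\widetilde{B}_D^0$, and with the factorization \eqref{B_D0 and tilde B_D0 resolvents} in place of \eqref{B deps and tilde B D,eps tozd resolvent}. The $H^2$-estimate \eqref{lemma hom probl 3}, which has no counterpart in Lemma~\ref{Lemma estimates u_D,eps}, is then extracted from the elliptic regularity bound \eqref{BD0 ^-1 L2->H2}, which is available only for the constant-coefficient homogenized operator.

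For \eqref{lemma hom probl 1}: the operator $\widetilde{B}_D^0$ is positive selfadjoint (by the analogue of \eqref{b_D^0 ots 2} applied to $f_0\mathbf{v}$), so $\|(\widetilde{B}_D^0-\zeta I)^{-1}\|_{L_2(\mathcal{O})\to L_2(\mathcal{O})}\le\mathrm{dist}(\zeta,\mathbb{R}_+)^{-1}=c(\phi)|\zeta|^{-1}$. Combining this with \eqref{B_D0 and tilde B_D0 resolvents} and $|f_0|^2\le\|Q_0^{-1}\|_{L_\infty}$ from \eqref{f_0<=} yields the claim. For \eqref{lemma hom probl 2} I would test the weak formulation $\mathfrak{b}_D^0[\mathbf{u}_0,\boldsymbol{\eta}]-\zeta(\overline{Q_0}\mathbf{u}_0,\boldsymbol{\eta})_{L_2(\mathcal{O})}=(\mathbf{F},\boldsymbol{\eta})_{L_2(\mathcal{O})}$ against $\boldsymbol{\eta}=\mathbf{u}_0$, apply the lower bound $c_*\|\mathbf{D}\mathbf{u}_0\|_{L_2(\mathcal{O})}^2\le\mathfrak{b}_D^0[\mathbf{u}_0,\mathbf{u}_0]$ from \eqref{b_D^0 ots }, and insert the $L_2$-bound from the previous step. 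Extracting the square root reproduces $\mathcal{C}_1$ exactly as in the derivation of \eqref{lemma u_eps 2}, since $c_*^{-1/2}=2\alpha_0^{-1/2}\|g^{-1}\|_{L_\infty}^{1/2}$.

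For \eqref{lemma hom probl 3}, I would rewrite \eqref{Dirichlet homogenized problem} as $B_D^0\mathbf{u}_0=\mathbf{F}+\zeta\overline{Q_0}\mathbf{u}_0$ with zero boundary trace, so that \eqref{BD0 ^-1 L2->H2} gives
\begin{equation*}
\|\mathbf{u}_0\|_{H^2(\mathcal{O})}\le\widehat{c}\bigl(\|\mathbf{F}\|_{L_2(\mathcal{O})}+|\zeta|\,\|\overline{Q_0}\mathbf{u}_0\|_{L_2(\mathcal{O})}\bigr).
\end{equation*}
The delicate point is to bound $\|\overline{Q_0}\mathbf{u}_0\|_{L_2}$ in a way that matches the power $\|Q_0\|_{L_\infty}^{1/2}\|Q_0^{-1}\|_{L_\infty}^{1/2}$ appearing in $\mathcal{C}_2$; a crude estimate $|\overline{Q_0}|\le\|Q_0\|_{L_\infty}$ combined with \eqref{lemma hom probl 1} would produce the strictly larger factor $\|Q_0\|_{L_\infty}\|Q_0^{-1}\|_{L_\infty}$. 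To avoid this loss I would use the factorization once more: setting $\mathbf{v}_0:=(\widetilde{B}_D^0-\zeta I)^{-1}f_0\mathbf{F}$ so that $\mathbf{u}_0=f_0\mathbf{v}_0$, the identity $\overline{Q_0}f_0=f_0^{-1}$ gives $\overline{Q_0}\mathbf{u}_0=f_0^{-1}\mathbf{v}_0$, whence
\begin{equation*}
\|\overline{Q_0}\mathbf{u}_0\|_{L_2(\mathcal{O})}\le|f_0^{-1}|\,\|\mathbf{v}_0\|_{L_2(\mathcal{O})}\le\|Q_0\|_{L_\infty}^{1/2}\|Q_0^{-1}\|_{L_\infty}^{1/2}\,c(\phi)|\zeta|^{-1}\|\mathbf{F}\|_{L_2(\mathcal{O})}.
\end{equation*}

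The only nontrivial step in the whole argument is therefore this last manoeuvre: routing $\overline{Q_0}\mathbf{u}_0$ through the auxiliary function $\mathbf{v}_0$ rather than estimating it directly is what delivers the geometric-mean constant in $\mathcal{C}_2$. Everything else is a mechanical transcription of the proof of Lemma~\ref{Lemma estimates u_D,eps}.
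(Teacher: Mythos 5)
Your treatment of \eqref{lemma hom probl 1} and \eqref{lemma hom probl 2} is exactly the paper's route: the paper simply says these are checked the same way as in Lemma~\ref{Lemma estimates u_D,eps}, i.e.\ via the factorization \eqref{B_D0 and tilde B_D0 resolvents}, the spectral bound for the selfadjoint nonnegative operator $\widetilde{B}_D^0$, and the quadratic form tested with $\boldsymbol{\eta}=\mathbf{u}_0$; your constant bookkeeping for $\mathcal{C}_1$ is correct.

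For \eqref{lemma hom probl 3} you use the same two ingredients as the paper (elliptic regularity \eqref{BD0 ^-1 L2->H2} and the passage through $\widetilde{B}_D^0$), but organized differently, and this costs you the stated constant. The paper writes the multiplicative splitting \eqref{d-vo lm 2.2 1} and bounds the single factor $B_D^0(B_D^0-\zeta\overline{Q_0})^{-1}=f_0^{-1}\widetilde{B}_D^0(\widetilde{B}_D^0-\zeta I)^{-1}f_0$ by $\vert f_0^{-1}\vert\,\vert f_0\vert\,\sup_{x\geqslant 0}x\vert x-\zeta\vert^{-1}\leqslant \Vert Q_0\Vert_{L_\infty}^{1/2}\Vert Q_0^{-1}\Vert_{L_\infty}^{1/2}c(\phi)$, see \eqref{d-vo lm 2.2 2}, which gives exactly $\mathcal{C}_2 c(\phi)$. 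Your additive splitting $B_D^0\mathbf{u}_0=\mathbf{F}+\zeta\overline{Q_0}\mathbf{u}_0$ amounts to estimating $x/(x-\zeta)=1+\zeta/(x-\zeta)$ term by term, so after your (correct) bound $\Vert\overline{Q_0}\mathbf{u}_0\Vert_{L_2(\mathcal{O})}\leqslant\Vert Q_0\Vert_{L_\infty}^{1/2}\Vert Q_0^{-1}\Vert_{L_\infty}^{1/2}c(\phi)\vert\zeta\vert^{-1}\Vert\mathbf{F}\Vert_{L_2(\mathcal{O})}$ you end up with $\Vert\mathbf{u}_0\Vert_{H^2(\mathcal{O})}\leqslant\widehat{c}\bigl(1+\Vert Q_0\Vert_{L_\infty}^{1/2}\Vert Q_0^{-1}\Vert_{L_\infty}^{1/2}c(\phi)\bigr)\Vert\mathbf{F}\Vert_{L_2(\mathcal{O})}\leqslant 2\mathcal{C}_2 c(\phi)\Vert\mathbf{F}\Vert_{L_2(\mathcal{O})}$, i.e.\ twice the constant claimed in the lemma (the extra $\widehat{c}\Vert\mathbf{F}\Vert$ is not absorbed for free). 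This is a harmless quantitative loss, not a conceptual gap, and it disappears if you bound $\Vert B_D^0\mathbf{u}_0\Vert_{L_2(\mathcal{O})}$ in one stroke as $\Vert f_0^{-1}\widetilde{B}_D^0(\widetilde{B}_D^0-\zeta I)^{-1}f_0\mathbf{F}\Vert_{L_2(\mathcal{O})}$ using $\sup_{x\geqslant 0}x\vert x-\zeta\vert^{-1}\leqslant c(\phi)$ and \eqref{f_0<=} --- which is precisely the paper's argument.
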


 \begin{proof}
Estimates  \eqref{lemma hom probl 1} and  \eqref{lemma hom probl 2} can be checked by the same way
as estimates of Lemma~\ref{Lemma estimates u_D,eps}. Let us prove~\eqref{lemma hom probl 3}.
Obviously,
 \begin{equation}
 \label{d-vo lm 2.2 1}
 \Vert  (B_D^0-\zeta \overline{Q_0})^{-1}\Vert _{L_2(\mathcal{O})\rightarrow H^2(\mathcal{O})}
\leqslant \Vert (B_D^0)^{-1}\Vert _{L_2(\mathcal{O})\rightarrow H^2(\mathcal{O})}\Vert B_D^0(B_D^0-\zeta \overline{Q_0})^{-1}\Vert _{L_2(\mathcal{O})\rightarrow L_2(\mathcal{O})}.
 \end{equation}
By \eqref{B_D0 and tilde B_D0 resolvents}, we have $B_D^0(B_D^0-\zeta\overline{Q_0})^{-1}=B_D^0f_0(\widetilde{B}_D^0-\zeta I)^{-1}f_0=f_0^{-1}\widetilde{B}_D^0 (\widetilde{B}_{D}^0-\zeta I)^{-1}f_0$. Hence,
 \begin{equation}
 \label{d-vo lm 2.2 2}
 \Vert B_D^0(B_D^0-\zeta \overline{Q_0})^{-1}\Vert _{L_2(\mathcal{O})\rightarrow L_2(\mathcal{O})}
\leqslant \vert f_0^{-1}\vert \vert f_0\vert \sup\limits _{x\geqslant 0}\frac{x}{\vert x-\zeta\vert}
\leqslant \Vert Q_0\Vert _{L_\infty}^{1/2}\Vert Q_0^{-1}\Vert _{L_\infty}^{1/2}c(\phi).
 \end{equation}
We have taken \eqref{f_0<=} into account. Now, relations \eqref{BD0 ^-1 L2->H2}, \eqref{d-vo lm 2.2 1}, and \eqref{d-vo lm 2.2 2}
imply \eqref{lemma hom probl 3}.
 \end{proof}

\subsection{Formulation of the results}

We choose the numbers $\varepsilon _0, \varepsilon _1\in (0,1]$ according to the following condition.

\begin{condition}
\label{condition varepsilon}
Let $\varepsilon _0\in (0,1]$ be such that the strip $(\partial\mathcal{O})_{\varepsilon} :=\left\lbrace \mathbf{x}\in \mathbb{R}^d : \mathrm{dist}\,\lbrace \mathbf{x};\partial\mathcal{O}\rbrace <\varepsilon \right\rbrace $ can be covered by a finite number of open sets
admitting diffeomorphisms of class $C^{0,1}$ rectifying the boundary $\partial\mathcal{O}$.
Denote $\varepsilon _1 :=\varepsilon _0 (1+r_1)^{-1}$, where $2r_1=\mathrm{diam}\,\Omega$.
\end{condition}

Clearly, $\varepsilon _1$ depends only on the domain $\mathcal{O}$ and the parameters of the lattice $\Gamma$.
Note that Condition~\ref{condition varepsilon} would be provided only by the assumption that $\partial\mathcal{O}$ is Lipschitz.
We have imposed a more restrictive condition $\partial\mathcal{O}\in C^{1,1}$ in order to ensure estimate \eqref{BD0 ^-1 L2->H2}.

Now, we formulate the main results.

\begin{theorem}
\label{Theorem Dirichlet L2}
Suppose that $\mathcal{O}\subset\mathbb{R}^d$ is a bounded domain of class $C^{1,1}$.
Let $\zeta =\vert \zeta \vert e^{i\phi}\in \mathbb{C}\setminus \mathbb{R}_+$ and $\vert \zeta \vert \geqslant 1$.
Let $\mathbf{u}_\varepsilon$ be the solution of problem \eqref{Dirichlet problem} with $\mathbf{F}\in L_2(\mathcal{O};\mathbb{C}^n)$.
Let $\mathbf{u}_0$ be the solution of  problem \eqref{Dirichlet homogenized problem}. Suppose that $\varepsilon _1$ is subject to Condition~\textnormal{\ref{condition varepsilon}}.
Then for $0<\varepsilon\leqslant \varepsilon _1$ we have
\begin{equation}
\label{Th L2 solutions}
\Vert \mathbf{u}_\varepsilon -\mathbf{u}_0\Vert _{L_2(\mathcal{O})}\leqslant C_4 c(\phi)^5 \varepsilon \vert \zeta \vert ^{-1/2}\Vert \mathbf{F}\Vert _{L_2(\mathcal{O})}.
\end{equation}
Here $c(\phi)$ is given by \eqref{c(phi)}{\rm ;} the constant $C_4$ depends only on the initial data \eqref{problem data} and the domain $\mathcal{O}$.
In operator terms,
\begin{equation}
\label{Th L2}
\Vert (B_{D,\varepsilon}-\zeta Q_0^\varepsilon )^{-1}-(B_D^0-\zeta \overline{Q_0})^{-1}\Vert _{L_2(\mathcal{O})\rightarrow L_2(\mathcal{O})}\leqslant C_4 c(\phi)^5 \varepsilon \vert \zeta \vert ^{-1/2}.
\end{equation}
\end{theorem}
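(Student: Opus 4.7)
The plan is to deduce this $L_2\to L_2$ estimate by a duality argument from the $L_2\to H^1$ approximation with corrector \eqref{main result 2}, which must be established first. Fix $\mathbf{F},\mathbf{G}\in L_2(\mathcal{O};\mathbb{C}^n)$ and set $\mathbf{u}_\varepsilon=(B_{D,\varepsilon}-\zeta Q_0^\varepsilon)^{-1}\mathbf{F}$, $\mathbf{u}_0=(B_D^0-\zeta\overline{Q_0})^{-1}\mathbf{F}$, $\mathbf{v}_\varepsilon=(B_{D,\varepsilon}-\bar\zeta Q_0^\varepsilon)^{-1}\mathbf{G}$, $\mathbf{v}_0=(B_D^0-\bar\zeta\overline{Q_0})^{-1}\mathbf{G}$. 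Since $B_{D,\varepsilon}$ and $B_D^0$ are self-adjoint and the positive weights $Q_0^\varepsilon$, $\overline{Q_0}$ are self-adjoint, using the equations satisfied by these functions one obtains
\begin{equation*}
\bigl((B_{D,\varepsilon}-\zeta Q_0^\varepsilon)^{-1}\mathbf{F} - (B_D^0-\zeta\overline{Q_0})^{-1}\mathbf{F},\,\mathbf{G}\bigr)_{L_2(\mathcal{O})} = (\mathbf{F},\mathbf{v}_\varepsilon-\mathbf{v}_0)_{L_2(\mathcal{O})}.
\end{equation*}

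Next, I would apply \eqref{main result 2} on the right-hand side, writing $\mathbf{v}_\varepsilon-\mathbf{v}_0 = \varepsilon K_D(\varepsilon;\bar\zeta)\mathbf{G} + \mathbf{r}_\mathbf{G}$ with $\Vert\mathbf{r}_\mathbf{G}\Vert_{H^1(\mathcal{O})}\leq C(\phi)(\varepsilon^{1/2}|\zeta|^{-1/4}+\varepsilon)\Vert\mathbf{G}\Vert_{L_2}$. The corrector contribution $\varepsilon(\mathbf{F},K_D(\varepsilon;\bar\zeta)\mathbf{G})_{L_2}$ is controlled using Proposition~\ref{Proposition f^eps S_eps} (which applies because $\Lambda,\widetilde{\Lambda}\in L_2(\Omega)$ by \eqref{Lambda <=} and \eqref{tilde Lambda<=}) combined with the a priori bounds from Lemma~\ref{Lemma hom problem estimates} for $\mathbf{v}_0$, namely $\Vert\mathbf{D}\mathbf{v}_0\Vert_{L_2}\leq\mathcal{C}_1c(\phi)|\zeta|^{-1/2}\Vert\mathbf{G}\Vert_{L_2}$ and $\Vert\mathbf{v}_0\Vert_{L_2}\leq c(\phi)|\zeta|^{-1}\Vert Q_0^{-1}\Vert_{L_\infty}\Vert\mathbf{G}\Vert_{L_2}$. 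This yields $\varepsilon|(\mathbf{F},K_D(\varepsilon;\bar\zeta)\mathbf{G})_{L_2}|\leq C c(\phi)^2\varepsilon|\zeta|^{-1/2}\Vert\mathbf{F}\Vert_{L_2}\Vert\mathbf{G}\Vert_{L_2}$, which is of the claimed order.

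The residual $(\mathbf{F},\mathbf{r}_\mathbf{G})_{L_2}$ is the critical term: the naive bound $\Vert\mathbf{F}\Vert_{L_2}\Vert\mathbf{r}_\mathbf{G}\Vert_{L_2}$ would give only $O(\varepsilon^{1/2}|\zeta|^{-1/4})$. To recover the missing factor $\varepsilon^{1/2}|\zeta|^{1/4}$, I would substitute $\mathbf{F}=(B_D^0-\zeta\overline{Q_0})\mathbf{u}_0$ and rewrite the pairing as $\mathfrak{b}_D^0[\mathbf{u}_0,\mathbf{r}_\mathbf{G}]-\zeta(\overline{Q_0}\mathbf{u}_0,\mathbf{r}_\mathbf{G})_{L_2}$. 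The first summand pairs $\mathbf{D}\mathbf{r}_\mathbf{G}$ (controlled by the $H^1$-approximation for the $\mathbf{v}$-problem) with $\mathbf{D}\mathbf{u}_0$, and the missing $\varepsilon^{1/2}|\zeta|^{1/4}$ gain is then extracted by invoking \eqref{main result 2} a second time, now applied to $\mathbf{u}_\varepsilon-\mathbf{u}_0$. The product of the two $H^1$-side errors, each of order $\varepsilon^{1/2}|\zeta|^{-1/4}$, produces the sharp $\varepsilon|\zeta|^{-1/2}$ bound. The extra factors $c(\phi)$ from the estimates in Lemma~\ref{Lemma hom problem estimates} (one for each of $\mathbf{u}_0$, $\mathbf{v}_0$) together with the $c(\phi)^2$ coming from the $\mathbb{R}^d$ estimates of Theorems~\ref{Th L2 Rd} and \ref{Theorem H1 Rd} accumulate to the stated power $c(\phi)^5$.

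The main obstacle lies in the last step: the corrector $\varepsilon K_D(\varepsilon;\cdot)\mathbf{F}$ does not vanish on $\partial\mathcal{O}$, so in general $\mathbf{r}_\mathbf{G}\notin H^1_0(\mathcal{O})$ and the manipulation with the quadratic form $\mathfrak{b}_D^0$ is not immediately legitimate. This must be remedied by adding and subtracting a boundary layer correction term (one of the central technical ingredients already used in the proof of \eqref{main result 2}) and by applying estimates in the $\varepsilon$-neighborhood $(\partial\mathcal{O})_\varepsilon$ of the boundary as furnished by Condition~\ref{condition varepsilon}. Additional care is needed to track the $\phi$-dependent constants through the lower-order terms, whose coefficients are unbounded, so that the continuity constants from the embedding inequalities \eqref{int O a_j <= ... p=}--\eqref{2.19 new} do not introduce spurious $\zeta$-dependence into the final bound.
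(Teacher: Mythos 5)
Your high-level strategy---prove the $(L_2\to H^1)$ estimate \eqref{main result 2} first, then deduce the $(L_2\to L_2)$ estimate \eqref{main result 1} by duality against the adjoint problem $(B_{D,\varepsilon}-\zeta^*Q_0^\varepsilon)^{-1}\boldsymbol{\Phi}$---does match the paper's, and the corrector contribution $\varepsilon(\mathbf{F},K_D(\varepsilon;\zeta^*)\mathbf{G})$ is bounded correctly. But the mechanism you propose for the residual term $(\mathbf{F},\mathbf{r}_{\mathbf{G}})$ has a genuine gap, and the paper takes a concretely different route around exactly that obstacle.

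The arithmetic of your ``product of two $H^1$-side errors'' argument does not close. After substituting $\mathbf{F}=(B_D^0-\zeta\overline{Q_0})\mathbf{u}_0$ and (granting the $H^1_0$ issue you flag) rewriting $(\mathbf{F},\mathbf{r}_{\mathbf{G}})$ as $\mathfrak{b}_D^0[\mathbf{u}_0,\mathbf{r}_{\mathbf{G}}]-\zeta(\overline{Q_0}\mathbf{u}_0,\mathbf{r}_{\mathbf{G}})$, the first term is controlled by $\Vert\mathbf{u}_0\Vert_{H^1}\Vert\mathbf{r}_{\mathbf{G}}\Vert_{H^1}$. But $\Vert\mathbf{u}_0\Vert_{H^1}\sim c(\phi)\vert\zeta\vert^{-1/2}\Vert\mathbf{F}\Vert$ (Lemma \ref{Lemma hom problem estimates}), not $\varepsilon^{1/2}\vert\zeta\vert^{-1/4}$; the function $\mathbf{u}_0$ is a true solution, not an approximation error, and replacing it by $\mathbf{u}_\varepsilon - \varepsilon K_D\mathbf{F}$ via a second application of \eqref{main result 2} does not help since $\Vert\mathbf{u}_\varepsilon\Vert_{H^1}$ has the same order. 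The resulting bound $\varepsilon^{1/2}\vert\zeta\vert^{-3/4}$ is strictly weaker than the target $\varepsilon\vert\zeta\vert^{-1/2}$ throughout $1\leqslant\vert\zeta\vert<\varepsilon^{-2}$, which is precisely the regime where the result is nontrivial. There is no place in your setup where two approximation errors actually get paired against each other.

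What the paper does instead: first reduce everything, via Section~4, to an estimate of $\Vert\mathbf{w}_\varepsilon\Vert_{L_2}$ for the boundary-layer correction term $\mathbf{w}_\varepsilon$ of \eqref{w_eps problem}; then apply the Green identity to $\boldsymbol{\varrho}_\varepsilon=\mathbf{w}_\varepsilon-\boldsymbol{\varphi}_\varepsilon$, where $\boldsymbol{\varphi}_\varepsilon$ is the cut-off of the corrector defined in \eqref{phi eps}. This $\boldsymbol{\varrho}_\varepsilon$ \emph{is} in $H^1_0(\mathcal{O};\mathbb{C}^n)$, which legitimizes the duality pairing against $\boldsymbol{\eta}_\varepsilon=(B_{D,\varepsilon}-\zeta^*Q_0^\varepsilon)^{-1}\boldsymbol{\Phi}$; one then inserts the $H^1$-approximation of $\boldsymbol{\eta}_\varepsilon$ and splits into four terms \eqref{3.25 from draft}. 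Only one of them ($\mathcal{I}_1$) has the structure you anticipate (and even there, the $\varepsilon^{1/2}$ smallness of the first factor $\boldsymbol{\varphi}_\varepsilon$ in $H^1$ comes from its support in the $\varepsilon$-strip $(\partial\mathcal{O})_\varepsilon$, via Lemma~\ref{Lemma 3.6 from Su15}, not from an $H^1$-approximation gain). The remaining terms $\mathcal{I}_2,\mathcal{I}_3,\mathcal{I}_4$ require the boundary localization of $\boldsymbol{\varphi}_\varepsilon$ directly, through Lemmas~\ref{lemma ots int O_eps B_eps}, \ref{Lemma 3.6 from Su15}, and the embedding inequalities for the unbounded coefficients. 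You correctly identify that a boundary-layer term must be introduced, but you locate it in the wrong place (as a repair of $\mathbf{r}_{\mathbf{G}}$'s boundary trace, rather than as the central object on which the whole duality is performed), and you do not identify the true source of the extra $\varepsilon^{1/2}\vert\zeta\vert^{1/4}$ gain. A final minor omission: the estimate of Section 6 carries an extra $\varepsilon^2$ term, and one must combine it with the rough bound \eqref{raznost res grubo} by splitting the region $\vert\zeta\vert\leqslant\varepsilon^{-2}$ versus $\vert\zeta\vert>\varepsilon^{-2}$ to obtain \eqref{Th L2} uniformly.
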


In order to approximate the solution in the Sobolev space  $H^1(\mathcal{O};\mathbb{C}^n)$, we introduce a corrector.
For this, we fix a linear continuous extension operator
\begin{equation}
\label{P_O H^1, H^2}
\begin{split}
P_\mathcal{O}: H^l(\mathcal{O};\mathbb{C}^n)\rightarrow H^l(\mathbb{R}^d;\mathbb{C}^n),\quad l =0,1,2.
\end{split}
\end{equation}
Such a ``universal'' extension operator exists for any bounded Lipschitz domain  (see \cite{St} or \cite{R}).
Herewith,
\begin{equation}
\label{PO}
\| P_{\mathcal O} \|_{H^l({\mathcal O}) \to H^l({\mathbb R}^d)} \leqslant C_{\mathcal O}^{(l)},\quad l =0,1,2,
\end{equation}
where the constant $C_{\mathcal O}^{(l)}$ depends only on $l$ and the domain ${\mathcal O}$.
By $R_\mathcal{O}$ we denote the operator of restriction of functions in $\mathbb{R}^d$ to the domain $\mathcal{O}$.
We put
\begin{equation}
\label{K_D(eps,zeta)}
K_D(\varepsilon ;\zeta ):=
R_\mathcal{O}\bigl([\Lambda ^\varepsilon]b(\mathbf{D})+[\widetilde{\Lambda}^\varepsilon ]\bigr)S_\varepsilon P_\mathcal{O}(B_D^0-\zeta \overline{Q_0})^{-1}.
\end{equation}
The continuity of the operator $K_D(\varepsilon ;\zeta ) : L_2(\mathcal{O};\mathbb{C}^n)\rightarrow H^1(\mathcal{O};\mathbb{C}^n)$
can be checked by analogy with the continuity of the operator \eqref{K(eps;zeta)}.

Let $\widetilde{\mathbf{u}}_0=P_\mathcal{O}\mathbf{u}_0$. By $\mathbf{v}_\varepsilon$ we denote the first order approximation
of the solution $\mathbf{u}_\varepsilon$:
\begin{align}
\label{v_eps}
&\widetilde{\mathbf{v}}_\varepsilon : =\widetilde{\mathbf{u}}_0+\varepsilon \Lambda ^\varepsilon S_\varepsilon b(\mathbf{D})\widetilde{\mathbf{u}}_0+\varepsilon\widetilde{\Lambda}^\varepsilon S_\varepsilon \widetilde{\mathbf{u}}_0,\\
\label{v_eps=}
&\mathbf{v}_\varepsilon :=\widetilde{\mathbf{v}}_\varepsilon \vert _{\mathcal{O}},
\end{align}
i.~e.,  $\mathbf{v}_\varepsilon = (B_D^0-\zeta \overline{Q_0})^{-1}\mathbf{F}+\varepsilon K_D(\varepsilon ;\zeta )\mathbf{F}$,
where  $K_D(\varepsilon ;\zeta)$ is given by  \eqref{K_D(eps,zeta)}.

\begin{theorem}
\label{Theorem Dirichlet H1}
Suppose that the assumptions of Theorem~\textnormal{\ref{Theorem Dirichlet L2}} are satisfied.
Suppose that $\Lambda (\mathbf{x})$ and $\widetilde{\Lambda}(\mathbf{x})$ are $\Gamma$-periodic solutions of problems
\eqref{Lambda problem} and \eqref{tildeLambda_problem}, respectively. Let $S_\varepsilon $ be the smoothing operator~\eqref{S_eps},
and let $P_\mathcal{O}$ be the extension operator \eqref{P_O H^1, H^2}. Denote $\widetilde{\mathbf{u}}_0=P_\mathcal{O}\mathbf{u}_0$.
Let $\mathbf{v}_\varepsilon$ be defined by \eqref{v_eps} and \eqref{v_eps=}.
Then for $\zeta\in\mathbb{C}\setminus\mathbb{R}_+$, $\vert \zeta\vert\geqslant 1$, and $0<\varepsilon \leqslant \varepsilon _1$ we have
\begin{equation}
\label{Th L2->H1 solutions}
\Vert \mathbf{u}_\varepsilon -\mathbf{v}_\varepsilon \Vert _{H^1(\mathcal{O})}\leqslant  \bigl( C_5 c(\phi)^2\varepsilon ^{1/2}\vert \zeta \vert ^{-1/4}+C_6 c(\phi)^4\varepsilon\bigr)\Vert \mathbf{F}\Vert _{L_2(\mathcal{O})}.
\end{equation}
In operator terms,
\begin{align}
\label{Th L2->H1}
\Vert (B_{D,\varepsilon}-\zeta Q_0^\varepsilon )^{-1}-(B_D^0-\zeta \overline{Q_0})^{-1}-\varepsilon K_D(\varepsilon ;\zeta )\Vert _{L_2(\mathcal{O})\rightarrow H^1(\mathcal{O})}
\leqslant C_5 c(\phi)^2\varepsilon ^{1/2}\vert \zeta \vert ^{-1/4}+C_6 c(\phi)^4\varepsilon ,
\end{align}
where the operator $K_D(\varepsilon;\zeta)$ is given by \eqref{K_D(eps,zeta)}. Let $\widetilde{g}(\mathbf{x})$ be
the matrix-valued function defined by~\eqref{tilde g}.
Let $\mathbf{p}_\varepsilon:=g^\varepsilon b(\mathbf{D})\mathbf{u}_\varepsilon$.
Then for $\zeta\in\mathbb{C}\setminus\mathbb{R}_+$, $\vert \zeta\vert\geqslant 1$, and $0<\varepsilon \leqslant \varepsilon _1$ we have
\begin{equation}
\label{Th fluxes}
\Vert \mathbf{p}_\varepsilon-\widetilde{g}^\varepsilon S_\varepsilon b(\mathbf{D})\widetilde{\mathbf{u}}_0-g^\varepsilon \bigl(b(\mathbf{D})\widetilde{\Lambda}\bigr)^\varepsilon S_\varepsilon \widetilde{\mathbf{u}}_0\Vert _{L_2(\mathcal{O})}
\leqslant \bigl(\widetilde{C}_5c(\phi)^2\varepsilon ^{1/2}\vert \zeta \vert ^{-1/4}+\widetilde{C}_6c(\phi)^4\varepsilon \bigr)\Vert \mathbf{F}\Vert _{L_2(\mathcal{O})}.
\end{equation}
In operator terms,
\begin{equation}
\label{2.41a}
\Vert  g^\varepsilon b(\mathbf{D})(B_{D,\varepsilon}-\zeta Q_0^\varepsilon )^{-1}-G_D(\varepsilon ;\zeta)\Vert _{L_2(\mathcal{O})\rightarrow L_2(\mathcal{O})}
\leqslant\widetilde{C}_5c(\phi)^2\varepsilon ^{1/2}\vert \zeta \vert ^{-1/4}+\widetilde{C}_6c(\phi)^4\varepsilon .
\end{equation}
Here
$G_D(\varepsilon ;\zeta) := \widetilde{g}^\varepsilon S_\varepsilon b(\mathbf{D})P_\mathcal{O}(B_D^0-\zeta\overline{Q_0})^{-1}+g^\varepsilon\bigl( b(\mathbf{D})\widetilde{\Lambda}\bigr)^\varepsilon S_\varepsilon P_\mathcal{O}(B_D^0-\zeta \overline{Q_0})^{-1}$.
The constants $C_5$, $C_6$, $\widetilde{C}_5$, and $\widetilde{C}_6$ depend only on the initial data~\eqref{problem data}
and the domain $\mathcal{O}$.
\end{theorem}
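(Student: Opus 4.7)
The plan is to follow the spectral scheme of \cite{PSu,Su13,Su15}, adapted to the lower-order terms with unbounded coefficients (as in \cite{MSu15}), and to carry the spectral parameter $\zeta$ through every step. Let $\widetilde{\mathbf{v}}_\varepsilon$ be the first-order approximation of~\eqref{v_eps} defined on all of $\mathbb{R}^d$ with $\widetilde{\mathbf{u}}_0 = P_\mathcal{O} \mathbf{u}_0$. Using the Euler equations \eqref{Lambda problem}, \eqref{tildeLambda_problem} together with the definitions \eqref{V}, \eqref{W} of $V$, $W$, a direct calculation (exactly the one underlying Theorem~\ref{Theorem H1 Rd} in \cite{MSu15}) shows that
\begin{equation*}
(B_\varepsilon - \zeta Q_0^\varepsilon)\widetilde{\mathbf{v}}_\varepsilon - \mathbf{F} = \boldsymbol{\rho}_\varepsilon, \quad \vert (\boldsymbol{\rho}_\varepsilon,\boldsymbol{\eta})_{L_2(\mathcal{O})}\vert \leqslant C \varepsilon\, c(\phi)^2 \Vert \mathbf{F}\Vert _{L_2(\mathcal{O})}\Vert \boldsymbol{\eta}\Vert _{H^1(\mathcal{O})}
\end{equation*}
for $\boldsymbol{\eta}\in H^1_0(\mathcal{O};\mathbb{C}^n)$; the factor $c(\phi)^2$ records the use of $\Vert \mathbf{u}_0\Vert_{H^2(\mathcal{O})}\leqslant \mathcal{C}_2 c(\phi)\Vert \mathbf{F}\Vert _{L_2(\mathcal{O})}$ and $\Vert \mathbf{u}_0\Vert_{H^1(\mathcal{O})}\leqslant \mathcal{C}_1 c(\phi)\vert\zeta\vert^{-1/2}\Vert\mathbf{F}\Vert_{L_2(\mathcal{O})}$ from Lemma~\ref{Lemma hom problem estimates} in the residual terms involving the lower-order coefficients.

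The main issue is that $\mathbf{v}_\varepsilon = \widetilde{\mathbf{v}}_\varepsilon\vert_\mathcal{O}$ does not vanish on $\partial\mathcal{O}$: although $\widetilde{\mathbf{u}}_0\vert_{\partial\mathcal{O}} = 0$, the corrector pieces $\varepsilon \Lambda^\varepsilon S_\varepsilon b(\mathbf{D})\widetilde{\mathbf{u}}_0$ and $\varepsilon\widetilde{\Lambda}^\varepsilon S_\varepsilon \widetilde{\mathbf{u}}_0$ do not. I introduce a cutoff $\theta_\varepsilon \in C^\infty(\mathbb{R}^d)$ with $\theta_\varepsilon = 1$ on $(\partial\mathcal{O})_{\varepsilon/2}$, $\theta_\varepsilon = 0$ outside $(\partial\mathcal{O})_\varepsilon$, $\vert\nabla\theta_\varepsilon\vert\leqslant C\varepsilon^{-1}$, and set $\mathbf{v}_\varepsilon^\sharp := \mathbf{v}_\varepsilon - \varepsilon\theta_\varepsilon(\Lambda^\varepsilon S_\varepsilon b(\mathbf{D})\widetilde{\mathbf{u}}_0 + \widetilde{\Lambda}^\varepsilon S_\varepsilon \widetilde{\mathbf{u}}_0)\vert _\mathcal{O}$, so that $\mathbf{u}_\varepsilon - \mathbf{v}_\varepsilon^\sharp \in H^1_0(\mathcal{O};\mathbb{C}^n)$. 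Testing the difference of the Dirichlet integral identities for $\mathbf{u}_\varepsilon$ and $\mathbf{v}_\varepsilon^\sharp$ against $\mathbf{u}_\varepsilon - \mathbf{v}_\varepsilon^\sharp$, and using the coercivity \eqref{b_D,eps ots} together with a standard resolvent trick to absorb the $\zeta Q_0^\varepsilon$ term (which contributes one extra factor $c(\phi)$), I reduce $\Vert \mathbf{u}_\varepsilon - \mathbf{v}_\varepsilon^\sharp\Vert _{H^1(\mathcal{O})}$ to the sum of the interior residual $(\boldsymbol{\rho}_\varepsilon, \mathbf{u}_\varepsilon - \mathbf{v}_\varepsilon^\sharp)_{L_2(\mathcal{O})}$, controlled by Step~1, and an extra boundary-strip residual $\mathbf{s}_\varepsilon$ supported in $(\partial\mathcal{O})_\varepsilon\cap\mathcal{O}$ coming from the cutoff. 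Applying Proposition~\ref{Proposition f^eps S_eps} to $[\Lambda^\varepsilon]S_\varepsilon$, $[\widetilde{\Lambda}^\varepsilon]S_\varepsilon$, combined with the standard boundary-strip inequality $\Vert\mathbf{h}\Vert^2_{L_2((\partial\mathcal{O})_\varepsilon\cap\mathcal{O})} \leqslant C\varepsilon\Vert\mathbf{h}\Vert_{L_2(\mathcal{O})}\Vert\mathbf{h}\Vert_{H^1(\mathcal{O})}$ for $\mathbf{h} = b(\mathbf{D})\widetilde{\mathbf{u}}_0$ and $\widetilde{\mathbf{u}}_0$, and inserting the bounds of Lemma~\ref{Lemma hom problem estimates}, produces the $c(\phi)^2\varepsilon^{1/2}\vert\zeta\vert^{-1/4}$ term in~\eqref{Th L2->H1}; the $c(\phi)^4\varepsilon$ term is Step~1 combined with the extra $c(\phi)^2$ absorbed from $\zeta Q_0^\varepsilon$. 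The cutoff correction $\varepsilon\theta_\varepsilon(\Lambda^\varepsilon S_\varepsilon b(\mathbf{D})\widetilde{\mathbf{u}}_0 + \widetilde{\Lambda}^\varepsilon S_\varepsilon\widetilde{\mathbf{u}}_0)$ is bounded in $H^1(\mathcal{O})$ by the same boundary-strip estimate, which yields \eqref{Th L2->H1 solutions}.

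The flux estimate \eqref{Th fluxes}, \eqref{2.41a} follows by applying $g^\varepsilon b(\mathbf{D})$: using $b(\mathbf{D})(\varepsilon\Lambda^\varepsilon S_\varepsilon \mathbf{h}) = (b(\mathbf{D})\Lambda)^\varepsilon S_\varepsilon \mathbf{h} + \varepsilon\sum_j b_j \Lambda^\varepsilon S_\varepsilon D_j\mathbf{h}$ and the analogous identity for $\widetilde{\Lambda}$, one recognizes $\widetilde{g}^\varepsilon S_\varepsilon b(\mathbf{D})\widetilde{\mathbf{u}}_0 + g^\varepsilon (b(\mathbf{D})\widetilde{\Lambda})^\varepsilon S_\varepsilon\widetilde{\mathbf{u}}_0$ as the leading part of $g^\varepsilon b(\mathbf{D})\widetilde{\mathbf{v}}_\varepsilon$, while the remaining Steklov commutators are controlled by Propositions~\ref{Proposition S__eps - I} and~\ref{Proposition f^eps S_eps}; combined with the $H^1$-bound for $\mathbf{u}_\varepsilon - \mathbf{v}_\varepsilon^\sharp$ already proved, this gives~\eqref{2.41a}. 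The main obstacle will be the uniform tracking of $c(\phi)$-powers across all Cauchy--Schwarz splittings: the unbounded coefficients $a_j$, $Q$ force \eqref{sum a-j u}, \eqref{(Qu,u)<=} to be used with a $\zeta$-independent parameter $\nu$ absorbed into $c_*$, so as not to lose further factors of $c(\phi)$, and the split $c(\phi)^4\varepsilon$ versus $c(\phi)^2\varepsilon^{1/2}\vert\zeta\vert^{-1/4}$ is dictated precisely by the $\zeta$-dependence of the $H^1$- and $H^2$-norms of $\mathbf{u}_0$ in Lemma~\ref{Lemma hom problem estimates}.
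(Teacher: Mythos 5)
Your overall scheme is the one the paper actually uses: extend $\mathbf{u}_0$ by $P_\mathcal{O}$, import the two-parametric $\mathbb{R}^d$ results of \cite{MSu15} for the interior part, cut off the corrector in the $\varepsilon$-strip $(\partial\mathcal{O})_\varepsilon$ (your $\varepsilon\theta_\varepsilon(\Lambda^\varepsilon S_\varepsilon b(\mathbf{D})\widetilde{\mathbf{u}}_0+\widetilde{\Lambda}^\varepsilon S_\varepsilon\widetilde{\mathbf{u}}_0)$ is exactly the paper's $\boldsymbol{\varphi}_\varepsilon$ of \eqref{phi eps}), run an energy estimate for the $H^1_0$-difference with the $\mathrm{Im}/\mathrm{Re}$ absorption of the $\zeta Q_0^\varepsilon$-term, estimate $\boldsymbol{\varphi}_\varepsilon$ by the boundary-strip lemmas (Lemmas \ref{lemma ots int O_eps B_eps}, \ref{Lemma 3.6 from Su15}), and deduce the flux bound from the $H^1$ bound via the decomposition \eqref{3.20 from draft} and the Steklov commutator estimates. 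The only organizational difference is that you subtract the cutoff corrector from $\mathbf{v}_\varepsilon$ directly and do one energy estimate, whereas the paper routes the argument through the discrepancy $\mathbf{w}_\varepsilon$ of \eqref{w_eps problem} (estimating $\mathbf{V}_\varepsilon=\mathbf{u}_\varepsilon-\mathbf{v}_\varepsilon+\mathbf{w}_\varepsilon$ in Theorem \ref{Theorem with Diriclet corrector} and then $\mathbf{w}_\varepsilon$ via $\boldsymbol{\varrho}_\varepsilon=\mathbf{w}_\varepsilon-\boldsymbol{\varphi}_\varepsilon$ in Lemmas \ref{lemma w eps}, \ref{lemma phi eps}); since $\mathbf{u}_\varepsilon-\mathbf{v}_\varepsilon^\sharp=\mathbf{V}_\varepsilon-\boldsymbol{\varrho}_\varepsilon$, your version just merges the paper's two energy estimates, at the cost of not producing the sharp-order statement of Theorem \ref{Theorem with Diriclet corrector} as a by-product.

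One step would fail as written. Your Step-1 claim $\vert(\boldsymbol{\rho}_\varepsilon,\boldsymbol{\eta})_{L_2(\mathcal{O})}\vert\leqslant C\varepsilon\,c(\phi)^2\Vert\mathbf{F}\Vert_{L_2(\mathcal{O})}\Vert\boldsymbol{\eta}\Vert_{H^1(\mathcal{O})}$ is not true uniformly in $\vert\zeta\vert\geqslant 1$: the residual of the first-order approximation contains the terms $-\zeta(Q_0^\varepsilon-\overline{Q_0})\widetilde{\mathbf{u}}_0$ and $-\varepsilon\zeta Q_0^\varepsilon\bigl(\Lambda^\varepsilon S_\varepsilon b(\mathbf{D})\widetilde{\mathbf{u}}_0+\widetilde{\Lambda}^\varepsilon S_\varepsilon\widetilde{\mathbf{u}}_0\bigr)$, whose pairings with $\boldsymbol{\eta}$ are of size $\varepsilon\vert\zeta\vert^{1/2}c(\phi)\Vert\mathbf{F}\Vert\Vert\boldsymbol{\eta}\Vert_{L_2}$ (use $\Vert\widetilde{\mathbf{u}}_0\Vert_{H^1}\lesssim c(\phi)\vert\zeta\vert^{-1/2}\Vert\mathbf{F}\Vert$), and this $\vert\zeta\vert^{1/2}$ cannot be traded for $\Vert\boldsymbol{\eta}\Vert_{H^1}$. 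The correct interior bound is necessarily two-termed, as in the paper's \eqref{|Ieps|<=}: an $\varepsilon\Vert\boldsymbol{\eta}\Vert_{H^1}$ piece plus an $\varepsilon\vert\zeta\vert^{1/2}\Vert(Q_0^\varepsilon)^{1/2}\boldsymbol{\eta}\Vert_{L_2}$ piece; only the latter is removable by the $\mathrm{Im}/\mathrm{Re}$ trick (which is what produces the $c(\phi)^4$ in the $\varepsilon$-term), and the same two-term structure must be kept for the boundary residual involving $\boldsymbol{\varphi}_\varepsilon$ (cf. \eqref{3.5 in draft}--\eqref{5.6a}). Note also that the paper never performs the direct residual computation you describe: it defines the interior functional through $\widetilde{\mathbf{u}}_\varepsilon-\widetilde{\mathbf{v}}_\varepsilon$ with $\widetilde{\mathbf{u}}_\varepsilon=(B_\varepsilon-\zeta Q_0^\varepsilon)^{-1}\widetilde{\mathbf{F}}$, $\widetilde{\mathbf{F}}=(B^0-\zeta\overline{Q_0})\widetilde{\mathbf{u}}_0$, and feeds in the already established norm bounds \eqref{tilde u_eps -v_eps <=}, \eqref{tilde u_eps -v_eps <= in H1}; if you insist on the direct calculation you must reprove the $\zeta$-sharp control of all oscillating terms (including $\zeta(Q_0^\varepsilon-\overline{Q_0})\widetilde{\mathbf{u}}_0$, which requires splitting the derivative as in Lemma \ref{Lemma Q0-overline Q0} and sending the $\Vert\boldsymbol{\eta}\Vert_{L_2}$-part into the absorption step), i.e.\ essentially redo the work of \cite{MSu15}. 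With the residual estimate corrected to this two-term form, your argument closes and gives \eqref{Th L2->H1 solutions}--\eqref{2.41a} with the stated powers of $c(\phi)$.
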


The first order approximation $\mathbf{v}_\varepsilon$ of the solution $\mathbf{u}_\varepsilon$ does not satisfy the Dirichlet boundary condition. We have  $\mathbf{v}_\varepsilon \vert _{\partial \mathcal{O}}=\varepsilon \bigl( \Lambda ^\varepsilon S_\varepsilon b(\mathbf{D})\widetilde{\mathbf{u}}_0 +\widetilde{\Lambda}^\varepsilon S_\varepsilon \widetilde{\mathbf{u}}_0\bigr)\vert _{\partial \mathcal{O}}$. We consider the
``discrepancy'' $\mathbf{w}_\varepsilon$, which is the solution of the problem
\begin{equation}
\label{w_eps problem}
B_\varepsilon \mathbf{w}_\varepsilon -\zeta Q_0^\varepsilon \mathbf{w}_\varepsilon =0\ \mbox{in}\ \mathcal{O};\quad
\mathbf{w}_\varepsilon \vert _{\partial \mathcal{O}}=\varepsilon \bigl(\Lambda ^\varepsilon S_\varepsilon b(\mathbf{D})\widetilde{\mathbf{u}}_0+\widetilde{\Lambda}^\varepsilon S_\varepsilon \widetilde{\mathbf{u}}_0\bigr)\vert _{\partial \mathcal{O}}.
\end{equation}
Here the equation is understood in the weak sense, as the following
identity for $\mathbf{w}_\varepsilon\in H^1(\mathcal{O};\mathbb{C}^n)$:
\begin{equation}
\label{w_eps int tozd}
\mathfrak{b}_{N,\varepsilon} [\mathbf{w}_\varepsilon ,\boldsymbol{\eta}]-\zeta (Q_0^\varepsilon \mathbf{w}_\varepsilon ,\boldsymbol{\eta})_{L_2(\mathcal{O})}=0,\quad \boldsymbol{\eta}\in H^1_0(\mathcal{O};\mathbb{C}^n).
\end{equation}
The discrepancy $\mathbf{w}_\varepsilon$ is often called the ``boundary layer correction term''. Allowing some freedom, along with
$\mathbf{w}_\varepsilon$, we shall use the notation $\mathbf{w}_\varepsilon(\cdot ;\zeta)$ for the solution of problem \eqref{w_eps problem}.
We introduce the operator taking $\mathbf{F}$ to $\mathbf{w}_\varepsilon$:
\begin{equation}
\label{2.46A}
\varepsilon W_D(\varepsilon ;\zeta) : L_2(\mathcal{O};\mathbb{C}^n)\ni\mathbf{F}\rightarrow \mathbf{w}_\varepsilon (\cdot ;\zeta)\in H^1(\mathcal{O};\mathbb{C}^n).
\end{equation}
Let us find more explicit expression for $W_D(\varepsilon ;\zeta)$. Clearly, the function
\begin{equation}
\label{**.2}
\mathbf{r}_\varepsilon (\mathbf{x};\zeta):=\mathbf{w}_\varepsilon (\mathbf{x};\zeta)-\varepsilon \bigl(K_D(\varepsilon ;\zeta)\mathbf{F}\bigr)(\mathbf{x})
\end{equation}
belongs to $H^1_0(\mathcal{O};\mathbb{C}^n)$ and satisfies the identity
\begin{equation}
\label{**.3}
\mathfrak{b}_{D,\varepsilon}[\mathbf{r}_\varepsilon,\boldsymbol{\eta}]-\zeta(Q_0^\varepsilon\mathbf{r}_\varepsilon ,\boldsymbol{\eta})_{L_2(\mathcal{O})}
=\varepsilon \mathcal{I}(\varepsilon ;\zeta)[\mathbf{F},\boldsymbol{\eta}],\quad\boldsymbol{\eta}\in H^1_0(\mathcal{O};\mathbb{C}^n),
\end{equation}
where
\begin{equation}
\label{**.4}
\mathcal{I}(\varepsilon;\zeta)[\mathbf{F},\boldsymbol{\eta}]:=-\mathfrak{b}_{N,\varepsilon}[K_D(\varepsilon ;\zeta)\mathbf{F},\boldsymbol{\eta}]
+\zeta (Q_0^\varepsilon K_D(\varepsilon;\zeta)\mathbf{F},\boldsymbol{\eta})_{L_2(\mathcal{O})}.
\end{equation}
By \eqref{b mathfrak eps <= c2},
\begin{equation}
\label{**.4a}
\begin{split}
|\mathcal{I}(\varepsilon;\zeta)[\mathbf{F},\boldsymbol{\eta}]| \le
{\mathfrak c}_2 \| K_D(\varepsilon ;\zeta)\mathbf{F}\|_{H^1(\mathcal{O})} \| \boldsymbol{\eta} \|_{H^1(\mathcal{O})}
+ |\zeta| \|Q_0\|_{L_\infty} \| K_D(\varepsilon ;\zeta)\mathbf{F}\|_{L_2(\mathcal{O})} \| \boldsymbol{\eta} \|_{L_2(\mathcal{O})},
\\
\mathbf{F} \in L_2(\mathcal{O};\mathbb{C}^n),\quad  \boldsymbol{\eta}\in H^1_0(\mathcal{O};\mathbb{C}^n).
\end{split}
\end{equation}
Hence, for $\mathbf{F}\in L_2(\mathcal{O};\mathbb{C}^n)$ fixed, relation \eqref{**.4} defines an antilinear continuous functional of $\boldsymbol{\eta}\in H^1_0(\mathcal{O};\mathbb{C}^n)$, which can be identified with an element from $H^{-1}(\mathcal{O};\mathbb{C}^n)$. This element
depends on $\mathbf{F}$ linearly, we denote it by $T(\varepsilon;\zeta)\mathbf{F}$. Thus,
\begin{equation}
\label{**.5}
\mathcal{I}(\varepsilon;\zeta)[\mathbf{F},\boldsymbol{\eta}]=(T(\varepsilon ;\zeta)\mathbf{F},\boldsymbol{\eta})_{L_2(\mathcal{O})},
\quad \mathbf{F} \in L_2(\mathcal{O};\mathbb{C}^n),\quad\boldsymbol{\eta}\in H^1_0(\mathcal{O};\mathbb{C}^n),
\end{equation}
where the right-hand side is understood as extension of the inner product in $L_2$ to pairs from $H^{-1}\times H^1_0$.
From \eqref{**.4a}, \eqref{**.5}, and the continuity of the operator $ K_D(\varepsilon ;\zeta): L_2(\mathcal{O};\mathbb{C}^n)
\to H^1(\mathcal{O};\mathbb{C}^n)$ it follows that the operator
$T(\varepsilon;\zeta) : L_2(\mathcal{O};\mathbb{C}^n)\rightarrow H^{-1}(\mathcal{O};\mathbb{C}^n)$ is continuous.

By \eqref{**.3} and \eqref{**.5}, we have
\begin{equation}
\label{**.6}
\mathbf{r}_\varepsilon =\varepsilon (B_{D,\varepsilon}-\zeta Q_0^\varepsilon)^{-1}T(\varepsilon ;\zeta)\mathbf{F},
\end{equation}
where the generalized resolvent is extended to a continuous operator acting from $H^{-1}(\mathcal{O};\mathbb{C}^n)$ to $H^1_0(\mathcal{O};\mathbb{C}^n)$. Now, by \eqref{**.2} and \eqref{**.6},
\begin{equation*}
\mathbf{w}_\varepsilon (\cdot;\zeta)=\varepsilon \bigl( K_D(\varepsilon ;\zeta)+(B_{D,\varepsilon}-\zeta Q_0^\varepsilon)^{-1}T(\varepsilon;\zeta)\bigr)\mathbf{F},
\end{equation*}
whence (see \eqref{2.46A})
\begin{equation}
\label{W_D(eps;zeta)}
W_D(\varepsilon;\zeta)=K_D(\varepsilon;\zeta)+(B_{D,\varepsilon}-\zeta Q_0^\varepsilon )^{-1}T(\varepsilon ;\zeta).
\end{equation}

The following theorem gives approximation for the solution $\mathbf{u}_\varepsilon$ in $H^1(\mathcal{O};\mathbb{C}^n)$ with error estimate of sharp order
$O(\varepsilon)$; in this approximation, the discrepancy $\mathbf{w}_\varepsilon$ is taken into account.

\begin{theorem}
\label{Theorem with Diriclet corrector}
Suppose that the assumptions of Theorem~\textnormal{\ref{Theorem Dirichlet H1}} are satisfied.
Let $\mathbf{w}_\varepsilon$ be the solution of problem~\eqref{w_eps problem}. Let $W_D(\varepsilon;\zeta)$ be the operator~\eqref{W_D(eps;zeta)}. Then for $\zeta \in \mathbb{C}\setminus \mathbb{R}_+$, $\vert \zeta \vert \geqslant 1$, and $0<\varepsilon\leqslant 1$ we have
\begin{equation}
\label{u_eps - V_eps +W-eps in H1}
\Vert \mathbf{u}_\varepsilon -\mathbf{v}_\varepsilon +\mathbf{w}_\varepsilon \Vert _{H^1(\mathcal{O})}\leqslant C_7 c(\phi)^4\varepsilon \Vert \mathbf{F}\Vert _{L_2(\mathcal{O})}.
\end{equation}
In operator terms,
\begin{equation}
\label{u_eps - V_eps +W-eps in H1 in operator terms}
\begin{split}
\Vert &(B_{D,\varepsilon}-\zeta Q_0^\varepsilon)^{-1}-(B_D^0-\zeta \overline{Q_0})^{-1}-\varepsilon K_D(\varepsilon ;\zeta)+\varepsilon W_D(\varepsilon;\zeta)\Vert _{L_2(\mathcal{O})\rightarrow H^1 (\mathcal{O})}
\leqslant C_7 c(\phi)^4\varepsilon .
\end{split}
\end{equation}
The constant $C_7$ depends only on the initial data~\eqref{problem data} and the domain~$\mathcal{O}$.
\end{theorem}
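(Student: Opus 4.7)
The plan is to exploit the fact that $\boldsymbol{\rho}_\varepsilon:=\mathbf{u}_\varepsilon-\mathbf{v}_\varepsilon+\mathbf{w}_\varepsilon$ lies in $H^1_0(\mathcal{O};\mathbb{C}^n)$, since $\mathbf{w}_\varepsilon$ was built in~\eqref{w_eps problem} precisely to cancel the boundary trace of $\mathbf{v}_\varepsilon$. I would derive an integral identity
\begin{equation*}
\mathfrak{b}_{D,\varepsilon}[\boldsymbol{\rho}_\varepsilon,\boldsymbol{\eta}]-\zeta(Q_0^\varepsilon\boldsymbol{\rho}_\varepsilon,\boldsymbol{\eta})_{L_2(\mathcal{O})}=\Psi_\varepsilon(\boldsymbol{\eta}),\qquad \boldsymbol{\eta}\in H^1_0(\mathcal{O};\mathbb{C}^n),
\end{equation*}
bound $|\Psi_\varepsilon(\boldsymbol{\eta})|\leq C\,c(\phi)^{3}\varepsilon\,\Vert\mathbf{F}\Vert_{L_2(\mathcal{O})}\Vert\boldsymbol{\eta}\Vert_{H^1(\mathcal{O})}$ by reducing to the $\mathbb{R}^d$-result of Theorem~\ref{Theorem H1 Rd}, and then extract the $H^1$-bound on $\boldsymbol{\rho}_\varepsilon$ from the coercivity of $\mathfrak{b}_{D,\varepsilon}-\zeta(Q_0^\varepsilon\,\cdot\,,\cdot)$, which costs one more factor of $c(\phi)$.

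For the identity, I combine the three relations satisfied for every $\boldsymbol{\eta}\in H^1_0(\mathcal{O};\mathbb{C}^n)$: the integral identity $\mathfrak{b}_{D,\varepsilon}[\mathbf{u}_\varepsilon,\boldsymbol{\eta}]-\zeta(Q_0^\varepsilon\mathbf{u}_\varepsilon,\boldsymbol{\eta})=(\mathbf{F},\boldsymbol{\eta})$, the identity~\eqref{w_eps int tozd} for $\mathbf{w}_\varepsilon$, and the homogenized identity $\mathfrak{b}_D^0[\mathbf{u}_0,\boldsymbol{\eta}]-\zeta(\overline{Q_0}\mathbf{u}_0,\boldsymbol{\eta})=(\mathbf{F},\boldsymbol{\eta})$. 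Using $\mathbf{v}_\varepsilon=\mathbf{u}_0+\varepsilon K_D(\varepsilon;\zeta)\mathbf{F}$ and the definition~\eqref{**.4} of $\mathcal{I}(\varepsilon;\zeta)$, a short reorganization produces
\begin{equation*}
\Psi_\varepsilon(\boldsymbol{\eta})=\bigl(\mathfrak{b}_D^0[\mathbf{u}_0,\boldsymbol{\eta}]-\mathfrak{b}_{N,\varepsilon}[\mathbf{u}_0,\boldsymbol{\eta}]\bigr)+\zeta\bigl((Q_0^\varepsilon-\overline{Q_0})\mathbf{u}_0,\boldsymbol{\eta}\bigr)_{L_2(\mathcal{O})}-\varepsilon\,\mathcal{I}(\varepsilon;\zeta)[\mathbf{F},\boldsymbol{\eta}].
\end{equation*}

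To estimate $\Psi_\varepsilon$, I extend $\boldsymbol{\eta}$ by zero to $\mathbb{R}^d$ (preserving its $H^1$-norm, since it belongs to $H^1_0(\mathcal{O};\mathbb{C}^n)$) and recognize $\Psi_\varepsilon(\boldsymbol{\eta})$ as the $H^{-1}(\mathbb{R}^d)\times H^{1}(\mathbb{R}^d)$ pairing between $\boldsymbol{\eta}$ and the residual $(B^0-\zeta\overline{Q_0})\widetilde{\mathbf{u}}_0-(B_\varepsilon-\zeta Q_0^\varepsilon)\widetilde{\mathbf{v}}_\varepsilon$, where $\widetilde{\mathbf{v}}_\varepsilon$ is defined in~\eqref{v_eps}. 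This residual is exactly the quantity controlled (in dual form) by Theorem~\ref{Theorem H1 Rd} applied to $(B^0-\zeta\overline{Q_0})\widetilde{\mathbf{u}}_0$, which produces a $c(\phi)^{2}\varepsilon$-factor; combining with the $H^2$-bound $\Vert\widetilde{\mathbf{u}}_0\Vert_{H^2(\mathbb{R}^d)}\leq C_{\mathcal{O}}^{(2)}\mathcal{C}_2 c(\phi)\Vert\mathbf{F}\Vert_{L_2(\mathcal{O})}$ coming from Lemma~\ref{Lemma hom problem estimates} and~\eqref{PO} gives the claimed $c(\phi)^{3}\varepsilon$ estimate for $\Psi_\varepsilon$.

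Finally, setting $\boldsymbol{\eta}=\boldsymbol{\rho}_\varepsilon$ in the integral identity and taking an appropriate linear combination of real and imaginary parts (as in the proof of Lemma~\ref{Lemma estimates u_D,eps}), together with~\eqref{b_eps >=} and the Friedrichs inequality, yields $\Vert\boldsymbol{\rho}_\varepsilon\Vert_{H^1(\mathcal{O})}\leq C c(\phi)\sup_{\Vert\boldsymbol{\eta}\Vert_{H^1}\leq 1}|\Psi_\varepsilon(\boldsymbol{\eta})|$, and the two bounds combine to~\eqref{u_eps - V_eps +W-eps in H1}. The main obstacle is the identification step in the duality argument: one must verify that the discrepancy between $\mathfrak{b}_D^0[\mathbf{u}_0,\cdot]$ and $\mathfrak{b}_{N,\varepsilon}[\mathbf{u}_0,\cdot]$ (which a~priori is only $O(1)$) is precisely absorbed---after the $\varepsilon K_D$ correction built into $\widetilde{\mathbf{v}}_\varepsilon$---into the $O(\varepsilon)$ residual of the $\mathbb{R}^d$-problem, and to track the interaction of the Steklov smoothing $S_\varepsilon$ and the extension $P_\mathcal{O}$ near $\partial\mathcal{O}$, which is precisely what Condition~\ref{condition varepsilon} is designed to handle.
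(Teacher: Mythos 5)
Your overall strategy coincides with the paper's: both proofs work with $\mathbf{V}_\varepsilon=\mathbf{u}_\varepsilon-\mathbf{v}_\varepsilon+\mathbf{w}_\varepsilon\in H^1_0(\mathcal{O};\mathbb{C}^n)$, observe that the weak formulation reduces the right-hand side to the residual of the whole-space first order approximation (your $\Psi_\varepsilon(\boldsymbol{\eta})$ is exactly the paper's $I_\varepsilon[\boldsymbol{\eta}]=\mathfrak{b}_\varepsilon[\widetilde{\mathbf{u}}_\varepsilon-\widetilde{\mathbf{v}}_\varepsilon,\boldsymbol{\eta}]-\zeta(Q_0^\varepsilon(\widetilde{\mathbf{u}}_\varepsilon-\widetilde{\mathbf{v}}_\varepsilon),\boldsymbol{\eta})_{L_2(\mathbb{R}^d)}$, once one uses $\widetilde{\mathbf{F}}=(B_\varepsilon-\zeta Q_0^\varepsilon)\widetilde{\mathbf{u}}_\varepsilon$), estimate that residual by Theorem~\ref{Theorem H1 Rd}, and close by a coercivity argument based on real and imaginary parts.

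One step, however, does not survive as you wrote it: the claimed bound $|\Psi_\varepsilon(\boldsymbol{\eta})|\leqslant C\,c(\phi)^3\varepsilon\Vert\mathbf{F}\Vert_{L_2(\mathcal{O})}\Vert\boldsymbol{\eta}\Vert_{H^1(\mathcal{O})}$ is not uniform in $|\zeta|\geqslant 1$. The term $-\zeta(Q_0^\varepsilon(\widetilde{\mathbf{u}}_\varepsilon-\widetilde{\mathbf{v}}_\varepsilon),\boldsymbol{\eta})$ contributes $|\zeta|\cdot O\bigl(c(\phi)^3\varepsilon|\zeta|^{-1/2}\bigr)\Vert\boldsymbol{\eta}\Vert_{L_2}=O\bigl(c(\phi)^3\varepsilon|\zeta|^{1/2}\bigr)\Vert\boldsymbol{\eta}\Vert_{L_2}$, because the $L_2$-estimate for $\widetilde{\mathbf{u}}_\varepsilon-\widetilde{\mathbf{v}}_\varepsilon$ gains only $|\zeta|^{-1/2}$. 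Hence the chain ``$\Vert\mathbf{V}_\varepsilon\Vert_{H^1}\leqslant C c(\phi)\sup_{\Vert\boldsymbol{\eta}\Vert_{H^1}\leqslant1}|\Psi_\varepsilon(\boldsymbol{\eta})|$ together with $\sup|\Psi_\varepsilon|\leqslant Cc(\phi)^3\varepsilon\Vert\mathbf{F}\Vert$'' would leave an extra $|\zeta|^{1/2}$ for large $|\zeta|$. The paper instead keeps the two-term estimate \eqref{|Ieps|<=} (an $\Vert\boldsymbol{\eta}\Vert_{H^1}$-part of size $c(\phi)^3\varepsilon$ and an $\Vert(Q_0^\varepsilon)^{1/2}\boldsymbol{\eta}\Vert_{L_2}$-part of size $c(\phi)^3\varepsilon|\zeta|^{1/2}$) and, in the imaginary/real-part step, first derives $(Q_0^\varepsilon\mathbf{V}_\varepsilon,\mathbf{V}_\varepsilon)\leqslant C c(\phi)^4\varepsilon|\zeta|^{-1}\Vert\mathbf{V}_\varepsilon\Vert_{H^1}\Vert\mathbf{F}\Vert+Cc(\phi)^8\varepsilon^2|\zeta|^{-1}\Vert\mathbf{F}\Vert^2$; the factor $|\zeta|^{-1}$ there exactly cancels the $|\zeta|^{1/2}$ when this is fed back into the form. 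So the real/imaginary-part argument must be run on the two-term estimate, not on a single dual-norm bound. Two minor further points: the sign in front of $\varepsilon\,\mathcal{I}(\varepsilon;\zeta)$ in your formula for $\Psi_\varepsilon$ should be $+$, and Condition~\ref{condition varepsilon} plays no role in this theorem (it is stated for all $0<\varepsilon\leqslant 1$); the boundary-layer analysis near $\partial\mathcal{O}$ enters only later, when $\mathbf{w}_\varepsilon$ itself is estimated.
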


\section{Auxiliary statements}
\label{Section Lemmas}

\subsection{Estimates in the neighborhood of the boundary}

\begin{lemma}
\label{lemma ots int O_eps B_eps}
Suppose that Condition~\textnormal{\ref{condition varepsilon}} is satisfied.
Then for any $u\in H^1(\mathbb{R}^d)$ we have
\begin{equation*}
\int _{(\partial \mathcal{O})_\varepsilon }\vert u\vert ^2 \,d\mathbf{x}\leqslant \beta\varepsilon \Vert u\Vert _{H^1(\mathbb{R}^d)}\Vert u\Vert _{L_2(\mathbb{R}^d)},\quad 0<\varepsilon\leqslant\varepsilon _0.
\end{equation*}
The constant $\beta$ depends only on the domain $\mathcal{O}$.
\end{lemma}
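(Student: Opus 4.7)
The plan is to reduce this boundary-layer estimate to a one-dimensional trace-type inequality by flattening the boundary on local charts. First, using Condition~\ref{condition varepsilon}, I would pick a finite cover $U_1,\dots,U_N$ of the strip $(\partial\mathcal{O})_{\varepsilon_0}$ together with bi-Lipschitz diffeomorphisms $\Phi_j:U_j\to V_j\subset\mathbb{R}^d$ rectifying $\partial\mathcal{O}$, and fix a subordinate partition of unity $\{\chi_j\}_{j=1}^N$. For $\varepsilon\leq\varepsilon_0$ we have $(\partial\mathcal{O})_\varepsilon\subset(\partial\mathcal{O})_{\varepsilon_0}$, hence
\begin{equation*}
\int_{(\partial\mathcal{O})_\varepsilon}|u|^2\,d\mathbf{x}\le\sum_{j=1}^N\int_{U_j\cap(\partial\mathcal{O})_\varepsilon}\chi_j|u|^2\,d\mathbf{x}.
\end{equation*}
After applying $\Phi_j$ each summand becomes an integral of $|\tilde u_j|^2$, where $\tilde u_j=(\chi_j^{1/2}u)\circ\Phi_j^{-1}$, over a subset of the flat strip $\{\mathbf{y}=(\mathbf{y}',y_d)\in V_j:|y_d|<\kappa_j\varepsilon\}$, with $\kappa_j$ depending only on the Lipschitz constants of $\Phi_j$ and $\Phi_j^{-1}$. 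Extending $\tilde u_j$ by zero outside $V_j$ yields a function in $H^1(\mathbb{R}^d)$ whose norms are bounded by $\|u\|_{H^1(\mathbb{R}^d)}$ and $\|u\|_{L_2(\mathbb{R}^d)}$ respectively up to constants depending only on $\mathcal{O}$.

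The heart of the matter is the one-dimensional estimate: for any $v\in H^1(\mathbb{R})$,
\begin{equation*}
\int_{|t|<\delta}|v(t)|^2\,dt\le 2\delta\,\|v\|_{L_\infty(\mathbb{R})}^2\le 2\sqrt{2}\,\delta\,\|v\|_{L_2(\mathbb{R})}^{1/2}\|v\|_{L_2(\mathbb{R})}^{1/2}\|v'\|_{L_2(\mathbb{R})}\cdot(\cdots),
\end{equation*}
more cleanly, using the standard Sobolev-type bound $\|v\|_{L_\infty(\mathbb{R})}^2\le 2\|v\|_{L_2(\mathbb{R})}\|v'\|_{L_2(\mathbb{R})}$, one obtains
\begin{equation*}
\int_{|t|<\delta}|v(t)|^2\,dt\le 4\delta\,\|v\|_{L_2(\mathbb{R})}\|v\|_{H^1(\mathbb{R})}.
\end{equation*}
Applying this with $v(t)=\tilde u_j(\mathbf{y}',t)$ and $\delta=\kappa_j\varepsilon$, then integrating over $\mathbf{y}'$ and using the Cauchy-Schwarz inequality together with Fubini identities
\begin{equation*}
\int_{\mathbb{R}^{d-1}}\|\tilde u_j(\mathbf{y}',\cdot)\|_{L_2(\mathbb{R})}^2\,d\mathbf{y}'=\|\tilde u_j\|_{L_2(\mathbb{R}^d)}^2,\qquad \int_{\mathbb{R}^{d-1}}\|\tilde u_j(\mathbf{y}',\cdot)\|_{H^1(\mathbb{R})}^2\,d\mathbf{y}'\le\|\tilde u_j\|_{H^1(\mathbb{R}^d)}^2,
\end{equation*}
gives the bound $\int|\tilde u_j|^2\le C_j\varepsilon\,\|\tilde u_j\|_{L_2(\mathbb{R}^d)}\|\tilde u_j\|_{H^1(\mathbb{R}^d)}$. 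Summing over $j$ and converting back to norms of $u$ yields the claim.

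The only real obstacle is bookkeeping the Lipschitz change of variables: one has to verify that the bi-Lipschitz diffeomorphisms $\Phi_j$ map the tubular neighborhood $U_j\cap(\partial\mathcal{O})_\varepsilon$ into a flat strip whose width is controlled by $\kappa_j\varepsilon$ (which follows because both $\Phi_j$ and $\Phi_j^{-1}$ are Lipschitz and $\partial\mathcal{O}\cap U_j$ is mapped to a flat piece), and that pullback by $\Phi_j^{-1}$ preserves the $L_2$ and $H^1$ norms up to multiplicative constants depending only on $\mathcal{O}$. These are standard facts; all constants accumulated in the process depend solely on the Lipschitz constants of the charts and on the partition of unity, hence only on $\mathcal{O}$, which yields the stated $\beta$.
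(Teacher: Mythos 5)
Your argument is correct and is essentially the standard proof: the paper itself does not reprove this lemma but refers to \cite[\S 5]{PSu}, where the same strategy is used — localize with a partition of unity, rectify the boundary by the Lipschitz charts guaranteed by Condition~\ref{condition varepsilon}, and apply the one-dimensional interpolation bound $\|v\|_{L_\infty(\mathbb{R})}^2\leqslant 2\|v\|_{L_2(\mathbb{R})}\|v'\|_{L_2(\mathbb{R})}$ on the flattened strip, then undo the change of variables. The bookkeeping points you flag (width of the image strip, norm equivalence under bi-Lipschitz pullback) are indeed routine and all constants depend only on $\mathcal{O}$, as required.
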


\begin{lemma}
\label{Lemma 3.6 from Su15}
Suppose that Condition~\textnormal{\ref{condition varepsilon}} is satisfied.
Let $h(\mathbf{x})$ be a $\Gamma$-periodic function in~$\mathbb{R}^d$ such that $h\in L_2(\Omega)$.
Let $S_\varepsilon$ be the operator~\eqref{S_eps}. Denote~$\beta _* :=\beta (1+r_1)$, where $2r_1=\mathrm{diam}\,\Omega$.
Then for $0<\varepsilon\leqslant\varepsilon _1$ and any $\mathbf{u}\in H^1(\mathbb{R}^d;\mathbb{C}^k)$ we have
\begin{equation*}
\int _{(\partial\mathcal{O})_\varepsilon}\vert h^\varepsilon (\mathbf{x})\vert ^2 \vert (S_\varepsilon \mathbf{u})(\mathbf{x})\vert ^2\,d\mathbf{x}
\leqslant \beta _*\varepsilon\vert \Omega\vert ^{-1}\Vert h\Vert ^2_{L_2(\Omega)}\Vert \mathbf{u}\Vert _{H^1(\mathbb{R}^d)}\Vert \mathbf{u}\Vert _{L_2(\mathbb{R}^d)}.
\end{equation*}
\end{lemma}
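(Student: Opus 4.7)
The plan is to combine a Cauchy--Schwarz estimate for the Steklov average with a change of variables that exploits periodicity of $h$, and then reduce matters to the preceding Lemma~\ref{lemma ots int O_eps B_eps} on a slightly enlarged boundary strip. This is the same pattern used to prove Proposition~\ref{Proposition f^eps S_eps}, but with the crucial additional bookkeeping needed to control what happens under the shift of the boundary strip.

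First I would apply Cauchy--Schwarz inside the definition \eqref{S_eps}:
\begin{equation*}
|(S_\varepsilon \mathbf{u})(\mathbf{x})|^2 \leq |\Omega|^{-1}\int_\Omega |\mathbf{u}(\mathbf{x}-\varepsilon\mathbf{z})|^2\,d\mathbf{z}.
\end{equation*}
Substituting this into the left-hand side of the lemma and using Fubini, the estimate to prove reduces to controlling
\begin{equation*}
|\Omega|^{-1}\int_\Omega d\mathbf{z}\int_{(\partial\mathcal{O})_\varepsilon} |h^\varepsilon(\mathbf{x})|^2|\mathbf{u}(\mathbf{x}-\varepsilon\mathbf{z})|^2\,d\mathbf{x}.
\end{equation*}
I then make the change of variable $\mathbf{y}=\mathbf{x}-\varepsilon\mathbf{z}$ for each fixed $\mathbf{z}$. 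Since $\Omega$ is centered at the origin with $\mathrm{diam}\,\Omega = 2r_1$, we have $|\varepsilon\mathbf{z}|\leq \varepsilon r_1$ for $\mathbf{z}\in\Omega$, and therefore the translated set satisfies $(\partial\mathcal{O})_\varepsilon - \varepsilon\mathbf{z}\subset (\partial\mathcal{O})_{\varepsilon(1+r_1)}$. By the definition $\varepsilon_1 = \varepsilon_0(1+r_1)^{-1}$, the enlarged strip $(\partial\mathcal{O})_{\varepsilon(1+r_1)}$ is still contained in $(\partial\mathcal{O})_{\varepsilon_0}$, which is precisely what is needed to apply Lemma~\ref{lemma ots int O_eps B_eps}.

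After the change of variables, and replacing $h^\varepsilon(\mathbf{y}+\varepsilon\mathbf{z})=h(\mathbf{y}/\varepsilon+\mathbf{z})$, I would enlarge the domain of the $\mathbf{y}$-integral to $(\partial\mathcal{O})_{\varepsilon(1+r_1)}$, swap the order of integration again, and compute the inner integral using $\Gamma$-periodicity of $h$:
\begin{equation*}
\int_\Omega |h(\mathbf{y}/\varepsilon+\mathbf{z})|^2\,d\mathbf{z} = \int_\Omega |h(\mathbf{z})|^2\,d\mathbf{z} = \|h\|^2_{L_2(\Omega)}.
\end{equation*}
This yields the intermediate bound
\begin{equation*}
\int_{(\partial\mathcal{O})_\varepsilon} |h^\varepsilon|^2 |S_\varepsilon\mathbf{u}|^2\,d\mathbf{x}\leq |\Omega|^{-1}\|h\|^2_{L_2(\Omega)}\int_{(\partial\mathcal{O})_{\varepsilon(1+r_1)}}|\mathbf{u}(\mathbf{y})|^2\,d\mathbf{y}.
\end{equation*}

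To finish, I would apply Lemma~\ref{lemma ots int O_eps B_eps} to the $\mathbf{u}$-integral with parameter $\varepsilon(1+r_1)\leq\varepsilon_0$, giving a factor $\beta\,\varepsilon(1+r_1)\,\|\mathbf{u}\|_{H^1(\mathbb{R}^d)}\|\mathbf{u}\|_{L_2(\mathbb{R}^d)}=\beta_*\,\varepsilon\,\|\mathbf{u}\|_{H^1(\mathbb{R}^d)}\|\mathbf{u}\|_{L_2(\mathbb{R}^d)}$, which is exactly the asserted bound. The only non-routine step is the verification that translations by $\varepsilon\mathbf{z}$, $\mathbf{z}\in\Omega$, enlarge the strip only by a factor $(1+r_1)$; this is what dictates the choice of $\varepsilon_1$ in Condition~\ref{condition varepsilon} and is precisely what makes the argument close up. All other steps are routine applications of Cauchy--Schwarz, Fubini, and periodicity.
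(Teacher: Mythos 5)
Your proof is correct and follows the standard route that the paper defers to (citing [PSu, \S 5] and the analogous Lemma 2.6 of [ZhPas]): Cauchy--Schwarz on the Steklov average, Fubini, the translation $\mathbf{y}=\mathbf{x}-\varepsilon\mathbf{z}$ together with $|\mathbf{z}|\leq r_1$ for $\mathbf{z}\in\Omega$ (by central symmetry of the cell) to enlarge the strip to $(\partial\mathcal{O})_{\varepsilon(1+r_1)}$, periodicity of $|h|^2$ to evaluate the inner $\mathbf{z}$-integral as $\|h\|^2_{L_2(\Omega)}$, and finally Lemma~\ref{lemma ots int O_eps B_eps} applied at parameter $\varepsilon(1+r_1)\leq\varepsilon_0$. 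Every step, including the identification of the constant $\beta_*=\beta(1+r_1)$ and the role of $\varepsilon_1=\varepsilon_0(1+r_1)^{-1}$, checks out.
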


Lemma \ref{Lemma 3.6 from Su15} is an analogue of Lemma~2.6 from~\cite{ZhPas}.
Lemmas \ref{lemma ots int O_eps B_eps} and \ref{Lemma 3.6 from Su15} were checked in~\cite[\S5]{PSu} under the condition $\partial \O \in C^1$,
but the proofs work also under Condition~\ref{condition varepsilon}.

\subsection{Properties of the matrix-valued functions $\Lambda$ and $\widetilde{\Lambda}$}

The following result was proved in \cite[Corollary~2.4]{PSu}.

\begin{lemma}
\label{Lemma Lambda in L infty}
Suppose that the $\Gamma$-periodic solution $\Lambda (\mathbf{x})$ of problem~\eqref{Lambda problem} is bounded: $\Lambda\in L_\infty$.
Then for any function $u\in H^1(\mathbb{R}^d)$ and $\varepsilon >0$ we have
\begin{equation*}
\int _{\mathbb{R}^d}\vert (\mathbf{D}\Lambda )^\varepsilon (\mathbf{x})\vert ^2\vert u(\mathbf{x})\vert ^2\,d\mathbf{x}
\leqslant \beta _1 \Vert u\Vert ^2 _{L_2(\mathbb{R}^d)}
+\beta _2 \varepsilon ^2\Vert \Lambda \Vert ^2_{L_\infty}\Vert \mathbf{D}u\Vert ^2 _{L_2(\mathbb{R}^d)}.
\end{equation*}
The constants $\beta _1$ and $\beta _2$ depend on $m$, $d$, $\alpha _0$, $\alpha _1$, $\Vert g\Vert _{L_\infty}$, and $\Vert g^{-1}\Vert _{L_\infty}$.
\end{lemma}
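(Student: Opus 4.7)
The plan is to reduce the claim to the case $\varepsilon=1$ by rescaling, and then combine the ellipticity inequality $\alpha_0\Vert\mathbf{D}\mathbf{v}\Vert^2_{L_2(\mathbb{R}^d)}\leqslant\Vert b(\mathbf{D})\mathbf{v}\Vert^2_{L_2(\mathbb{R}^d)}$ on $H^1(\mathbb{R}^d;\mathbb{C}^n)$ (a consequence of~\eqref{<b^*b<} via Plancherel) with the equation~\eqref{Lambda problem} tested against the admissible test function $|v|^2\Lambda_j$. Setting $\mathbf{y}=\mathbf{x}/\varepsilon$ and $v(\mathbf{y}):=u(\varepsilon\mathbf{y})$, one has $\int_{\mathbb{R}^d}|(\mathbf{D}\Lambda)^\varepsilon|^2|u|^2\,d\mathbf{x}=\varepsilon^d\int_{\mathbb{R}^d}|\mathbf{D}\Lambda|^2|v|^2\,d\mathbf{y}$, $\Vert v\Vert^2_{L_2(\mathbb{R}^d)}=\varepsilon^{-d}\Vert u\Vert^2_{L_2(\mathbb{R}^d)}$, and $\Vert\mathbf{D}_{\mathbf{y}}v\Vert^2_{L_2(\mathbb{R}^d)}=\varepsilon^{2-d}\Vert\mathbf{D}u\Vert^2_{L_2(\mathbb{R}^d)}$, so the general case is equivalent to $\varepsilon=1$; by density it further suffices to treat $v\in C_0^\infty(\mathbb{R}^d)$.

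Next, I would argue column-by-column on the $n\times m$-matrix $\Lambda=(\Lambda_1,\dots,\Lambda_m)$. Each column $\Lambda_j$ is $\Gamma$-periodic and belongs to $\widetilde{H}^1(\Omega;\mathbb{C}^n)\cap L_\infty$, so $\Lambda_j v$ lies in $H^1(\mathbb{R}^d;\mathbb{C}^n)$. Applying the ellipticity inequality to $\Lambda_j v$, expanding $\mathbf{D}(\Lambda_j v)=(\mathbf{D}\Lambda_j)v+\Lambda_j\mathbf{D}v$ and $b(\mathbf{D})(\Lambda_j v)=(b(\mathbf{D})\Lambda_j)v+\sum_{k=1}^d b_k\Lambda_j D_k v$ by the product rule, and using the elementary bound $\Vert A+B\Vert^2\geqslant\tfrac12\Vert A\Vert^2-\Vert B\Vert^2$ together with $|b_k|\leqslant\alpha_1^{1/2}$, I arrive at
\begin{equation*}
\tfrac{\alpha_0}{2}\int_{\mathbb{R}^d}|\mathbf{D}\Lambda_j|^2|v|^2\,d\mathbf{y}\leqslant 2\int_{\mathbb{R}^d}|b(\mathbf{D})\Lambda_j|^2|v|^2\,d\mathbf{y}+C(\alpha_0,\alpha_1,d)\Vert\Lambda\Vert^2_{L_\infty}\Vert\mathbf{D}v\Vert^2_{L_2(\mathbb{R}^d)}.
\end{equation*}
The task thus reduces to controlling $\int|b(\mathbf{D})\Lambda_j|^2|v|^2\,d\mathbf{y}$.

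The heart of the proof is to extract such a bound from~\eqref{Lambda problem}. Since $\Lambda$ and $g$ are $\Gamma$-periodic, the equation $b(\mathbf{D})^*g(b(\mathbf{D})\Lambda_j+\mathbf{e}_j)=0$ holds distributionally on all of $\mathbb{R}^d$ (this is obtained from the cell formulation by periodizing the test function), and may be paired with the compactly supported $H^1$-function $|v|^2\Lambda_j$, which is admissible because $\Lambda\in L_\infty$ and $v\in C_0^\infty$. Expanding $b(\mathbf{D})(|v|^2\Lambda_j)=|v|^2 b(\mathbf{D})\Lambda_j+\sum_k(D_k|v|^2)b_k\Lambda_j$, bounding the resulting energy term $\int|v|^2\langle g\,b(\mathbf{D})\Lambda_j,b(\mathbf{D})\Lambda_j\rangle\,d\mathbf{y}$ from below by $\Vert g^{-1}\Vert^{-1}_{L_\infty}\int|b(\mathbf{D})\Lambda_j|^2|v|^2$, and applying Cauchy--Schwarz, the pointwise bound $|D_k|v|^2|\leqslant 2|v||\mathbf{D}v|$, and Young's inequality with a sufficiently small parameter to absorb all remaining $\int|b(\mathbf{D})\Lambda_j|^2|v|^2$-contributions from the right-hand side into the left, one obtains
\begin{equation*}
\int_{\mathbb{R}^d}|b(\mathbf{D})\Lambda_j|^2|v|^2\,d\mathbf{y}\leqslant C_1'\Vert v\Vert^2_{L_2(\mathbb{R}^d)}+C_2'\Vert\Lambda\Vert^2_{L_\infty}\Vert\mathbf{D}v\Vert^2_{L_2(\mathbb{R}^d)},
\end{equation*}
with constants $C_1',C_2'$ depending only on $d,\alpha_0,\alpha_1,\Vert g\Vert_{L_\infty},\Vert g^{-1}\Vert_{L_\infty}$. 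Combining with the previous display, summing over $j=1,\dots,m$, and undoing the rescaling yields the lemma. I expect the main technical point to be the careful choice of the small parameter in Young's inequality so that the absorption into the left-hand side is genuine and the final constants $\beta_1,\beta_2$ depend only on the quantities claimed; this is elementary but must be tracked with care.
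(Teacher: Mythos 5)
Your proof is correct. There is no in-paper argument to compare with: the paper quotes this statement from Corollary~2.4 of \cite{PSu}, and your derivation is essentially the standard proof behind that citation — reduce to $\varepsilon=1$ by scaling, apply the ellipticity bound $\alpha_0\Vert\mathbf{D}w\Vert^2_{L_2(\mathbb{R}^d)}\leqslant\Vert b(\mathbf{D})w\Vert^2_{L_2(\mathbb{R}^d)}$ to $w=\Lambda_j v$, and control $\int|b(\mathbf{D})\Lambda_j|^2|v|^2\,d\mathbf{y}$ by the Caccioppoli-type estimate obtained from testing the (periodized) cell equation \eqref{Lambda problem} with $|v|^2\Lambda_j$, which is precisely where $\Lambda\in L_\infty$ enters. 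The absorption goes through with, say, $\delta=\tfrac14\Vert g^{-1}\Vert^{-1}_{L_\infty}$ for each Young step, and all resulting constants depend only on $m$, $d$, $\alpha_0$, $\alpha_1$, $\Vert g\Vert_{L_\infty}$, $\Vert g^{-1}\Vert_{L_\infty}$, as the lemma requires; the only details worth spelling out are the a.e.-subsequence/Fatou passage in the density step and the periodization argument that upgrades the cell identity to compactly supported test functions on $\mathbb{R}^d$, both routine and correctly indicated in your sketch.
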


The following statement can be easily checked with the help of the H\"older inequality and the Sobolev embedding theorem; cf. \cite[Lemma~3.5]{MSu15}.

\begin{lemma}
\label{Lemma Lambda in Lp H1->L2}
\label{Lemma a embedding thm}
Let $h(\mathbf{x})$ be a $\Gamma$-periodic function in~$\mathbb{R}^d$ such that
\begin{equation}
\label{f condition lemma MSu}
h\in L_p(\Omega),\quad p=2\ \mbox{for}\ d=1,\quad p>2\ \mbox{for}\  d= 2,\quad p\geqslant d\ \mbox{for}\  d\geqslant 3.
\end{equation}
Then for $0<\varepsilon\leqslant 1$ the operator $[h^\varepsilon ]$ is a continuous mapping of $H^1(\mathbb{R}^d)$ to $L_2(\mathbb{R}^d)$, and
$$
\Vert [h^\varepsilon ]\Vert _{H^1(\mathbb{R}^d)\rightarrow L_2(\mathbb{R}^d)}\leqslant \Vert h\Vert _{L_p(\Omega)}C(\widehat{q},\Omega),
$$
where $C(\widehat{q},\Omega)$ is the norm of the embedding $H^1(\Omega)\hookrightarrow L_{\widehat{q}}(\Omega)$.
Here $\widehat{q}=\infty$ for $d=1$ and $\widehat{q}=2p(p-2)^{-1}$ for $d\geqslant 2$.
\end{lemma}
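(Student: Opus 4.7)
\smallskip

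\noindent\textbf{Proof plan.} The statement is an unweighted multiplier estimate, so the natural approach is to tile $\mathbb{R}^d$ by translates of the scaled cell and reduce to a single-cell computation. Specifically, I would write
\begin{equation*}
\int_{\mathbb{R}^d} |h^\varepsilon(\mathbf{x})|^2 |u(\mathbf{x})|^2\,d\mathbf{x} = \sum_{\mathbf{a}\in\Gamma}\int_{\varepsilon(\Omega+\mathbf{a})} |h^\varepsilon(\mathbf{x})|^2 |u(\mathbf{x})|^2\,d\mathbf{x},
\end{equation*}
and bound each piece by $\|h\|_{L_p(\Omega)}^2 C(\widehat{q},\Omega)^2 \|u\|_{H^1(\varepsilon(\Omega+\mathbf{a}))}^2$, at which point summation over $\mathbf{a}\in\Gamma$ yields the claim.

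On each cell I would apply H\"older's inequality with exponents $p/2$ and $\widehat{q}/2 = p/(p-2)$ (for $d\ge 2$) to obtain
\begin{equation*}
\int_{\varepsilon(\Omega+\mathbf{a})} |h^\varepsilon|^2|u|^2\,d\mathbf{x}
\le \Bigl(\int_{\varepsilon(\Omega+\mathbf{a})}|h^\varepsilon|^p\,d\mathbf{x}\Bigr)^{2/p}
\Bigl(\int_{\varepsilon(\Omega+\mathbf{a})}|u|^{\widehat{q}}\,d\mathbf{x}\Bigr)^{2/\widehat{q}}.
\end{equation*}
By the change of variable $\mathbf{y}=\mathbf{x}/\varepsilon$ and the $\Gamma$-periodicity of $h$, the first factor equals $\varepsilon^{2d/p}\|h\|_{L_p(\Omega)}^2$. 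For the second factor I would pass to $v(\mathbf{y}):=u(\varepsilon\mathbf{y})$ on $\Omega+\mathbf{a}$ and invoke the (translation-invariant) Sobolev embedding $H^1(\Omega+\mathbf{a})\hookrightarrow L_{\widehat{q}}(\Omega+\mathbf{a})$ with constant $C(\widehat{q},\Omega)$, giving
\begin{equation*}
\|u\|_{L_{\widehat{q}}(\varepsilon(\Omega+\mathbf{a}))}^2
\le C(\widehat{q},\Omega)^2\, \varepsilon^{2d/\widehat{q}-d}
\bigl(\|u\|_{L_2(\varepsilon(\Omega+\mathbf{a}))}^2+\varepsilon^2\|\mathbf{D}u\|_{L_2(\varepsilon(\Omega+\mathbf{a}))}^2\bigr).
\end{equation*}

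The key numerical check, and the only place where anything could go wrong, is that the $\varepsilon$-powers cancel exactly: since $1/\widehat{q}=1/2-1/p$, one has $2d/p+2d/\widehat{q}-d=0$, so the product of the two factors is controlled by $\|h\|_{L_p(\Omega)}^2 C(\widehat{q},\Omega)^2\|u\|_{H^1(\varepsilon(\Omega+\mathbf{a}))}^2$ (here one uses $\varepsilon^2\le 1$ to drop the $\varepsilon^2$ in front of the gradient term). Summing over $\mathbf{a}\in\Gamma$ yields the asserted inequality. The case $d=1$, $p=2$, $\widehat{q}=\infty$ is handled identically using the embedding $H^1(\Omega+a)\hookrightarrow L_\infty(\Omega+a)$ in place of the Sobolev inequality; the same scaling bookkeeping again gives the $\varepsilon$-independent bound. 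The hypothesis \eqref{f condition lemma MSu} on $p$ in each dimension is exactly what guarantees $\widehat{q}$ lies in the admissible range of the Sobolev embedding (with $\widehat{q}\le 2d/(d-2)$ for $d\ge 3$, any finite $\widehat{q}$ for $d=2$, and $\widehat{q}=\infty$ for $d=1$); this is the only place where the restriction on $p$ enters the argument.
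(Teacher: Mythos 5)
Your argument is correct, and it is exactly the route the paper has in mind: the lemma is stated there without details, with the remark that it "can be easily checked with the help of the H\"older inequality and the Sobolev embedding theorem" (cf. Lemma~3.5 of \cite{MSu15}), which is precisely your cell-by-cell H\"older estimate with exponents $p/2$ and $\widehat{q}/2$, the scaled Sobolev embedding on the unit cell, and the cancellation of the $\varepsilon$-powers coming from $1/\widehat{q}=1/2-1/p$ (using $\varepsilon\leqslant 1$ to absorb the gradient factor). No gaps.
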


The following result was proved in \cite[Corollary~3.6]{MSu15}.

\begin{lemma}
\label{Lemma DLambda, Lambda in Lp}
Suppose that the $\Gamma$-periodic solution $\widetilde{\Lambda}(\mathbf{x})$ of problem \eqref{tildeLambda_problem}
satisfies condition~\eqref{f condition lemma MSu}.
Then for any $u\in H^2(\mathbb{R}^d)$ and $0<\varepsilon\leqslant 1$ we have
\begin{equation*}
\int _{\mathbb{R}^d}\vert (\mathbf{D}\widetilde{\Lambda})^\varepsilon(\mathbf{x})\vert ^2 \vert u(\mathbf{x})\vert ^2\,d\mathbf{x}
\leqslant
\widetilde{\beta}_1\Vert u\Vert ^2 _{H^1(\mathbb{R}^d)}
+\widetilde{\beta}_2\varepsilon ^2 \Vert \widetilde{\Lambda}\Vert _{L_p(\Omega)}^2 C(\widehat{q},\Omega)^2\Vert \mathbf{D}u\Vert ^2 _{H^1(\mathbb{R}^d)}.
\end{equation*}
Here $\widehat{q}$ is as in Lemma~\textnormal{\ref{Lemma Lambda in Lp H1->L2}}. The constants $\widetilde{\beta}_1$ and $\widetilde{\beta}_2$ depend only on $n$, $d$, $\alpha _0$, $\alpha _1$, $\rho$, $\Vert g\Vert _{L_\infty}$, $\Vert g^{-1}\Vert _{L_\infty}$, the norms $\Vert a_j\Vert _{L_\rho (\Omega)}$, $j=1,\dots,d$, and the parameters of the lattice~$\Gamma$.
\end{lemma}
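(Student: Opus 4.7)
The claim is the $\widetilde\Lambda$-analog of Lemma~\ref{Lemma Lambda in L infty}, proved in the same spirit but with the $L_\infty$-bound on $\Lambda$ replaced by the $L_p$-multiplier bound of Lemma~\ref{Lemma a embedding thm}. The plan is: first, to split $|(\mathbf{D}\widetilde\Lambda)^\varepsilon|^2|u|^2$ by a Leibniz-type identity into an easy $|\widetilde\Lambda^\varepsilon|^2|\mathbf{D}u|^2$-piece and a hard $|\mathbf{D}(\widetilde\Lambda^\varepsilon u)|^2$-piece; and then to close the hard piece by combining the strong ellipticity of $b(\mathbf{D})$ with an energy identity coming from the defining equation~\eqref{tildeLambda_problem} for $\widetilde\Lambda$, tested against $\widetilde\Lambda^\varepsilon|u|^2$.

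For the splitting, the product rule applied to $\widetilde\Lambda^\varepsilon u$ (with $u$ scalar) together with Cauchy--Schwarz gives the pointwise bound
$$|(\mathbf{D}\widetilde\Lambda)^\varepsilon|^2|u|^2\le 2\varepsilon^2|\mathbf{D}(\widetilde\Lambda^\varepsilon u)|^2+2\varepsilon^2|\widetilde\Lambda^\varepsilon|^2|\mathbf{D}u|^2.$$
After integration, the second summand is controlled by Lemma~\ref{Lemma a embedding thm} applied entrywise to $\widetilde\Lambda\in L_p(\Omega)$ and to the $H^1(\mathbb{R}^d)$-function $\mathbf{D}u$, producing exactly the second term in the target bound. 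For the first summand, applying columnwise the integrated consequence of~\eqref{<b^*b<}, $\alpha_0\|\mathbf{D}v\|_{L_2}^2\le\|b(\mathbf{D})v\|_{L_2}^2$, combined with $g\ge\|g^{-1}\|_{L_\infty}^{-1}\mathbf{1}_m$ and the Leibniz identity
$$\varepsilon b(\mathbf{D})(\widetilde\lambda_k^\varepsilon u)=(b(\mathbf{D}_y)\widetilde\lambda_k)^\varepsilon u+\varepsilon\sum_{j=1}^d b_j\widetilde\lambda_k^\varepsilon D_j u,$$
reduces the task (modulo one further $\varepsilon^2\int|\widetilde\Lambda^\varepsilon|^2|\mathbf{D}u|^2$-piece handled as above) to bounding $\sum_k\int_{\mathbb{R}^d}|u|^2\,|(b(\mathbf{D}_y)\widetilde\lambda_k)^\varepsilon|^2\,d\mathbf x$, where $\widetilde\lambda_k$ is the $k$th column of $\widetilde\Lambda$.

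To bound this residual sum, first restrict to $u\in C_0^\infty(\mathbb{R}^d)$; the $H^2$-conclusion will follow by density. Testing the rescaled weak form of~\eqref{tildeLambda_problem},
$$\int_{\mathbb{R}^d}\langle g^\varepsilon b(\mathbf{D})\widetilde\lambda_k^\varepsilon,b(\mathbf{D})\boldsymbol\eta\rangle\,d\mathbf x=-\varepsilon^{-1}\sum_{j=1}^d\int_{\mathbb{R}^d}\langle(a_j^\varepsilon)^*_{\cdot k},D_j\boldsymbol\eta\rangle\,d\mathbf x,$$
against $\boldsymbol\eta=\widetilde\lambda_k^\varepsilon|u|^2$, expanding all derivatives by Leibniz, and multiplying through by $\varepsilon^2$, the left-hand side bounds $\|g^{-1}\|_{L_\infty}^{-1}\int|u|^2|(b(\mathbf{D}_y)\widetilde\lambda_k)^\varepsilon|^2$ from below modulo a cross term $\varepsilon\int D_j|u|^2\langle g^\varepsilon(b(\mathbf{D}_y)\widetilde\lambda_k)^\varepsilon,b_j\widetilde\lambda_k^\varepsilon\rangle$, while the right-hand side, with no residual singular factor, splits into $-\sum_j\int|u|^2\langle(a_j^\varepsilon)^*_{\cdot k},(D_j\widetilde\lambda_k)^\varepsilon\rangle$ and $-\varepsilon\sum_j\int D_j|u|^2\langle(a_j^\varepsilon)^*_{\cdot k},\widetilde\lambda_k^\varepsilon\rangle$. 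Cauchy--Schwarz on the first piece against $(\int|(\mathbf{D}\widetilde\Lambda)^\varepsilon|^2|u|^2)^{1/2}$, combined with Lemma~\ref{Lemma a embedding thm} used on the multiplier $a_j\in L_\rho(\Omega)$ (the exponent $\rho$ from~\eqref{a_j cond} meets~\eqref{f condition lemma MSu}), produces $\delta\int|(\mathbf{D}\widetilde\Lambda)^\varepsilon|^2|u|^2+C_\delta\|u\|_{H^1(\mathbb{R}^d)}^2$; the cross and $\mathbf{D}|u|^2$-terms, treated with $|D_j|u|^2|\le 2|u||\mathbf{D}u|$, Young's inequality, and Lemma~\ref{Lemma a embedding thm} applied to both $a_j$ and $\widetilde\Lambda$, contribute $\|u\|_{H^1(\mathbb{R}^d)}^2$- and $\varepsilon^2\|\widetilde\Lambda\|_{L_p(\Omega)}^2 C(\widehat q,\Omega)^2\|\mathbf{D}u\|_{H^1(\mathbb{R}^d)}^2$-pieces plus a second absorbable $\delta'\int|u|^2|(b(\mathbf{D}_y)\widetilde\lambda_k)^\varepsilon|^2$. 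Absorbing both small multiples on the left, summing over $k$, and re-inserting into the splitting chain of the previous paragraph gives the stated inequality.

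The main difficulty is the $\varepsilon$-bookkeeping in the last step: the $\varepsilon^{-1}$ prefactor on the $a_j$-term of the weak equation must combine cleanly with the $\varepsilon^{-1}$ coming from $\mathbf{D}\widetilde\lambda_k^\varepsilon=\varepsilon^{-1}(\mathbf{D}\widetilde\lambda_k)^\varepsilon$, so that after multiplication by $\varepsilon^2$ all residual terms are either $O(1)$ (multiplied by $|u|^2$) or $O(\varepsilon)$ (multiplied by $\mathbf{D}|u|^2$); and the small constant $\delta$ in front of the absorbable target quantity $\int|(\mathbf{D}\widetilde\Lambda)^\varepsilon|^2|u|^2$ must be chosen small enough to survive the amplification factor $2\alpha_0^{-1}\|g\|_{L_\infty}\|g^{-1}\|_{L_\infty}$ picked up when the ellipticity chain of the second paragraph is unwound.
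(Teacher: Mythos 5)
Your argument is correct: the Leibniz splitting of $(\mathbf{D}\widetilde\Lambda)^\varepsilon u$, the use of the multiplier bound of Lemma~\ref{Lemma Lambda in Lp H1->L2} (applicable to $\widetilde\Lambda\in L_p(\Omega)$ and, since $\rho$ in \eqref{a_j cond} meets \eqref{f condition lemma MSu}, also to $a_j$), and the energy identity obtained by testing the rescaled equation \eqref{tildeLambda_problem} against $\widetilde\lambda_k^\varepsilon|u|^2$ with absorption of the small multiples of $\int|(\mathbf{D}\widetilde\Lambda)^\varepsilon|^2|u|^2$ (finite for $u\in C_0^\infty$, then density) is exactly the standard route, and your $\varepsilon$-bookkeeping and choice of $\delta$ relative to the ellipticity constants are handled correctly. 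The paper itself gives no proof here — it simply cites \cite[Corollary~3.6]{MSu15} — and your proof coincides in substance with the argument behind that cited result, so no further comparison is needed.
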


\subsection{Lemma about $Q_0^\varepsilon - \overline{Q_0}$}

The proof of the following statement is quite similar to that of Lemma~3.7 from \cite{MSu15}. We omit the details.

\begin{lemma}
\label{Lemma Q0-overline Q0}
Let $Q_0(\mathbf{x})$ be a $\Gamma$-periodic $(n\times n)$-matrix-valued function such that $Q_0 \in L_\infty$.
Then the operator $[Q_0^\varepsilon -\overline{Q_0}]$ is a continuous mapping of
$H^1(\mathcal{O};\mathbb{C}^n)$ to $H^{-1}(\mathcal{O};\mathbb{C}^n)$, and we have
\begin{equation}
\label{lemma Q_eps -overline Q}
\Vert [Q_0^\varepsilon -\overline{Q_0}]\Vert _{H^1(\mathcal{O})\rightarrow H^{-1}(\mathcal{O})}\leqslant C_{Q_0}\varepsilon .
\end{equation}
The constant $C_{Q_0}$ depends on $d$, $\Vert Q_0\Vert _{L_\infty}$, and the parameters of the lattice $\Gamma$.
\end{lemma}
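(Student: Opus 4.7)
The core idea is to represent the mean-zero fluctuation $Q_0-\overline{Q_0}$ as the divergence of a $\Gamma$-periodic matrix potential. After the homogenization scaling $\mathbf{x}\mapsto\mathbf{x}/\varepsilon$, this divergence structure produces the desired factor of $\varepsilon$ upon integration by parts against an $H^1_0$ test function.

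First, for each pair of indices $i,k\in\{1,\dots,n\}$, I would solve the periodic Poisson problem $\Delta\Phi_{ik}=(Q_0)_{ik}-(\overline{Q_0})_{ik}$ on the torus $\mathbb{R}^d/\Gamma$, with $\Phi_{ik}\in\widetilde{H}^1(\Omega)$ and $\int_\Omega\Phi_{ik}\,d\mathbf{x}=0$. This is solvable because the right-hand side has zero mean over $\Omega$. Since $Q_0\in L_\infty$, the right-hand side belongs to $L_p(\Omega)$ for every $p<\infty$, and standard periodic elliptic regularity yields $\Phi_{ik}\in W^{2,p}$ for every $p<\infty$; hence by the Sobolev embedding $W^{1,p}\hookrightarrow L_\infty$ with $p>d$, each matrix-valued function $M_j$ with entries $(M_j)_{ik}:=\partial_j\Phi_{ik}$ is $\Gamma$-periodic and bounded, with $\Vert M_j\Vert_{L_\infty}$ controlled by $d$, $\Vert Q_0\Vert_{L_\infty}$, and the parameters of $\Gamma$. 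By construction,
\begin{equation*}
Q_0(\mathbf{x})-\overline{Q_0}=\sum_{j=1}^d\partial_j M_j(\mathbf{x}),\qquad\mathbf{x}\in\mathbb{R}^d,
\end{equation*}
and rescaling gives the distributional identity $Q_0^\varepsilon(\mathbf{x})-\overline{Q_0}=\varepsilon\sum_{j=1}^d\partial_j\bigl(M_j^\varepsilon(\mathbf{x})\bigr)$.

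Next, I would test against arbitrary $\mathbf{u}\in H^1(\mathcal{O};\mathbb{C}^n)$ and $\boldsymbol{\eta}\in H^1_0(\mathcal{O};\mathbb{C}^n)$. Since $\boldsymbol{\eta}$ vanishes on $\partial\mathcal{O}$, integration by parts produces no boundary terms, and one obtains
\begin{equation*}
\bigl((Q_0^\varepsilon-\overline{Q_0})\mathbf{u},\boldsymbol{\eta}\bigr)_{L_2(\mathcal{O})}=-\varepsilon\sum_{j=1}^d\int_{\mathcal{O}}\bigl(\langle M_j^\varepsilon\partial_j\mathbf{u},\boldsymbol{\eta}\rangle+\langle M_j^\varepsilon\mathbf{u},\partial_j\boldsymbol{\eta}\rangle\bigr)\,d\mathbf{x}.
\end{equation*}
The $L_\infty$ bound on $M_j$ together with Cauchy--Schwarz then yields
\begin{equation*}
\bigl|\bigl((Q_0^\varepsilon-\overline{Q_0})\mathbf{u},\boldsymbol{\eta}\bigr)_{L_2(\mathcal{O})}\bigr|\le 2\varepsilon\Bigl(\sum_{j=1}^d\Vert M_j\Vert_{L_\infty}\Bigr)\Vert\mathbf{u}\Vert_{H^1(\mathcal{O})}\Vert\boldsymbol{\eta}\Vert_{H^1(\mathcal{O})},
\end{equation*}
and taking the supremum over $\boldsymbol{\eta}$ in the unit ball of $H^1_0(\mathcal{O};\mathbb{C}^n)$ gives \eqref{lemma Q_eps -overline Q} with an admissible $C_{Q_0}$.

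The main (and essentially only) obstacle is securing the $L_\infty$ bound on the potential $M_j$; it rests on periodic elliptic regularity for the Poisson equation with an $L_\infty$ right-hand side. If one wished to avoid this step, one could instead retain $M_j\in L_p(\Omega)$ for some finite $p$ and estimate the products $M_j^\varepsilon\mathbf{u}$ and $M_j^\varepsilon\boldsymbol{\eta}$ through H\"older's inequality combined with the Sobolev embedding $H^1(\mathcal{O})\hookrightarrow L_q(\mathcal{O})$ (with dual exponents); this is the route taken in \cite[Lemma~3.7]{MSu15}, which explains the remark that the present proof is ``quite similar''. The remaining manipulations are purely mechanical, which is why the authors omit the details.
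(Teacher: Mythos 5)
Your proof is correct and takes essentially the same approach as the paper's (which points to Lemma~3.7 of \cite{MSu15}): represent the mean-zero fluctuation $Q_0-\overline{Q_0}$ as the divergence $\sum_j\partial_j M_j$ of a $\Gamma$-periodic potential, observe that rescaling supplies the factor $\varepsilon$, and integrate by parts against a test function in $H^1_0(\mathcal{O};\mathbb{C}^n)$. The $L_\infty$ bound on $M_j$ via periodic $W^{2,p}$ regularity with $p>d$ (or the H\"older--Sobolev variant you mention) is the standard closing step and yields the claimed dependence of $C_{Q_0}$ on $d$, $\|Q_0\|_{L_\infty}$, and the parameters of~$\Gamma$.
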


\section{Proof of Theorem~\ref{Theorem with Diriclet corrector}. Beginning of the  proofs of Theorems~\ref{Theorem Dirichlet L2} and~\ref{Theorem Dirichlet H1}}
\label{Section Proof Dirischlet corrector}

In this section, we prove Theorem~\ref{Theorem with Diriclet corrector} and reduce the proofs of
Theorems~\ref{Theorem Dirichlet L2} and~\ref{Theorem Dirichlet H1} to estimation of the correction term $\mathbf{w}_\varepsilon$.

\subsection{Associated problem in~$\mathbb{R}^d$}

By Lemma~\ref{Lemma hom problem estimates}, \eqref{PO}, and the inequality $\vert \zeta\vert \geqslant 1$, we have
\begin{align}
\label{tilde u_0 in L2}
\Vert \widetilde{\mathbf{u}}_0\Vert _{L_2(\mathbb{R}^d)} &\leqslant
k _1 c(\phi) \vert \zeta \vert ^{-1} \Vert \mathbf{F}\Vert _{L_2(\mathcal{O})};
\quad k_1 :=C_{\mathcal{O}}^{(0)} \Vert Q_0^{-1}\Vert _{L_\infty};
\\
\label{tilde u_0 in H1}
\Vert \widetilde{\mathbf{u}}_0\Vert _{H^1(\mathbb{R}^d)}&\leqslant
k_2 c(\phi)\vert \zeta \vert ^{-1/2}\Vert \mathbf{F}\Vert _{L_2(\mathcal{O})};
\quad k_2:= C_{\mathcal{O}}^{(1)}\left(\mathcal{C}_1+\Vert Q_0^{-1}\Vert _{L_\infty} \right);
\\
\label{tilde u_0 in H2}
\Vert \widetilde{\mathbf{u}}_0\Vert _{H^2(\mathbb{R}^d)}&\leqslant
k _3c(\phi)\Vert \mathbf{F}\Vert _{L_2(\mathcal{O})};\quad k_3:= C_{\mathcal{O}}^{(2)}\mathcal{C}_2.
\end{align}

We put
\begin{equation}
\label{tilde F =}
\widetilde{\mathbf{F}}:=(B^0-\zeta \overline{Q_0})\widetilde{\mathbf{u}}_0.
\end{equation}
Then $\widetilde{\mathbf{F}}\in L_2(\mathbb{R}^d;\mathbb{C}^n)$ and $\widetilde{\mathbf{F}}\vert _{\mathcal{O}}=\mathbf{F}$. Relations \eqref{L(xi)<=}, \eqref{tilde u_0 in L2}, and
\eqref{tilde u_0 in H2} imply that
\begin{align}
\label{tilde F<=}
\Vert \widetilde{\mathbf{F}}\Vert _{L_2(\mathbb{R}^d)}\leqslant C_L \Vert \widetilde{\mathbf{u}}_0\Vert _{H^2(\mathbb{R}^d)}+\vert \zeta \vert \vert \overline{Q_0}\vert \Vert \widetilde{\mathbf{u}}_0\Vert _{L_2(\mathbb{R}^d)}
\leqslant C_{\widetilde{F}}c(\phi)\Vert \mathbf{F}\Vert _{L_2(\mathcal{O})};
\quad C_{\widetilde{F}}:=k _3 C_L+k_1 \Vert Q_0\Vert _{L_\infty}.
\end{align}

Let $\widetilde{\mathbf{u}}_\varepsilon \in H^1(\mathbb{R}^d;\mathbb{C}^n)$ be the solution of the following
equation in $\mathbb{R}^d$:
\begin{equation}
\label{tilde u_eps problem}
B_\varepsilon \widetilde{\mathbf{u}}_\varepsilon -\zeta Q_0^\varepsilon \widetilde{\mathbf{u}}_\varepsilon =\widetilde{\mathbf{F}},
\end{equation}
i.~e., $\widetilde{\mathbf{u}}_\varepsilon =(B_\varepsilon -\zeta Q_0^\varepsilon )^{-1}\widetilde{\mathbf{F}}$.
Combining \eqref{tilde F =}--\eqref{tilde u_eps problem} and applying Theorems~\ref{Th L2 Rd} and~\ref{Theorem H1 Rd}, for $\zeta\in\mathbb{C}\setminus\mathbb{R}_+$, $\vert \zeta \vert \geqslant 1$, and
$0<\varepsilon \leqslant 1$ we obtain
\begin{align}
\label{tilde U eps -tilde u 0}
&\Vert \widetilde{\mathbf{u}}_\varepsilon -\widetilde{\mathbf{u}}_0\Vert _{L_2(\mathbb{R}^d)}\leqslant C_1 C_{\widetilde{F}}
 c(\phi)^3  \varepsilon \vert \zeta \vert ^{-1/2}\Vert \mathbf{F}\Vert _{L_2(\mathcal{O})},
\\
\label{tilde u_eps -v_eps <=}
&\Vert \widetilde{\mathbf{u}}_\varepsilon -\widetilde{\mathbf{v}}_\varepsilon \Vert _{L_2(\mathbb{R}^d)}\leqslant C_2C_{\widetilde{F}}c(\phi)^3\varepsilon \vert \zeta \vert ^{-1/2}\Vert \mathbf{F}\Vert _{L_2(\mathcal{O})},\\
\label{D tilde u_eps -v_eps <=}
&\Vert \mathbf{D}(\widetilde{\mathbf{u}}_\varepsilon -\widetilde{\mathbf{v}}_\varepsilon )\Vert _{L_2(\mathbb{R}^d)}\leqslant C_3C_{\widetilde{F}}c(\phi)^3\varepsilon\Vert \mathbf{F}\Vert _{L_2(\mathcal{O})}.
\end{align}
Now, \eqref{tilde u_eps -v_eps <=}, \eqref{D tilde u_eps -v_eps <=}, and the inequality $\vert \zeta \vert\geqslant 1$ imply that
\begin{equation}
\label{tilde u_eps -v_eps <= in H1}
\Vert \widetilde{\mathbf{u}}_\varepsilon -\widetilde{\mathbf{v}}_\varepsilon \Vert _{H^1(\mathbb{R}^d)}\leqslant \widetilde{C}_3 c(\phi)^3\varepsilon\Vert \mathbf{F}\Vert _{L_2(\mathcal{O})};
\quad \widetilde{C}_3 := (C_2+C_3)C_{\widetilde{F}}.
\end{equation}

\subsection{Proof of Theorem~\ref{Theorem with Diriclet corrector}}

Denote  $\mathbf{V}_\varepsilon :=\mathbf{u}_\varepsilon-\mathbf{v}_\varepsilon+\mathbf{w}_\varepsilon$. By \eqref{Dirichlet problem}, \eqref{w_eps problem}, and \eqref{w_eps int tozd}, the function $\mathbf{V}_\varepsilon\in H^1_0(\mathcal{O};\mathbb{C}^n)$ satisfies the identity
\begin{equation*}
\begin{split}
\mathfrak{b}_{D,\varepsilon}[\mathbf{V}_\varepsilon,\boldsymbol{\eta}]-\zeta (Q_0^\varepsilon\mathbf{V}_\varepsilon ,\boldsymbol{\eta})_{L_2(\mathcal{O})}
=(\mathbf{F},\boldsymbol{\eta})_{L_2(\mathcal{O})}-\mathfrak{b}_{N,\varepsilon}[\mathbf{v}_\varepsilon,\boldsymbol{\eta}]+\zeta (Q_0^\varepsilon\mathbf{v}_\varepsilon ,\boldsymbol{\eta})_{L_2(\mathcal{O})},
\quad \boldsymbol{\eta}\in H^1_0(\mathcal{O};\mathbb{C}^n).
\end{split}
\end{equation*}
We extend $\boldsymbol{\eta}$ by zero to $\mathbb{R}^d\setminus\mathcal{O}$, keeping the same notation.
Then $\boldsymbol{\eta}\in H^1(\mathbb{R}^d;\mathbb{C}^n)$. Recalling that $\widetilde{\mathbf{F}}$ is extension of  $\mathbf{F}$ and
$\widetilde{\mathbf{v}}_\varepsilon$ is extension of $\mathbf{v}_\varepsilon$, and
 using~\eqref{tilde u_eps problem}, we find
\begin{equation}
\label{tozd for V_eps var 2}
\mathfrak{b}_{D,\varepsilon}[\mathbf{V}_\varepsilon,\boldsymbol{\eta}]-\zeta (Q_0^\varepsilon\mathbf{V}_\varepsilon ,\boldsymbol{\eta})_{L_2(\mathcal{O})}=I_\varepsilon[\boldsymbol{\eta}],\quad \boldsymbol{\eta}\in H^1_0(\mathcal{O};\mathbb{C}^n),
\end{equation}
where the following notation is used:
\begin{equation}
\label{I_eps[eta]}
\begin{split}
I_\varepsilon [\boldsymbol{\eta}]:=\mathfrak{b}_\varepsilon[\widetilde{\mathbf{u}}_\varepsilon - \widetilde{\mathbf{v}}_\varepsilon,\boldsymbol{\eta}]-\zeta (Q_0^\varepsilon(\widetilde{\mathbf{u}}_\varepsilon - \widetilde{\mathbf{v}}_\varepsilon),\boldsymbol{\eta})_{L_2(\mathbb{R}^d)}, \quad \boldsymbol{\eta}\in H^1(\mathbb{R}^d;\mathbb{C}^n).
\end{split}
\end{equation}
Next, we estimate the functional \eqref{I_eps[eta]} with the help of \eqref{b_eps <=}, \eqref{tilde u_eps -v_eps <=}, and \eqref{tilde u_eps -v_eps <= in H1}:
\begin{equation}
\label{|Ieps|<=}
\begin{split}
\vert I_\varepsilon [\boldsymbol{\eta}]\vert &\leqslant C_{8}c(\phi)^3\varepsilon\Vert \mathbf{F}\Vert _{L_2(\mathcal{O})}\Vert \boldsymbol{\eta}\Vert _{H^1(\mathcal{O})}
+C_{9}c(\phi)^3\varepsilon\vert \zeta\vert ^{1/2}\Vert \mathbf{F}\Vert _{L_2(\mathcal{O})}\Vert (Q_0^\varepsilon)^{1/2}\boldsymbol{\eta}\Vert _{L_2(\mathcal{O})},
\end{split}
\end{equation}
where $C_{8}:=c_3 \widetilde{C}_3$ and $C_{9}:=\Vert Q_0\Vert ^{1/2}_{L_\infty}C_2C_{\widetilde{F}}$.

We substitute $\boldsymbol{\eta}=\mathbf{V}_\varepsilon$ in \eqref{tozd for V_eps var 2},
take the imaginary part, and apply \eqref{|Ieps|<=}. Then
\begin{equation}
\label{Im zeta (Q0eps V,V)<=}
\begin{split}
\vert \mathrm{Im}\,\zeta\vert (Q_0^\varepsilon\mathbf{V}_\varepsilon ,\mathbf{V}_\varepsilon )_{L_2(\mathcal{O})}
&=\vert \mathrm{Im}\,I_\varepsilon [\mathbf{V}_\varepsilon ]\vert
\leqslant C_{8}c(\phi)^3\varepsilon\Vert \mathbf{V}_\varepsilon\Vert _{H^1(\mathcal{O})}\Vert \mathbf{F}\Vert _{L_2(\mathcal{O})}
\\&
+C_{9}\vert \zeta\vert ^{1/2}c(\phi)^3\varepsilon\Vert \mathbf{F}\Vert _{L_2(\mathcal{O})}\Vert (Q_0^\varepsilon)^{1/2}\mathbf{V}_\varepsilon\Vert _{L_2(\mathcal{O})}.
\end{split}
\end{equation}
If $\mathrm{Re}\,\zeta\geqslant 0$ (and then $\mathrm{Im}\,\zeta \neq 0$), we deduce
\begin{equation}
\label{Q0Veps Re zeta >0}
(Q_0^\varepsilon \mathbf{V}_\varepsilon ,\mathbf{V}_\varepsilon )_{L_2(\mathcal{O})}
\leqslant 2 C_{8}c(\phi)^4\varepsilon\vert \zeta\vert ^{-1}\Vert \mathbf{V}_\varepsilon\Vert _{H^1(\mathcal{O})}\Vert \mathbf{F}\Vert _{L_2(\mathcal{O})}
+C_{9}^2c(\phi)^8 \varepsilon ^2 \vert \zeta\vert ^{-1} \Vert \mathbf{F}\Vert ^2_{L_2(\mathcal{O})},\ \mathrm{Re}\,\zeta\geqslant 0.
\end{equation}
If $\mathrm{Re}\,\zeta <0$, we take the real part in identity \eqref{tozd for V_eps var 2} with  $\boldsymbol{\eta}=\mathbf{V}_\varepsilon$. Note that $c(\phi)=1$ for such $\zeta$. Using \eqref{|Ieps|<=}, we obtain
\begin{equation}
\label{Re zeta (Q0eps V,V)<=}
\begin{split}
\vert \mathrm{Re}\,\zeta\vert (Q_0^\varepsilon\mathbf{V}_\varepsilon ,\mathbf{V}_\varepsilon )_{L_2(\mathcal{O})}
\leqslant C_{8}\varepsilon\Vert \mathbf{V}_\varepsilon\Vert _{H^1(\mathcal{O})}\Vert \mathbf{F}\Vert _{L_2(\mathcal{O})}
+C_{9} \varepsilon \vert \zeta\vert ^{1/2} \Vert \mathbf{F}\Vert _{L_2(\mathcal{O})}\Vert (Q_0^\varepsilon )^{1/2}\mathbf{V}_\varepsilon\Vert _{L_2(\mathcal{O})}.
\end{split}
\end{equation}
Summing up \eqref{Im zeta (Q0eps V,V)<=} and \eqref{Re zeta (Q0eps V,V)<=}, we
deduce the inequality
\begin{equation*}
(Q_0^\varepsilon\mathbf{V}_\varepsilon,\mathbf{V}_\varepsilon )_{L_2(\mathcal{O})}
\leqslant
4C_{8}\varepsilon\vert \zeta\vert ^{-1}\Vert \mathbf{V}_\varepsilon\Vert _{H^1(\mathcal{O})}\Vert \mathbf{F}\Vert _{L_2(\mathcal{O})}
+4C_{9}^2\varepsilon ^2\vert \zeta\vert ^{-1}\Vert \mathbf{F}\Vert ^2_{L_2(\mathcal{O})},\quad \mathrm{Re}\,\zeta <0.
\end{equation*}
Combining this with \eqref{Q0Veps Re zeta >0}, for all  $\zeta$ under consideration we obtain
\begin{equation}
\label{(Q0Eps V,V )<=}
(Q_0^\varepsilon\mathbf{V}_\varepsilon ,\mathbf{V}_\varepsilon )_{L_2(\mathcal{O})}
\leqslant 4C_{8}c(\phi)^4\varepsilon\vert \zeta\vert ^{-1}\Vert \mathbf{V}_\varepsilon\Vert _{H^1(\mathcal{O})}\Vert \mathbf{F}\Vert _{L_2(\mathcal{O})}
+4C_{9}^2 c(\phi)^8\varepsilon ^2\vert \zeta\vert ^{-1}\Vert \mathbf{F}\Vert ^2_{L_2(\mathcal{O})}.
\end{equation}
Now, \eqref{tozd for V_eps var 2} with $\boldsymbol{\eta}=\mathbf{V}_\varepsilon$, \eqref{|Ieps|<=}, and \eqref{(Q0Eps V,V )<=} imply that
\begin{equation*}
\begin{split}
\mathfrak{b}_{D,\varepsilon}[\mathbf{V}_\varepsilon ,\mathbf{V}_\varepsilon ]\leqslant
7C_{8}c(\phi)^4\varepsilon\Vert \mathbf{V}_\varepsilon\Vert _{H^1(\mathcal{O})}\Vert \mathbf{F}\Vert _{L_2(\mathcal{O})}+\frac{13}{2} C_{9}^2c(\phi)^8\varepsilon ^2\Vert \mathbf{F}\Vert ^2_{L_2(\mathcal{O})}.
\end{split}
\end{equation*}
Taking \eqref{H^1-norm <= BDeps^1/2} into account, we deduce
\begin{equation*}
\Vert \mathbf{V}_\varepsilon\Vert ^2_{H^1(\mathcal{O})}\leqslant C_7^2 c(\phi)^8\varepsilon ^2\Vert \mathbf{F}\Vert ^2_{L_2(\mathcal{O})};\quad C_7^2:=49c_4^4C_{8}^2+13c_4^2C_{9}^2,
\end{equation*}
which implies \eqref{u_eps - V_eps +W-eps in H1}.
\qed

Apart from estimate \eqref{u_eps - V_eps +W-eps in H1},
we also need to estimate the $L_2$-norm of $\mathbf{V}_\varepsilon$.

\begin{lemma}
Under the assumptions of Theorem~\textnormal{\ref{Theorem with Diriclet corrector}}, for  $\zeta\in\mathbb{C}\setminus\mathbb{R}_+$, $\vert \zeta\vert \geqslant 1$, and $0<\varepsilon\leqslant 1$
we have
  \begin{equation}
\label{u_eps - V_eps +W-eps in L_2}
\Vert \mathbf{u}_\varepsilon -\mathbf{v}_\varepsilon +\mathbf{w}_\varepsilon\Vert _{L_2(\mathcal{O})}\leqslant C_{10} c(\phi)^4\varepsilon\vert\zeta\vert ^{-1/2}\Vert \mathbf{F}\Vert _{L_2(\mathcal{O})}.
\end{equation}
The constant $C_{10}$ depends only on the initial data~\textnormal{\eqref{problem data}} and the domain $\mathcal{O}$.
\end{lemma}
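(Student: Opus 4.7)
The plan is to derive \eqref{u_eps - V_eps +W-eps in L_2} from the already established integral identity \eqref{tozd for V_eps var 2} together with the functional bound \eqref{|Ieps|<=} by a standard duality argument that exploits the selfadjointness of $B_{D,\varepsilon}$. For an arbitrary $\boldsymbol{\Phi}\in L_2(\mathcal{O};\mathbb{C}^n)$, I would introduce the auxiliary function $\boldsymbol{\eta}_\ast:=(B_{D,\varepsilon}-\zeta^\ast Q_0^\varepsilon)^{-1}\boldsymbol{\Phi}$, which lies in $H^1_0(\mathcal{O};\mathbb{C}^n)$. Testing its defining weak equation against $\mathbf{V}_\varepsilon\in H^1_0(\mathcal{O};\mathbb{C}^n)$ and taking complex conjugates (the form $\mathfrak{b}_{D,\varepsilon}$ is Hermitian and $Q_0^\varepsilon$ is a Hermitian matrix) yields
\begin{equation*}
(\mathbf{V}_\varepsilon,\boldsymbol{\Phi})_{L_2(\mathcal{O})}
=\mathfrak{b}_{D,\varepsilon}[\mathbf{V}_\varepsilon,\boldsymbol{\eta}_\ast]
-\zeta(Q_0^\varepsilon\mathbf{V}_\varepsilon,\boldsymbol{\eta}_\ast)_{L_2(\mathcal{O})}
=I_\varepsilon[\boldsymbol{\eta}_\ast],
\end{equation*}
where the last equality is just \eqref{tozd for V_eps var 2} with $\boldsymbol{\eta}=\boldsymbol{\eta}_\ast$ (extended by zero to $\mathbb{R}^d$).

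It then remains to estimate $|I_\varepsilon[\boldsymbol{\eta}_\ast]|$ by means of \eqref{|Ieps|<=} together with the a priori bounds of Lemma~\ref{Lemma estimates u_D,eps} applied to the auxiliary resolvent with spectral parameter $\zeta^\ast$. Since $c(\arg\zeta^\ast)=c(\phi)$ by direct inspection of \eqref{c(phi)}, Lemma~\ref{Lemma estimates u_D,eps} delivers
\begin{equation*}
\|\boldsymbol{\eta}_\ast\|_{H^1(\mathcal{O})}\leq C\,c(\phi)|\zeta|^{-1/2}\|\boldsymbol{\Phi}\|_{L_2(\mathcal{O})},\qquad
\|(Q_0^\varepsilon)^{1/2}\boldsymbol{\eta}_\ast\|_{L_2(\mathcal{O})}\leq C\,c(\phi)|\zeta|^{-1}\|\boldsymbol{\Phi}\|_{L_2(\mathcal{O})},
\end{equation*}
where I use $|\zeta|\geq 1$ to absorb the $L_2$-contribution into the gradient part of the $H^1$-norm of $\boldsymbol{\eta}_\ast$.

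Inserting these into \eqref{|Ieps|<=} shows that both summands contribute the \textbf{same} decay factor $c(\phi)^4\varepsilon|\zeta|^{-1/2}$: the first term pairs $|\zeta|^0$ with $\|\boldsymbol{\eta}_\ast\|_{H^1(\mathcal{O})}$, while the second pairs $|\zeta|^{1/2}$ with $\|(Q_0^\varepsilon)^{1/2}\boldsymbol{\eta}_\ast\|_{L_2(\mathcal{O})}$. Taking the supremum over $\|\boldsymbol{\Phi}\|_{L_2(\mathcal{O})}=1$ then yields \eqref{u_eps - V_eps +W-eps in L_2} with an explicit constant $C_{10}$. No serious obstacle is anticipated; the only points deserving a moment of care are the Hermitian symmetry used to transfer the test function to the second slot in \eqref{tozd for V_eps var 2}, and the observation that the potential $|\zeta|^{1/2}$-loss in the second term of \eqref{|Ieps|<=} is compensated exactly by the extra $|\zeta|^{-1/2}$-decay enjoyed by $\boldsymbol{\eta}_\ast$ in $L_2$ compared to its $H^1$-norm.
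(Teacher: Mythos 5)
Your argument is correct, but it is not the route the paper takes. The paper's proof is a two-line consequence of material already on the table in the proof of Theorem~\ref{Theorem with Diriclet corrector}: it inserts the $H^1$-estimate \eqref{u_eps - V_eps +W-eps in H1} into the weighted bound \eqref{(Q0Eps V,V )<=} for $(Q_0^\varepsilon\mathbf{V}_\varepsilon,\mathbf{V}_\varepsilon)_{L_2(\mathcal{O})}$, which was derived there by taking imaginary/real parts of \eqref{tozd for V_eps var 2} with $\boldsymbol{\eta}=\mathbf{V}_\varepsilon$; since $(Q_0^\varepsilon\mathbf{V}_\varepsilon,\mathbf{V}_\varepsilon)\geqslant \Vert Q_0^{-1}\Vert_{L_\infty}^{-1}\Vert\mathbf{V}_\varepsilon\Vert_{L_2}^2$, this immediately gives \eqref{u_eps - V_eps +W-eps in L_2} with $C_{10}=2\Vert Q_0^{-1}\Vert_{L_\infty}^{1/2}(C_7C_8+C_9^2)^{1/2}$. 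You instead run a duality argument: test the weak equation for $\boldsymbol{\eta}_*=(B_{D,\varepsilon}-\zeta^*Q_0^\varepsilon)^{-1}\boldsymbol{\Phi}$ against $\mathbf{V}_\varepsilon$, use the Hermitian symmetry of $\mathfrak{b}_{D,\varepsilon}$ and of $Q_0$ to obtain $(\mathbf{V}_\varepsilon,\boldsymbol{\Phi})_{L_2(\mathcal{O})}=I_\varepsilon[\boldsymbol{\eta}_*]$ via \eqref{tozd for V_eps var 2}, and then estimate by \eqref{|Ieps|<=} and Lemma~\ref{Lemma estimates u_D,eps} at the point $\zeta^*$ (where indeed $c(\mathrm{arg}\,\zeta^*)=c(\phi)$ and $|\zeta^*|=|\zeta|$). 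The bookkeeping checks out: the term with $\Vert\boldsymbol{\eta}_*\Vert_{H^1}\lesssim c(\phi)|\zeta|^{-1/2}\Vert\boldsymbol{\Phi}\Vert$ and the term with $|\zeta|^{1/2}\Vert(Q_0^\varepsilon)^{1/2}\boldsymbol{\eta}_*\Vert_{L_2}\lesssim |\zeta|^{1/2}\cdot c(\phi)|\zeta|^{-1}\Vert\boldsymbol{\Phi}\Vert$ both produce $c(\phi)^4\varepsilon|\zeta|^{-1/2}$, so the stated power of $c(\phi)$ and the decay in $|\zeta|$ are recovered exactly. Comparing the two: the paper's route needs no adjoint resolvent and no selfadjointness beyond what is already encoded in \eqref{(Q0Eps V,V )<=}, but it does use the conclusion of Theorem~\ref{Theorem with Diriclet corrector}; your route bypasses \eqref{(Q0Eps V,V )<=} and the $H^1$ result entirely at the price of invoking the symmetry of the form and the resolvent bounds at $\zeta^*$, and it is very much in the spirit of the duality arguments the paper itself employs later (e.g., in Section~\ref{Section proof of L_2 rightarrow L_2 theorem}). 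Both give the claimed constant structure, so your proposal stands as a valid alternative proof.
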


\begin{proof}
By \eqref{u_eps - V_eps +W-eps in H1} and \eqref{(Q0Eps V,V )<=},
\begin{equation*}
(Q_0^\varepsilon\mathbf{V}_\varepsilon ,\mathbf{V}_\varepsilon)_{L_2(\mathcal{O})}
\leqslant 4 (C_7C_{8}+C_{9}^2)c(\phi)^8\varepsilon ^2\vert \zeta\vert ^{-1}\Vert \mathbf{F}\Vert ^2_{L_2(\mathcal{O})}.
\end{equation*}
This implies \eqref{u_eps - V_eps +W-eps in L_2} with the constant $C_{10}:=2\Vert Q_0^{-1}\Vert _{L_\infty}^{1/2}(C_7C_{8}+C_{9}^2)^{1/2}$.
\end{proof}

\subsection{Conclusions} 1) From \eqref{u_eps - V_eps +W-eps in H1} it follows that
\begin{equation}
\label{2.34 from draft}
\Vert \mathbf{u}_\varepsilon -\mathbf{v}_\varepsilon\Vert _{H^1(\mathcal{O})}\leqslant C_7 c(\phi)^4\varepsilon\Vert \mathbf{F}\Vert  _{L_2(\mathcal{O})}+\Vert \mathbf{w}_\varepsilon\Vert _{H^1(\mathcal{O})}.
\end{equation}
Hence, in order to prove \eqref{Th L2->H1 solutions},
it suffices to obtain an appropriate estimate for $\Vert \mathbf{w}_\varepsilon \Vert _{H^1(\mathcal{O})}$.

\noindent
2) By \eqref{u_eps - V_eps +W-eps in L_2},
\begin{equation}
\label{||u eps -u eps||L2<=}
\Vert \mathbf{u}_\varepsilon -\mathbf{u}_0\Vert _{L_2(\mathcal{O})}\leqslant C_{10} c(\phi)^4\varepsilon\vert \zeta\vert ^{-1/2}\Vert \mathbf{F}\Vert _{L_2(\mathcal{O})}+\Vert \mathbf{v}_\varepsilon -\mathbf{u}_0\Vert _{L_2(\mathcal{O})}+\Vert \mathbf{w}_\varepsilon\Vert _{L_2(\mathcal{O})}.
\end{equation}
We have
\begin{equation}
\label{||v eps -u eps||L2<=}
\Vert \mathbf{v}_\varepsilon -\mathbf{u}_0\Vert _{L_2(\mathcal{O})}\leqslant\varepsilon\Vert \Lambda ^\varepsilon S_\varepsilon b(\mathbf{D})\widetilde{\mathbf{u}}_0\Vert _{L_2(\mathbb{R}^d)}
+\varepsilon\Vert \widetilde{\Lambda}^\varepsilon S_\varepsilon\widetilde{\mathbf{u}}_0\Vert _{L_2(\mathbb{R}^d)}.
\end{equation}
From Proposition~\ref{Proposition f^eps S_eps}, \eqref{Lambda <=}, and \eqref{tilde Lambda<=}
it follows that
\begin{align}
\label{Lambda S_eps <=}
&\Vert [\Lambda ^\varepsilon]S_\varepsilon\Vert _{L_2(\mathbb{R}^d)\rightarrow L_2(\mathbb{R}^d)}\leqslant M_1,\\
\label{tilde Lambda S_eps <=}
&\Vert [\widetilde{\Lambda}^\varepsilon]S_\varepsilon\Vert _{L_2(\mathbb{R}^d)\rightarrow L_2(\mathbb{R}^d)}
\leqslant \widetilde{M}_1; \quad
\widetilde{M}_1:=\vert \Omega\vert ^{-1/2}(2r_0)^{-1}C_a n^{1/2}\alpha _0^{-1}\Vert g^{-1}\Vert _{L_\infty}.
\end{align}
Combining \eqref{<b^*b<}, \eqref{tilde u_0 in H1}, and \eqref{||v eps -u eps||L2<=}--\eqref{tilde Lambda S_eps <=}, we obtain
\begin{equation}
\label{||v eps - u 0||L2(O)}
\Vert \mathbf{v}_\varepsilon -\mathbf{u}_0\Vert _{L_2(\mathcal{O})}
\leqslant\varepsilon (M_1^2\alpha _1+\widetilde{M}_1^2)^{1/2}\Vert \widetilde{\mathbf{u}}_0\Vert  _{H^1(\mathbb{R}^d)}
\leqslant
\varepsilon (M_1^2\alpha _1+\widetilde{M}_1^2)^{1/2}k _2 c(\phi)\vert \zeta\vert ^{-1/2}\Vert \mathbf{F}\Vert _{L_2(\mathcal{O})}.
\end{equation}
Now, inequalities \eqref{||u eps -u eps||L2<=} and \eqref{||v eps - u 0||L2(O)} yield
\begin{equation}
\label{2.35 from draft}
\Vert \mathbf{u}_\varepsilon -\mathbf{u}_0\Vert _{L_2(\mathcal{O})}\leqslant C_{11} c(\phi)^4 \varepsilon \vert \zeta\vert ^{-1/2}\Vert \mathbf{F}\Vert _{L_2(\mathcal{O})}+\Vert \mathbf{w}_\varepsilon\Vert _{L_2(\mathcal{O})},
\end{equation}
where $C_{11}:=C_{10}+(M_1^2\alpha _1+\widetilde{M}_1^2)^{1/2}k _2$.
Thus, the proof of Theorem~\ref{Theorem Dirichlet L2} is reduced to appropriate  estimate for $\Vert \mathbf{w}_\varepsilon\Vert _{L_2(\mathcal{O})}$.

\section{The proof of Theorem~\ref{Theorem Dirichlet H1}}
\label{Section proof of L_2->H^1 theorem}

\subsection{Localization near the boundary}
Recall that
$(\partial\mathcal{O})_\varepsilon :=\lbrace \mathbf{x}\in \mathbb{R}^d :\mathrm{dist}\,\lbrace \mathbf{x};\partial \mathcal{O}\rbrace <\varepsilon \rbrace .$
Fix a smooth cut-off function $\theta _\varepsilon (\mathbf{x})$ in $\mathbb{R}^d$ such that
\begin{equation}
\label{theta_eps conditions}
\begin{split}
&\theta _\varepsilon \in C_0^\infty (\mathbb{R}^d),\quad \mathrm{supp}\,\theta _\varepsilon \subset (\partial \mathcal{O})_\varepsilon ,\quad 0\leqslant \theta _\varepsilon (\mathbf{x})\leqslant 1,\\
&\theta _\varepsilon (\mathbf{x})=1\ \mbox{for}\ \mathbf{x}\in \partial \mathcal{O};\quad \varepsilon\vert \nabla \theta _\varepsilon (\mathbf{x})\vert \leqslant \mu =\mathrm{Const}.
\end{split}
\end{equation}
The constant $\mu$ depends only on $d$ and the domain $\mathcal{O}$. Consider the following function in $\mathbb{R}^d$:
\begin{equation}
\label{phi eps}
\boldsymbol{\varphi }_\varepsilon (\mathbf{x}) :=\varepsilon \theta _\varepsilon (\mathbf{x})
\left(
\Lambda ^\varepsilon (\mathbf{x})(S_\varepsilon b(\mathbf{D})\widetilde{\mathbf{u}}_0)(\mathbf{x})
+\widetilde{\Lambda}^\varepsilon (\mathbf{x})(S_\varepsilon \widetilde{\mathbf{u}}_0)(\mathbf{x})
\right).
\end{equation}

\begin{lemma}\label{lemma w eps}
Suppose that $\mathbf{w}_\varepsilon$ is the solution of problem \eqref{w_eps problem}.
Suppose that $\boldsymbol{\varphi}_\varepsilon $ is given by \eqref{phi eps}.
Then for $\zeta \in \mathbb{C}\setminus \mathbb{R}_+$, $\vert \zeta \vert \geqslant 1$, and
$0<\varepsilon \leqslant \varepsilon _1$ we have
\begin{equation}
\label{3.3 from draft}
\Vert \mathbf{w}_\varepsilon \Vert _{H^1(\mathcal{O})}\leqslant c(\phi)\left(C_{12}\vert \zeta \vert ^{1/2}\Vert \boldsymbol{\varphi}_\varepsilon \Vert _{L_2(\mathcal{O})}+C_{13}\Vert \boldsymbol{\varphi}_\varepsilon \Vert _{H^1(\mathcal{O})}\right).
\end{equation}
The constants $C_{12}$ and $C_{13}$ depend only on the initial data \eqref{problem data} and the domain $\mathcal{O}$.
\end{lemma}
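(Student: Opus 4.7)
The plan is to subtract off the boundary data via $\boldsymbol{\varphi}_\varepsilon$. Since $\theta_\varepsilon \equiv 1$ on $\partial \mathcal O$, we have $\boldsymbol{\varphi}_\varepsilon\vert_{\partial\mathcal O} = \varepsilon(\Lambda^\varepsilon S_\varepsilon b(\mathbf D)\widetilde{\mathbf u}_0 + \widetilde{\Lambda}^\varepsilon S_\varepsilon \widetilde{\mathbf u}_0)\vert_{\partial\mathcal O} = \mathbf w_\varepsilon\vert_{\partial\mathcal O}$. Consequently $\boldsymbol{\psi}_\varepsilon := \mathbf w_\varepsilon - \boldsymbol{\varphi}_\varepsilon$ lies in $H^1_0(\mathcal O;\mathbb C^n)$. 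Plugging this into the weak equation \eqref{w_eps int tozd} for $\mathbf w_\varepsilon$ and noting that $\mathfrak b_{N,\varepsilon}$ and $\mathfrak b_{D,\varepsilon}$ are given by the same expression (they only differ in their form domains), I obtain the identity
\begin{equation*}
\mathfrak b_{D,\varepsilon}[\boldsymbol{\psi}_\varepsilon,\boldsymbol{\eta}] - \zeta(Q_0^\varepsilon \boldsymbol{\psi}_\varepsilon,\boldsymbol{\eta})_{L_2(\mathcal O)} = -\mathfrak b_{N,\varepsilon}[\boldsymbol{\varphi}_\varepsilon,\boldsymbol{\eta}] + \zeta(Q_0^\varepsilon \boldsymbol{\varphi}_\varepsilon,\boldsymbol{\eta})_{L_2(\mathcal O)}, \quad \boldsymbol{\eta}\in H^1_0(\mathcal O;\mathbb C^n).
\end{equation*}

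Next I test this with $\boldsymbol{\eta} = \boldsymbol{\psi}_\varepsilon$. Since $\mathfrak b_{N,\varepsilon}$ is Hermitian and bounded on $H^1(\mathcal O;\mathbb C^n)$ by \eqref{b mathfrak eps <= c2}, polarization yields $|\mathfrak b_{N,\varepsilon}[\boldsymbol{\varphi}_\varepsilon,\boldsymbol{\psi}_\varepsilon]| \leqslant \mathfrak c_2 \|\boldsymbol{\varphi}_\varepsilon\|_{H^1(\mathcal O)} \|\boldsymbol{\psi}_\varepsilon\|_{H^1(\mathcal O)}$; the $(Q_0^\varepsilon\cdot,\cdot)$-terms are controlled trivially by $\|Q_0\|_{L_\infty}\|\boldsymbol{\varphi}_\varepsilon\|_{L_2}\|\boldsymbol{\psi}_\varepsilon\|_{L_2}$. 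Mimicking the two-case argument from the proof of Theorem~\ref{Theorem with Diriclet corrector} (taking $\mathrm{Im}$-part when $\mathrm{Re}\,\zeta \geqslant 0$ and $\mathrm{Re}$-part when $\mathrm{Re}\,\zeta<0$), I derive
\begin{equation*}
(Q_0^\varepsilon \boldsymbol{\psi}_\varepsilon, \boldsymbol{\psi}_\varepsilon)_{L_2(\mathcal O)} \leqslant k c(\phi)\vert\zeta\vert^{-1} \|\boldsymbol{\varphi}_\varepsilon\|_{H^1(\mathcal O)} \|\boldsymbol{\psi}_\varepsilon\|_{H^1(\mathcal O)} + k c(\phi)\|\boldsymbol{\varphi}_\varepsilon\|_{L_2(\mathcal O)}\|\boldsymbol{\psi}_\varepsilon\|_{L_2(\mathcal O)}
\end{equation*}
with a constant $k$ depending only on the initial data.

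Plugging this bound back into the real part of the tested identity gives, in conjunction with $\mathfrak b_{D,\varepsilon}[\boldsymbol{\psi}_\varepsilon,\boldsymbol{\psi}_\varepsilon]\geqslant 0$, an estimate of the form
\begin{equation*}
\mathfrak b_{D,\varepsilon}[\boldsymbol{\psi}_\varepsilon,\boldsymbol{\psi}_\varepsilon] \leqslant K c(\phi) \|\boldsymbol{\varphi}_\varepsilon\|_{H^1(\mathcal O)}\|\boldsymbol{\psi}_\varepsilon\|_{H^1(\mathcal O)} + K c(\phi)\vert\zeta\vert\|\boldsymbol{\varphi}_\varepsilon\|_{L_2(\mathcal O)}\|\boldsymbol{\psi}_\varepsilon\|_{L_2(\mathcal O)}.
\end{equation*}
Using the coercivity \eqref{H^1-norm <= BDeps^1/2}, namely $\|\boldsymbol{\psi}_\varepsilon\|_{H^1(\mathcal O)}^2 \leqslant c_4^2 \mathfrak b_{D,\varepsilon}[\boldsymbol{\psi}_\varepsilon,\boldsymbol{\psi}_\varepsilon]$ (valid since $B_{D,\varepsilon}$ is positive definite and $\boldsymbol{\psi}_\varepsilon\in H^1_0$), together with $\|\boldsymbol{\psi}_\varepsilon\|_{L_2(\mathcal O)}^2 \leqslant \|Q_0^{-1}\|_{L_\infty}(Q_0^\varepsilon\boldsymbol{\psi}_\varepsilon,\boldsymbol{\psi}_\varepsilon)_{L_2(\mathcal O)}$ and Young's inequality $ab\leqslant \tfrac{1}{2}(a^2+b^2)$ to absorb the $\boldsymbol{\psi}_\varepsilon$-factors into the left-hand sides, I arrive at $\|\boldsymbol{\psi}_\varepsilon\|_{H^1(\mathcal O)} \leqslant c(\phi)(C_{12}\vert\zeta\vert^{1/2}\|\boldsymbol{\varphi}_\varepsilon\|_{L_2(\mathcal O)} + C_{13}'\|\boldsymbol{\varphi}_\varepsilon\|_{H^1(\mathcal O)})$ after suitable relabelling. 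The final bound \eqref{3.3 from draft} follows by the triangle inequality $\|\mathbf w_\varepsilon\|_{H^1(\mathcal O)}\leqslant \|\boldsymbol{\psi}_\varepsilon\|_{H^1(\mathcal O)} + \|\boldsymbol{\varphi}_\varepsilon\|_{H^1(\mathcal O)}$.

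The main obstacle is not conceptual but bookkeeping: keeping track of the $c(\phi)$ powers through the $\mathrm{Re}\,\zeta\geqslant 0$ versus $\mathrm{Re}\,\zeta<0$ split and executing Young's inequality with the right weights so that only $c(\phi)$ (and not $c(\phi)^2$) appears in the final constant. A second minor technicality is justifying the off-diagonal bound on $\mathfrak b_{N,\varepsilon}$; here Hermitian symmetry of the form together with \eqref{b mathfrak eps <= c2} (rather than coercivity, which fails on $H^1$ without boundary condition) suffices via the polarization identity.
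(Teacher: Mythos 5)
Your proposal is correct and follows essentially the same route as the paper's own proof: subtract $\boldsymbol{\varphi}_\varepsilon$ to get an $H^1_0$-function, test the resulting identity \eqref{tozd b_d,eps rho eta} with it, split into imaginary/real parts according to the sign of $\mathrm{Re}\,\zeta$, and combine the form bound \eqref{b mathfrak eps <= c2} with the coercivity \eqref{H^1-norm <= BDeps^1/2} before concluding with the triangle inequality. The only (harmless) deviation is that you postpone absorbing the $\Vert\boldsymbol{\psi}_\varepsilon\Vert_{L_2}$-factors to the very end, whereas the paper absorbs $\Vert (Q_0^\varepsilon)^{1/2}\boldsymbol{\varrho}_\varepsilon\Vert_{L_2}$ already in \eqref{5.6a}; with the weighted Young step you indicate, your bookkeeping also produces only the first power of $c(\phi)$ and the factor $\vert\zeta\vert^{1/2}$, as required.
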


  \begin{proof}
We have $\mathbf{w}_\varepsilon \vert _{\partial\mathcal{O}}=\boldsymbol{\varphi}_\varepsilon \vert _{\partial\mathcal{O}}$. Therefore,  $\boldsymbol{\varrho}_\varepsilon :=\mathbf{w}_\varepsilon -\boldsymbol{\varphi}_\varepsilon\in H^1_0(\mathcal{O};\mathbb{C}^n)$.
By \eqref{w_eps int tozd},
\begin{equation}
\label{tozd b_d,eps rho eta}
\mathfrak{b}_{D,\varepsilon}[\boldsymbol{\varrho}_\varepsilon ,\boldsymbol{\eta}]-\zeta (Q_0^\varepsilon\boldsymbol{\varrho}_\varepsilon,\boldsymbol{\eta})_{L_2(\mathcal{O})}
=-\mathfrak{b}_{N,\varepsilon} [\boldsymbol{\varphi}_\varepsilon ,\boldsymbol{\eta}]+\zeta(Q_0^\varepsilon \boldsymbol{\varphi}_\varepsilon ,\boldsymbol{\eta})_{L_2(\mathcal{O})},
\quad \boldsymbol{\eta}\in H^1_0(\mathcal{O};\mathbb{C}^n).
\end{equation}
We substitute $\boldsymbol{\eta}=\boldsymbol{\varrho}_\varepsilon$ in~\eqref{tozd b_d,eps rho eta} and take the imaginary part.
Then, by~\eqref{b mathfrak eps <= c2},
\begin{equation}
\label{3.5 in draft}
\begin{split}
\vert \mathrm{Im}\,\zeta\vert (Q_0^\varepsilon \boldsymbol{\varrho}_\varepsilon ,\boldsymbol{\varrho}_\varepsilon)_{L_2(\mathcal{O})}
\leqslant \mathfrak{c}_2\Vert \boldsymbol{\varphi}_\varepsilon\Vert _{H^1(\mathcal{O})}\Vert \boldsymbol{\varrho}_\varepsilon \Vert _{H^1(\mathcal{O})}+\vert \zeta \vert \Vert Q_0\Vert ^{1/2}_{L_\infty}\Vert \boldsymbol{\varphi}_\varepsilon\Vert _{L_2(\mathcal{O})}\Vert (Q_0^\varepsilon )^{1/2}\boldsymbol{\varrho}_\varepsilon \Vert _{L_2(\mathcal{O})}.
\end{split}
\end{equation}
 If $\mathrm{Re}\,\zeta \geqslant 0$ (and then $\mathrm{Im}\,\zeta\neq 0$), we deduce
\begin{equation*}
(Q_0^\varepsilon \boldsymbol{\varrho}_\varepsilon ,\boldsymbol{\varrho}_\varepsilon)_{L_2(\mathcal{O})}\leqslant 2 \mathfrak{c}_2
 c(\phi) \vert \zeta \vert ^{-1} \Vert \boldsymbol{\varphi}_\varepsilon\Vert _{H^1(\mathcal{O})}\Vert \boldsymbol{\varrho}_\varepsilon\Vert _{H^1(\mathcal{O})}
+\Vert Q_0\Vert _{L_\infty} c(\phi )^2\Vert \boldsymbol{\varphi}_\varepsilon\Vert ^2 _{L_2(\mathcal{O})},\quad \mathrm{Re}\,\zeta\geqslant 0.
\end{equation*}
If $\mathrm{Re}\,\zeta <0$, we take the real part of the corresponding identity and obtain
\begin{equation}
\label{3.7 in draft}
\vert \mathrm{Re}\,\zeta\vert (Q_0^\varepsilon \boldsymbol{\varrho}_\varepsilon, \boldsymbol{\varrho}_\varepsilon )_{L_2(\mathcal{O})}
\leqslant \mathfrak{c}_2\Vert \boldsymbol{\varphi}_\varepsilon \Vert _{H^1(\mathcal{O})}\Vert \boldsymbol{\varrho}_\varepsilon\Vert _{H^1(\mathcal{O})}
+\vert \zeta \vert \Vert Q_0 \Vert ^{1/2}_{L_\infty}\Vert \boldsymbol{\varphi}_\varepsilon \Vert _{L_2(\mathcal{O})}\Vert (Q_0^\varepsilon )^{1/2}\boldsymbol{\varrho}_\varepsilon \Vert _{L_2(\mathcal{O})}.
\end{equation}
Summing up \eqref{3.5 in draft} and \eqref{3.7 in draft}, we deduce
\begin{equation*}
(Q_0^\varepsilon \boldsymbol{\varrho}_\varepsilon ,\boldsymbol{\varrho}_\varepsilon)_{L_2(\mathcal{O})}\leqslant 4 \mathfrak{c}_2 \vert \zeta\vert ^{-1}\Vert \boldsymbol{\varphi}_\varepsilon\Vert _{H^1(\mathcal{O})}\Vert \boldsymbol{\varrho}_\varepsilon \Vert _{H^1(\mathcal{O})}+4\Vert Q_0\Vert _{L_\infty}\Vert \boldsymbol{\varphi}_\varepsilon \Vert ^2 _{L_2(\mathcal{O})},\quad \mathrm{Re}\,\zeta <0.
\end{equation*}
Thus, for all $\zeta$ under consideration we have
\begin{equation}
\label{5.6a}
(Q_0^\varepsilon \boldsymbol{\varrho}_\varepsilon ,\boldsymbol{\varrho}_\varepsilon )_{L_2(\mathcal{O})}\leqslant 4 \mathfrak{c}_2 c(\phi) \vert \zeta \vert ^{-1} \Vert \boldsymbol{\varphi}_\varepsilon \Vert _{H^1(\mathcal{O})}\Vert \boldsymbol{\varrho}_\varepsilon \Vert _{H^1(\mathcal{O})}
+4\Vert Q_0\Vert _{L_\infty} c(\phi)^2\Vert \boldsymbol{\varphi}_\varepsilon \Vert ^2 _{L_2(\mathcal{O})}.
\end{equation}
From \eqref{tozd b_d,eps rho eta} with $\boldsymbol{\eta}=\boldsymbol{\varrho}_\varepsilon$, \eqref{5.6a}, and
\eqref{b mathfrak eps <= c2} it follows that
\begin{equation*}
\begin{split}
\mathfrak{b}_{D,\varepsilon }[\boldsymbol{\varrho}_\varepsilon ,\boldsymbol{\varrho}_\varepsilon ]
\leqslant 9 \mathfrak{c}_2 c(\phi)\Vert \boldsymbol{\varphi}_\varepsilon \Vert _{H^1(\mathcal{O})}\Vert \boldsymbol{\varrho}_\varepsilon\Vert _{H^1(\mathcal{O})}
+9 c(\phi)^2 \vert \zeta\vert \Vert Q_0\Vert _{L_\infty} \Vert \boldsymbol{\varphi}_\varepsilon \Vert ^2 _{L_2(\mathcal{O})}.
\end{split}
\end{equation*}
Together with \eqref{H^1-norm <= BDeps^1/2}, this implies
\begin{equation*}
\Vert \boldsymbol{\varrho}_\varepsilon \Vert _{H^1(\mathcal{O})}\leqslant 9 \mathfrak{c}_2 c_4^2 c(\phi)\Vert \boldsymbol{\varphi}_\varepsilon\Vert _{H^1(\mathcal{O})}
+3\sqrt{2}c_4 \Vert Q_0\Vert ^{1/2}_{L_\infty} c(\phi) \vert \zeta\vert ^{1/2} \Vert \boldsymbol{\varphi}_\varepsilon \Vert _{L_2(\mathcal{O})}.
\end{equation*}
Recalling that $\boldsymbol{\varrho}_\varepsilon =\mathbf{w}_\varepsilon -\boldsymbol{\varphi}_\varepsilon$,
we obtain \eqref{3.3 from draft} with the constants
$C_{13}:=9\mathfrak{c}_2 c_4^2+1$ and $C_{12}:=3\sqrt{2} c_4 \Vert Q_0\Vert ^{1/2}_{L_\infty}$.
\end{proof}

\subsection{Estimates for the function $\boldsymbol{\varphi}_\varepsilon $}
\begin{lemma}\label{lemma phi eps}
Let $\boldsymbol{\varphi}_\varepsilon $ be given by \eqref{phi eps}. Then for $\zeta \in \mathbb{C}\setminus \mathbb{R}_+$, $\vert \zeta \vert \geqslant 1$, and $0<\varepsilon \leqslant \varepsilon _1$ we have
\begin{align}
\label{3.11 from draft}
&\Vert \boldsymbol{\varphi}_\varepsilon \Vert _{L_2(\mathbb{R}^d)}\leqslant C_{14} c(\phi) \varepsilon \vert \zeta \vert ^{-1/2}\Vert \mathbf{F}\Vert _{L_2(\mathcal{O})},
\\
\label{3.12 from draft}
&\Vert \mathbf{D}\boldsymbol{\varphi}_\varepsilon \Vert _{L_2(\mathbb{R}^d)}\leqslant c(\phi)\left(C_{15} \varepsilon ^{1/2} \vert \zeta \vert ^{-1/4}
+C_{16}\varepsilon \right)\Vert \mathbf{F}\Vert _{L_2(\mathcal{O})}.
\end{align}
The constants $C_{14}$, $C_{15}$, and $C_{16}$ depend only on the initial data \eqref{problem data} and the domain $\mathcal{O}$.
\end{lemma}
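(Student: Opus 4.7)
\emph{Plan.} The strategy is to treat \eqref{3.11 from draft} by a short direct argument and \eqref{3.12 from draft} by a Leibniz expansion in which each resulting summand is fed into either Proposition~\ref{Proposition f^eps S_eps} (the global bound) or Lemma~\ref{Lemma 3.6 from Su15} (the boundary-strip bound), according to whether it already carries an explicit factor of $\varepsilon$.

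For \eqref{3.11 from draft}, since $0\leqslant\theta_\varepsilon\leqslant 1$, one has pointwise
$$\vert \boldsymbol{\varphi}_\varepsilon\vert \leqslant \varepsilon\bigl(\vert \Lambda^\varepsilon\vert\,\vert S_\varepsilon b(\D)\widetilde{\mathbf{u}}_0\vert +\vert \widetilde{\Lambda}^\varepsilon\vert\,\vert S_\varepsilon \widetilde{\mathbf{u}}_0\vert\bigr).$$
Integrating over $\mathbb{R}^d$ and applying Proposition~\ref{Proposition f^eps S_eps} (equivalently \eqref{Lambda S_eps <=}, \eqref{tilde Lambda S_eps <=}), then using \eqref{<b^*b<} with \eqref{tilde u_0 in H1}, and \eqref{tilde u_0 in L2} (upgrading $\vert\zeta\vert^{-1}$ to $\vert\zeta\vert^{-1/2}$ via $\vert\zeta\vert\geqslant 1$), immediately yields \eqref{3.11 from draft}. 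The support property of $\theta_\varepsilon$ is not needed here.

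For \eqref{3.12 from draft}, I expand by the Leibniz rule, using $\D(\Lambda^\varepsilon)=\varepsilon^{-1}(\D\Lambda)^\varepsilon$ and that $S_\varepsilon$ commutes with $\D$, to obtain
\begin{equation*}
\D\boldsymbol{\varphi}_\varepsilon
=\varepsilon(\D\theta_\varepsilon)\bigl(\Lambda^\varepsilon S_\varepsilon b(\D)\widetilde{\mathbf{u}}_0+\widetilde{\Lambda}^\varepsilon S_\varepsilon\widetilde{\mathbf{u}}_0\bigr)
+\theta_\varepsilon (\D\Lambda)^\varepsilon S_\varepsilon b(\D)\widetilde{\mathbf{u}}_0
+\theta_\varepsilon (\D\widetilde{\Lambda})^\varepsilon S_\varepsilon\widetilde{\mathbf{u}}_0
+\varepsilon\theta_\varepsilon \Lambda^\varepsilon S_\varepsilon b(\D)\D\widetilde{\mathbf{u}}_0
+\varepsilon\theta_\varepsilon\widetilde{\Lambda}^\varepsilon S_\varepsilon \D\widetilde{\mathbf{u}}_0.
\end{equation*}
The first three terms carry no outer $\varepsilon$-factor, but are supported in $(\partial\O)_\varepsilon$ (note $\vert\varepsilon\D\theta_\varepsilon\vert\leqslant\mu$). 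To each I apply Lemma~\ref{Lemma 3.6 from Su15} with $h\in\{\Lambda,\widetilde\Lambda,\D\Lambda,\D\widetilde\Lambda\}$; these all lie in $L_2(\Omega)$ by \eqref{Lambda <=}--\eqref{DLambda<=} and \eqref{tilde Lambda<=}--\eqref{D tilde Lambda}. The lemma supplies the gain $\varepsilon^{1/2}$ and the product $\Vert\mathbf{u}\Vert_{H^1}^{1/2}\Vert\mathbf{u}\Vert_{L_2}^{1/2}$; combined with \eqref{<b^*b<} and the bounds \eqref{tilde u_0 in L2}--\eqref{tilde u_0 in H2}, this product is at most $Cc(\phi)\vert\zeta\vert^{-1/4}$ when $\mathbf{u}=b(\D)\widetilde{\mathbf{u}}_0$ and at most $Cc(\phi)\vert\zeta\vert^{-3/4}\leqslant Cc(\phi)\vert\zeta\vert^{-1/4}$ when $\mathbf{u}=\widetilde{\mathbf{u}}_0$. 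Each of these three contributions is thus bounded by $Cc(\phi)\varepsilon^{1/2}\vert\zeta\vert^{-1/4}\Vert\mathbf{F}\Vert_{L_2(\O)}$. For the last two terms, which already carry an explicit $\varepsilon$-prefactor, I invoke only Proposition~\ref{Proposition f^eps S_eps}, together with \eqref{<b^*b<} and \eqref{tilde u_0 in H2} for the $\Lambda^\varepsilon$-term and \eqref{tilde u_0 in H1} for the $\widetilde\Lambda^\varepsilon$-term, to get a contribution of size $Cc(\phi)\varepsilon\Vert\mathbf{F}\Vert_{L_2(\O)}$. Summing the five contributions yields \eqref{3.12 from draft}.

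\emph{Main obstacle.} The difficulty is bookkeeping rather than depth: five distinct summands, each requiring a choice between the strip estimate (which pays $\varepsilon^{1/2}$ but needs $\theta_\varepsilon$-support) and the global estimate (which uses only the explicit $\varepsilon$-factor), together with careful tracking of the $\vert\zeta\vert$-exponent in order to extract the claimed $\vert\zeta\vert^{-1/4}$ (any stronger decay being gracefully downgraded via $\vert\zeta\vert\geqslant 1$). A secondary subtlety is that $\widetilde\Lambda$ is not assumed bounded, so the $L_\infty$-based route of Lemma~\ref{Lemma Lambda in L infty} is unavailable for the $\widetilde\Lambda$-terms; one is forced to rely solely on the $L_2(\Omega)$ information \eqref{tilde Lambda<=}, \eqref{D tilde Lambda}—which is precisely the input that Lemma~\ref{Lemma 3.6 from Su15} requires.
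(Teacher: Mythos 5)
Your proof is correct and follows essentially the same route as the paper: the $L_2$-bound via Proposition~\ref{Proposition f^eps S_eps} together with \eqref{tilde u_0 in L2}--\eqref{tilde u_0 in H1}, and the gradient bound via the same Leibniz decomposition, with Lemma~\ref{Lemma 3.6 from Su15} applied to the terms lacking an explicit $\varepsilon$-factor (using only the $L_2(\Omega)$-bounds \eqref{Lambda <=}--\eqref{DLambda<=}, \eqref{tilde Lambda<=}--\eqref{D tilde Lambda}) and the global smoothing bound for the $\varepsilon$-weighted terms. The $\zeta$-exponent bookkeeping, including the downgrade of $\vert\zeta\vert^{-3/4}$ to $\vert\zeta\vert^{-1/4}$, matches the paper's computation.
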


\begin{proof}
First, we prove \eqref{3.11 from draft}.
From \eqref{<b^*b<}, \eqref{Lambda S_eps <=}, \eqref{tilde Lambda S_eps <=}, \eqref{theta_eps conditions}, and \eqref{phi eps} it follows that
\begin{equation*}
\begin{split}
\Vert \boldsymbol{\varphi}_\varepsilon \Vert _{L_2(\mathbb{R}^d)}
\leqslant \varepsilon M_1 \alpha _1 ^{1/2}\Vert \widetilde{\mathbf{u}}_0\Vert _{H^1(\mathbb{R}^d)}+\varepsilon \widetilde{M}_1\Vert \widetilde{\mathbf{u}}_0\Vert _{L_2(\mathbb{R}^d)}.
\end{split}
\end{equation*}
Combining this with \eqref{tilde u_0 in L2}, \eqref{tilde u_0 in H1}, and the inequality $|\zeta|\ge 1$, we obtain \eqref{3.11 from draft} with the constant $C_{14}:= M_1\alpha _1 ^{1/2}k _2+\widetilde{M}_1 k _1$.

To prove \eqref{3.12 from draft},  consider the derivatives:
\begin{equation*}
\begin{split}
\partial _j \boldsymbol{\varphi}_\varepsilon &= \varepsilon (\partial _j \theta _\varepsilon )(\Lambda ^\varepsilon S_\varepsilon b(\mathbf{D})\widetilde{\mathbf{u}}_0+\widetilde{\Lambda}^\varepsilon S_\varepsilon \widetilde{\mathbf{u}}_0)
+\theta _\varepsilon \left(
(\partial _j \Lambda )^\varepsilon S_\varepsilon b(\mathbf{D})\widetilde{\mathbf{u}}_0+(\partial _j \widetilde{\Lambda})^\varepsilon S_\varepsilon \widetilde{\mathbf{u}}_0
\right)
\\
&+\varepsilon \theta _\varepsilon (\Lambda ^\varepsilon S_\varepsilon b(\mathbf{D})\partial _j\widetilde{\mathbf{u}}_0+\widetilde{\Lambda}^\varepsilon S_\varepsilon \partial _j \widetilde{\mathbf{u}}_0).
\end{split}
\end{equation*}
Hence,
\begin{equation}
\label{3.13 from draft}
\begin{split}
\Vert \mathbf{D}\boldsymbol{\varphi}_\varepsilon  \Vert ^2 _{L_2(\mathbb{R}^d)}
&\leqslant 3\varepsilon ^2 \Vert (\nabla \theta _\varepsilon )(\Lambda ^\varepsilon S_\varepsilon b(\mathbf{D})\widetilde{\mathbf{u}}_0+\widetilde{\Lambda}^\varepsilon S_\varepsilon \widetilde{\mathbf{u}}_0)\Vert ^2 _{L_2(\mathbb{R}^d)}
\\
&+3\bigl\Vert \theta _\varepsilon
\bigl(
(\mathbf{D}\Lambda)^\varepsilon S_\varepsilon b(\mathbf{D})\widetilde{\mathbf{u}}_0+(\mathbf{D}\widetilde{\Lambda})^\varepsilon S_\varepsilon \widetilde{\mathbf{u}}_0
\bigr)\bigr\Vert ^2 _{L_2(\mathbb{R}^d)}
\\
&+3\varepsilon ^2 \sum _{j=1}^d \Vert \theta _\varepsilon (\Lambda ^\varepsilon S_\varepsilon b(\mathbf{D})\partial _j \widetilde{\mathbf{u}}_0+\widetilde{\Lambda}^\varepsilon S_\varepsilon \partial _j \widetilde{\mathbf{u}}_0)\Vert ^2 _{L_2(\mathbb{R}^d)}.
\end{split}
\end{equation}
Denote the consecutive terms in the right-hand side of~\eqref{3.13 from draft} by~$J_1(\varepsilon)$, $J_2(\varepsilon )$, and~$J_3(\varepsilon)$.
The term $J_1(\varepsilon)$ is estimated with the help of~\eqref{theta_eps conditions} and Lemma~\ref{Lemma 3.6 from Su15}:
\begin{equation}
\label{J_1 leqslant part 1}
\begin{split}
J_1(\varepsilon )
&\leqslant
6\mu ^2 \left(
\int _{(\partial\mathcal{O})_\varepsilon}\vert \Lambda ^\varepsilon S_\varepsilon b(\mathbf{D})\widetilde{\mathbf{u}}_0\vert ^2\,d\mathbf{x}
+\int _{(\partial\mathcal{O})_\varepsilon }\vert \widetilde{\Lambda}^\varepsilon S_\varepsilon \widetilde{\mathbf{u}}_0\vert ^2\,d\mathbf{x}
\right)
\\
&\leqslant 6\mu ^2
\beta _*\varepsilon \vert \Omega\vert ^{-1}\Vert \Lambda \Vert ^2 _{L_2(\Omega)}\Vert b(\mathbf{D})\widetilde{\mathbf{u}}_0\Vert _{H^1(\mathbb{R}^d)}\Vert b(\mathbf{D})\widetilde{\mathbf{u}}_0\Vert _{L_2(\mathbb{R}^d)}
\\
&+ 6\mu ^2 \beta _*\varepsilon\vert \Omega\vert ^{-1}\Vert \widetilde{\Lambda}\Vert ^2 _{L_2(\Omega)}\Vert \widetilde{\mathbf{u}}_0\Vert _{H^1(\mathbb{R}^d)}\Vert \widetilde{\mathbf{u}}_0\Vert _{L_2(\mathbb{R}^d)}
,
\quad 0<\varepsilon\leqslant\varepsilon _1.
\end{split}
\end{equation}
According to \eqref{tilde Lambda<=} and \eqref{tilde Lambda S_eps <=},
$\vert \Omega\vert ^{-1/2}\Vert \widetilde{\Lambda}\Vert _{L_2(\Omega )}\leqslant \widetilde{M_1}$. Combining this with \eqref{<b^*b<}, \eqref{Lambda <=}, and \eqref{J_1 leqslant part 1}, we obtain
\begin{equation*}
\begin{split}
J_1(\varepsilon)\leqslant 6\mu ^2 \beta _* \varepsilon \left( M_1^2 \alpha _1 \Vert \widetilde{\mathbf{u}}_0\Vert _{H^2(\mathbb{R}^d)}\Vert \widetilde{\mathbf{u}}_0\Vert _{H^1(\mathbb{R}^d)} + \widetilde{M}_1^2\Vert \widetilde{\mathbf{u}}_0\Vert _{H^1(\mathbb{R}^d)}\Vert \widetilde{\mathbf{u}}_0\Vert _{L_2(\mathbb{R}^d)}\right),
\quad 0<\varepsilon\leqslant \varepsilon _1.
\end{split}
\end{equation*}
Together with \eqref{tilde u_0 in L2}--\eqref{tilde u_0 in H2} and the inequality $\vert \zeta\vert \geqslant 1$, this implies
\begin{equation}
\label{J1<=}
J_1(\varepsilon)\leqslant \kappa _1 c(\phi)^2 \varepsilon \vert \zeta\vert ^{-1/2} \Vert \mathbf{F}\Vert ^2 _{L_2(\mathcal{O})},\quad 0<\varepsilon\leqslant\varepsilon _1; \quad \kappa _1:=6\mu ^2 \beta _* k_2(M_1^2\alpha _1 k_3+\widetilde{M}_1^2k_1).
\end{equation}

From \eqref{D tilde Lambda} it follows that $\vert\Omega\vert ^{-1/2}\Vert \mathbf{D}\widetilde{\Lambda}\Vert _{L_2(\Omega)}\leqslant\widetilde{M}_2$,
where
\begin{equation}
\label{tilde M2 =}
\widetilde{M}_2:=\vert \Omega\vert ^{-1/2}C_an^{1/2}\alpha _0^{-1}\Vert g^{-1}\Vert _{L_\infty}.
\end{equation}
The term $J_2(\varepsilon )$ is estimated similarly to $J_1(\varepsilon )$
with the help of Lemma~\ref{Lemma 3.6 from Su15} and relations \eqref{<b^*b<}, \eqref{DLambda<=},  \eqref{tilde u_0 in L2}--\eqref{tilde u_0 in H2}, and \eqref{theta_eps conditions}. We arrive at
\begin{equation}
\label{J2<=}
J_2(\varepsilon)\leqslant \kappa _2c(\phi)^2 \varepsilon \vert \zeta\vert ^{-1/2} \Vert \mathbf{F}\Vert ^2 _{L_2(\mathcal{O})},\quad 0<\varepsilon\leqslant\varepsilon _1;\quad \kappa _2 := 6\beta _* k_2(M_2^2k_3\alpha _1 +\widetilde{M}_2^2k_1).
\end{equation}

Finally, the term $J_3(\varepsilon)$ is estimated by using \eqref{<b^*b<}, \eqref{Lambda S_eps <=}, \eqref{tilde Lambda S_eps <=}, and \eqref{theta_eps conditions}:
\begin{equation*}
\begin{split}
J_3(\varepsilon) \leqslant
6\varepsilon ^2\left(
M_1^2\alpha _1 \Vert \widetilde{\mathbf{u}}_0\Vert ^2 _{H^2(\mathbb{R}^d)}+\widetilde{M}_1^2\Vert \widetilde{\mathbf{u}}_0\Vert ^2 _{H^1(\mathbb{R}^d)}
\right).
\end{split}
\end{equation*}
Together with \eqref{tilde u_0 in H1}, \eqref{tilde u_0 in H2}, and the inequality $\vert \zeta\vert \geqslant 1$, this yields
\begin{equation}
\label{J3<=}
J_3(\varepsilon)\leqslant \kappa _3 c(\phi)^2 \varepsilon ^2 \Vert \mathbf{F}\Vert ^2 _{L_2(\mathcal{O})},\quad 0<\varepsilon\leqslant 1;
\quad \kappa _3 :=6 M_1^2 \alpha _1 k_3^2+6\widetilde{M}_1^2k_2^2.
\end{equation}
Now, relations \eqref{3.13 from draft}, \eqref{J1<=}, \eqref{J2<=}, and \eqref{J3<=} imply \eqref{3.12 from draft}
with the constants $C_{15} :=(\kappa _1+\kappa _2)^{1/2}$ and $C_{16} :=\kappa _3^{1/2}$.
\end{proof}

\subsection{Completion of the proof of Theorem~\ref{Theorem Dirichlet H1}}

From Lemmas~\ref{lemma w eps} and~\ref{lemma phi eps} it follows that
\begin{equation*}
\Vert \mathbf{w}_\varepsilon\Vert _{H^1(\mathcal{O})}
\leqslant c (\phi)^2
\left(C_{13}C_{15} \varepsilon ^{1/2}  \vert \zeta\vert ^{-1/4} +(C_{12}C_{14}+C_{13}C_{14}+C_{13}C_{16})\varepsilon \right)
\Vert \mathbf{F}\Vert _{L_2(\mathcal{O})}
\end{equation*}
for $\zeta\in\mathbb{C}\setminus\mathbb{R}_+$, $\vert \zeta\vert \geqslant 1$, and $0<\varepsilon\leqslant\varepsilon _1$.
Together with~\eqref{2.34 from draft}, this implies~\eqref{Th L2->H1 solutions} with the constants $C_5:=C_{13}C_{15}$ and $C_6:=C_7+C_{12}C_{14}+C_{13}C_{14}+C_{13}C_{16}$.

It remains to check~\eqref{Th fluxes}. By \eqref{b_l <=} and \eqref{Th L2->H1 solutions},
\begin{equation}
\label{3.19 from draft}
\Vert \mathbf{p}_\varepsilon -g^\varepsilon b(\mathbf{D})\mathbf{v}_\varepsilon \Vert _{L_2(\mathcal{O})}
\leqslant \Vert g\Vert _{L_\infty}(d\alpha _1 )^{1/2}(C_5c(\phi)^2\varepsilon ^{1/2}\vert\zeta\vert ^{-1/4}+C_6 c(\phi)^4\varepsilon )\Vert \mathbf{F}\Vert _{L_2(\mathcal{O})}.
\end{equation}
We have
\begin{equation}
\label{3.20 from draft}
\begin{split}
g^\varepsilon b(\mathbf{D})\mathbf{v}_\varepsilon
&=
g^\varepsilon b(\mathbf{D})\mathbf{u}_0
+g^\varepsilon (b(\mathbf{D})\Lambda )^\varepsilon S_\varepsilon b(\mathbf{D})\widetilde{\mathbf{u}}_0
+g^\varepsilon (b(\mathbf{D})\widetilde{\Lambda})^\varepsilon S_\varepsilon\widetilde{\mathbf{u}}_0\\
&+\varepsilon\sum _{l=1}^d g^\varepsilon b_l (\Lambda ^\varepsilon S_\varepsilon b(\mathbf{D})D_l\widetilde{\mathbf{u}}_0+\widetilde{\Lambda}^\varepsilon S_\varepsilon D_l\widetilde{\mathbf{u}}_0).
\end{split}
\end{equation}
The fourth term in the right-hand side of~\eqref{3.20 from draft} is estimated with the help of~\eqref{b_l <=}, \eqref{Lambda S_eps <=}, and~\eqref{tilde Lambda S_eps <=}:
\begin{equation}
\label{5.18a}
\begin{split}
\Bigl\Vert &\varepsilon\sum _{l=1}^d g^\varepsilon b_l (\Lambda ^\varepsilon S_\varepsilon b(\mathbf{D})D_l\widetilde{\mathbf{u}}_0+\widetilde{\Lambda}^\varepsilon S_\varepsilon D_l\widetilde{\mathbf{u}}_0)\Bigr\Vert _{L_2(\mathcal{O})}
\\
&\leqslant
\varepsilon \Vert g\Vert _{L_\infty}\alpha_1 ^{1/2}
\left(
M_1\sum _{l=1}^d \Vert b(\mathbf{D})D_l\widetilde{\mathbf{u}}_0\Vert _{L_2(\mathbb{R}^d)}+\widetilde{M}_1\sum _{l=1}^d\Vert D_l\widetilde{\mathbf{u}}_0\Vert _{L_2(\mathbb{R}^d)}
\right).
\end{split}
\end{equation}
Combining this with~\eqref{<b^*b<}, \eqref{tilde u_0 in H1}, \eqref{tilde u_0 in H2}, and the condition $\vert\zeta\vert\geqslant 1$,
we deduce
\begin{equation}
\label{4-th chlen in tozd for fluxes}
\begin{split}
\Bigl\Vert \varepsilon\sum _{l=1}^d g^\varepsilon b_l (\Lambda ^\varepsilon S_\varepsilon b(\mathbf{D})D_l\widetilde{\mathbf{u}}_0+\widetilde{\Lambda}^\varepsilon S_\varepsilon D_l\widetilde{\mathbf{u}}_0)\Bigr\Vert _{L_2(\mathcal{O})}
\leqslant
C_{17}c(\phi)\varepsilon\Vert \mathbf{F}\Vert _{L_2(\mathcal{O})},
\end{split}
\end{equation}
where
$C_{17} :=\Vert g\Vert _{L_\infty} (d\alpha _1)^{1/2}\bigl(M_1\alpha _1^{1/2} k_3+\widetilde{M}_1k_2\bigr).$

Next, by Proposition~\ref{Proposition S__eps - I}, we have
\begin{equation}
\label{5.19a}
\begin{split}
\Vert g^\varepsilon b(\mathbf{D})\widetilde{\mathbf{u}}_0-g^\varepsilon S_\varepsilon b(\mathbf{D})\widetilde{\mathbf{u}}_0\Vert _{L_2(\mathbb{R}^d)}
\leqslant
\varepsilon r_1\Vert g\Vert _{L_\infty}\Vert \mathbf{D}b(\mathbf{D})\widetilde{\mathbf{u}}_0\Vert _{L_2(\mathbb{R}^d)}.
\end{split}
\end{equation}
Together with \eqref{<b^*b<} and \eqref{tilde u_0 in H2}, this implies
\begin{equation}
\label{3.22 from draft}
\Vert g^\varepsilon b(\mathbf{D})\widetilde{\mathbf{u}}_0-g^\varepsilon S_\varepsilon b(\mathbf{D})\widetilde{\mathbf{u}}_0\Vert _{L_2(\mathbb{R}^d)}
\leqslant
C_{18} c(\phi) \varepsilon \Vert \mathbf{F}\Vert _{L_2(\mathcal{O})},
\end{equation}
where
$C_{18} :=r_1\alpha_1^{1/2}k_3\Vert g\Vert _{L_\infty}$. Now, relations \eqref{tilde g}, \eqref{3.19 from draft}, \eqref{3.20 from draft}, \eqref{4-th chlen in tozd for fluxes}, and \eqref{3.22 from draft} imply \eqref{Th fluxes} with the constants $\widetilde{C}_5:=(d\alpha_1)^{1/2}\Vert g\Vert _{L_\infty}C_5$ and $\widetilde{C}_6:=(d\alpha_1)^{1/2}\Vert g\Vert _{L_\infty}C_6+C_{17}+C_{18}$. \qed

\section{The proof of Theorem~\ref{Theorem Dirichlet L2}}
\label{Section proof of L_2 rightarrow L_2 theorem}
\subsection{Estimate for the discrepancy $\mathbf{w}_\varepsilon$ in $L_2$}

\begin{lemma}
Suppose that $\mathbf{w}_\varepsilon$ is the solution of problem \eqref{w_eps problem}.
Suppose that the number $\varepsilon _1$ is subject to Condition~\textnormal{\ref{condition varepsilon}}.
Then for $\zeta\in \mathbb{C}\setminus\mathbb{R}_+$, $\vert \zeta\vert\geqslant 1$, and
$0<\varepsilon\leqslant\varepsilon _1$ we have
\begin{equation}
\label{3.23 from draft}
\Vert \mathbf{w}_\varepsilon \Vert _{L_2(\mathcal{O})}\leqslant c(\phi)^5(C_{19}\varepsilon \vert \zeta\vert ^{-1/2}+C_{20}\varepsilon ^2)\Vert \mathbf{F}\Vert _{L_2(\mathcal{O})}.
\end{equation}
The constants $C_{19}$ and $C_{20}$ depend only on the initial data \eqref{problem data} and the domain $\mathcal{O}$.
\end{lemma}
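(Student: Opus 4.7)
\medskip

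\textbf{Plan.} I would prove the $L_2$-bound on $\mathbf{w}_\varepsilon$ by an Aubin--Nitsche duality argument. For arbitrary $\mathbf{h}\in L_2(\mathcal{O};\mathbb{C}^n)$, set $\boldsymbol{\eta}_\varepsilon := (B_{D,\varepsilon}-\zeta^* Q_0^\varepsilon)^{-1}\mathbf{h}\in H^1_0(\mathcal{O};\mathbb{C}^n)$. Testing its weak equation against the admissible element $\mathbf{w}_\varepsilon-\boldsymbol{\varphi}_\varepsilon\in H^1_0(\mathcal{O};\mathbb{C}^n)$ and combining with \eqref{w_eps int tozd} (using Hermiticity of $\mathfrak{b}_{N,\varepsilon}$ and $Q_0^*=Q_0$) yields the key identity
\begin{equation*}
(\mathbf{h},\mathbf{w}_\varepsilon)_{L_2(\mathcal{O})}
= (\mathbf{h},\boldsymbol{\varphi}_\varepsilon)_{L_2(\mathcal{O})}
-\mathfrak{b}_{N,\varepsilon}[\boldsymbol{\eta}_\varepsilon,\boldsymbol{\varphi}_\varepsilon]
+\zeta^*(Q_0^\varepsilon\boldsymbol{\eta}_\varepsilon,\boldsymbol{\varphi}_\varepsilon)_{L_2(\mathcal{O})}.
\end{equation*}
Taking the supremum over $\mathbf{h}$ reduces \eqref{3.23 from draft} to estimating the three terms on the right.

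The first and third terms are routine. Lemma~\ref{lemma phi eps} provides $\Vert\boldsymbol{\varphi}_\varepsilon\Vert_{L_2(\mathcal{O})}\lesssim c(\phi)\varepsilon|\zeta|^{-1/2}\Vert\mathbf{F}\Vert_{L_2(\mathcal{O})}$, and Lemma~\ref{Lemma estimates u_D,eps} applied to the adjoint resolvent (note $c(\phi)$ is invariant under $\zeta\mapsto\zeta^*$) gives $\Vert\boldsymbol{\eta}_\varepsilon\Vert_{L_2(\mathcal{O})}\lesssim c(\phi)|\zeta|^{-1}\Vert\mathbf{h}\Vert_{L_2(\mathcal{O})}$, so both produce contributions of order $c(\phi)^2\varepsilon|\zeta|^{-1/2}\Vert\mathbf{F}\Vert\Vert\mathbf{h}\Vert$. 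The serious obstruction is the middle term $\mathfrak{b}_{N,\varepsilon}[\boldsymbol{\eta}_\varepsilon,\boldsymbol{\varphi}_\varepsilon]$, since a brute application of \eqref{b mathfrak eps <= c2} combined with the $H^1$-bounds produces an unwanted cross term of order $\varepsilon^{1/2}|\zeta|^{-3/4}$. To circumvent this, I would introduce the first-order approximation $\boldsymbol{\nu}_\varepsilon$ of $\boldsymbol{\eta}_\varepsilon$, built from $\boldsymbol{\eta}_0 := (B_D^0-\zeta^*\overline{Q_0})^{-1}\mathbf{h}$ exactly as $\mathbf{v}_\varepsilon$ is built from $\mathbf{u}_0$ in \eqref{v_eps}--\eqref{v_eps=}, and split $\boldsymbol{\eta}_\varepsilon=\boldsymbol{\nu}_\varepsilon+\mathbf{r}_\varepsilon$. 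Theorem~\ref{Theorem Dirichlet H1} applied to the adjoint yields $\Vert\mathbf{r}_\varepsilon\Vert_{H^1(\mathcal{O})}\lesssim c(\phi)^2\varepsilon^{1/2}|\zeta|^{-1/4}\Vert\mathbf{h}\Vert+c(\phi)^4\varepsilon\Vert\mathbf{h}\Vert$; paired with the $H^1$-bound on $\boldsymbol{\varphi}_\varepsilon$ from Lemma~\ref{lemma phi eps} and Young's inequality $2\varepsilon^{3/2}|\zeta|^{-1/4}\leqslant\varepsilon|\zeta|^{-1/2}+\varepsilon^2$, the contribution of $\mathfrak{b}_{N,\varepsilon}[\mathbf{r}_\varepsilon,\boldsymbol{\varphi}_\varepsilon]$ reaches exactly the desired order $c(\phi)^5(\varepsilon|\zeta|^{-1/2}+\varepsilon^2)\Vert\mathbf{F}\Vert\Vert\mathbf{h}\Vert$, accounting for the exponent $5$ in \eqref{3.23 from draft}.

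The most delicate step is estimating $\mathfrak{b}_{N,\varepsilon}[\boldsymbol{\nu}_\varepsilon,\boldsymbol{\varphi}_\varepsilon]$, where the $H^2$-regularity is available only for $\widetilde{\boldsymbol{\eta}}_0$, not for $\boldsymbol{\nu}_\varepsilon$ itself. Since $\mathrm{supp}\,\boldsymbol{\varphi}_\varepsilon\subset(\partial\mathcal{O})_\varepsilon$, every integral collapses to one over the $\varepsilon$-strip. Expanding $b(\mathbf{D})\boldsymbol{\nu}_\varepsilon$ by means of $b(\mathbf{D})(\Lambda^\varepsilon S_\varepsilon f)=(b(\mathbf{D})\Lambda)^\varepsilon S_\varepsilon f+\varepsilon\sum_l b_l\Lambda^\varepsilon S_\varepsilon D_l f$ (and analogously for $\widetilde{\Lambda}$), the leading piece $\Vert b(\mathbf{D})\widetilde{\boldsymbol{\eta}}_0\Vert_{L_2((\partial\mathcal{O})_\varepsilon)}$ is controlled by Lemma~\ref{lemma ots int O_eps B_eps} together with the bound $\Vert\widetilde{\boldsymbol{\eta}}_0\Vert_{H^2(\mathbb{R}^d)}\lesssim c(\phi)\Vert\mathbf{h}\Vert$, which produces the extra factor $\varepsilon^{1/2}$; the oscillating pieces involving $(\mathbf{D}\Lambda)^\varepsilon$, $(\mathbf{D}\widetilde{\Lambda})^\varepsilon$, $[\Lambda^\varepsilon]S_\varepsilon$, $[\widetilde{\Lambda}^\varepsilon]S_\varepsilon$ are handled via Lemma~\ref{Lemma 3.6 from Su15} and Proposition~\ref{Proposition f^eps S_eps}. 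The lower-order contributions in $\mathfrak{b}_{N,\varepsilon}$ with unbounded coefficients $a_j\in L_\rho(\Omega)$ and $Q\in L_s(\Omega)$ require a careful combination of H\"older's inequality (as in the derivation of \eqref{b mathfrak eps <= c2}) with the thin-strip lemmas---this control of the rough lower-order terms on $(\partial\mathcal{O})_\varepsilon$ is the main technical difference from \cite{Su15} and the principal obstacle of the proof. Pairing the resulting thin-strip bounds for $\boldsymbol{\nu}_\varepsilon$ with $\Vert\boldsymbol{\varphi}_\varepsilon\Vert_{H^1(\mathcal{O})}$ and once more invoking Young's inequality yields a bound of order $c(\phi)^2(\varepsilon|\zeta|^{-1/2}+\varepsilon^2)\Vert\mathbf{F}\Vert\Vert\mathbf{h}\Vert$, which is dominated by the $c(\phi)^5$ contribution of $\mathbf{r}_\varepsilon$ and completes \eqref{3.23 from draft}.
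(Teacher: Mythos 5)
Your proposal is correct and follows essentially the same strategy as the paper's proof: duality against $\boldsymbol{\eta}_\varepsilon=(B_{D,\varepsilon}-\zeta^*Q_0^\varepsilon)^{-1}\boldsymbol{\Phi}$ starting from the weak identity for $\boldsymbol{\varrho}_\varepsilon=\mathbf{w}_\varepsilon-\boldsymbol{\varphi}_\varepsilon$, replacement of $\boldsymbol{\eta}_\varepsilon$ by its first-order approximation (controlling the remainder via Theorem~\ref{Theorem Dirichlet H1}), and term-by-term thin-strip estimates via Lemmas~\ref{lemma ots int O_eps B_eps} and \ref{Lemma 3.6 from Su15} with a final Young inequality $\varepsilon^{3/2}|\zeta|^{-1/4}\leqslant\tfrac12(\varepsilon|\zeta|^{-1/2}+\varepsilon^2)$. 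The one technical point your sketch glosses over is that the pure $\boldsymbol{\eta}_0$-contribution $\mathfrak{b}_{N,\varepsilon}[\boldsymbol{\varphi}_\varepsilon,\boldsymbol{\eta}_0]$ must itself be split as $\mathfrak{b}_{N,\varepsilon}[\boldsymbol{\varphi}_\varepsilon,S_\varepsilon\widetilde{\boldsymbol{\eta}}_0]+\mathfrak{b}_{N,\varepsilon}[\boldsymbol{\varphi}_\varepsilon,\boldsymbol{\eta}_0-S_\varepsilon\widetilde{\boldsymbol{\eta}}_0]$, because the thin-strip bound for the unbounded coefficients $a_j,Q$ (Lemma~\ref{Lemma 3.6 from Su15}) is only available once the Steklov smoothing is present.
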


\begin{proof}
Recall that $\boldsymbol{\varrho}_\varepsilon =\mathbf{w}_\varepsilon -\boldsymbol{\varphi}_\varepsilon$
satisfies~\eqref{tozd b_d,eps rho eta}. We substitute $\boldsymbol{\eta} = \boldsymbol{\eta}_\varepsilon =(B_{D,\varepsilon}-\zeta ^*Q_0^\varepsilon)^{-1}\boldsymbol{\Phi}$ with $\boldsymbol{\Phi}\in L_2(\mathcal{O};\mathbb{C}^n)$ into this identity.
Then the left-hand side of \eqref{tozd b_d,eps rho eta} can be written as
\begin{equation*}
\mathfrak{b}_{D,\varepsilon}[\boldsymbol{\varrho}_\varepsilon,\boldsymbol{\eta}_\varepsilon]-\zeta(Q_0^\varepsilon \boldsymbol{\varrho}_\varepsilon ,\boldsymbol{\eta}_\varepsilon )_{L_2(\mathcal{O})} =(\boldsymbol{\varrho}_\varepsilon,\boldsymbol{\Phi})_{L_2(\mathcal{O})}.
\end{equation*}
Hence,
\begin{equation}
\label{3.24 from draft}
(\mathbf{w}_\varepsilon-\boldsymbol{\varphi}_\varepsilon ,\boldsymbol{\Phi})_{L_2(\mathcal{O})}=-\mathfrak{b}_{N,\varepsilon} [\boldsymbol{\varphi}_\varepsilon ,\boldsymbol{\eta}_\varepsilon ]+\zeta (Q_0^\varepsilon \boldsymbol{\varphi}_\varepsilon ,\boldsymbol{\eta}_\varepsilon )_{L_2(\mathcal{O})}.
\end{equation}

To approximate $\boldsymbol{\eta}_\varepsilon$ in $H^1(\mathcal{O};\mathbb{C}^n)$, we apply the already proved Theorem~\ref{Theorem Dirichlet H1}. Denote $\boldsymbol{\eta}_0=(B_D^0-\zeta ^*\overline{Q_0})^{-1}\boldsymbol{\Phi}$ and $\widetilde{\boldsymbol{\eta}}_0=P_\mathcal{O}\boldsymbol{\eta}_0$. The first order approximation of $\boldsymbol{\eta}_\varepsilon$
is given by $\boldsymbol{\eta}_0+\varepsilon\Lambda^\varepsilon S_\varepsilon b(\mathbf{D})\widetilde{\boldsymbol{\eta}}_0+\varepsilon\widetilde{\Lambda}^\varepsilon S_\varepsilon\widetilde{\boldsymbol{\eta}}_0$.
We rewrite \eqref{3.24 from draft} in the form
\begin{equation}
\label{3.25 from draft}
\begin{split}
(\mathbf{w}_\varepsilon-\boldsymbol{\varphi}_\varepsilon ,\boldsymbol{\Phi})_{L_2(\mathcal{O})}&=-\mathfrak{b}_{N,\varepsilon} [\boldsymbol{\varphi}_\varepsilon ,\boldsymbol{\eta}_\varepsilon -\boldsymbol{\eta} _0 -\varepsilon\Lambda^\varepsilon S_\varepsilon b(\mathbf{D})\widetilde{\boldsymbol{\eta}}_0-\varepsilon\widetilde{\Lambda}^\varepsilon S_\varepsilon\widetilde{\boldsymbol{\eta}}_0]
-\mathfrak{b}_{N,\varepsilon} [\boldsymbol{\varphi }_\varepsilon ,\boldsymbol{\eta}_0]
\\
&-\mathfrak{b}_{N,\varepsilon}[\boldsymbol{\varphi }_\varepsilon ,\varepsilon\Lambda^\varepsilon S_\varepsilon b(\mathbf{D})\widetilde{\boldsymbol{\eta}}_0+\varepsilon\widetilde{\Lambda}^\varepsilon S_\varepsilon\widetilde{\boldsymbol{\eta}}_0]
+\zeta (Q_0^\varepsilon \boldsymbol{\varphi}_\varepsilon ,\boldsymbol{\eta}_\varepsilon )_{L_2(\mathcal{O})}.
\end{split}
\end{equation}
Denote the consecutive terms in the right-hand side of this identity by $\mathcal{I}_j(\varepsilon)$, $j=1,2,3,4$.

Lemma \ref{Lemma estimates u_D,eps} and Lemma \ref{lemma phi eps} imply the following estimate for
the term $\mathcal{I}_4(\varepsilon)$:
\begin{equation}
\label{3.26 from draft}
\vert \mathcal{I}_4(\varepsilon)\vert \leqslant C_{21}c(\phi)^2 \varepsilon \vert \zeta\vert ^{-1/2} \Vert\mathbf{F}\Vert _{L_2(\mathcal{O})}\Vert \boldsymbol{\Phi}\Vert _{L_2(\mathcal{O})};\quad C_{21}:=C_{14}\Vert Q_0\Vert _{L_\infty}\Vert Q_0^{-1}\Vert _{L_\infty}.
\end{equation}

To estimate $\mathcal{I}_1(\varepsilon)$, we apply \eqref{b mathfrak eps <= c2},
Theorem~\ref{Theorem Dirichlet H1}, and Lemma~\ref{lemma phi eps}:
\begin{equation*}
\begin{split}
\vert \mathcal{I}_1(\varepsilon )\vert
&\leqslant \mathfrak{c}_2 c(\phi)\left( C_{15} \varepsilon ^{1/2} \vert \zeta\vert ^{-1/4}+(C_{14}+C_{16})\varepsilon\right)
\Vert \mathbf{F}\Vert _{L_2(\mathcal{O})}
\\
&\times
\left(C_5 c(\phi)^2\varepsilon ^{1/2}\vert \zeta\vert ^{-1/4}+C_6 c(\phi)^4\varepsilon\right)
 \Vert \boldsymbol{\Phi}\Vert _{L_2(\mathcal{O})}.
\end{split}
\end{equation*}
Hence,
\begin{equation}
\label{3.27 from draft}
\vert \mathcal{I}_1(\varepsilon)\vert
\leqslant
c(\phi)^5\left({\gamma}_1 \varepsilon \vert\zeta\vert ^{-1/2} +{\gamma}_2\varepsilon ^2\right)\Vert \mathbf{F}\Vert _{L_2(\mathcal{O})}\Vert \boldsymbol{\Phi}\Vert _{L_2(\mathcal{O})},
\end{equation}
where ${\gamma}_1:=\mathfrak{c}_2(C_5(C_{14}+C_{15}+C_{16})+C_6C_{15})$,  ${\gamma}_2:=\mathfrak{c}_2\left(C_5(C_{14}+C_{16})+C_6(C_{14}+C_{15}+C_{16})\right)$.

Next, we have
\begin{equation}
\label{*.1}
\mathcal{I}_2(\varepsilon)=-\mathfrak{b}_{N,\varepsilon}[\boldsymbol{\varphi}_\varepsilon ,\boldsymbol{\eta}_0]
=-\mathfrak{b}_{N,\varepsilon} [\boldsymbol{\varphi}_\varepsilon ,S_\varepsilon \widetilde{\boldsymbol{\eta}}_0]
-\mathfrak{b}_{N,\varepsilon} [\boldsymbol{\varphi}_\varepsilon , \boldsymbol{\eta}_0-S_\varepsilon\widetilde{\boldsymbol{\eta}}_0 ].
\end{equation}
From Proposition~\ref{Proposition S__eps - I} and estimate \eqref{tilde u_0 in H2} for $\widetilde{\boldsymbol{\eta}}_0$
it follows that
\begin{equation*}
\Vert \boldsymbol{\eta}_0-S_\varepsilon \widetilde{\boldsymbol{\eta}}_0\Vert _{H^1(\mathcal{O})}
\leqslant \Vert \widetilde{\boldsymbol{\eta}}_0 - S_\varepsilon \widetilde{\boldsymbol{\eta}}_0\Vert _{H^1(\mathbb{R}^d)}
\leqslant
\varepsilon r_1\Vert \widetilde{\boldsymbol{\eta}}_0\Vert _{H^2(\mathbb{R}^d)}
\leqslant c(\phi) \varepsilon r_1 k_3 \Vert \boldsymbol{\Phi}\Vert _{L_2(\mathcal{O})}.
\end{equation*}
Combining this with \eqref{b mathfrak eps <= c2} and Lemma~\ref{lemma phi eps}, we obtain
\begin{equation}
\label{*.1a}
\vert \mathfrak{b}_{N,\varepsilon} [\boldsymbol{\varphi}_\varepsilon , \boldsymbol{\eta}_0-S_\varepsilon\widetilde{\boldsymbol{\eta}}_0 ]\vert
\leqslant
c(\phi)^2\left(\gamma _3 \varepsilon \vert \zeta\vert ^{-1/2} +\gamma _4\varepsilon ^2\right)
\Vert \mathbf{F}\Vert _{L_2(\mathcal{O})}\Vert \boldsymbol{\Phi}\Vert _{L_2(\mathcal{O})},
\end{equation}
where $\gamma _3 :=\mathfrak{c}_2 r_1k_3C_{15}$ and $\gamma _4 :=\mathfrak{c}_2 r_1 k_3(C_{14}+C_{15}+C_{16})$.

Let us estimate the first term in the right-hand side of~\eqref{*.1}. According to~\eqref{b mathfrak eps =},
\begin{equation}
\label{I2(eps),=sum I2j}
\begin{split}
\vert \mathfrak{b}_{N,\varepsilon} [\boldsymbol{\varphi}_\varepsilon ,S_\varepsilon \widetilde{\boldsymbol{\eta}}_0]\vert
&
\leqslant
\left\vert \int _\mathcal{O}\langle g^\varepsilon b(\mathbf{D})\boldsymbol{\varphi}_\varepsilon ,b(\mathbf{D})S_\varepsilon\widetilde{\boldsymbol{\eta}}_0\rangle\,d\mathbf{x}\right\vert
\\
&+\sum _{j=1}^d \int _\mathcal{O}\left(\vert \langle a_j^\varepsilon D_j\boldsymbol{\varphi}_\varepsilon ,S_\varepsilon\widetilde{\boldsymbol{\eta}}_0\rangle\vert +\vert\langle (a_j^\varepsilon)^*\boldsymbol{\varphi}_\varepsilon ,D_jS_\varepsilon\widetilde{\boldsymbol{\eta}}_0\rangle\vert \right)\,d\mathbf{x}
\\
&+\left\vert \int _\mathcal{O}\langle Q^\varepsilon \boldsymbol{\varphi}_\varepsilon ,S_\varepsilon\widetilde{\boldsymbol{\eta}}_0\rangle\,d\mathbf{x}\right\vert
+\lambda \left\vert \int _{\mathcal{O}}\langle Q_0^\varepsilon \boldsymbol{\varphi}_\varepsilon ,S_\varepsilon\widetilde{\boldsymbol{\eta}}_0\rangle\,d\mathbf{x}\right\vert
\\
&=:\sum _{k=1}^4 \mathcal{I}_2^{(k)}(\varepsilon).
\end{split}
\end{equation}
Since $\boldsymbol{\varphi}_\varepsilon$ is supported in $(\partial\mathcal{O})_\varepsilon$, all integrals in
\eqref{I2(eps),=sum I2j} are taken over $(\partial\mathcal{O})_\varepsilon \cap \mathcal{O}$. The term $\mathcal{I}_2^{(1)}(\varepsilon)$ is estimated with the help of Lemma~\ref{lemma ots int O_eps B_eps}, \eqref{S_eps <= 1}, and \eqref{<b^*b<}:
\begin{equation*}
\begin{split}
\mathcal{I}_2^{(1)}(\varepsilon )
&\leqslant
\Vert g\Vert _{L_\infty} \alpha _1^{1/2}\Vert \mathbf{D}\boldsymbol{\varphi}_\varepsilon \Vert _{L_2(\mathbb{R}^d)}
\biggl(\int _{(\partial\mathcal{O})_\varepsilon}\vert b(\mathbf{D})S_\varepsilon \widetilde{\boldsymbol{\eta}}_0\vert ^2 \,d\mathbf{x}\biggr) ^{1/2}
\\
&\leqslant
\Vert g\Vert _{L_\infty} \alpha _1^{1/2}\Vert \mathbf{D}\boldsymbol{\varphi}_\varepsilon \Vert _{L_2(\mathbb{R}^d)}
(\beta \varepsilon )^{1/2}\left( \Vert b(\mathbf{D}) \widetilde{\boldsymbol{\eta}}_0\Vert _{H^1(\mathbb{R}^d)}\Vert b(\mathbf{D})\widetilde{\boldsymbol{\eta}}_0\Vert _{L_2(\mathbb{R}^d)}\right)^{1/2}.
\end{split}
\end{equation*}
Applying \eqref{<b^*b<}, \eqref{tilde u_0 in H1} and \eqref{tilde u_0 in H2} for $\widetilde{\boldsymbol{\eta}}_0$, and \eqref{3.12 from draft},
we see that
\begin{equation}
\label{I_2^1<=}
\mathcal{I}_2^{(1)}(\varepsilon )
\leqslant
c(\phi)^2 (\gamma _5 \varepsilon \vert \zeta\vert ^{-1/2}+\gamma _6\varepsilon ^2)\Vert \mathbf{F}\Vert _{L_2(\mathcal{O})}\Vert \boldsymbol{\Phi}\Vert _{L_2(\mathcal{O})},
\end{equation}
where $\gamma _5 :=\beta^{1/2}\Vert g\Vert _{L_\infty} \alpha _1 (k_2k_3)^{1/2}(C_{15}+C_{16})$ and
$\gamma _6 :=\beta^{1/2}\Vert g\Vert _{L_\infty}\alpha _1(k_2k_3)^{1/2}C_{16}$.

The term $\mathcal{I}_2^{(2)}(\varepsilon)$ satisfies
\begin{equation}
\label{*.2}
\mathcal{I}_2^{(2)}(\varepsilon)
\leqslant
\sum _{j=1}^d \Vert D_j\boldsymbol{\varphi}_\varepsilon\Vert _{L_2(\mathbb{R}^d)}
\biggl(\int _{(\partial\mathcal{O})_\varepsilon }\vert (a_j^\varepsilon )^*S_\varepsilon \widetilde{\boldsymbol{\eta}}_0\vert ^2\,d\mathbf{x}\biggr) ^{1/2}
+\sum _{j=1}^d \Vert \boldsymbol{\varphi}_\varepsilon \Vert _{L_2(\mathbb{R}^d)}\Vert a_j^\varepsilon S_\varepsilon D_j \widetilde{\boldsymbol{\eta}}_0\Vert _{L_2(\mathbb{R}^d)}.
\end{equation}
By Lemma~\ref{Lemma 3.6 from Su15}, we have
\begin{equation*}
\int _{(\partial\mathcal{O})_\varepsilon }\vert (a_j^\varepsilon )^*S_\varepsilon \widetilde{\boldsymbol{\eta}}_0\vert ^2\,d\mathbf{x}
\leqslant
\beta _*\varepsilon \vert \Omega \vert ^{-1}
\Vert a_j\Vert ^2 _{L_2(\Omega)}
\Vert \widetilde{\boldsymbol{\eta}}_0\Vert _{H^1(\mathbb{R}^d)}\Vert \widetilde{\boldsymbol{\eta}}_0\Vert _{L_2(\mathbb{R}^d)}.
\end{equation*}
Combining this with \eqref{tilde u_0 in L2}, \eqref{tilde u_0 in H1} for $\widetilde{\boldsymbol{\eta}}_0$ and \eqref{3.12 from draft},
we obtain the following estimate for the first summand in the right-hand side of~\eqref{*.2}:
\begin{equation}
\label{*.2a}
\sum _{j=1}^d \Vert D_j\boldsymbol{\varphi}_\varepsilon\Vert _{L_2(\mathbb{R}^d)}
\biggl(\int _{(\partial\mathcal{O})_\varepsilon }\vert (a_j^\varepsilon )^*S_\varepsilon \widetilde{\boldsymbol{\eta}}_0\vert ^2\,d\mathbf{x}\biggr) ^{1/2}
\leqslant
\gamma _7 c(\phi)^2 \varepsilon\vert \zeta\vert ^{-1/2}\Vert \mathbf{F}\Vert _{L_2(\mathcal{O})}\Vert \boldsymbol{\Phi}\Vert _{L_2(\mathcal{O})},
\end{equation}
where $\gamma _7 := C_a \left( \beta _*\vert \Omega \vert ^{-1} k_1k_2\right)^{1/2}(C_{15}+C_{16})$.
The second summand in the right-hand side of~\eqref{*.2} is estimated by Proposition~\ref{Proposition f^eps S_eps},
\eqref{tilde u_0 in H1} for $\widetilde{\boldsymbol{\eta}}_0$, and \eqref{3.11 from draft}:
\begin{equation*}
\begin{split}
\sum _{j=1}^d \Vert \boldsymbol{\varphi}_\varepsilon \Vert _{L_2(\mathbb{R}^d)}\Vert a_j^\varepsilon S_\varepsilon D_j \widetilde{\boldsymbol{\eta}}_0\Vert _{L_2(\mathbb{R}^d)}
&\leqslant
\vert \Omega\vert ^{-1/2}\sum _{j=1}^d \Vert \boldsymbol{\varphi}_\varepsilon \Vert _{L_2(\mathbb{R}^d)}\Vert a_j\Vert _{L_2(\Omega)}\Vert D_j\widetilde{\boldsymbol{\eta}}_0\Vert _{L_2(\mathbb{R}^d)}
\\
&\leqslant
\gamma _8 c(\phi)^2 \varepsilon \vert \zeta\vert ^{-1/2}\Vert \mathbf{F}\Vert _{L_2(\mathcal{O})}\Vert \boldsymbol{\Phi}\Vert _{L_2(\mathcal{O})},
\end{split}
\end{equation*}
where $\gamma _8 := \vert \Omega \vert ^{-1/2} C_a C_{14} k_2$.
Together with \eqref{*.2} and \eqref{*.2a}, this implies
\begin{equation}
\label{I_2^2<=}
\mathcal{I}_2^{(2)}(\varepsilon)
\leqslant
(\gamma _7+\gamma _8) c(\phi)^2 \varepsilon \vert \zeta \vert ^{-1/2}\Vert \mathbf{F}\Vert _{L_2(\mathcal{O})}\Vert \boldsymbol{\Phi}\Vert _{L_2(\mathcal{O})}.
\end{equation}

We proceed to estimation of the term $\mathcal{I}_2^{(3)}(\varepsilon)$:
\begin{equation}
\label{*.star}
\mathcal{I}_2^{(3)}(\varepsilon)
\leqslant
\Vert \vert Q^\varepsilon \vert ^{1/2}\boldsymbol{\varphi}_\varepsilon\Vert _{L_2(\mathbb{R}^d)}
\biggl( \int _{(\partial \mathcal{O})_\varepsilon }\vert Q^\varepsilon \vert \vert S_\varepsilon \widetilde{\boldsymbol{\eta}}_0\vert ^2\,d\mathbf{x}\biggr) ^{1/2}.
\end{equation}
The first factor in the right-hand side of \eqref{*.star} is estimated by Lemma~\ref{Lemma a embedding thm} and condition~\eqref{Q condition}:
\begin{equation}
\label{*.star star}
\Vert \vert Q^\varepsilon \vert ^{1/2} \boldsymbol{\varphi}_\varepsilon \Vert _{L_2(\mathbb{R}^d)}
\leqslant
C(\check{q},\Omega)\Vert Q\Vert ^{1/2}_{L_s(\Omega)}\Vert \boldsymbol{\varphi}_\varepsilon \Vert _{H^1(\mathbb{R}^d)},
\end{equation}
where $\check{q}=\infty$ for $d=1$, $\check{q}=2s/(s-1)$ for $d\geqslant 2$.
The second factor in the right-hand side of~\eqref{*.star} is estimated with the help of Lemma~\ref{Lemma 3.6 from Su15}:
\begin{equation}
\label{*.star star star}
\int _{(\partial \mathcal{O})_\varepsilon }\vert Q^\varepsilon \vert \vert S_\varepsilon \widetilde{\boldsymbol{\eta}}_0\vert ^2\,d\mathbf{x}
\leqslant
\beta _*\varepsilon\vert \Omega\vert ^{-1}\Vert Q\Vert _{L_1(\Omega)}\Vert \widetilde{\boldsymbol{\eta}}_0\Vert _{H^1(\mathbb{R}^d)}\Vert \widetilde{\boldsymbol{\eta}}_0\Vert _{L_2(\mathbb{R}^d)}.
\end{equation}
Combining \eqref{tilde u_0 in L2} and \eqref{tilde u_0 in H1} for $\widetilde{\boldsymbol{\eta}}_0$,
\eqref{*.star}--\eqref{*.star star star}, and using Lemma~\ref{lemma phi eps}, we find
\begin{equation}
\label{I_2^3<=}
\mathcal{I}_2^{(3)}(\varepsilon)
\leqslant
\gamma _{9} c(\phi)^2 \varepsilon\vert \zeta\vert ^{-1/2}\Vert \mathbf{F}\Vert _{L_2(\mathcal{O})}\Vert \boldsymbol{\Phi}\Vert _{L_2(\mathcal{O})},
\end{equation}
where
$\gamma_{9}:=C(\check{q},\Omega)\Vert Q\Vert _{L_s(\Omega)}^{1/2} \Vert Q\Vert _{L_1(\Omega)}^{1/2}
\left(\beta _*\vert \Omega \vert ^{-1}k_1k_2\right)^{1/2}(C_{14}+C_{15}+C_{16})$.

Relations \eqref{S_eps <= 1}, \eqref{tilde u_0 in L2} for $\widetilde{\boldsymbol{\eta}}_0$, and \eqref{3.11 from draft}
imply the following estimate for the term $\mathcal{I}_2^{(4)}(\varepsilon)$:
\begin{equation}
\label{I_2^4<=}
\mathcal{I}_2^{(4)}(\varepsilon) \leqslant
\lambda \Vert Q_0\Vert _{L_\infty}\Vert \boldsymbol{\varphi}_\varepsilon \Vert _{L_2(\mathbb{R}^d)}
\Vert S_\varepsilon \widetilde{\boldsymbol{\eta}}_0\Vert _{L_2(\mathbb{R}^d)}
\leqslant
\gamma _{10}c(\phi)^2\varepsilon\vert \zeta\vert ^{-1/2}\Vert \mathbf{F}\Vert _{L_2(\mathcal{O})}\Vert \boldsymbol{\Phi}\Vert _{L_2(\mathcal{O})},
\end{equation}
where $\gamma _{10}:=\lambda \Vert Q_0\Vert _{L_\infty}C_{14}k_1$.

Thus, combining \eqref{*.1}--\eqref{I_2^1<=}, \eqref{I_2^2<=}, \eqref{I_2^3<=}, and \eqref{I_2^4<=}, we obtain
\begin{equation}
\label{I_2(eps)<=}
\mathcal{I}_2(\varepsilon) \leqslant
c(\phi)^2 \left( \widehat{\gamma} \varepsilon\vert \zeta\vert ^{-1/2} +\widetilde{\gamma} \varepsilon ^2\right)
\Vert \mathbf{F}\Vert _{L_2(\mathcal{O})}\Vert \boldsymbol{\Phi}\Vert _{L_2(\mathcal{O})},
\end{equation}
where $\widehat{\gamma}:=\gamma _3 +\gamma _5 +\gamma _7 +\gamma _8 +\gamma _{9}+\gamma _{10}$ and
$\widetilde{\gamma}:= \gamma _4 +\gamma _6$.

It remains to estimate $\mathcal{I}_3(\varepsilon)$:
\begin{equation}
\label{3.40 from draft}
\begin{split}
\vert \mathcal{I}_3(\varepsilon )\vert
&=\vert \mathfrak{b}_{N,\varepsilon} [\boldsymbol{\varphi}_\varepsilon ,\varepsilon \Lambda ^\varepsilon S_\varepsilon b(\mathbf{D})\widetilde{\boldsymbol{\eta}}_0+\varepsilon\widetilde{\Lambda}^\varepsilon
S_\varepsilon \widetilde{\boldsymbol{\eta}}_0]\vert
\\
&\leqslant
\left\vert \left(g^\varepsilon b(\mathbf{D})\boldsymbol{\varphi}_\varepsilon ,
(b(\mathbf{D})\Lambda)^\varepsilon S_\varepsilon b(\mathbf{D})\widetilde{\boldsymbol{\eta}}_0\right)_{L_2(\mathcal{O})}\right\vert
\\
&+
\left\vert \bigl(g^\varepsilon b(\mathbf{D})\boldsymbol{\varphi}_\varepsilon ,
(b(\mathbf{D})\widetilde{\Lambda})^\varepsilon S_\varepsilon \widetilde{\boldsymbol{\eta}}_0\bigr)_{L_2(\mathcal{O})}\right\vert
\\
&+
\biggl\vert \biggl(g^\varepsilon b(\mathbf{D})\boldsymbol{\varphi}_\varepsilon ,\varepsilon \sum _{l=1}^d b_l \Lambda ^\varepsilon S_\varepsilon b(\mathbf{D})D_l\widetilde{\boldsymbol{\eta}}_0\biggr) _{L_2(\mathcal{O})}\biggr\vert
\\
&+
\biggl\vert \biggl(g^\varepsilon b(\mathbf{D})\boldsymbol{\varphi}_\varepsilon ,\varepsilon \sum _{l=1}^d b_l \widetilde{\Lambda} ^\varepsilon S_\varepsilon D_l\widetilde{\boldsymbol{\eta}}_0\biggr) _{L_2(\mathcal{O})}\biggr\vert
\\
&+
\sum _{j=1}^d\left\vert \left( a_j^\varepsilon D_j\boldsymbol{\varphi}_\varepsilon ,
\varepsilon \Lambda ^\varepsilon S_\varepsilon b(\mathbf{D})\widetilde{\boldsymbol{\eta}}_0+\varepsilon\widetilde{\Lambda}^\varepsilon
S_\varepsilon \widetilde{\boldsymbol{\eta}}_0\right)_{L_2(\mathcal{O})}\right\vert
\\
&+
\sum _{j=1}^d\left\vert \left( (a_j^\varepsilon)^*\boldsymbol{\varphi}_\varepsilon,
(D_j\Lambda)^\varepsilon S_\varepsilon b(\mathbf{D})\widetilde{\boldsymbol{\eta}}_0
+(D_j\widetilde{\Lambda})^\varepsilon S_\varepsilon \widetilde{\boldsymbol{\eta}}_0\right)_{L_2(\mathcal{O})}\right\vert
\\
&+
\sum _{j=1}^d \left\vert\left( (a_j^\varepsilon )^*\boldsymbol{\varphi}_\varepsilon ,
\varepsilon\Lambda ^\varepsilon S_\varepsilon b(\mathbf{D})D_j\widetilde{\boldsymbol{\eta}}_0
+\varepsilon\widetilde{\Lambda}^\varepsilon S_\varepsilon D_j\widetilde{\boldsymbol{\eta}}_0\right) _{L_2(\mathcal{O})}\right\vert
\\
&+
\left\vert \left(Q^\varepsilon\boldsymbol{\varphi}_\varepsilon ,
\varepsilon \Lambda ^\varepsilon S_\varepsilon b(\mathbf{D})\widetilde{\boldsymbol{\eta}}_0
+\varepsilon\widetilde{\Lambda}^\varepsilon S_\varepsilon \widetilde{\boldsymbol{\eta}}_0\right) _{L_2(\mathcal{O})}\right\vert
\\
&+
\lambda\left\vert \left(Q_0^\varepsilon \boldsymbol{\varphi}_\varepsilon,
\varepsilon\Lambda ^\varepsilon S_\varepsilon b(\mathbf{D})\widetilde{\boldsymbol{\eta}}_0
+\varepsilon \widetilde{\Lambda}^\varepsilon S_\varepsilon \widetilde{\boldsymbol{\eta}}_0\right)_{L_2(\mathcal{O})}\right\vert .
\end{split}
\end{equation}
The consecutive terms in the right-hand side of \eqref{3.40 from draft} are denoted by $\mathcal{I}_3^{(j)}(\varepsilon)$, $j=1,\dots,9$.

Using \eqref{<b^*b<} and Lemma~\ref{Lemma 3.6 from Su15}, and taking into account that $\boldsymbol{\varphi}_\varepsilon$
is supported in $(\partial\mathcal{O})_\varepsilon$, we estimate the first term:
\begin{equation*}
\begin{split}
\mathcal{I}_3^{(1)}(\varepsilon)
&\leqslant \Vert g\Vert _{L_\infty}\alpha _1^{1/2}\Vert \mathbf{D}\boldsymbol{\varphi}_\varepsilon \Vert _{L_2(\mathbb{R}^d)}
\biggl(
\int _{(\partial\mathcal{O})_\varepsilon}\vert (b(\mathbf{D})\Lambda )^\varepsilon S_\varepsilon b(\mathbf{D})\widetilde{\boldsymbol{\eta}}_0\vert ^2\,d\mathbf{x}\biggr) ^{1/2}
\\
&\leqslant
\Vert g\Vert _{L_\infty}\alpha _1^{1/2}\Vert \mathbf{D}\boldsymbol{\varphi}_\varepsilon \Vert _{L_2(\mathbb{R}^d)}
(\beta _*\varepsilon )^{1/2}\vert \Omega\vert ^{-1/2}\Vert b(\mathbf{D})\Lambda\Vert _{L_2(\Omega)}\Vert b(\mathbf{D})\widetilde{\boldsymbol{\eta}}_0\Vert ^{1/2}_{H^1(\mathbb{R}^d)}\Vert b(\mathbf{D})\widetilde{\boldsymbol{\eta}}_0\Vert ^{1/2}_{L_2(\mathbb{R}^d)}.
\end{split}
\end{equation*}
Now we apply Lemma~\ref{lemma phi eps} and estimates \eqref{tilde u_0 in H1}, \eqref{tilde u_0 in H2} for $\widetilde{\boldsymbol{\eta}}_0$.
Taking \eqref{<b^*b<} and \eqref{b(D)Lambda<=} into account, we arrive at
\begin{equation}
\label{3.41 from draft}
\mathcal{I}_3^{(1)}(\varepsilon)
\leqslant c(\phi)^2({\gamma}_{11}\varepsilon\vert \zeta\vert ^{-1/2}+{\gamma}_{12}\varepsilon ^2)\Vert \mathbf{F}\Vert _{L_2(\mathcal{O})}\Vert \boldsymbol{\Phi}\Vert _{L_2(\mathcal{O})}.
\end{equation}
Here
${\gamma}_{11}:=\Vert g\Vert _{L_\infty}^{3/2}\Vert g^{-1}\Vert _{L_\infty}^{1/2} \alpha _1 (m \beta _* k_2k_3)^{1/2}(C_{15}+C_{16})$ and
${\gamma}_{12}:=\Vert g\Vert _{L_\infty}^{3/2}\Vert g^{-1}\Vert _{L_\infty}^{1/2} \alpha _1 (m\beta _* k_2k_3)^{1/2}C_{16}$.
In a similar way, using \eqref{b(D) tilde Lambda <=}, we obtain
\begin{equation}
\label{3.42 from draft}
\mathcal{I}_3^{(2)}(\varepsilon)
\leqslant {\gamma}_{13} c(\phi)^2 \varepsilon\vert\zeta\vert ^{-1/2}\Vert \mathbf{F}\Vert _{L_2(\mathcal{O})}\Vert \boldsymbol{\Phi}\Vert _{L_2(\mathcal{O})},
\end{equation}
where
${\gamma}_{13}:=\Vert g\Vert _{L_\infty}\Vert g^{-1}\Vert _{L_\infty}(\alpha _1 \beta _* n k_1 k_2)^{1/2}\vert\Omega\vert ^{-1/2}\alpha _0 ^{-1/2}C_a(C_{15}+C_{16})$.

To estimate $\mathcal{I}_3^{(3)}(\varepsilon)$, we apply \eqref{<b^*b<}, \eqref{b_l <=}, and \eqref{Lambda S_eps <=}:
\begin{equation*}
\mathcal{I}^{(3)}_3(\varepsilon)
\leqslant \varepsilon \Vert g\Vert _{L_\infty}\alpha _1 ^{3/2}d^{1/2}  M_1\Vert
\mathbf{D}\boldsymbol{\varphi}_\varepsilon\Vert _{L_2(\mathbb{R}^d)}\Vert \widetilde{\boldsymbol{\eta}}_0\Vert _{H^2(\mathbb{R}^d)}.
\end{equation*}
Together with \eqref{tilde u_0 in H2} for $\widetilde{\boldsymbol{\eta}}_0$ and Lemma \ref{lemma phi eps}, this implies
\begin{equation}
\label{3.43 from draft}
\mathcal{I}_3^{(3)}(\varepsilon)\leqslant
c(\phi)^2 \left({\gamma}_{14}\varepsilon \vert \zeta\vert ^{-1/2}+{\gamma}_{15}\varepsilon ^2\right)\Vert \mathbf{F}\Vert _{L_2(\mathcal{O})}\Vert\boldsymbol{\Phi}\Vert _{L_2(\mathcal{O})},
\end{equation}
where ${\gamma}_{14}:=\Vert g\Vert _{L_\infty}\alpha _1 ^{3/2} d^{1/2} M_1k_3C_{15}$ and ${\gamma}_{15}:=\Vert g\Vert _{L_\infty}\alpha _1 ^{3/2} d^{1/2} M_1 k_3(C_{15}+C_{16})$.

In a similar way, using \eqref{tilde Lambda S_eps <=}, we obtain
\begin{equation}
\label{3.44 from draft}
\mathcal{I}_3^{(4)}(\varepsilon)
\leqslant
{\gamma}_{16} c(\phi)^2 \varepsilon\vert\zeta\vert^{-1/2}\Vert \mathbf{F}\Vert _{L_2(\mathcal{O})}\Vert \boldsymbol{\Phi}\Vert _{L_2(\mathcal{O})},
\end{equation}
where
${\gamma}_{16}:=\Vert g\Vert _{L_\infty}d^{1/2}\alpha _1 \widetilde{M}_1 k_2(C_{15}+C_{16})$.

Now, we estimate the term $\mathcal{I}_3^{(5)}(\varepsilon)$:
\begin{equation}
\label{*.3}
\mathcal{I}_3^{(5)}(\varepsilon )
\leqslant \varepsilon \sum _{j=1}^d \Vert D_j \boldsymbol{\varphi}_\varepsilon \Vert _{L_2(\mathbb{R}^d)}
\left(\Vert (a_j^\varepsilon )^* \Lambda ^\varepsilon S_\varepsilon b(\mathbf{D})\widetilde{\boldsymbol{\eta}}_0\Vert _{L_2(\mathbb{R}^d)}
+ \Vert (a_j^\varepsilon)^*\widetilde{\Lambda}^\varepsilon S_\varepsilon \widetilde{\boldsymbol{\eta}}_0\Vert _{L_2(\mathbb{R}^d)}\right).
\end{equation}
By Proposition~\ref{Proposition f^eps S_eps},
\begin{equation}
\label{*.3aa}
\Vert (a_j^\varepsilon )^* \Lambda ^\varepsilon S_\varepsilon b(\mathbf{D})\widetilde{\boldsymbol{\eta}}_0\Vert _{L_2(\mathbb{R}^d)}
\leqslant
\vert \Omega\vert ^{-1/2}\Vert a_j^*\Lambda\Vert _{L_2(\Omega)}\Vert b(\mathbf{D})\widetilde{\boldsymbol{\eta}}_0\Vert _{L_2(\mathbb{R}^d)}.
\end{equation}
From the H\"older inequality and the Sobolev embedding theorem it follows that
\begin{equation}
\label{*.3a}
\Vert a_j^*\Lambda \Vert _{L_2(\Omega)}
\leqslant C(q,\Omega)\Vert a_j\Vert _{L_\rho (\Omega)}\Vert \Lambda\Vert _{H^1(\Omega)},
\end{equation}
where $q=\infty$ for $d=1$ and $q=2\rho /(\rho -2)$ for $d\geqslant 2$.
Similarly,
\begin{equation}
\label{*.3aaa}
\begin{split}
\Vert (a_j^\varepsilon)^*\widetilde{\Lambda}^\varepsilon S_\varepsilon \widetilde{\boldsymbol{\eta}}_0\Vert _{L_2(\mathbb{R}^d)}
\leqslant
\vert \Omega \vert ^{-1/2}C(q,\Omega)\Vert a_j\Vert _{L_\rho (\Omega)}\Vert \widetilde{\Lambda}\Vert _{H^1(\Omega)}\Vert \widetilde{\boldsymbol{\eta}}_0\Vert _{L_2(\mathbb{R}^d)}.
\end{split}
\end{equation}
From \eqref{*.3}--\eqref{*.3aaa} it follows that
\begin{equation}
\label{*.3b}
\mathcal{I}_3^{(5)}(\varepsilon)\leqslant \varepsilon \widehat{C}_a C(q,\Omega) \vert \Omega \vert ^{-1/2}
\Vert \mathbf{D} \boldsymbol{\varphi}_\varepsilon \Vert _{L_2(\mathbb{R}^d)}
\left(
\Vert \Lambda \Vert _{H^1(\Omega)}\Vert b(\mathbf{D})\widetilde{\boldsymbol{\eta}}_0\Vert _{L_2(\mathbb{R}^d)}
+\Vert \widetilde{\Lambda}\Vert _{H^1(\Omega)}\Vert \widetilde{\boldsymbol{\eta}}_0\Vert _{L_2(\mathbb{R}^d)}
\right).
\end{equation}
By \eqref{Lambda <=} and \eqref{DLambda<=},
\begin{equation}
\label{Lambda in H1 <=}
\vert \Omega \vert ^{-1/2}\Vert \Lambda \Vert _{H^1(\Omega)}\leqslant M_1+M_2.
\end{equation}
According to \eqref{tilde Lambda<=}, \eqref{D tilde Lambda}, \eqref{tilde Lambda S_eps <=}, and \eqref{tilde M2 =},
\begin{equation}
\label{tilde Lambda in H1 <=}
\vert \Omega \vert ^{-1/2}\Vert \widetilde{\Lambda} \Vert _{H^1(\Omega)}\leqslant \widetilde{M}_1+\widetilde{M}_2.
\end{equation}
Relations \eqref{<b^*b<}, \eqref{3.12 from draft}, \eqref{*.3b}--\eqref{tilde Lambda in H1 <=}, and inequalities
\eqref{tilde u_0 in L2}, \eqref{tilde u_0 in H1} for $\widetilde{\boldsymbol{\eta}}_0$ imply that
\begin{equation}
\label{I_3^5<=}
\mathcal{I}_3^{(5)}(\varepsilon)
\leqslant
\gamma _{17} c(\phi)^2 \varepsilon\vert \zeta\vert ^{-1/2}
\Vert \mathbf{F}\Vert _{L_2(\mathcal{O})}\Vert \boldsymbol{\Phi}\Vert _{L_2(\mathcal{O})}.
\end{equation}
Here
$\gamma_{17}:=\widehat{C}_aC(q,\Omega)(C_{15}+C_{16})
\left((M_1+M_2)\alpha _1^{1/2}k_2+(\widetilde{M}_1+\widetilde{M}_2)k_1\right)$.

We proceed to estimation of $\mathcal{I}_3^{(6)}(\varepsilon)$:
\begin{equation}
\label{*.4}
\begin{split}
\mathcal{I}_3^{(6)}(\varepsilon)
&\leqslant
\sum _{j=1}^d \Vert (a_j^\varepsilon )^*\boldsymbol{\varphi}_\varepsilon \Vert _{L_2(\mathbb{R}^d)}
\biggl(
\int _{(\partial\mathcal{O})_\varepsilon }\vert (D_j \Lambda )^\varepsilon S_\varepsilon b(\mathbf{D})\widetilde{\boldsymbol{\eta}}_0\vert ^2\,d\mathbf{x}\biggr) ^{1/2}
\\
&+\sum _{j=1}^d \Vert (a_j^\varepsilon )^*\boldsymbol{\varphi}_\varepsilon \Vert _{L_2(\mathbb{R}^d)}
\biggl(\int _{(\partial\mathcal{O})_\varepsilon }\vert (D_j \widetilde{\Lambda} )^\varepsilon S_\varepsilon \widetilde{\boldsymbol{\eta}}_0\vert ^2\,d\mathbf{x}\biggr) ^{1/2}.
\end{split}
\end{equation}
From Lemma~\ref{Lemma a embedding thm} it follows that
\begin{equation}
\label{*.5}
\Vert (a_j^\varepsilon )^*\boldsymbol{\varphi}_\varepsilon\Vert _{L_2(\mathbb{R}^d)}
\leqslant C(q,\Omega)\Vert a_j\Vert _{L_\rho (\Omega )}\Vert \boldsymbol{\varphi}_\varepsilon\Vert _{H^1(\mathbb{R}^d)},
\end{equation}
where $q=\infty$ for $d=1$, $q=2\rho/(\rho -2)$ for $d\geqslant 2$.
By \eqref{*.4}, \eqref{*.5}, and Lemma \ref{Lemma 3.6 from Su15}, we have
\begin{equation*}
\begin{split}
\mathcal{I}_3^{(6)}(\varepsilon )
&\leqslant
C(q,\Omega)\widehat{C}_a
\left(\beta _*\vert\Omega \vert ^{-1}\varepsilon\right)^{1/2}
\Vert \boldsymbol{\varphi}_\varepsilon\Vert _{H^1(\mathbb{R}^d)}
\\
&\times
\left(
\Vert \mathbf{D}\Lambda \Vert _{L_2(\Omega)}\Vert b(\mathbf{D})\widetilde{\boldsymbol{\eta}}_0\Vert _{H^1(\mathbb{R}^d)}^{1/2}\Vert b(\mathbf{D})\widetilde{\boldsymbol{\eta}}_0\Vert _{L_2(\mathbb{R}^d)}^{1/2}
+\Vert \mathbf{D}\widetilde{\Lambda}\Vert _{L_2(\Omega)}\Vert \widetilde{\boldsymbol{\eta}}_0\Vert _{H^1(\mathbb{R}^d)}^{1/2}\Vert \widetilde{\boldsymbol{\eta}}_0\Vert _{L_2(\mathbb{R}^d)}^{1/2}
\right).
\end{split}
\end{equation*}
Combining this with \eqref{<b^*b<}, \eqref{DLambda<=}, \eqref{D tilde Lambda}, \eqref{tilde M2 =}, inequalities
\eqref{tilde u_0 in L2}--\eqref{tilde u_0 in H2} for $\widetilde{\boldsymbol{\eta}}_0$, and Lemma~\ref{lemma phi eps}, we obtain
\begin{equation}
\label{I_3^6<=}
\mathcal{I}_3^{(6)}(\varepsilon)
\leqslant
c(\phi)^2(\gamma _{18}\varepsilon \vert \zeta \vert ^{-1/2}+\gamma _{19}\varepsilon ^2)\Vert \mathbf{F}\Vert _{L_2(\mathcal{O})}\Vert \boldsymbol{\Phi}\Vert _{L_2(\mathcal{O})},
\end{equation}
where
$\gamma _{18}:=(C_{14}+C_{15}+C_{16})C(q,\Omega)\widehat{C}_a (\beta _* k_2)^{1/2}
\bigl(M_2 (\alpha _1 k_3)^{1/2} +\widetilde{M}_2k_1^{1/2}\bigr)$ and
$\gamma _{19} :=C_{16}C(q,\Omega)\widehat{C}_a M_2 (\beta _* \alpha _1 k_2 k_3 )^{1/2}$.

The term $\mathcal{I}_3^{(7)}(\varepsilon)$ is estimated with the help of  \eqref{Lambda S_eps <=}, \eqref{tilde Lambda S_eps <=}, and  \eqref{*.5}:
\begin{equation*}
\begin{split}
\mathcal{I}_3^{(7)}(\varepsilon)
\leqslant
\varepsilon C(q,\Omega)\sum _{j=1}^d \Vert a_j\Vert _{L_\rho (\Omega)}\Vert \boldsymbol{\varphi}_\varepsilon \Vert _{H^1(\mathbb{R}^d)}
\left(
M_1\Vert b(\mathbf{D})D_j\widetilde{\boldsymbol{\eta}}_0\Vert _{L_2(\mathbb{R}^d)}
+\widetilde{M}_1\Vert D_j\widetilde{\boldsymbol{\eta}}_0\Vert _{L_2(\mathbb{R}^d)}\right).
\end{split}
\end{equation*}
Now, applying Lemma~\ref{lemma phi eps}, \eqref{<b^*b<}, and inequalities \eqref{tilde u_0 in H1}, \eqref{tilde u_0 in H2} for  $\widetilde{\boldsymbol{\eta}}_0$,  we arrive at
\begin{equation}
\label{I_3^7<=}
\mathcal{I}_3^{(7)}(\varepsilon)
\leqslant
c(\phi)^2(\gamma _{20}\varepsilon \vert \zeta\vert ^{-1/2}+\gamma _{21}\varepsilon ^2)\Vert \mathbf{F}\Vert _{L_2(\mathcal{O})}\Vert \boldsymbol{\Phi}\Vert _{L_2(\mathcal{O})},
\end{equation}
where
$\gamma _{20}:= \widehat{C}_aC(q,\Omega)\bigl(C_{15}M_1\alpha_1^{1/2}k_3+(C_{14}+C_{15}+C_{16})\widetilde{M}_1k_2\bigr)$ and
$\gamma _{21}:=\widehat{C}_aC(q,\Omega)M_1\alpha _1 ^{1/2}k_3(C_{14}+C_{15}+C_{16})$.
Let us estimate the term $\mathcal{I}_3^{(8)}(\varepsilon)$:
\begin{equation}
\label{*.7}
\mathcal{I}_3^{(8)}(\varepsilon)
\leqslant
\varepsilon \Vert \vert Q^\varepsilon \vert ^{1/2}\boldsymbol{\varphi}_\varepsilon\Vert _{L_2(\mathbb{R}^d)}
\left(
\Vert \vert Q^\varepsilon \vert ^{1/2}\Lambda ^\varepsilon S_\varepsilon b(\mathbf{D})\widetilde{\boldsymbol{\eta}}_0\Vert _{L_2(\mathbb{R}^d)}
+
\Vert \vert Q^\varepsilon \vert ^{1/2}\widetilde{\Lambda} ^\varepsilon S_\varepsilon \widetilde{\boldsymbol{\eta}}_0\Vert _{L_2(\mathbb{R}^d)}
\right).
\end{equation}
By Proposition~\ref{Proposition f^eps S_eps} and \eqref{<b^*b<}, we have
\begin{equation}
\label{*.7a}
\Vert \vert Q^\varepsilon \vert ^{1/2}\Lambda ^\varepsilon S_\varepsilon b(\mathbf{D})\widetilde{\boldsymbol{\eta}}_0\Vert _{L_2(\mathbb{R}^d)}
\leqslant
\alpha _1^{1/2}\vert \Omega\vert ^{-1/2}\Vert \vert Q\vert ^{1/2}\Lambda\Vert _{L_2(\Omega)}\Vert \widetilde{\boldsymbol{\eta}}_0\Vert _{H^1(\mathbb{R}^d)}.
\end{equation}
From the H\"older inequality and the Sobolev embedding theorem it follows that
\begin{equation}
\label{*.7b}
\Vert \vert Q\vert ^{1/2}\Lambda\Vert _{L_2(\Omega)}
\leqslant C(\check{q},\Omega)\Vert Q\Vert ^{1/2}_{L_s(\Omega)}\Vert \Lambda\Vert _{H^1(\Omega)},
\end{equation}
where $\check{q}=\infty$ for $d=1$ and $\check{q}=2s/(s-1)$ for $d\ge 2$. Similarly,
\begin{equation}
\label{*.7c}
\Vert \vert Q^\varepsilon \vert ^{1/2}\widetilde{\Lambda} ^\varepsilon S_\varepsilon \widetilde{\boldsymbol{\eta}}_0\Vert _{L_2(\mathbb{R}^d)}
\leqslant \vert\Omega\vert ^{-1/2} C(\check{q},\Omega)\Vert Q\Vert ^{1/2}_{L_s(\Omega)}\Vert \widetilde{\Lambda}\Vert _{H^1(\Omega)}\Vert \widetilde{\boldsymbol{\eta}}_0\Vert _{L_2(\mathbb{R}^d)}.
\end{equation}
Relations \eqref{*.star star}, \eqref{Lambda in H1 <=}, \eqref{tilde Lambda in H1 <=}, and  \eqref{*.7}--\eqref{*.7c}
imply  that
\begin{equation*}
\mathcal{I}_3^{(8)}(\varepsilon)
\leqslant
\varepsilon C(\check{q},\Omega)^2\Vert Q\Vert _{L_s(\Omega)}\Vert \boldsymbol{\varphi}_\varepsilon \Vert _{H^1(\mathbb{R}^d)}
\left(
\alpha _1^{1/2}(M_1+M_2)\Vert \widetilde{\boldsymbol{\eta}}_0\Vert _{H^1(\mathbb{R}^d)}+(\widetilde{M}_1+\widetilde{M}_2)\Vert \widetilde{\boldsymbol{\eta}}_0\Vert _{L_2(\mathbb{R}^d)}\right).
\end{equation*}
Together with estimates \eqref{tilde u_0 in L2}, \eqref{tilde u_0 in H1} for $\widetilde{\boldsymbol{\eta}}_0$ and Lemma~\ref{lemma phi eps},
this yields
\begin{equation}
\label{I_3^8<=}
\mathcal{I}_3^{(8)}(\varepsilon)
\leqslant
\gamma _{22} c(\phi)^2 \varepsilon\vert \zeta\vert ^{-1/2}\Vert \mathbf{F}\Vert _{L_2(\mathcal{O})}\Vert \boldsymbol{\Phi}\Vert _{L_2(\mathcal{O})},
\end{equation}
where
$\gamma _{22}: = C(\check{q},\Omega)^2\Vert Q\Vert _{L_s(\Omega)}(C_{14}+C_{15}+C_{16})
\bigl((M_1+M_2)\alpha _1^{1/2}k_2+(\widetilde{M}_1+\widetilde{M}_2)k_1\bigr)$.

The term $\mathcal{I}_3^{(9)}(\varepsilon)$ is estimated by using \eqref{<b^*b<}, \eqref{Lambda S_eps <=}, \eqref{tilde Lambda S_eps <=}, inequalities \eqref{tilde u_0 in L2}, \eqref{tilde u_0 in H1} for $\widetilde{\boldsymbol{\eta}}_0$, and Lemma~\ref{lemma phi eps}.
We arrive at
\begin{equation}
\label{3.49 from draft}
\mathcal{I}_3^{(9)}(\varepsilon)
\leqslant
{\gamma}_{23} c(\phi)^2 \varepsilon\vert\zeta\vert^{-1/2}\Vert \mathbf{F}\Vert _{L_2(\mathcal{O})}\Vert \boldsymbol{\Phi}\Vert _{L_2(\mathcal{O})},
\end{equation}
where ${\gamma}_{23}:=\lambda \Vert Q_0\Vert _{L_\infty}C_{14}\left(M_1\alpha _1^{1/2}k_2+\widetilde{M}_1k_1\right)$.

Finally, relations \eqref{3.40 from draft}--\eqref{3.44 from draft}, \eqref{I_3^5<=}, \eqref{I_3^6<=}, \eqref{I_3^7<=}, \eqref{I_3^8<=},
and \eqref{3.49 from draft} imply
\begin{equation}
\label{3.50 from draft}
|\mathcal{I}_3(\varepsilon)| \leqslant c(\phi)^2
\left(\widehat{\gamma}' \varepsilon \vert \zeta\vert ^{-1/2} +\widetilde{\gamma}' \varepsilon ^2\right)
\Vert \mathbf{F}\Vert _{L_2(\mathcal{O})}\Vert \boldsymbol{\Phi}\Vert _{L_2(\mathcal{O})},
\end{equation}
where $\widehat{\gamma}':= \gamma_{11}+\gamma_{13}+\gamma_{14}+\gamma_{16}+\gamma_{17}+\gamma_{18}+\gamma _{20}+\gamma _{22}+\gamma _{23}$ and $\widetilde{\gamma}' := \gamma _{12}+\gamma _{15}+\gamma _{19}+\gamma _{21}$.

Thus, we have estimated all terms in the right-hand side of \eqref{3.25 from draft}. From \eqref{3.25 from draft}--\eqref{3.27 from draft}, \eqref{I_2(eps)<=}, and \eqref{3.50 from draft} it follows that
\begin{equation*}
\left\vert (\mathbf{w}_\varepsilon -\boldsymbol{\varphi}_\varepsilon ,\boldsymbol{\Phi})_{L_2(\mathcal{O})}\right\vert
\leqslant c(\phi)^5(\gamma _*\varepsilon\vert\zeta\vert ^{-1/2}+\gamma _{**}\varepsilon ^2 )\Vert \mathbf{F}\Vert _{L_2(\mathcal{O})}\Vert \boldsymbol{\Phi}\Vert _{L_2(\mathcal{O})}, \quad \boldsymbol{\Phi} \in L_2(\mathcal{O};\mathbb{C}^n).
\end{equation*}
Here $\gamma _*:=C_{21} + \gamma _1+\widehat{\gamma} + \widehat{\gamma}'$ and
 $\gamma_{**}:=\gamma _2+\widetilde{\gamma} + \widetilde{\gamma}'$.
  Hence,
  \begin{equation*}
  \Vert \mathbf{w}_\varepsilon -\boldsymbol{\varphi}_\varepsilon\Vert _{L_2(\mathcal{O})}
  \leqslant
  c(\phi)^5(\gamma _*\varepsilon\vert\zeta\vert ^{-1/2}+\gamma _{**}\varepsilon ^2 )\Vert \mathbf{F}\Vert _{L_2(\mathcal{O})}.
  \end{equation*}
Together with \eqref{3.11 from draft}, this yields \eqref{3.23 from draft} with the constants
$C_{19}:=\gamma _*+C_{14}$ and $C_{20}:=\gamma _{**}$.
\end{proof}

\subsection{Completion of the proof of Theorem~\ref{Theorem Dirichlet L2}}

From \eqref{2.35 from draft} and \eqref{3.23 from draft} it follows that
\begin{equation}
\label{raznost res with eps ^2}
\Vert \mathbf{u}_\varepsilon -\mathbf{u}_0\Vert _{L_2(\mathcal{O})}
\leqslant C_{22}c(\phi)^5(\varepsilon\vert\zeta\vert ^{-1/2}+\varepsilon ^2)\Vert \mathbf{F}\Vert _{L_2(\mathcal{O})},
\end{equation}
where $C_{22}:=\max\lbrace C_{11}+C_{19};C_{20}\rbrace$.
In order to deduce \eqref{Th L2}, we also need the following rough estimate:
\begin{equation}
\label{raznost res grubo}
\Vert (B_{D,\varepsilon}-\zeta Q_0^\varepsilon)^{-1}-(B_D^0-\zeta\overline{Q_0})^{-1}\Vert _{L_2(\mathcal{O})\rightarrow L_2(\mathcal{O})}
\leqslant 2\Vert Q_0 ^{-1}\Vert _{L_\infty} c(\phi)\vert \zeta\vert ^{-1}
\end{equation}
for any $\zeta\in\mathbb{C}\setminus\mathbb{R}_+$ and $0<\varepsilon\leqslant 1$, which follows from
\eqref{2.10a} and \eqref{lemma hom probl 1}.
For $\vert\zeta\vert \leqslant \varepsilon ^{-2}$ we use \eqref{raznost res with eps ^2} and note that
$\varepsilon ^2\leqslant \varepsilon \vert \zeta\vert ^{-1/2}$. For $\vert \zeta \vert >\varepsilon ^{-2}$ we apply
\eqref{raznost res grubo} and take into account that $\vert \zeta\vert ^{-1}<\varepsilon \vert \zeta\vert ^{-1/2}$.
This implies \eqref{Th L2} with $C_4:=2\max\lbrace \Vert Q_0^{-1}\Vert _{L_\infty} ;C_{22}\rbrace $.
\qed

\section{Special cases}
\label{Chapter Special cases}

\subsection{Removal of the smoothing operator $S_\varepsilon$ in the corrector}

It turns out that the smoothing operator $S_\varepsilon$ in the corrector can be removed under some additional assumptions on the matrix-valued
functions $\Lambda(\mathbf{x})$ and $\widetilde{\Lambda}(\mathbf{x})$.

\begin{condition}
\label{Condition Lambda in L infty}
Suppose that the $\Gamma$-periodic solution $\Lambda (\mathbf{x})$ of problem~\eqref{Lambda problem} is bounded, i.~e., $\Lambda\in L_\infty (\mathbb{R}^d)$.
\end{condition}

\begin{condition}
\label{Condition tilde Lambda in Lp}
Suppose that the $\Gamma$-periodic solution $\widetilde{\Lambda}(\mathbf{x})$ of problem \eqref{tildeLambda_problem} is such that
$\widetilde{\Lambda}\in L_p(\Omega)$, where $p=2$ for $d=1$, $p>2$ for $d=2$, and $p=d$ for $d\geqslant 3$.
\end{condition}

Some cases where Conditions~\ref{Condition Lambda in L infty} and~\ref{Condition tilde Lambda in Lp}
are fulfilled were distinguished in \cite[Lemma~8.7]{BSu06} and \cite[Proposition~8.11]{SuAA}, respectively.

\begin{proposition}[\cite{BSu06}]
\label{Proposition Lambda in L infty <=}
Suppose that at least one of the following assumptions is satisfied:
$1^\circ )$ $d\leqslant 2${\rm ;}
$2^\circ )$ $d\geqslant 1$, and the operator $A_\varepsilon$ is of the form $A_\varepsilon =\mathbf{D}^* g^\varepsilon (\mathbf{x})\mathbf{D}$, where the matrix $g(\mathbf{x})$ has real entries{\rm ;}
$3^\circ )$ the dimension $d$ is arbitrary, and $g^0=\underline{g}$, i.~e., relations \eqref{underline-g} are valid.
Then Condition~\textnormal{\ref{Condition Lambda in L infty}} is fulfilled.
\end{proposition}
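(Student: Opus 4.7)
The plan is to treat the three subcases separately, since each invokes a distinct regularity mechanism for $\Lambda$.

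Case $1^\circ$. For $d=1$, the $H^1$-bound \eqref{DLambda<=} combined with the one-dimensional Sobolev embedding $\widetilde H^1(\Omega)\hookrightarrow L_\infty(\Omega)$ yields $\Lambda\in L_\infty$ immediately. For $d=2$, the standard $H^1$ estimate is not sufficient, so I would invoke the Meyers higher-integrability theorem for divergence-form elliptic systems with bounded measurable coefficients: applied to the periodic problem \eqref{Lambda problem}, it upgrades the integrability to $\mathbf{D}\Lambda\in L_{2+\delta}(\Omega)$ for some $\delta>0$ depending only on the ellipticity constants. Morrey's embedding $W^{1,2+\delta}(\Omega)\hookrightarrow C^{0,\delta/(2+\delta)}(\overline\Omega)$ then gives the desired $L_\infty$-bound via periodicity.

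Case $2^\circ$. When $b(\mathbf{D})=\mathbf{D}$ and $g$ has real entries, the matrix problem \eqref{Lambda problem} decouples into scalar problems, one for each column of $\Lambda$: the $k$-th column $\Lambda_k$ solves the divergence-form scalar equation $-\mathrm{div}(g\,\nabla\Lambda_k)=\mathrm{div}(g\,\mathbf{e}_k)$ with real, bounded, uniformly elliptic coefficients and right-hand side in divergence form. The classical De~Giorgi--Nash--Moser theorem yields local H\"older continuity of $\Lambda_k$, and periodicity then produces a uniform bound on $\mathbb{R}^d$.

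Case $3^\circ$. I would construct a bounded $\Lambda$ explicitly from the structural identity \eqref{underline-g}. Assemble the periodic potentials $\mathbf{f}_j$ into a matrix $F(\mathbf{x})$ with these columns. Taking mean values on both sides of \eqref{underline-g} and using $\overline{g^{-1}}=(g^0)^{-1}$ identifies the constant part as $L^0=(g^0)^{-1}$. A direct computation then shows that $\Lambda(\mathbf{x}):=F(\mathbf{x})g^0-\overline{Fg^0}$ satisfies $g(\mathbf{x})(b(\mathbf{D})\Lambda(\mathbf{x})+\mathbf{1}_m)=g^0$, a constant matrix, so it is annihilated by $b(\mathbf{D})^*$ and solves \eqref{Lambda problem}; uniqueness of the mean-zero periodic solution identifies it with the $\Lambda$ appearing in Condition~\ref{Condition Lambda in L infty}. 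The main obstacle here is showing that the potentials $\mathbf{f}_j$ can be chosen in $L_\infty$. I would address this by sharpening the construction behind the characterization \eqref{underline-g} of Proposition~1.7 of \cite{BSu}, selecting bounded representatives in the equivalence class of potentials — either by a pointwise construction adapted to the lattice, or, where applicable, by applying the same Meyers-type upgrade as in case $1^\circ$ to the auxiliary first-order relations $b(\mathbf{D})\mathbf{f}_j=\mathbf{l}_j-\mathbf{l}_j^0$ with $L_\infty$ right-hand side.
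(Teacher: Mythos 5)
The paper contains no internal proof of this statement: it is imported verbatim from \cite[Lemma~8.7]{BSu06}, so your argument can only be judged on its own merits. Cases $1^\circ$ and $2^\circ$ are sound: for $d=1$ the Sobolev embedding suffices, for $d=2$ the Meyers higher-integrability estimate plus Morrey embedding (valid since $2+\delta>d$) gives the bound, and in the scalar real case the cell problem for each entry $\Lambda_k$ is a divergence-form equation $-\mathrm{div}(g\nabla\Lambda_k)=\mathrm{div}(g\mathbf{e}_k)$ to which De~Giorgi--Nash--Moser applies. Your algebra in case $3^\circ$ is also correct: with $F$ the matrix of potentials from \eqref{underline-g}, averaging identifies $\mathbf{l}_j^0$ with the columns of $(g^0)^{-1}$, the function $\Lambda=Fg^0-\overline{Fg^0}$ satisfies $b(\mathbf{D})\Lambda+\mathbf{1}_m=g^{-1}g^0$, hence solves \eqref{Lambda problem}, and by uniqueness of the mean-zero periodic solution it is the $\Lambda$ of Condition~\ref{Condition Lambda in L infty}; so everything reduces, as you say, to boundedness of the $\mathbf{f}_j$ (equivalently of $\Lambda$).

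That last step, however, is exactly where your proposal has a genuine gap, and the two fixes you offer do not close it. First, there is nothing to ``select'': since $b(\mathbf{D})u=0$ for periodic $u$ forces $u=\mathrm{const}$ (by the rank condition \eqref{<b^*b<}), each $\mathbf{f}_j$ is unique up to an additive constant, so one must \emph{prove} boundedness, not choose a bounded representative. Second, a ``Meyers-type upgrade'' only yields gradient integrability $L_{2+\delta}$ with a small, non-quantified $\delta$, which is useless for $d\geqslant 3$ --- the only dimensions where case $3^\circ$ adds anything to case $1^\circ$ --- because boundedness requires an exponent exceeding $d$. The step that actually works uses the full strength of what you have already established: $b(\mathbf{D})\Lambda=g^{-1}g^0-\mathbf{1}_m\in L_\infty(\Omega)$, so $\Lambda$ solves the \emph{constant-coefficient} system $b(\mathbf{D})^*b(\mathbf{D})\Lambda=b(\mathbf{D})^*\bigl(g^{-1}g^0-\mathbf{1}_m\bigr)$; in Fourier series $\widehat{\mathbf{D}\Lambda}(\boldsymbol{\xi})=\boldsymbol{\xi}\bigl(b(\boldsymbol{\xi})^*b(\boldsymbol{\xi})\bigr)^{-1}b(\boldsymbol{\xi})^*\widehat{h}(\boldsymbol{\xi})$ with a multiplier that is homogeneous of degree zero and smooth off the origin by \eqref{<b^*b<}, so the Mikhlin multiplier theorem gives $\mathbf{D}\Lambda\in L_p(\Omega)$ for \emph{every} $p<\infty$ (the right-hand side lies in every $L_p$), and choosing $p>d$ the Sobolev embedding together with periodicity yields $\Lambda\in L_\infty(\mathbb{R}^d)$. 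With this replacement for your final paragraph (or simply a citation of \cite{BSu06} for case $3^\circ$), the argument is complete.
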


\begin{proposition}[\cite{SuAA}]
 Suppose that at least one of the following assumptions is satisfied:
$1^\circ )$ $d\leqslant 4${\rm ;}
$2^\circ )$ the dimension $d$ is arbitrary, and the operator $A_\varepsilon$ is of the form $A_\varepsilon =\mathbf{D}^* g^\varepsilon (\mathbf{x})\mathbf{D}$, where the matrix $g(\mathbf{x})$ has real entries.
Then  Condition~\textnormal{\ref{Condition tilde Lambda in Lp}} is fulfilled.
\end{proposition}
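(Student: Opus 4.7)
The plan is to verify Condition~\ref{Condition tilde Lambda in Lp} separately under assumption $1^\circ$ and assumption $2^\circ$, exploiting in each case a different source of regularity for the $\Gamma$-periodic solution $\widetilde{\Lambda}$ of problem \eqref{tildeLambda_problem}.

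Under assumption $1^\circ$, I would start from the fact that $\widetilde{\Lambda}\in\widetilde{H}^1(\Omega)$, already recorded in the estimates \eqref{tilde Lambda<=}--\eqref{D tilde Lambda}. Then I would invoke the Sobolev embedding theorem on the bounded Lipschitz cell $\Omega$: one has $H^1(\Omega)\hookrightarrow L_\infty(\Omega)$ for $d=1$, $H^1(\Omega)\hookrightarrow L_q(\Omega)$ for every $q<\infty$ when $d=2$, and $H^1(\Omega)\hookrightarrow L_{2d/(d-2)}(\Omega)$ for $d\geqslant 3$. Condition~\ref{Condition tilde Lambda in Lp} asks for $p=d$ when $d\geqslant 3$, and the inequality $d\leqslant 2d/(d-2)$ is elementarily equivalent to $d\leqslant 4$. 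Hence, for any $d\leqslant 4$ the Sobolev embedding $H^1(\Omega)\hookrightarrow L_p(\Omega)$ with the prescribed $p$ is at our disposal, and Condition~\ref{Condition tilde Lambda in Lp} follows at once.

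Under assumption $2^\circ$, one has $b(\mathbf{D})=\mathbf{D}$, so that the equation \eqref{tildeLambda_problem} decouples entrywise into scalar divergence-form equations of the shape
\begin{equation*}
-\mathrm{div}\bigl(g(\mathbf{x})\nabla w\bigr)=\mathrm{div}\,\mathbf{f}(\mathbf{x}),
\end{equation*}
where $w$ denotes a scalar entry of $\widetilde{\Lambda}$ and $\mathbf{f}\in L_\rho(\Omega;\mathbb{R}^d)$ is assembled from the entries of the matrices $a_j^*$. Because $g$ is real, bounded, measurable and uniformly elliptic and because $\rho>d$, I would invoke the classical De~Giorgi--Nash--Moser theory (in the form of Stampacchia's $L_\infty$-bound for equations with divergence-form right-hand sides in $L_\rho$, $\rho>d$) to conclude that $w\in L_\infty(\Omega)$, and in fact is Hölder continuous. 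This gives $\widetilde{\Lambda}\in L_\infty(\Omega)\subset L_p(\Omega)$ for every $p$, so Condition~\ref{Condition tilde Lambda in Lp} holds for arbitrary $d$.

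The main obstacle is case $2^\circ$ in high dimensions, where one really must exploit both the scalar, decoupled structure of the equation and the reality of $g$: De~Giorgi--Nash--Moser is known to fail for general strongly elliptic systems when $d\geqslant 5$, so for a genuinely matrix-valued symbol $b(\mathbf{D})$ with $m>n$ no analogous $L_\infty$ bound can be expected. This is precisely why, beyond the threshold $d=4$ provided by the Sobolev embedding of $1^\circ$, only the specific structural assumption $2^\circ$ admits a general proof of Condition~\ref{Condition tilde Lambda in Lp}.
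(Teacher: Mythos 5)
Your argument is correct and is essentially the proof behind this statement, which the paper itself only quotes from \cite[Proposition~8.11]{SuAA} (cf.\ also Remark~\ref{Remark scalar problem} and \cite{LaU}): case $1^\circ$ is exactly the Sobolev embedding $H^1(\Omega)\hookrightarrow L_{2d/(d-2)}(\Omega)$ together with the equivalence $d\leqslant 2d/(d-2)\Leftrightarrow d\leqslant 4$ applied to $\widetilde{\Lambda}\in\widetilde{H}^1(\Omega)$, and case $2^\circ$ is the scalar De~Giorgi--Nash/Ladyzhenskaya--Ural'tseva $L_\infty$-bound for real uniformly elliptic divergence-form equations with right-hand side $\mathrm{div}\,\mathbf{f}$, $\mathbf{f}\in L_\rho(\Omega)$, $\rho>d$, applied (after splitting into real and imaginary parts) to the equation for $\widetilde{\Lambda}$. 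Only your closing aside is overly conservative: boundedness for elliptic systems can already fail for $d\geqslant 3$, which does not affect the proof.
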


\begin{remark}
\label{Remark scalar problem}
If $A_\varepsilon =\mathbf{D}^* g^\varepsilon (\mathbf{x})\mathbf{D}$, where $g(\mathbf{x})$ is symmetric matrix with real entries, from   \textnormal{\cite[Chapter {\rm III}, Theorem~{\rm 13.1}]{LaU}} it follows that $\Lambda \in L_\infty$ and
$\widetilde{\Lambda} \in L_\infty$. So, Conditions \textnormal{\ref{Condition Lambda in L infty}} and \textnormal{\ref{Condition tilde Lambda in Lp}} are fulfilled. Moreover, the norm $\Vert\Lambda\Vert _{L_\infty}$ does not exceed a constant depending  on $d$, $\Vert g\Vert _{L_\infty}$,
$\Vert g^{-1}\Vert _{L_\infty}$, and $\Omega$, while the norm $\Vert \widetilde{\Lambda}\Vert _{L_\infty}$ is controlled in terms of $d$, $\rho$, $\Vert g\Vert _{L_\infty}$, $\Vert g^{-1}\Vert _{L_\infty}$, $\Vert a_j\Vert _{L_\rho (\Omega)}$, $j=1,\dots ,d$, and $\Omega$.
\end{remark}

In this subsection, our goal is to prove the following theorem.

\begin{theorem}
\label{Theorem no S-eps}
Suppose that the assumptions of Theorem~\textnormal{\ref{Theorem Dirichlet H1}} are satisfied.
Suppose also that Conditions~\textnormal{\ref{Condition Lambda in L infty}} and~\textnormal{\ref{Condition tilde Lambda in Lp}} hold.
Denote
\begin{align}
\label{K_D^0}
K_D^0(\varepsilon ;\zeta) :=& (\Lambda ^\varepsilon  b(\mathbf{D})+ \widetilde{\Lambda}^\varepsilon  )(B_D^0-\zeta \overline{Q_0})^{-1},
\\
\label{G3(eps;zeta)}
G^0_D(\varepsilon ;\zeta ):=& \widetilde{g}^\varepsilon  b(\mathbf{D})(B_D^0-\zeta\overline{Q_0})^{-1}
+g^\varepsilon \bigl(b(\mathbf{D})\widetilde{\Lambda}\bigr)^\varepsilon (B_D^0-\zeta\overline{Q_0})^{-1}.
\end{align}
Then for $\zeta \in \mathbb{C}\setminus \mathbb{R}_+$, $\vert \zeta\vert\geqslant 1$, and $0<\varepsilon\leqslant\varepsilon_1$ we have
\begin{align}
\label{Th no S-eps 3}
\begin{split}
\Vert  &(B_{D,\varepsilon}-\zeta Q_0^\varepsilon )^{-1} - (B_D^0-\zeta \overline{Q_0})^{-1}
- \eps K_D^0(\varepsilon ;\zeta) \Vert _{L_2(\mathcal{O})\rightarrow H^1(\mathcal{O})}
\leqslant C_5 c(\phi)^2\varepsilon ^{1/2}\vert \zeta \vert ^{-1/4}+C_{23}c(\phi)^4\varepsilon ,
\end{split}
\\
\label{Th no S-eps fluxes 3}
\begin{split}
\Vert& g^\varepsilon b(\mathbf{D})(B_{D,\varepsilon}-\zeta Q_0^\varepsilon)^{-1}- G^0_D(\varepsilon ;\zeta)\Vert _{L_2(\mathcal{O})\rightarrow L_2(\mathcal{O})}
\leqslant \widetilde{C}_5 c(\phi)^2\varepsilon ^{1/2}\vert \zeta\vert ^{-1/4}+\widetilde{C}_{23}c(\phi)^4\varepsilon .
\end{split}
\end{align}
The constants $C_5$ and $\widetilde{C}_{5}$ are as in Theorem~\textnormal{\ref{Theorem Dirichlet H1}}.
The constants $C_{23}$ and $\widetilde{C}_{23}$ depend only on the initial data \eqref{problem data}, the domain~$\mathcal{O}$,
and also on $p$ and the norms $\Vert \Lambda\Vert _{L_\infty}$, $\Vert \widetilde{\Lambda}\Vert _{L_p(\Omega)}$.
\end{theorem}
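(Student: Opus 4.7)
\smallskip

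\noindent\textbf{Proof proposal.} The plan is to reduce Theorem~\ref{Theorem no S-eps} to the already proved Theorem~\ref{Theorem Dirichlet H1} by showing that, under Conditions~\ref{Condition Lambda in L infty} and~\ref{Condition tilde Lambda in Lp}, the Steklov operator $S_\varepsilon$ appearing in the correctors $K_D(\varepsilon;\zeta)$ and $G_D(\varepsilon;\zeta)$ may be removed at the cost of an extra $O(\varepsilon)$ term. More precisely, I will prove the two operator bounds
\begin{align*}
&\Vert \varepsilon K_D(\varepsilon;\zeta)-\varepsilon K_D^0(\varepsilon;\zeta)\Vert_{L_2(\mathcal{O})\to H^1(\mathcal{O})}\le \mathfrak{C}\,c(\phi)\,\varepsilon,
\\
&\Vert G_D(\varepsilon;\zeta)-G_D^0(\varepsilon;\zeta)\Vert_{L_2(\mathcal{O})\to L_2(\mathcal{O})}\le \widetilde{\mathfrak{C}}\,c(\phi)\,\varepsilon,
\end{align*}
valid for $|\zeta|\ge 1$ and $0<\varepsilon\le\varepsilon_1$; both bounds then combine with \eqref{Th L2->H1} and \eqref{2.41a} via the triangle inequality and absorption of $c(\phi)$ into $c(\phi)^4$ to yield \eqref{Th no S-eps 3} and \eqref{Th no S-eps fluxes 3}.

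The key observation is that, after restriction to $\mathcal{O}$ and since $\mathbf{u}_0=\widetilde{\mathbf{u}}_0$ on $\mathcal{O}$ and $S_\varepsilon$ commutes with $\mathbf{D}^\alpha$, the correctors differ only through the factor $(I-S_\varepsilon)$:
\begin{equation*}
\bigl(K_D^0(\varepsilon;\zeta)-K_D(\varepsilon;\zeta)\bigr)\mathbf{F}
=\Lambda^\varepsilon (I-S_\varepsilon)b(\mathbf{D})\widetilde{\mathbf{u}}_0+\widetilde{\Lambda}^\varepsilon(I-S_\varepsilon)\widetilde{\mathbf{u}}_0,
\end{equation*}
and similarly for the flux correctors, using the decomposition $\widetilde{g}^\varepsilon=g^\varepsilon+g^\varepsilon(b(\mathbf{D})\Lambda)^\varepsilon$ coming from \eqref{tilde g}. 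The $L_2$-norm of $(I-S_\varepsilon)b(\mathbf{D})\widetilde{\mathbf{u}}_0$ and of $(I-S_\varepsilon)\widetilde{\mathbf{u}}_0$ (together with its $H^1$-norm, since $S_\varepsilon$ commutes with derivatives) is of order $\varepsilon\Vert\widetilde{\mathbf{u}}_0\Vert_{H^2}$ by Proposition~\ref{Proposition S__eps - I}, and by \eqref{tilde u_0 in H2} this is bounded by $c(\phi)\varepsilon\Vert\mathbf{F}\Vert_{L_2(\mathcal{O})}$. So the plan for each piece is to peel off the bounded factor $g^\varepsilon$ or $\Lambda^\varepsilon$ (Condition~\ref{Condition Lambda in L infty} enters here) and use the Steklov bound to gain an $\varepsilon$, or, for the rapidly oscillating periodic coefficients $(b(\mathbf{D})\Lambda)^\varepsilon$ and $(b(\mathbf{D})\widetilde{\Lambda})^\varepsilon$, to apply Lemmas~\ref{Lemma Lambda in L infty} and~\ref{Lemma DLambda, Lambda in Lp} to $u=(I-S_\varepsilon)(\cdot)$ directly.

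For the $L_2\to H^1$ bound for $K_D^0-K_D$, I differentiate using the product rule $\mathbf{D}(\Lambda^\varepsilon v)=\varepsilon^{-1}(\mathbf{D}\Lambda)^\varepsilon v+\Lambda^\varepsilon\mathbf{D}v$, so that $\varepsilon\mathbf{D}[\Lambda^\varepsilon(I-S_\varepsilon)b(\mathbf{D})\widetilde{\mathbf{u}}_0]$ splits into the ``gradient-of-$\Lambda$'' term, controlled by Lemma~\ref{Lemma Lambda in L infty} with $u=(I-S_\varepsilon)b(\mathbf{D})\widetilde{\mathbf{u}}_0$ (so $\Vert u\Vert_{L_2}+\varepsilon\Vert\mathbf{D}u\Vert_{L_2}\lesssim\varepsilon\Vert\widetilde{\mathbf{u}}_0\Vert_{H^2}$), and the ``$\Lambda^\varepsilon\mathbf{D}$'' term, controlled by $\Vert\Lambda\Vert_{L_\infty}\Vert(I-S_\varepsilon)\mathbf{D}b(\mathbf{D})\widetilde{\mathbf{u}}_0\Vert_{L_2}$. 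The analogous estimate for the $\widetilde{\Lambda}$-term uses Lemma~\ref{Lemma DLambda, Lambda in Lp} in place of Lemma~\ref{Lemma Lambda in L infty}, together with Lemma~\ref{Lemma Lambda in Lp H1->L2} to bound $\widetilde{\Lambda}^\varepsilon\cdot(I-S_\varepsilon)\mathbf{D}\widetilde{\mathbf{u}}_0$ in $L_2$ via its $H^1$-argument, whose $H^1$-norm is $\le 2\Vert\widetilde{\mathbf{u}}_0\Vert_{H^2}$.

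The flux estimate is handled in parallel: the piece $g^\varepsilon(I-S_\varepsilon)b(\mathbf{D})\widetilde{\mathbf{u}}_0$ is immediate from $\Vert g\Vert_{L_\infty}<\infty$ and Proposition~\ref{Proposition S__eps - I}, while the pieces $g^\varepsilon(b(\mathbf{D})\Lambda)^\varepsilon(I-S_\varepsilon)b(\mathbf{D})\widetilde{\mathbf{u}}_0$ and $g^\varepsilon(b(\mathbf{D})\widetilde{\Lambda})^\varepsilon(I-S_\varepsilon)\widetilde{\mathbf{u}}_0$ fall, after pulling out $g^\varepsilon$ and using $|b(\mathbf{D})\Lambda|\le\alpha_1^{1/2}|\mathbf{D}\Lambda|$, again under Lemmas~\ref{Lemma Lambda in L infty} and~\ref{Lemma DLambda, Lambda in Lp}. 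The main technical obstacle is that $\widetilde{g}$ is in general only in $L_2(\Omega)$, so one cannot bound $\Vert\widetilde{g}^\varepsilon v\Vert_{L_2}$ by $\Vert v\Vert_{L_2}$; the decomposition $\widetilde{g}=g\mathbf{1}_m+g\,b(\mathbf{D})\Lambda$ together with Condition~\ref{Condition Lambda in L infty} and Lemma~\ref{Lemma Lambda in L infty} is what saves the day, letting the needed $\varepsilon$-smallness be extracted from $(I-S_\varepsilon)$ applied to the smooth function $b(\mathbf{D})\widetilde{\mathbf{u}}_0\in H^1(\mathbb{R}^d)$. All $c(\phi)$-powers are then tracked through \eqref{tilde u_0 in H2}, producing the constants $C_{23}$ and $\widetilde{C}_{23}$ with the claimed dependence on the problem data, $\mathcal{O}$, $p$, $\Vert\Lambda\Vert_{L_\infty}$ and $\Vert\widetilde{\Lambda}\Vert_{L_p(\Omega)}$.
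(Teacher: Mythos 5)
Your proposal is correct and follows essentially the same route as the paper: the $(L_2\to H^1)$ bound is reduced to showing $\varepsilon\Vert K_D(\varepsilon;\zeta)-K_D^0(\varepsilon;\zeta)\Vert_{L_2(\mathcal{O})\to H^1(\mathcal{O})}=O(c(\phi)\varepsilon)$, which is precisely the content of the paper's Lemmas~\ref{Lemma Lambda (S-I)} and~\ref{Lemma tilde Lambda(S-I)} (proved there, as in your sketch, from Proposition~\ref{Proposition S__eps - I}, the product rule, and Lemmas~\ref{Lemma Lambda in L infty}, \ref{Lemma Lambda in Lp H1->L2}, \ref{Lemma DLambda, Lambda in Lp}, combined with the $H^2$-bound \eqref{lemma hom probl 3}). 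The only deviation is in the flux estimate, where you bound $G_D-G^0_D$ directly starting from \eqref{2.41a} rather than applying $g^\varepsilon b(\mathbf{D})$ to the already-proved \eqref{Th no S-eps 3} and Leibniz-expanding as the paper does; both variants rest on the same two Conditions and the same lemmas and yield the same $O(c(\phi)\varepsilon)$ correction, so this is an immaterial reordering.
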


The continuity of the operators \eqref{K_D^0} and \eqref{G3(eps;zeta)}
under the assumptions of Theorem~\ref{Theorem no S-eps} follows from Lemmas \ref{Lemma Lambda in L infty}, \ref{Lemma Lambda in Lp H1->L2}, and \ref{Lemma DLambda, Lambda in Lp}.

To prove Theorem~\ref{Theorem no S-eps}, we need the following lemmas.
Their proofs are similar to the proofs of Lemmas~8.7 and~8.8 from~\cite{MSu15}.

\begin{lemma}
\label{Lemma Lambda (S-I)}
Suppose that Condition~\textnormal{\ref{Condition Lambda in L infty}} is satisfied.
Let $S_\varepsilon$ be the Steklov smoothing operator given by~\eqref{S_eps}. Then for~$0<\varepsilon\leqslant 1$ we have
\begin{equation}
\label{lemma Lambda (S-I)}
\Vert [\Lambda ^\varepsilon ]b(\mathbf{D})(S_\varepsilon -I)\Vert _{H^2(\mathbb{R}^d)\rightarrow H^1(\mathbb{R}^d)}\leqslant\mathfrak{C}_\Lambda .
\end{equation}
The constant~$\mathfrak{C}_\Lambda$ depends only on $m$, $d$, $\alpha _0$, $\alpha _1$, $\Vert g\Vert _{L_\infty}$, $\Vert g^{-1}\Vert _{L_\infty}$, the parameters of the lattice~$\Gamma$, and the norm~$\Vert \Lambda\Vert _{L_\infty}$.
\end{lemma}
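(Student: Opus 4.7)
Fix $\mathbf{u}\in H^2(\mathbb{R}^d;\mathbb{C}^n)$ and set $\mathbf{v}_\varepsilon:=\Lambda^\varepsilon b(\mathbf{D})(S_\varepsilon-I)\mathbf{u}$. The plan is to prove the bound $\|\mathbf{v}_\varepsilon\|_{H^1(\mathbb{R}^d)}\leq \mathfrak{C}_\Lambda\|\mathbf{u}\|_{H^2(\mathbb{R}^d)}$ by estimating $\|\mathbf{v}_\varepsilon\|_{L_2}$ and $\|\mathbf{D}\mathbf{v}_\varepsilon\|_{L_2}$ separately. Throughout, I would use repeatedly that $S_\varepsilon$ commutes with $\mathbf{D}$, so that $b(\mathbf{D})(S_\varepsilon-I)=(S_\varepsilon-I)b(\mathbf{D})$, and also $\|S_\varepsilon\|_{L_2\to L_2}\le 1$ together with \eqref{b_l <=}, which gives $\|b(\mathbf{D})\mathbf{w}\|_{H^1}\le d\alpha_1^{1/2}\|\mathbf{w}\|_{H^2}$.

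For the $L_2$ part, I would apply Proposition \ref{Proposition S__eps - I} componentwise to $b(\mathbf{D})\mathbf{u}\in H^1(\mathbb{R}^d;\mathbb{C}^m)$:
\begin{equation*}
\|\mathbf{v}_\varepsilon\|_{L_2(\mathbb{R}^d)}\le \|\Lambda\|_{L_\infty}\|(S_\varepsilon-I)b(\mathbf{D})\mathbf{u}\|_{L_2(\mathbb{R}^d)}\le \varepsilon r_1\|\Lambda\|_{L_\infty}\|\mathbf{D}\,b(\mathbf{D})\mathbf{u}\|_{L_2(\mathbb{R}^d)},
\end{equation*}
which is $O(\varepsilon\|\mathbf{u}\|_{H^2})$.

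For the derivative, I would use the product rule: since $\partial_j\Lambda^\varepsilon=\varepsilon^{-1}(\partial_j\Lambda)^\varepsilon$,
\begin{equation*}
\partial_j\mathbf{v}_\varepsilon=\varepsilon^{-1}(\partial_j\Lambda)^\varepsilon\, b(\mathbf{D})(S_\varepsilon-I)\mathbf{u}+\Lambda^\varepsilon b(\mathbf{D})(S_\varepsilon-I)\partial_j\mathbf{u}.
\end{equation*}
The second summand is treated exactly as $\mathbf{v}_\varepsilon$ above with $\mathbf{u}$ replaced by $\partial_j\mathbf{u}$, yielding a bound $O(\varepsilon\|\mathbf{u}\|_{H^2})$. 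The first summand is the crux: the prefactor $\varepsilon^{-1}$ has to be absorbed. For this I would apply Lemma \ref{Lemma Lambda in L infty} componentwise to the scalar components $u$ of $\mathbf{w}_\varepsilon:=b(\mathbf{D})(S_\varepsilon-I)\mathbf{u}$. The relevant inputs are
\begin{equation*}
\|\mathbf{w}_\varepsilon\|_{L_2(\mathbb{R}^d)}\le \varepsilon r_1\|\mathbf{D}\,b(\mathbf{D})\mathbf{u}\|_{L_2(\mathbb{R}^d)},\qquad \|\mathbf{D}\mathbf{w}_\varepsilon\|_{L_2(\mathbb{R}^d)}=\|(S_\varepsilon-I)\mathbf{D}\,b(\mathbf{D})\mathbf{u}\|_{L_2(\mathbb{R}^d)}\le 2\|\mathbf{D}\,b(\mathbf{D})\mathbf{u}\|_{L_2(\mathbb{R}^d)},
\end{equation*}
both of order $\|\mathbf{u}\|_{H^2}$ (the first even with an extra $\varepsilon$). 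Plugging these into the bound of Lemma~\ref{Lemma Lambda in L infty},
\begin{equation*}
\varepsilon^{-2}\int_{\mathbb{R}^d}|(\partial_j\Lambda)^\varepsilon|^2|\mathbf{w}_\varepsilon|^2\,d\mathbf{x}\le \varepsilon^{-2}\bigl(\beta_1\|\mathbf{w}_\varepsilon\|_{L_2}^2+\beta_2\varepsilon^2\|\Lambda\|_{L_\infty}^2\|\mathbf{D}\mathbf{w}_\varepsilon\|_{L_2}^2\bigr)\le C\|\mathbf{u}\|_{H^2(\mathbb{R}^d)}^2,
\end{equation*}
where the $\varepsilon^{-2}$ is killed precisely because $\|\mathbf{w}_\varepsilon\|_{L_2}^2=O(\varepsilon^2)$. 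Summing over $j$ and combining with the earlier estimates yields the asserted bound $\mathfrak{C}_\Lambda$, with the claimed dependence on the parameters coming from $\alpha_0$, $\alpha_1$, $\|g\|_{L_\infty}$, $\|g^{-1}\|_{L_\infty}$ (through $\beta_1,\beta_2$), $r_1$ (lattice parameter) and $\|\Lambda\|_{L_\infty}$.

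The main obstacle is this last step: the $\varepsilon^{-1}$ generated by differentiating the oscillating factor $\Lambda^\varepsilon$ would be catastrophic without the smoothing, but the Steklov difference $S_\varepsilon-I$ provides the compensating $\varepsilon$ in the $L_2$-norm, while Lemma~\ref{Lemma Lambda in L infty} supplies the correct $\|u\|_{L_2}^2+\varepsilon^2\|\mathbf{D}u\|_{L_2}^2$ splitting so that the $\varepsilon$-cancellation is uniform, using only $\Lambda\in L_\infty$ and no higher regularity.
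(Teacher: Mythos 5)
Your argument is correct and is essentially the paper's own proof: the same Leibniz-rule splitting of $\mathbf{D}\bigl(\Lambda^\varepsilon b(\mathbf{D})(S_\varepsilon-I)\mathbf{u}\bigr)$, with the $\varepsilon^{-1}$-term absorbed by applying Lemma~\ref{Lemma Lambda in L infty} to $(S_\varepsilon-I)b(\mathbf{D})\mathbf{u}$ and using Proposition~\ref{Proposition S__eps - I} to supply the compensating factor $\varepsilon$, while the remaining terms are handled via \eqref{S_eps <= 1} and \eqref{<b^*b<}. One small correction: the term $\Lambda^\varepsilon b(\mathbf{D})(S_\varepsilon-I)\partial_j\mathbf{u}$ is only $O(\Vert\mathbf{u}\Vert_{H^2})$ (via $\Vert S_\varepsilon-I\Vert_{L_2\to L_2}\leqslant 2$), not $O(\varepsilon\Vert\mathbf{u}\Vert_{H^2})$ as literally claimed — that would require $\mathbf{u}\in H^3$ — but the uniform-in-$\varepsilon$ bound asserted in the lemma is unaffected.
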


\begin{proof}
Let $\boldsymbol{\Phi}\in H^2(\mathbb{R}^d;\mathbb{C}^n)$. By \eqref{S_eps <= 1}, \eqref{<b^*b<}, and Condition~\ref{Condition Lambda in L infty},
\begin{equation}
\label{1 lemma Lambda (S-I)}
\Vert \Lambda ^\varepsilon b(\mathbf{D})(S_\varepsilon -I)\boldsymbol{\Phi}\Vert _{L_2(\mathbb{R}^d)}
\leqslant 2\alpha _1^{1/2}\Vert \Lambda \Vert _{L_\infty}\Vert \mathbf{D}\boldsymbol{\Phi}\Vert _{L_2(\mathbb{R}^d)}.
\end{equation}
Clearly,
\begin{equation*}
\begin{split}
\Vert \mathbf{D}\left(\Lambda ^\varepsilon b(\mathbf{D})(S_\varepsilon -I)\boldsymbol{\Phi}\right)\Vert ^2 _{L_2}
\leqslant
2\varepsilon ^{-2}\Vert (\mathbf{D}\Lambda )^\varepsilon (S_\varepsilon -I)b(\mathbf{D})\boldsymbol{\Phi}\Vert ^2 _{L_2}
+2\Vert \Lambda \Vert ^2_{L_\infty}\Vert (S_\varepsilon -I)b(\mathbf{D})\mathbf{D}\boldsymbol{\Phi}\Vert ^2 _{L_2}.
\end{split}
\end{equation*}
By Lemma~\ref{Lemma Lambda in L infty}, this yields
\begin{equation*}
\begin{split}
\Vert \mathbf{D}\left(\Lambda ^\varepsilon b(\mathbf{D})(S_\varepsilon -I)\boldsymbol{\Phi}\right)\Vert ^2 _{L_2(\mathbb{R}^d)}
&\leqslant
2\beta _1 \varepsilon ^{-2}\Vert (S_\varepsilon -I)b(\mathbf{D})\boldsymbol{\Phi}\Vert ^2 _{L_2(\mathbb{R}^d)}
\\
&+ 2\Vert \Lambda \Vert ^2_{L_\infty}(\beta _2+1)\Vert (S_\varepsilon -I)b(\mathbf{D})\mathbf{D}\boldsymbol{\Phi}\Vert ^2_{L_2(\mathbb{R}^d)}.
\end{split}
\end{equation*}
Applying \eqref{S_eps <= 1}, \eqref{<b^*b<}, and Proposition~\ref{Proposition S__eps - I}, we find
\begin{equation}
\label{2 lemma Lambda (S-I)}
\begin{split}
\Vert \mathbf{D}\left(\Lambda ^\varepsilon b(\mathbf{D})(S_\varepsilon -I)\boldsymbol{\Phi}\right)\Vert ^2 _{L_2(\mathbb{R}^d)}
&\leqslant \alpha _1 \left(2\beta _1 r_1^2+8\Vert \Lambda \Vert ^2_{L_\infty}(\beta _2+1)\right)\Vert \mathbf{D}^2\boldsymbol{\Phi}\Vert _{L_2(\mathbb{R}^d)}^2.
\end{split}
\end{equation}
Finally, relations~\eqref{1 lemma Lambda (S-I)} and~\eqref{2 lemma Lambda (S-I)} imply~\eqref{lemma Lambda (S-I)} with
$\mathfrak{C}_\Lambda ^2 :=\alpha _1\left(2\beta _1 r_1^2 +8\Vert \Lambda \Vert ^2 _{L_\infty}(\beta _2+1)\right)$.
\end{proof}

\begin{lemma}
\label{Lemma tilde Lambda(S-I)}
Suppose that Condition~\textnormal{\ref{Condition tilde Lambda in Lp}} is satisfied. Let $S_\varepsilon$ be the Steklov smoothing operator
given by~\eqref{S_eps}. Then for $0<\varepsilon\leqslant 1$ we have
\begin{equation}
\label{7.10a}
\Vert [\widetilde{\Lambda}^\varepsilon ](S_\varepsilon -I)\Vert _{H^2(\mathbb{R}^d)\rightarrow H^1(\mathbb{R}^d)}\leqslant \mathfrak{C}_{\widetilde{\Lambda}}.
\end{equation}
The constant $\mathfrak{C}_{\widetilde{\Lambda}}$ depends only on $n$, $d$, $\alpha _0$, $\alpha _1$, $\rho$, $\Vert g\Vert _{L_\infty}$, $\Vert g^{-1}\Vert _{L_\infty}$, îthe norms $\Vert a_j\Vert _{L_\rho (\Omega)}$, $j=1,\dots,d$, and also on $p$, the norm
$\Vert \widetilde{\Lambda}\Vert _{L_p(\Omega)}$, and the parameters of the lattice $\Gamma$.
\end{lemma}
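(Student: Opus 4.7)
The plan is to mimic the proof of Lemma~\ref{Lemma Lambda (S-I)}, with the essential replacements $\Lambda \mapsto \widetilde{\Lambda}$ and ``$\Lambda\in L_\infty$'' $\mapsto$ ``$\widetilde\Lambda\in L_p(\Omega)$''. The two changes force us to abandon pointwise estimates of $\widetilde\Lambda^\eps$ and rely instead on the continuity of $[\widetilde\Lambda^\eps]$ from $H^1(\mathbb R^d)$ to $L_2(\mathbb R^d)$ (Lemma~\ref{Lemma Lambda in Lp H1->L2}) and on the weighted estimate for $|(\mathbf D\widetilde\Lambda)^\eps|$ from Lemma~\ref{Lemma DLambda, Lambda in Lp}. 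Fix $\bPhi\in H^2(\mathbb R^d;\C^n)$; the proof reduces to estimating $\Vert\widetilde\Lambda^\eps(S_\eps-I)\bPhi\Vert_{L_2(\mathbb R^d)}$ and $\Vert\mathbf D(\widetilde\Lambda^\eps(S_\eps-I)\bPhi)\Vert_{L_2(\mathbb R^d)}$ in terms of $\Vert\bPhi\Vert_{H^2(\mathbb R^d)}$.

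For the $L_2$-norm: using $\widetilde\Lambda\in L_p(\Omega)$ with $p$ as in \eqref{f condition lemma MSu}, Lemma~\ref{Lemma Lambda in Lp H1->L2} gives
\[
\Vert\widetilde\Lambda^\eps(S_\eps-I)\bPhi\Vert_{L_2(\mathbb R^d)}
\leqslant \Vert\widetilde\Lambda\Vert_{L_p(\Omega)}C(\widehat q,\Omega)\Vert(S_\eps-I)\bPhi\Vert_{H^1(\mathbb R^d)},
\]
and Proposition~\ref{Proposition S__eps - I} combined with the commutativity $S_\eps\mathbf D=\mathbf DS_\eps$ yields $\Vert(S_\eps-I)\bPhi\Vert_{H^1(\mathbb R^d)}\leqslant\eps r_1\Vert\bPhi\Vert_{H^2(\mathbb R^d)}$, hence a bound of order $\eps$ (stronger than needed).

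For the gradient, I expand
\[
\mathbf D\bigl(\widetilde\Lambda^\eps(S_\eps-I)\bPhi\bigr)
= \eps^{-1}(\mathbf D\widetilde\Lambda)^\eps(S_\eps-I)\bPhi
+ \widetilde\Lambda^\eps(S_\eps-I)\mathbf D\bPhi.
\]
The second term is handled exactly as above via Lemma~\ref{Lemma Lambda in Lp H1->L2} together with $\Vert(S_\eps-I)\mathbf D\bPhi\Vert_{H^1(\mathbb R^d)}\leqslant 2\Vert\bPhi\Vert_{H^2(\mathbb R^d)}$ (property \eqref{S_eps <= 1}). For the first (dangerous) term, I apply Lemma~\ref{Lemma DLambda, Lambda in Lp} to $u=(S_\eps-I)\bPhi$:
\[
\int_{\mathbb R^d}|(\mathbf D\widetilde\Lambda)^\eps|^2|(S_\eps-I)\bPhi|^2\,d\mathbf x
\leqslant \widetilde\beta_1\Vert(S_\eps-I)\bPhi\Vert^2_{H^1(\mathbb R^d)}
+ \widetilde\beta_2\eps^2\Vert\widetilde\Lambda\Vert^2_{L_p(\Omega)}C(\widehat q,\Omega)^2\Vert\mathbf D(S_\eps-I)\bPhi\Vert^2_{H^1(\mathbb R^d)}.
\]
Here $\Vert(S_\eps-I)\bPhi\Vert_{H^1}^2\leqslant\eps^2 r_1^2\Vert\bPhi\Vert^2_{H^2}$ supplies the crucial factor $\eps^2$ that cancels the $\eps^{-2}$ in front, while $\Vert\mathbf D(S_\eps-I)\bPhi\Vert_{H^1}\leqslant 2\Vert\bPhi\Vert_{H^2}$ is used for the second summand. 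Summing all contributions and collecting constants gives \eqref{7.10a} with $\mathfrak C_{\widetilde\Lambda}$ depending only on the parameters listed in the statement.

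The main (though mild) obstacle compared with Lemma~\ref{Lemma Lambda (S-I)} is that one cannot simply pull $\widetilde\Lambda^\eps$ out in the $L_\infty$-norm; the argument is rescued by the fact that Lemma~\ref{Lemma DLambda, Lambda in Lp} already encodes the correct $\eps^2$-gain on the $\mathbf D$-weight side and Lemma~\ref{Lemma Lambda in Lp H1->L2} provides the $H^1\to L_2$-continuity of $[\widetilde\Lambda^\eps]$ with a constant independent of $\eps$. No additional technical ingredient is needed, and the constant $\mathfrak C_{\widetilde\Lambda}$ inherits the dependencies stated in the lemma directly from $\widetilde\beta_1,\widetilde\beta_2$, the embedding constant $C(\widehat q,\Omega)$, $r_1$, and $\Vert\widetilde\Lambda\Vert_{L_p(\Omega)}$.
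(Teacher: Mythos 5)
Your proof is correct and follows essentially the same route as the paper's: the same splitting $\mathbf D(\widetilde\Lambda^\eps(S_\eps-I)\bPhi)=\eps^{-1}(\mathbf D\widetilde\Lambda)^\eps(S_\eps-I)\bPhi+\widetilde\Lambda^\eps(S_\eps-I)\mathbf D\bPhi$, with Lemma~\ref{Lemma DLambda, Lambda in Lp} applied to $u=(S_\eps-I)\bPhi$ and Proposition~\ref{Proposition S__eps - I} supplying the $\eps^2$-gain that cancels the $\eps^{-2}$, and Lemma~\ref{Lemma Lambda in Lp H1->L2} for the remaining terms. The only cosmetic difference is that for the $L_2$-part the paper uses the crude bound $\Vert(S_\eps-I)\bPhi\Vert_{H^1}\leqslant 2\Vert\bPhi\Vert_{H^1}$ rather than your sharper $O(\eps)$ estimate; both suffice.
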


\begin{proof}
Let $\boldsymbol{\Phi}\in H^2(\mathbb{R}^d;\mathbb{C}^n)$.
From \eqref{S_eps <= 1}, Lemma~\ref{Lemma Lambda in Lp H1->L2}, and Condition~\ref{Condition tilde Lambda in Lp}
it follows that
\begin{equation}
\label{1 lemma tilde Lambda (S-I)}
\Vert \widetilde{\Lambda}^\varepsilon (S_\varepsilon -I)\boldsymbol{\Phi}\Vert _{L_2(\mathbb{R}^d)}
\leqslant 2 C(\widehat{q},\Omega)\Vert \widetilde{\Lambda}\Vert _{L_p(\Omega)}\Vert \boldsymbol{\Phi}\Vert _{H^1(\mathbb{R}^d)}.
\end{equation}
Consider the derivatives:
$\partial _j \bigl(\widetilde{\Lambda}^\varepsilon (S_\varepsilon -I)\boldsymbol{\Phi}\bigr)
=\varepsilon ^{-1}(\partial _j \widetilde{\Lambda})^\varepsilon (S_\varepsilon -I)\boldsymbol{\Phi}
+\widetilde{\Lambda}^\varepsilon (S_\varepsilon -I)\partial _j \boldsymbol{\Phi}$.
Together with Lemmas~\ref{Lemma Lambda in Lp H1->L2} and \ref{Lemma DLambda, Lambda in Lp}, this yields
\begin{equation*}
\begin{split}
\Vert \mathbf{D}\bigl(\widetilde{\Lambda}^\varepsilon (S_\varepsilon -I)\boldsymbol{\Phi}\bigr)\Vert ^2 _{L_2(\mathbb{R}^d)}
&\leqslant 2\widetilde{\beta}_1\varepsilon ^{-2}\Vert (S_\varepsilon -I)\boldsymbol{\Phi} \Vert ^2 _{H^1(\mathbb{R}^d)}
\\
&+2(\widetilde{\beta}_2+1)\Vert \widetilde{\Lambda}\Vert ^2_{L_p(\Omega)}C(\widehat{q},\Omega)^2\Vert \mathbf{D}(S_\varepsilon -I)\boldsymbol{\Phi}\Vert ^2_{H^1(\mathbb{R}^d)}.
\end{split}
\end{equation*}
Combining this with \eqref{S_eps <= 1} and Proposition~\ref{Proposition S__eps - I}, we obtain
\begin{equation}
\label{2 lemma tilde Lambda (S-I)}
\Vert \mathbf{D}\bigl(\widetilde{\Lambda}^\varepsilon (S_\varepsilon -I)\boldsymbol{\Phi}\bigr)\Vert ^2 _{L_2(\mathbb{R}^d)}
\leqslant
\bigl(
2\widetilde{\beta}_1r_1^2+8(\widetilde{\beta}_2+1)\Vert \widetilde{\Lambda}\Vert ^2 _{L_p(\Omega)}C(\widehat{q},\Omega)^2\bigr)
\Vert \mathbf{D}\boldsymbol{\Phi}\Vert ^2 _{H^1(\mathbb{R}^d)}.
\end{equation}
Now, \eqref{1 lemma tilde Lambda (S-I)} and \eqref{2 lemma tilde Lambda (S-I)} imply \eqref{7.10a} with
$\mathfrak{C}_{\widetilde{\Lambda}}^2 :=2\widetilde{\beta}_1r_1^2+(8 \widetilde{\beta}_2 + 12)
C(\widehat{q},\Omega)^2\Vert \widetilde{\Lambda}\Vert ^2_{L_p(\Omega)}$.
\end{proof}

\subsection{Proof of Theorem~\ref{Theorem no S-eps}}

Under Condition~\ref{Condition Lambda in L infty},  by
\eqref{lemma hom probl 3}, \eqref{PO}, and Lemma~\ref{Lemma Lambda (S-I)}, we have
\begin{equation}
\label{d-vo Th 7/6 1}
\begin{split}
\varepsilon  \Vert [\Lambda ^\varepsilon ] b(\mathbf{D}) (S_\varepsilon -I) P_\mathcal{O}(B_D^0-\zeta\overline{Q_0})^{-1}\Vert _{L_2(\mathcal{O})\rightarrow H^1(\mathcal{O})}
\leqslant  \mathfrak{C}_\Lambda C_\mathcal{O}^{(2)}\mathcal{C}_2 c(\phi) \varepsilon.
\end{split}
\end{equation}
Similarly, under Condition~\ref{Condition tilde Lambda in Lp}, from \eqref{lemma hom probl 3}, \eqref{PO}, and Lemma~\ref{Lemma tilde Lambda(S-I)}
it follows that
\begin{equation}
\label{d-vo Th 7/6 2}
\varepsilon\Vert [\widetilde{\Lambda}^\varepsilon](S_\varepsilon -I)P_\mathcal{O}(B_D^0-\zeta\overline{Q_0})^{-1}\Vert _{L_2(\mathcal{O})\rightarrow H^1(\mathcal{O})}
\leqslant \mathfrak{C}_{\widetilde{\Lambda}}C_\mathcal{O}^{(2)}\mathcal{C}_2c(\phi) \varepsilon.
\end{equation}
Relations~\eqref{Th L2->H1}, \eqref{d-vo Th 7/6 1}, and \eqref{d-vo Th 7/6 2} imply estimate~\eqref{Th no S-eps 3} with $C_{23}:=C_6+(\mathfrak{C}_\Lambda+\mathfrak{C}_{\widetilde{\Lambda}})C_\mathcal{O}^{(2)}\mathcal{C}_2$.

Let us check~\eqref{Th no S-eps fluxes 3}. By analogy with~\eqref{3.19 from draft},  from \eqref{Th no S-eps 3} it follows that
\begin{equation}
\label{d-vo Th 7/6 3}
\begin{split}
\Vert &g^\varepsilon b(\mathbf{D})(B_{D,\varepsilon}-\zeta Q_0^\varepsilon )^{-1}-g^\varepsilon b(\mathbf{D})
\bigl(I+\varepsilon (\Lambda ^\varepsilon b(\mathbf{D})+ \widetilde{\Lambda}^\varepsilon )
\bigr)
(B_D^0-\zeta\overline{Q_0})^{-1}\Vert _{L_2(\mathcal{O}) \rightarrow L_2(\mathcal{O})}
\\
&\leqslant (d\alpha _1)^{1/2}\Vert g\Vert _{L_\infty}
\bigl(
C_5 c(\phi )^2\varepsilon ^{1/2}\vert \zeta\vert ^{-1/4}+C_{23}c(\phi)^4\varepsilon
\bigr).
\end{split}
\end{equation}
Next, by analogy with \eqref{3.20 from draft},
\begin{equation}
\label{d-vo Th 7/6 4}
\begin{split}
\varepsilon & g^\varepsilon b(\mathbf{D})
\bigl(\Lambda ^\varepsilon b(\mathbf{D})+\widetilde{\Lambda}^\varepsilon \bigr)(B_D^0-\zeta\overline{Q_0})^{-1}
\\
&=
g^\varepsilon \bigl(b(\mathbf{D})\Lambda\bigr)^\varepsilon b(\mathbf{D})(B_D^0-\zeta \overline{Q_0})^{-1}
+
g^\varepsilon \bigl( b(\mathbf{D})\widetilde{\Lambda}\bigr)^\varepsilon (B_D^0-\zeta\overline{Q_0})^{-1}
\\
&+
\varepsilon\sum _{l=1}^d g^\varepsilon b_l \bigl(\Lambda ^\varepsilon b(\mathbf{D})D_l+\widetilde{\Lambda}^\varepsilon D_l\bigr)
(B_D^0-\zeta\overline{Q_0})^{-1}.
\end{split}
\end{equation}
Using \eqref{b_l <=}, \eqref{lemma hom probl 3}, and
Condition~\ref{Condition Lambda in L infty}, we obtain
\begin{equation}
\label{d-vo Th 7/6 5}
\begin{split}
\varepsilon & \sum _{l=1}^d \Vert g^\varepsilon b_l \Lambda ^\varepsilon b(\mathbf{D})D_l(B_D^0-\zeta\overline{Q_0})^{-1}\Vert _{L_2(\mathcal{O})\rightarrow L_2(\mathcal{O})}
\\
&\leqslant
\varepsilon\alpha _1 d\Vert g\Vert _{L_\infty}\Vert \Lambda\Vert _{L_\infty}\Vert \mathbf{D}^2(B_D^0-\zeta\overline{Q_0})^{-1}\Vert _{L_2(\mathcal{O})\rightarrow L_2(\mathcal{O})}
\leqslant
\varepsilon\alpha _1 d \Vert g\Vert _{L_\infty}\Vert \Lambda\Vert _{L_\infty}\mathcal{C}_2 c(\phi).
\end{split}
\end{equation}
Next, from \eqref{b_l <=}, \eqref{lemma hom probl 3}, \eqref{PO},  Lemma~\ref{Lemma Lambda in Lp H1->L2}, and Condition~\ref{Condition tilde Lambda in Lp} it follows that
\begin{equation}
\begin{split}
\label{d-vo Th 7/6 5a}
\varepsilon&\sum _{l=1}^d \Vert g^\varepsilon b_l \widetilde{\Lambda}^\varepsilon  D_l (B_D^0-\zeta\overline{Q_0})^{-1}\Vert _{L_2(\mathcal{O})\rightarrow L_2(\mathcal{O})}
\\
&\leqslant
\varepsilon (d\alpha _1)^{1/2}\Vert g\Vert _{L_\infty}\Vert \widetilde{\Lambda}^\varepsilon \mathbf{D}(B_D^0-\zeta\overline{Q_0})^{-1}\Vert _{L_2(\mathcal{O})\rightarrow L_2(\mathcal{O})}\\
&\leqslant \varepsilon (d\alpha _1)^{1/2}\Vert g\Vert _{L_\infty}\Vert \widetilde{\Lambda}^\varepsilon P_\mathcal{O}\mathbf{D}(B_D^0-\zeta\overline{Q_0})^{-1}\Vert _{L_2(\mathcal{O})\rightarrow L_2(\mathbb{R}^d)}
\\
&\leqslant \varepsilon (d\alpha _1)^{1/2}\Vert g\Vert _{L_\infty}\Vert \widetilde{\Lambda}\Vert _{L_p(\Omega)}C(\widehat{q},\Omega)C_\mathcal{O}^{(1)}\Vert \mathbf{D}(B_D^0-\zeta\overline{Q_0})^{-1}\Vert _{L_2(\mathcal{O})\rightarrow H^1(\mathcal{O})}\\
&\leqslant \varepsilon (d\alpha _1)^{1/2}\Vert g\Vert _{L_\infty}\Vert \widetilde{\Lambda}\Vert _{L_p(\Omega)}C(\widehat{q},\Omega)C_\mathcal{O}^{(1)}\mathcal{C}_2 c(\phi).
\end{split}
\end{equation}
Together with \eqref{d-vo Th 7/6 5}, this shows that the third term in the right-hand side of~\eqref{d-vo Th 7/6 4} does not exceed
$\widehat{C}_{23}c(\phi) \varepsilon$, where
$\widehat{C}_{23}:=(d\alpha _1)^{1/2}\mathcal{C}_2\Vert g\Vert _{L_\infty}\bigl( (d\alpha _1)^{1/2}\Vert \Lambda \Vert _{L_\infty}+C(\widehat{q},\Omega)C_{\mathcal{O}}^{(1)}\Vert \widetilde{\Lambda}\Vert _{L_p(\Omega)} \bigr)$.
Combining this with \eqref{d-vo Th 7/6 3} and \eqref{d-vo Th 7/6 4}, we arrive at estimate \eqref{Th no S-eps fluxes 3} with the constant $\widetilde{C}_{23}:=(d\alpha_1)^{1/2}\Vert g\Vert _{L_\infty}C_{23}+\widehat{C}_{23}$. \qed

\begin{remark}
If only Condition~\textnormal{\ref{Condition Lambda in L infty} (}respectively, Condition~\textnormal{\ref{Condition tilde Lambda in Lp})} is satisfied, then
the smoothing operator $S_\varepsilon$ can be removed only in the term of the corrector containing $\Lambda^\varepsilon$
\textnormal{(}respectively, $\widetilde{\Lambda}^\varepsilon$\textnormal{)}.
\end{remark}

\subsection{The case where the corrector is equal to zero}

Suppose that $g^0=\overline{g}$, i.~e., relations \eqref{overline-g} are satisfied.
Then the $\Gamma$-periodic solution of problem~\eqref{Lambda problem} is equal to zero: $\Lambda (\mathbf{x})=0$.
In addition, suppose that
\begin{equation}
\label{sum Dj aj =0}
\sum _{j=1}^d D_j a_j(\mathbf{x})^* =0.
\end{equation}
Then the $\Gamma$-periodic solution of problem~\eqref{tildeLambda_problem} is also equal to zero: $\widetilde{\Lambda}(\mathbf{x})=0$.
Hence, ${\mathbf v}_\eps = {\mathbf u}_0$ (see \eqref{v_eps}, \eqref{v_eps=}).
The solution of problem \eqref{w_eps problem} is equal to zero: ${\mathbf w}_\varepsilon =0$.
Theorem~\ref{Theorem with Diriclet corrector} implies the following result.

\begin{proposition}
\label{Proposition K=0}
Suppose that the assumptions of Theorem~\textnormal{\ref{Theorem Dirichlet L2}} are satisfied.
Suppose that relations~\eqref{overline-g} and~\eqref{sum Dj aj =0} hold.
Then for $\zeta\in\mathbb{C}\setminus\mathbb{R}_+$, $\vert\zeta\vert\geqslant 1$, and $0<\varepsilon\leqslant 1$ we have
\begin{equation*}
\Vert (B_{D,\varepsilon}-\zeta Q_0^\varepsilon )^{-1}-(B_D^0-\zeta \overline{Q_0})^{-1}\Vert _{L_2(\mathcal{O})\rightarrow H^1(\mathcal{O})}
\leqslant C_7 c(\phi)^4\varepsilon.
\end{equation*}
\end{proposition}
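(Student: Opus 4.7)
The plan is to observe that, under the hypotheses, both periodic correctors collapse to zero, and then apply Theorem~\ref{Theorem with Diriclet corrector} directly. First I would note that relations~\eqref{overline-g} are precisely equivalent (by the quoted Proposition) to $g^0=\overline{g}$, and inspecting the defining problem~\eqref{Lambda problem} one sees that the $\Gamma$-periodic matrix $\Lambda(\mathbf{x})\equiv 0$ is then a solution (with zero mean), hence the unique one. Similarly, under assumption~\eqref{sum Dj aj =0} the defining problem~\eqref{tildeLambda_problem} for $\widetilde{\Lambda}$ reduces to $b(\mathbf{D})^*g(\mathbf{x})b(\mathbf{D})\widetilde{\Lambda}=0$ with zero mean, whose unique $\Gamma$-periodic solution is $\widetilde{\Lambda}\equiv 0$.

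Next I would substitute $\Lambda=0$ and $\widetilde{\Lambda}=0$ into the definitions~\eqref{v_eps}, \eqref{v_eps=}: the two correction terms $\varepsilon\Lambda^\varepsilon S_\varepsilon b(\mathbf{D})\widetilde{\mathbf{u}}_0$ and $\varepsilon\widetilde{\Lambda}^\varepsilon S_\varepsilon\widetilde{\mathbf{u}}_0$ vanish identically, so $\widetilde{\mathbf{v}}_\varepsilon=\widetilde{\mathbf{u}}_0$ and consequently $\mathbf{v}_\varepsilon=\mathbf{u}_0$. Likewise, the Dirichlet data in problem~\eqref{w_eps problem} for the boundary layer correction $\mathbf{w}_\varepsilon$ is exactly $\varepsilon(\Lambda^\varepsilon S_\varepsilon b(\mathbf{D})\widetilde{\mathbf{u}}_0+\widetilde{\Lambda}^\varepsilon S_\varepsilon\widetilde{\mathbf{u}}_0)|_{\partial\mathcal{O}}=0$, so by uniqueness of the weak solution (guaranteed by~\eqref{b D,eps >= H1-norm} and the fact that $\zeta\notin\mathbb{R}_+$, cf.~Lemma~\ref{Lemma estimates u_D,eps}) we have $\mathbf{w}_\varepsilon=0$.

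With these two reductions in hand, the assertion $\mathbf{u}_\varepsilon-\mathbf{v}_\varepsilon+\mathbf{w}_\varepsilon=\mathbf{u}_\varepsilon-\mathbf{u}_0$ is immediate, and Theorem~\ref{Theorem with Diriclet corrector} (which is already proved and applies for all $\zeta\in\mathbb{C}\setminus\mathbb{R}_+$ with $|\zeta|\ge 1$ and $0<\varepsilon\le 1$) gives
\[
\Vert\mathbf{u}_\varepsilon-\mathbf{u}_0\Vert_{H^1(\mathcal{O})}\leq C_7 c(\phi)^4\varepsilon\Vert\mathbf{F}\Vert_{L_2(\mathcal{O})},
\]
which is the operator statement in the proposition. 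In this argument there is essentially no obstacle: the only substantive step is the observation that both cell problems degenerate, which is a direct consequence of the hypotheses; everything else is bookkeeping, and in particular one does not need to invoke Condition~\ref{condition varepsilon} on $\varepsilon_1$ because the range of $\varepsilon$ allowed in Theorem~\ref{Theorem with Diriclet corrector} is already the full interval $(0,1]$.
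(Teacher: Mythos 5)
Your proof matches the paper's own argument: under hypotheses \eqref{overline-g} and \eqref{sum Dj aj =0} one checks directly that $\Lambda\equiv 0$ and $\widetilde{\Lambda}\equiv 0$, so $\mathbf{v}_\varepsilon=\mathbf{u}_0$ and $\mathbf{w}_\varepsilon=0$, and the claim follows at once from Theorem~\ref{Theorem with Diriclet corrector}. Your observation that the range $0<\varepsilon\leq 1$ is available (since Theorem~\ref{Theorem with Diriclet corrector} holds there, without needing $\varepsilon\leq\varepsilon_1$) is also consistent with the statement.
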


\subsection{Special case} Suppose that $g^0=\underline{g}$, i.~e., relations~\eqref{underline-g} are satisfied.
Then, by Proposition~\ref{Proposition Lambda in L infty <=}($3^\circ$), Condition~\ref{Condition Lambda in L infty} is fulfilled.
Herewith, by \cite[Remark~3.5]{BSu05}, the matrix-valued function \eqref{tilde g} is constant and coincides with $g^0$, i.~e., $\widetilde{g}(\mathbf{x})=g^0=\underline{g}$. Hence, $\widetilde{g}^\varepsilon b(\mathbf{D})(B_D^0-\zeta\overline{Q_0})^{-1}=g^0b(\mathbf{D})(B_D^0-\zeta\overline{Q_0})^{-1}$.
In addition, suppose that relation~\eqref{sum Dj aj =0} holds. Then $\widetilde{\Lambda}(\mathbf{x})=0$, and Theorem~\ref{Theorem no S-eps}
implies the following result.

\begin{proposition}
\label{Proposition tilde Lambda =0 in Sec. 7}
Suppose that the assumptions of Theorem~\textnormal{\ref{Theorem Dirichlet L2}} are satisfied.
Suppose that relations~\eqref{underline-g} and~\eqref{sum Dj aj =0} hold.
Then for $\zeta\in\mathbb{C}\setminus\mathbb{R}_+$, $\vert\zeta\vert\geqslant 1$, and $0<\varepsilon\leqslant\varepsilon _1$ we have
\begin{equation*}
\Vert g^\varepsilon b(\mathbf{D})(B_{D,\varepsilon}-\zeta Q_0^\varepsilon )^{-1}-g^0b(\mathbf{D})(B_D^0-\zeta\overline{Q_0})^{-1}\Vert _{L_2(\mathcal{O})\rightarrow L_2(\mathcal{O})}
\leqslant \widetilde{C}_5 c(\phi)^2\varepsilon ^{1/2}\vert\zeta\vert ^{-1/4}+\widetilde{C}_{23}c(\phi)^4\varepsilon .
\end{equation*}
\end{proposition}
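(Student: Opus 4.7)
The plan is to derive this proposition as a direct corollary of Theorem~\ref{Theorem no S-eps} by showing that, under the two additional assumptions $g^0=\underline{g}$ and $\sum_{j=1}^d D_j a_j^*=0$, the general approximating operator $G^0_D(\varepsilon;\zeta)$ from \eqref{G3(eps;zeta)} collapses to the simple expression $g^0 b(\mathbf{D})(B_D^0-\zeta\overline{Q_0})^{-1}$.

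First, I would verify that the hypotheses of Theorem~\ref{Theorem no S-eps} are satisfied. Condition~\ref{Condition Lambda in L infty} is immediate from Proposition~\ref{Proposition Lambda in L infty <=}$(3^\circ)$, since we assume $g^0=\underline{g}$. For Condition~\ref{Condition tilde Lambda in Lp}, I would note that under \eqref{sum Dj aj =0}, problem \eqref{tildeLambda_problem} becomes $b(\mathbf{D})^* g(\mathbf{x}) b(\mathbf{D})\widetilde{\Lambda}(\mathbf{x}) = 0$ with the zero-mean condition, whose unique weak solution in $\widetilde{H}^1(\Omega;\mathbb{C}^n)$ is $\widetilde{\Lambda}\equiv 0$; in particular $\widetilde{\Lambda}\in L_p(\Omega)$ trivially.

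Next, I would simplify $G^0_D(\varepsilon;\zeta)$. Since $\widetilde{\Lambda}=0$, the second summand in \eqref{G3(eps;zeta)}, namely $g^\varepsilon (b(\mathbf{D})\widetilde{\Lambda})^\varepsilon (B_D^0-\zeta\overline{Q_0})^{-1}$, vanishes identically. For the first summand, I would invoke Remark~3.5 of \cite{BSu05} (referenced in the discussion preceding the proposition): when $g^0=\underline{g}$, the matrix-valued function $\widetilde{g}(\mathbf{x})$ defined by \eqref{tilde g} is actually constant and coincides with $g^0$. Hence $\widetilde{g}^\varepsilon = g^0$, and the first summand in \eqref{G3(eps;zeta)} equals $g^0 b(\mathbf{D})(B_D^0-\zeta\overline{Q_0})^{-1}$.

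Finally, substituting this simplified expression for $G^0_D(\varepsilon;\zeta)$ into estimate \eqref{Th no S-eps fluxes 3} of Theorem~\ref{Theorem no S-eps} yields exactly the claimed bound with the constants $\widetilde{C}_5$ and $\widetilde{C}_{23}$ inherited from that theorem. There is essentially no obstacle here, since the proposition is a transparent specialization; the only point requiring care is the correct citation of the structural identity $\widetilde{g}=g^0$ in the $g^0=\underline{g}$ case, which controls the passage from the abstract flux corrector to the simple constant-coefficient expression on the right-hand side.
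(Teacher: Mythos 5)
Your argument is correct and coincides with the paper's own proof: the paper also deduces Proposition~\ref{Proposition tilde Lambda =0 in Sec. 7} by noting that $g^0=\underline{g}$ yields Condition~\ref{Condition Lambda in L infty} via Proposition~\ref{Proposition Lambda in L infty <=}$(3^\circ)$ and $\widetilde{g}(\mathbf{x})=g^0$ by \cite[Remark~3.5]{BSu05}, that \eqref{sum Dj aj =0} forces $\widetilde{\Lambda}=0$, and then specializing estimate \eqref{Th no S-eps fluxes 3} of Theorem~\ref{Theorem no S-eps}. No gaps; your observation that $\widetilde{\Lambda}\equiv 0$ trivially satisfies Condition~\ref{Condition tilde Lambda in Lp} is exactly the point the paper uses implicitly.
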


\section{Estimates in a strictly interior subdomain}
\label{Chapter Interior subdomain}

\subsection{General case}
Using Theorem~\ref{Theorem Dirichlet L2} and the results for homogenization problem in~$\mathbb{R}^d$,
it is possible to improve error estimates in $H^1(\mathcal{O}')$ for any strictly interior subdomain $\mathcal{O}'$ of $\mathcal{O}$.

The following result is proved by the same method as Theorem~7.1 from \cite{Su15}.

\begin{theorem}
\label{Theorem O'}
Suppose that the assumptions of Theorem~\textnormal{\ref{Theorem Dirichlet H1}} are satisfied.
Let $\mathcal{O}'$ be a strictly interior subdomain of the domain~$\mathcal{O}$. Denote $\delta :=\mathrm{dist}\,\lbrace \mathcal{O}';\partial \mathcal{O}\rbrace$. Then for $\zeta\in\mathbb{C}\setminus\mathbb{R}_+$, $\vert\zeta\vert\geqslant 1$, and $0<\varepsilon\leqslant\varepsilon _1$ we have
\begin{align}
\label{Th O'}
\Vert &\mathbf{u}_\varepsilon -\mathbf{v}_\varepsilon \Vert _{H^1(\mathcal{O}')}
\leqslant c(\phi)^6\varepsilon(C_{24}'\vert\zeta\vert ^{-1/2}\delta ^{-1}+C_{24}'')\Vert \mathbf{F}\Vert _{L_2(\mathcal{O})},
\\
\label{Th O' fluxes}
\Vert &\mathbf{p}_\varepsilon -\widetilde{g}^\varepsilon S_\varepsilon b(\mathbf{D})\widetilde{\mathbf{u}}_0- g^\varepsilon \bigl(b(\mathbf{D})\widetilde{\Lambda}\bigr)^\varepsilon S_\varepsilon \widetilde{\mathbf{u}}_0\Vert _{L_2(\mathcal{O}')}
\leqslant c(\phi)^6\varepsilon \bigl(\widetilde{C}_{24}'\vert\zeta\vert ^{-1/2}\delta ^{-1}+\widetilde{C}_{24}''\bigr) \Vert \mathbf{F}\Vert _{L_2(\mathcal{O})}.
\end{align}
The constants $C_{24}'$, $C_{24}''$, $\widetilde{C}_{24}'$, and $\widetilde{C}_{24}''$ depend only on the initial data~\eqref{problem data} and the domain~$\mathcal{O}$.
\end{theorem}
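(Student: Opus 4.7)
The idea is to compare the Dirichlet solution $\mathbf{u}_\varepsilon$ with the $\mathbb{R}^d$-solution $\widetilde{\mathbf{u}}_\varepsilon$ of \eqref{tilde u_eps problem} in the interior, exploiting the fact that their difference satisfies a homogeneous equation inside $\mathcal{O}$. Write
\begin{equation*}
\mathbf{u}_\varepsilon -\mathbf{v}_\varepsilon =\bigl(\mathbf{u}_\varepsilon -\widetilde{\mathbf{u}}_\varepsilon\vert_{\mathcal{O}}\bigr)+\bigl(\widetilde{\mathbf{u}}_\varepsilon -\widetilde{\mathbf{v}}_\varepsilon\bigr)\vert_{\mathcal{O}}=:\mathbf{d}_\varepsilon +\bigl(\widetilde{\mathbf{u}}_\varepsilon -\widetilde{\mathbf{v}}_\varepsilon\bigr)\vert_{\mathcal{O}}.
\end{equation*}
The $H^1(\mathbb{R}^d)$-norm of the second term is already estimated by $O(c(\phi)^3\varepsilon)\Vert\mathbf{F}\Vert_{L_2(\mathcal{O})}$ via Theorem~\ref{Theorem H1 Rd} and \eqref{tilde F<=}. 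Thus the whole game is to bound $\mathbf{d}_\varepsilon$ in $H^1(\mathcal{O}')$.

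The starting observation is that since both $\mathbf{u}_\varepsilon$ and $\widetilde{\mathbf{u}}_\varepsilon\vert_{\mathcal{O}}$ satisfy $B_\varepsilon\mathbf{w}-\zeta Q_0^\varepsilon \mathbf{w}=\mathbf{F}$ in the weak sense on $\mathcal{O}$, the residual $\mathbf{d}_\varepsilon\in H^1(\mathcal{O};\mathbb{C}^n)$ obeys the homogeneous identity
\begin{equation*}
\mathfrak{b}_{N,\varepsilon}[\mathbf{d}_\varepsilon,\boldsymbol{\eta}]-\zeta(Q_0^\varepsilon\mathbf{d}_\varepsilon,\boldsymbol{\eta})_{L_2(\mathcal{O})}=0,\qquad \boldsymbol{\eta}\in H^1_0(\mathcal{O};\mathbb{C}^n).
\end{equation*}
An $L_2$-control of $\mathbf{d}_\varepsilon$ on the whole of $\mathcal{O}$ follows by combining Theorem~\ref{Theorem Dirichlet L2} (applied to $\mathbf{u}_\varepsilon-\mathbf{u}_0$) with Theorem~\ref{Th L2 Rd} (applied to $\widetilde{\mathbf{u}}_\varepsilon-\widetilde{\mathbf{u}}_0$) and the elementary bound $\Vert\widetilde{\mathbf{u}}_0-\mathbf{u}_0\Vert_{L_2(\mathcal{O})}=0$, yielding
\begin{equation*}
\Vert\mathbf{d}_\varepsilon\Vert_{L_2(\mathcal{O})}\leqslant C c(\phi)^{5}\varepsilon\vert\zeta\vert^{-1/2}\Vert\mathbf{F}\Vert_{L_2(\mathcal{O})}.
\end{equation*}

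The core of the proof is a Caccioppoli-type inequality. Choose a smooth cutoff $\chi\in C^\infty_0(\mathcal{O})$ with $\chi\equiv 1$ on $\mathcal{O}'$, $\mathrm{dist}\,\{\mathrm{supp}\,\chi;\partial\mathcal{O}\}\geqslant\delta/2$, and $\vert\nabla\chi\vert\leqslant C\delta^{-1}$. Substituting $\boldsymbol{\eta}=\chi^2\mathbf{d}_\varepsilon\in H^1_0(\mathcal{O};\mathbb{C}^n)$ into the homogeneous identity, commuting $\chi$ with $b(\mathbf{D})$ (which produces a zero-order commutator bounded by $\alpha_1^{1/2}\vert\nabla\chi\vert\leqslant C\delta^{-1}$), and treating the lower-order $a_j^\varepsilon$ and $Q^\varepsilon$ terms exactly as in Subsection~2.4 (via \eqref{sum a-j u}, \eqref{(Qu,u)<=}), one controls the principal part $\mathfrak{a}_\varepsilon[\chi\mathbf{d}_\varepsilon,\chi\mathbf{d}_\varepsilon]$ plus a multiple of $\Vert\chi\mathbf{d}_\varepsilon\Vert^2_{L_2}$ by
\begin{equation*}
C\delta^{-2}\Vert\mathbf{d}_\varepsilon\Vert^2_{L_2(\mathrm{supp}\,\chi)}+C\vert\zeta\vert\Vert\chi\mathbf{d}_\varepsilon\Vert^2_{L_2(\mathcal{O})}+\text{mixed term}.
\end{equation*}
Next, in order to absorb the $\zeta$-contribution we split into real and imaginary parts exactly as in the derivation of \eqref{Q0Veps Re zeta >0}--\eqref{(Q0Eps V,V )<=}: taking imaginary part when $\mathrm{Re}\,\zeta\geqslant 0$ and real part otherwise gives the weighted bound $(Q_0^\varepsilon\chi\mathbf{d}_\varepsilon,\chi\mathbf{d}_\varepsilon)_{L_2(\mathcal{O})}\leqslant C c(\phi)\vert\zeta\vert^{-1}\delta^{-2}\Vert\mathbf{d}_\varepsilon\Vert^2_{L_2(\mathrm{supp}\,\chi)}$, which fed back into the principal estimate gives
\begin{equation*}
\Vert\mathbf{D}\mathbf{d}_\varepsilon\Vert_{L_2(\mathcal{O}')}\leqslant C c(\phi)^{1/2}\bigl(\delta^{-1}+\vert\zeta\vert^{1/2}\bigr)\Vert\mathbf{d}_\varepsilon\Vert_{L_2(\mathcal{O})}.
\end{equation*}
Combining this with the $L_2$-bound for $\mathbf{d}_\varepsilon$ produces the expected structure $c(\phi)^{6}\varepsilon(\vert\zeta\vert^{-1/2}\delta^{-1}+1)\Vert\mathbf{F}\Vert_{L_2}$, which after merging with the $\mathbb{R}^d$-estimate for $\widetilde{\mathbf{u}}_\varepsilon-\widetilde{\mathbf{v}}_\varepsilon$ yields \eqref{Th O'}.

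Estimate \eqref{Th O' fluxes} for the flux follows by the same scheme: we already have in $\mathbb{R}^d$ an approximation of $g^\varepsilon b(\mathbf{D})\widetilde{\mathbf{u}}_\varepsilon$ by $\widetilde{g}^\varepsilon S_\varepsilon b(\mathbf{D})\widetilde{\mathbf{u}}_0+g^\varepsilon(b(\mathbf{D})\widetilde{\Lambda})^\varepsilon S_\varepsilon\widetilde{\mathbf{u}}_0$ with $O(\varepsilon)$ error in $L_2(\mathbb{R}^d)$ (this is the flux counterpart of Theorem~\ref{Theorem H1 Rd}, proved in \cite{MSu15}), and on $\mathcal{O}'$ the flux of the residual $\mathbf{d}_\varepsilon$ is controlled by $\Vert g\Vert_{L_\infty}\alpha_1^{1/2}\Vert\mathbf{D}\mathbf{d}_\varepsilon\Vert_{L_2(\mathcal{O}')}$, to which the just-established interior gradient bound applies. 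The main obstacle, and the only step requiring care, is the bookkeeping in the Caccioppoli step: isolating the precise factor of $c(\phi)$ coming from separating $\mathrm{Im}\,\zeta$ and $\mathrm{Re}\,\zeta$ while simultaneously handling the unbounded lower-order coefficients $a_j^\varepsilon$ and $Q^\varepsilon$ through the non-uniform estimates \eqref{sum a-j u} and \eqref{(Qu,u)<=}, in order to match the exponent $c(\phi)^{6}$ stated in \eqref{Th O'} and \eqref{Th O' fluxes}.
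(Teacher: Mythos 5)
Your proposal follows essentially the same route as the paper: decompose $\mathbf{u}_\varepsilon-\mathbf{v}_\varepsilon$ through the whole-space solution $\widetilde{\mathbf{u}}_\varepsilon$, observe that the difference $\mathbf{u}_\varepsilon-\widetilde{\mathbf{u}}_\varepsilon$ satisfies the homogeneous identity on $\mathcal{O}$, run a Caccioppoli argument with a cutoff $\chi$ (including the real/imaginary-part splitting to absorb the $\zeta$-term), feed in the $L_2(\mathcal{O})$ bound obtained from Theorems~\ref{Theorem Dirichlet L2} and~\ref{Th L2 Rd}, and add back the $\mathbb{R}^d$ corrector estimate; the flux estimate is then reduced to the interior $H^1$ bound in the same way. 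The minor bookkeeping differences you flag (the extra $|\zeta|^{1/2}$ term in your Caccioppoli bound and the exact power of $c(\phi)$) do not affect the stated form of \eqref{Th O'} and \eqref{Th O' fluxes}.
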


\begin{proof}
We fix a smooth cut-off function $\chi (\mathbf{x})$ such that
\begin{equation}
\label{hi properties}
\chi\in C_0^\infty (\mathcal{O});\quad 0\leqslant \chi (\mathbf{x})\leqslant 1;\quad \chi (\mathbf{x})=1 \ \mbox{for}\ \mathbf{x}\in \mathcal{O}';\quad \vert \nabla \chi (\mathbf{x})\vert \leqslant \kappa \delta ^{-1}.
\end{equation}
The constant $\kappa$ depends only on the dimension~$d$ and the domain~$\mathcal{O}$.
Let $\mathbf{u}_\varepsilon$ be the solution of problem~\eqref{Dirichlet problem}, and let $\widetilde{\mathbf{u}}_\varepsilon $ be the solution  of equation~\eqref{tilde u_eps problem}. Then
\begin{equation}
\label{tozd u eps - tilde u eps with eta}
\mathfrak{b}_{N,\varepsilon} [\mathbf{u}_\varepsilon -\widetilde{\mathbf{u}}_\varepsilon ,\boldsymbol{\eta}]-\zeta (Q_0^\varepsilon (\mathbf{u}_\varepsilon -\widetilde{\mathbf{u}}_\varepsilon ),\boldsymbol{\eta})_{L_2(\mathcal{O})}=0,\quad \boldsymbol{\eta}\in H^1_0(\mathcal{O};\mathbb{C}^n).
\end{equation}
We substitute $\boldsymbol{\eta}=\chi ^2 (\mathbf{u}_\varepsilon -\widetilde{\mathbf{u}}_\varepsilon )$ in \eqref{tozd u eps - tilde u eps with eta} and denote
\begin{equation}
\label{U(eps)=}
\mathfrak{U}(\varepsilon ):=\mathfrak{b}_{N,\varepsilon} [\chi (\mathbf{u}_\varepsilon -\widetilde{\mathbf{u}}_\varepsilon ),\chi (\mathbf{u}_\varepsilon -\widetilde{\mathbf{u}}_\varepsilon )]=\mathfrak{b}_{D,\varepsilon}[\chi (\mathbf{u}_\varepsilon -\widetilde{\mathbf{u}}_\varepsilon ),\chi (\mathbf{u}_\varepsilon -\widetilde{\mathbf{u}}_\varepsilon )].
\end{equation}
The corresponding identity can be written as
\begin{equation}
\label{tozd u eps - tilde u eps}
\begin{split}
\mathfrak{U}(\varepsilon)&-\zeta \left( Q_0^\varepsilon \chi (\mathbf{u}_\varepsilon -\widetilde{\mathbf{u}}_\varepsilon),\chi (\mathbf{u}_\varepsilon -\widetilde{\mathbf{u}}_\varepsilon)\right)_{L_2(\mathcal{O})}
\\
&=
2i\mathrm{Im}\,\left(g^\varepsilon \mathbf{z}_\varepsilon , b(\mathbf{D})\chi (\mathbf{u}_\varepsilon -\widetilde{\mathbf{u}}_\varepsilon)\right)_{L_2(\mathcal{O})}
+(g^\varepsilon\mathbf{z}_\varepsilon ,\mathbf{z}_\varepsilon )_{L_2(\mathcal{O})}
\\
&+2i\mathrm{Im}\,\sum _{j=1}^d\left( (D_j\chi)(\mathbf{u}_\varepsilon -\widetilde{\mathbf{u}}_\varepsilon ), (a_j^\varepsilon)^*\chi (\mathbf{u}_\varepsilon -\widetilde{\mathbf{u}}_\varepsilon)\right)_{L_2(\mathcal{O})}
,
\end{split}
\end{equation}
where $\mathbf{z}_\varepsilon :=\sum _{l=1}^d b_l (D_l \chi)(\mathbf{u}_\varepsilon -\widetilde{\mathbf{u}}_\varepsilon)$.
Denote the consecutive summands in the right-hand side of \eqref{tozd u eps - tilde u eps} as $i\mathfrak{I}_1(\varepsilon)$, $\mathfrak{I}_2(\varepsilon)$, and $i\mathfrak{I}_3(\varepsilon)$.
Let us estimate these terms. We can extend the function $\chi (\mathbf{u}_\varepsilon -\widetilde{\mathbf{u}}_\varepsilon )$
by zero to $\mathbb{R}^d \setminus \mathcal{O}$ and apply estimates in $\mathbb{R}^d$.
By \eqref{b_eps >=} and \eqref{U(eps)=},
\begin{equation}
\label{frak I 1<=}
\begin{split}
\vert \mathfrak{I}_1(\varepsilon )\vert &\leqslant 2\Vert g\Vert _{L_\infty}^{1/2}
\Vert \mathbf{z}_\varepsilon\Vert _{L_2(\mathcal{O})}
\Vert (g^\varepsilon)^{1/2} b(\mathbf{D})\chi (\mathbf{u}_\varepsilon -\widetilde{\mathbf{u}}_\varepsilon )\Vert  _{L_2(\mathcal{O})}
\leqslant 4\Vert g\Vert _{L_\infty}^{1/2} \Vert \mathbf{z}_\varepsilon\Vert _{L_2(\mathcal{O})} \mathfrak{U}(\varepsilon)^{1/2}.
\end{split}
\end{equation}
Obviously, $\mathfrak{I}_2(\varepsilon)\leqslant \Vert g\Vert _{L_\infty}\Vert \mathbf{z}_\varepsilon\Vert ^2 _{L_2(\mathcal{O})}$.
The norm of $\mathbf{z}_\varepsilon$ is estimated with the help of \eqref{b_l <=} and \eqref{hi properties}:
\begin{equation}
\label{z eps <=}
\Vert \mathbf{z}_\varepsilon\Vert _{L_2(\mathcal{O})}
\leqslant (d\alpha _1)^{1/2}\kappa \delta ^{-1}\Vert \mathbf{u}_\varepsilon -\widetilde{\mathbf{u}}_\varepsilon\Vert _{L_2(\mathcal{O})}.
\end{equation}
Hence,
\begin{equation}
\label{**8.10A}
\mathfrak{I}_2(\varepsilon)
\leqslant \gamma _{24}\delta^{-2}\Vert \mathbf{u}_\varepsilon -\widetilde{\mathbf{u}}_\varepsilon\Vert ^2_{L_2(\mathcal{O})};\quad\gamma _{24} :=d\alpha _1\kappa^2\Vert g\Vert _{L_\infty}.
\end{equation}
The term $\mathfrak{I}_3(\varepsilon)$ is estimated by Lemma~\ref{Lemma a embedding thm}, \eqref{H^1-norm <= BDeps^1/2}, \eqref{hi properties}, and \eqref{U(eps)=}:
\begin{equation}
\label{I_3 star <=}
\begin{split}
\vert \mathfrak{I}_3(\varepsilon)\vert & \leqslant 2\Vert (\mathbf{D}\chi )(\mathbf{u}_\varepsilon -\widetilde{\mathbf{u}}_\varepsilon )\Vert _{L_2(\mathcal{O})} \biggl( \sum _{j=1}^d \Vert (a_j^\varepsilon )^*\chi (\mathbf{u}_\varepsilon -\widetilde{\mathbf{u}}_\varepsilon )\Vert ^2 _{L_2(\mathcal{O})}\biggr)^{1/2}
\\
&\leqslant\gamma _{25}\delta ^{-1}\Vert \mathbf{u}_\varepsilon -\widetilde{\mathbf{u}}_\varepsilon \Vert _{L_2(\mathcal{O})}\mathfrak{U}(\varepsilon)^{1/2};
\quad \gamma _{25}:=2c_4 C(q,\Omega)\widehat{C}_a\kappa ,
\end{split}
\end{equation}
where $q=\infty$ for $d=1$ and $q=2\rho(\rho -2)^{-1}$ for $d\geqslant 2$.

Take the imaginary part in \eqref{tozd u eps - tilde u eps}. Then
\begin{equation*}
\mathrm{Im}\,\zeta\Vert (Q_0^\varepsilon)^{1/2}\chi (\mathbf{u}_\varepsilon -\widetilde{\mathbf{u}}_\varepsilon )\Vert ^2 _{L_2(\mathcal{O})}=-\mathfrak{I}_1(\varepsilon)-\mathfrak{I}_3(\varepsilon).
\end{equation*}
Therefore, relations \eqref{frak I 1<=}, \eqref{z eps <=}, and \eqref{I_3 star <=} imply that
\begin{equation}
\label{Im zeta || ||}
\vert \mathrm{Im}\,\zeta\vert \Vert (Q_0^\varepsilon)^{1/2}\chi (\mathbf{u}_\varepsilon -\widetilde{\mathbf{u}}_\varepsilon )\Vert ^2_{L_2(\mathcal{O})}
\leqslant
\gamma _{26}\delta ^{-1}\Vert \mathbf{u}_\varepsilon -\widetilde{\mathbf{u}}_\varepsilon\Vert _{L_2(\mathcal{O})}\mathfrak{U}(\varepsilon)^{1/2},
\end{equation}
where $\gamma_{26}: = 4 \Vert g\Vert _{L_\infty}^{1/2} (d\alpha _1)^{1/2} \kappa +\gamma_{25}$.
If $\mathrm{Re}\,\zeta\geqslant 0$ (and then $\mathrm{Im}\,\zeta\neq 0$), this yields
\begin{equation}
\label{ || || Re zeta >=0}
\begin{split}
 \Vert (Q_0^\varepsilon)^{1/2}\chi (\mathbf{u}_\varepsilon -\widetilde{\mathbf{u}}_\varepsilon )\Vert ^2_{L_2(\mathcal{O})}
\leqslant \gamma_{26} c(\phi)\vert \zeta\vert^{-1} \delta ^{-1}\Vert \mathbf{u}_\varepsilon -\widetilde{\mathbf{u}}_\varepsilon\Vert _{L_2(\mathcal{O})}\mathfrak{U}(\varepsilon)^{1/2}, \quad \mathrm{Re}\,\zeta\geqslant 0.
\end{split}
\end{equation}
If $\mathrm{Re}\,\zeta <0$, taking the real part in \eqref{tozd u eps - tilde u eps} and using~\eqref{**8.10A}, we have
\begin{equation}
\label{Re || ||}
\begin{split}
\vert\mathrm{Re}\,\zeta\vert\Vert (Q_0^\varepsilon)^{1/2}\chi (\mathbf{u}_\varepsilon -\widetilde{\mathbf{u}}_\varepsilon)\Vert ^2 _{L_2(\mathcal{O})}\leqslant
\mathfrak{I}_2(\varepsilon)
\leqslant \gamma _{24}\delta ^{-2}\Vert \mathbf{u}_\varepsilon -\widetilde{\mathbf{u}}_\varepsilon\Vert ^2_{L_2(\mathcal{O})},
\quad  \mathrm{Re}\,\zeta <0.
\end{split}
\end{equation}
Summimg up \eqref{Im zeta || ||} and \eqref{Re || ||}, we obtain
\begin{equation}
\label{|| || Re <0}
\begin{split}
\vert \zeta\vert \Vert (Q_0^\varepsilon)^{1/2}\chi (\mathbf{u}_\varepsilon -\widetilde{\mathbf{u}}_\varepsilon )\Vert ^2 _{L_2(\mathcal{O})}
\leqslant
\gamma_{26}\delta ^{-1}\Vert \mathbf{u}_\varepsilon -\widetilde{\mathbf{u}}_\varepsilon\Vert _{L_2(\mathcal{O})}\mathfrak{U}(\varepsilon)^{1/2}
+\gamma _{24} \delta ^{-2}\Vert \mathbf{u}_\varepsilon -\widetilde{\mathbf{u}}_\varepsilon\Vert ^2_{L_2(\mathcal{O})}
\end{split}
\end{equation}
for $\mathrm{Re}\,\zeta <0$.
As a result, \eqref{ || || Re zeta >=0} and \eqref{|| || Re <0} imply that
\begin{equation}
\label{|| || all zeta}
\begin{split}
\vert \zeta\vert \Vert (Q_0^\varepsilon )^{1/2}\chi (\mathbf{u}_\varepsilon -\widetilde{\mathbf{u}}_\varepsilon)\Vert ^2 _{L_2(\mathcal{O})}
&\leqslant \gamma_{26} c(\phi)\delta ^{-1}\Vert \mathbf{u}_\varepsilon -\widetilde{\mathbf{u}}_\varepsilon\Vert _{L_2(\mathcal{O})}\mathfrak{U}(\varepsilon)^{1/2}
+\gamma _{24}\delta ^{-2}\Vert \mathbf{u}_\varepsilon -\widetilde{\mathbf{u}}_\varepsilon\Vert ^2_{L_2(\mathcal{O})}
\end{split}
\end{equation}
for all $\zeta$ under consideration.
Taking the real part in \eqref{tozd u eps - tilde u eps} and using \eqref{**8.10A} and \eqref{|| || all zeta}, we obtain
\begin{equation*}
\begin{split}
\mathfrak{U}(\varepsilon)
\leqslant
\gamma_{26} c(\phi) \delta ^{-1}\Vert \mathbf{u}_\varepsilon -\widetilde{\mathbf{u}}_\varepsilon\Vert _{L_2(\mathcal{O})}\mathfrak{U}(\varepsilon)^{1/2} +2\gamma_{24}\delta ^{-2}\Vert \mathbf{u}_\varepsilon -\widetilde{\mathbf{u}}_\varepsilon \Vert ^2_{L_2(\mathcal{O})}.
\end{split}
\end{equation*}
Hence,
$\mathfrak{U}(\varepsilon)\leqslant \gamma _{27}^2  c(\phi)^2 \delta ^{-2}\Vert \mathbf{u}_\varepsilon -\widetilde{\mathbf{u}}_\varepsilon\Vert ^2_{L_2(\mathcal{O})}$, where $\gamma _{27}^2 :=\gamma _{26}^2+4\gamma _{24}$.
By \eqref{b_D,eps ots} and \eqref{U(eps)=}, we deduce
\begin{equation}
\label{D chi u-eps -}
\Vert \mathbf{D}\chi (\mathbf{u}_\varepsilon -\widetilde{\mathbf{u}}_\varepsilon)\Vert _{L_2(\mathcal{O})}
\leqslant \gamma _{27} c_*^{-1/2} c(\phi) \delta ^{-1}\Vert \mathbf{u}_\varepsilon -\widetilde{\mathbf{u}}_\varepsilon \Vert _{L_2(\mathcal{O})}.
\end{equation}

Estimates \eqref{Th L2 solutions} and \eqref{tilde U eps -tilde u 0} imply that
\begin{equation}
\label{u-eps -tilde u eps L2}
\Vert \mathbf{u}_\varepsilon -\widetilde{\mathbf{u}}_\varepsilon \Vert _{L_2(\mathcal{O})}
\leqslant \gamma _{28}c(\phi)^5\varepsilon\vert \zeta\vert ^{-1/2}\Vert \mathbf{F}\Vert _{L_2(\mathcal{O})},\quad 0<\varepsilon\leqslant \varepsilon _1,
\end{equation}
where $\gamma _{28}:=C_4+C_1C_{\widetilde{F}}$.
From \eqref{D chi u-eps -} and \eqref{u-eps -tilde u eps L2} it follows that
\begin{equation}
\label{8.18a}
\Vert\mathbf{D} \chi (\mathbf{u}_\varepsilon -\widetilde{\mathbf{u}}_\varepsilon)\Vert _{L_2(\mathcal{O})}
\leqslant \gamma _{29} c(\phi)^6 \varepsilon\vert \zeta\vert ^{-1/2} \delta ^{-1}\Vert \mathbf{F}\Vert _{L_2(\mathcal{O})}.
\end{equation}
Here $\gamma _{29} :=c_*^{-1/2}\gamma _{27}\gamma _{28}$. By~\eqref{u-eps -tilde u eps L2} and \eqref{8.18a},
\begin{equation}
\label{u-eps - tilde u-eps H1}
\Vert \mathbf{u}_\varepsilon -\widetilde{\mathbf{u}}_\varepsilon \Vert _{H^1(\mathcal{O}')}
\leqslant c(\phi)^6\varepsilon \vert \zeta\vert ^{-1/2}(\gamma _{29}\delta ^{-1}+\gamma _{28})\Vert \mathbf{F}\Vert _{L_2(\mathcal{O})}.
\end{equation}
Combining \eqref{v_eps=} and \eqref{tilde u_eps -v_eps <= in H1}, we find
\begin{equation}
\label{tilde u-eps - v-eps}
\Vert \widetilde{\mathbf{u}}_\varepsilon -\mathbf{v}_\varepsilon \Vert _{H^1(\mathcal{O}')}
\leqslant \Vert \widetilde{\mathbf{u}}_\varepsilon -\widetilde{\mathbf{v}}_\varepsilon \Vert _{H^1(\mathbb{R}^d)}
\leqslant \widetilde{C}_3 c(\phi)^3\varepsilon\Vert \mathbf{F}\Vert _{L_2(\mathcal{O})}.
\end{equation}
Now, relations \eqref{u-eps - tilde u-eps H1} and \eqref{tilde u-eps - v-eps} imply \eqref{Th O'} with
$C_{24}':=\gamma_{29}$ and $C_{24}'':=\gamma _{28}+\widetilde{C}_3$.

Let us prove \eqref{Th O' fluxes}. By \eqref{b_l <=} and \eqref{Th O'},
\begin{equation*}
\Vert \mathbf{p}_\varepsilon -g^\varepsilon b(\mathbf{D})\mathbf{v}_\varepsilon \Vert _{L_2(\mathcal{O}')}
\leqslant (d\alpha _1)^{1/2}\Vert g\Vert _{L_\infty} c(\phi)^6\varepsilon
(C_{24}'\vert \zeta\vert ^{-1/2}\delta ^{-1}+C_{24}'')\Vert \mathbf{F}\Vert _{L_2(\mathcal{O})}.
\end{equation*}
Combining this with \eqref{3.20 from draft}, \eqref{4-th chlen in tozd for fluxes}, and \eqref{3.22 from draft}, we deduce
 estimate \eqref{Th O' fluxes} with the constants
$\widetilde{C}_{24}' :=(d\alpha _1)^{1/2}\Vert g\Vert _{L_\infty}C_{24}'$ and
$\widetilde{C}_{24}'' :=(d\alpha _1)^{1/2}\Vert g\Vert _{L_\infty}C_{24}''+C_{17}+C_{18}$.
\end{proof}

\subsection{Removal of the smoothing operator in the corrector}
Provided that the matrix-valued functions $\Lambda (\mathbf{x})$ and $\widetilde{\Lambda}(\mathbf{x})$ are subject to Conditions~\ref{Condition Lambda in L infty} and \ref{Condition tilde Lambda in Lp}, respectively, the smoothing operator~$S_\varepsilon$ in the corrector can be removed.

\begin{theorem}
\label{Theorem O' no S-eps}
Suppose that the assumptions of Theorem~\textnormal{\ref{Theorem O'}} are satisfied.
Suppose also that  Conditions \textnormal{\ref{Condition Lambda in L infty}} and \textnormal{\ref{Condition tilde Lambda in Lp}} hold. Let
 $K_D^0(\varepsilon ;\zeta)$ and $G^0_D(\varepsilon ;\zeta)$ be given by~\eqref{K_D^0} and \eqref{G3(eps;zeta)}, respectively. Then for $\zeta\in\mathbb{C}\setminus\mathbb{R}_+$, $\vert \zeta\vert \geqslant 1$, and  $0<\varepsilon\leqslant\varepsilon _1$ we have
\begin{align}
\label{Th 8/2 3}
\begin{split}
\Vert &(B_{D,\varepsilon}-\zeta Q_0^\varepsilon )^{-1}- (B_D^0-\zeta \overline{Q_0})^{-1} -
\varepsilon  K_D^0(\varepsilon ;\zeta) \Vert _{L_2(\mathcal{O})\rightarrow H^1(\mathcal{O}')}
\leqslant c(\phi)^6\varepsilon (C_{24}'\vert \zeta\vert ^{-1/2}\delta ^{-1}+C_{25}),
\end{split}
\\
\label{Th 8/2 3 fluxes}
\begin{split}
\Vert &g^\varepsilon b(\mathbf{D})(B_{D,\varepsilon}-\zeta Q_0^\varepsilon )^{-1}-G^0_D(\varepsilon ;\zeta )\Vert _{L_2(\mathcal{O})\rightarrow L_2(\mathcal{O}')}
\leqslant c(\phi)^6\varepsilon (\widetilde{C}_{24}'\vert \zeta\vert ^{-1/2}\delta ^{-1}+\widetilde{C}_{25}).
\end{split}
\end{align}
The constants $C_{24}'$ and $\widetilde{C}_{24}'$ are as in Theorem~\textnormal{\ref{Theorem O'}}.
The constants $C_{25}$ and $\widetilde{C}_{25}$ depend only on the initial data \eqref{problem data},
the domain~$\mathcal{O}$, and also on $p$ and the norms $\Vert \Lambda\Vert _{L_\infty}$, $\Vert \widetilde{\Lambda}\Vert _{L_p(\Omega)}$.
\end{theorem}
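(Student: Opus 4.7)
The plan is to combine the already proved interior estimate of Theorem~\ref{Theorem O'} with the $S_\varepsilon$-removal lemmas from Section~\ref{Chapter Special cases}. Recall that
\[
\mathbf{v}_\varepsilon-\mathbf{u}_0-\varepsilon\bigl(\Lambda^\varepsilon b(\mathbf{D})+\widetilde{\Lambda}^\varepsilon\bigr)\mathbf{u}_0
=\varepsilon\,R_\mathcal{O}\bigl([\Lambda^\varepsilon]b(\mathbf{D})+[\widetilde{\Lambda}^\varepsilon]\bigr)(S_\varepsilon-I)P_\mathcal{O}(B_D^0-\zeta\overline{Q_0})^{-1}\mathbf{F},
\]
so subtracting $\mathbf{v}_\varepsilon$ from $\mathbf{u}_0+\varepsilon\bigl(\Lambda^\varepsilon b(\mathbf{D})+\widetilde{\Lambda}^\varepsilon\bigr)\mathbf{u}_0$ reduces matters to bounding the $(L_2\to H^1)$-norm of this ``smoothing defect'' globally on $\mathcal{O}$, not just on $\mathcal{O}'$. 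This is exactly what Lemmas~\ref{Lemma Lambda (S-I)} and~\ref{Lemma tilde Lambda(S-I)} are designed for, under Conditions~\ref{Condition Lambda in L infty} and~\ref{Condition tilde Lambda in Lp}.

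First, starting from estimate \eqref{Th O'} of Theorem~\ref{Theorem O'} and using $\|R_\mathcal{O}\|_{H^1(\mathbb{R}^d)\to H^1(\mathcal{O})}\le 1$ together with \eqref{PO}, \eqref{lemma hom probl 3}, and Lemmas~\ref{Lemma Lambda (S-I)}, \ref{Lemma tilde Lambda(S-I)}, I would bound
\[
\varepsilon\bigl\|\bigl([\Lambda^\varepsilon]b(\mathbf{D})+[\widetilde{\Lambda}^\varepsilon]\bigr)(S_\varepsilon-I)P_\mathcal{O}(B_D^0-\zeta\overline{Q_0})^{-1}\bigr\|_{L_2(\mathcal{O})\to H^1(\mathbb{R}^d)}\leq (\mathfrak{C}_\Lambda+\mathfrak{C}_{\widetilde{\Lambda}})C_\mathcal{O}^{(2)}\mathcal{C}_2\,c(\phi)\,\varepsilon,
\]
exactly as in the computation leading to \eqref{d-vo Th 7/6 1} and \eqref{d-vo Th 7/6 2}. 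Since this bound holds in $H^1(\mathcal{O})$, it holds a fortiori in $H^1(\mathcal{O}')$. Adding it to \eqref{Th O'} and noting that the $c(\phi)\,\varepsilon$ term can be absorbed into $c(\phi)^6\,\varepsilon$ (which is an upper bound since $c(\phi)\ge 1$), I would obtain \eqref{Th 8/2 3} with $C_{25}:=C_{24}''+(\mathfrak{C}_\Lambda+\mathfrak{C}_{\widetilde{\Lambda}})C_\mathcal{O}^{(2)}\mathcal{C}_2$; the $\delta^{-1}$ term from Theorem~\ref{Theorem O'} is preserved intact because the smoothing-defect contribution does not depend on $\delta$.

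The flux statement \eqref{Th 8/2 3 fluxes} is proved along the same lines, but starting from \eqref{Th O' fluxes}: it suffices to compare the ``smoothed flux'' $\widetilde{g}^\varepsilon S_\varepsilon b(\mathbf{D})P_\mathcal{O}\mathbf{u}_0+g^\varepsilon(b(\mathbf{D})\widetilde{\Lambda})^\varepsilon S_\varepsilon P_\mathcal{O}\mathbf{u}_0$ with the ``unsmoothed flux'' $G^0_D(\varepsilon;\zeta)\mathbf{F}$ in $L_2(\mathcal{O})$. The differences
$\widetilde{g}^\varepsilon(S_\varepsilon-I)b(\mathbf{D})P_\mathcal{O}(B_D^0-\zeta\overline{Q_0})^{-1}$ and
$g^\varepsilon(b(\mathbf{D})\widetilde{\Lambda})^\varepsilon(S_\varepsilon-I)P_\mathcal{O}(B_D^0-\zeta\overline{Q_0})^{-1}$ are estimated exactly as in the proof of Theorem~\ref{Theorem no S-eps} (see \eqref{d-vo Th 7/6 3}--\eqref{d-vo Th 7/6 5a}), where the unsmoothed lower-order piece $g^\varepsilon b(\mathbf{D})[\Lambda^\varepsilon b(\mathbf{D})+\widetilde{\Lambda}^\varepsilon]$ is rewritten as in \eqref{d-vo Th 7/6 4} and the remainders are controlled by Proposition~\ref{Proposition S__eps - I}, \eqref{b_l <=}, Lemma~\ref{Lemma Lambda in Lp H1->L2}, \eqref{lemma hom probl 3}, and Conditions~\ref{Condition Lambda in L infty}, \ref{Condition tilde Lambda in Lp}. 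All these remainders are $O(c(\phi)\varepsilon)$ in the $(L_2(\mathcal{O})\to L_2(\mathcal{O}))$-norm, hence in $L_2(\mathcal{O}')$. Combining with \eqref{Th O' fluxes} yields \eqref{Th 8/2 3 fluxes} with a suitable $\widetilde{C}_{25}$.

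No really new idea is required; the main book-keeping obstacle is simply to verify that, after applying the $S_\varepsilon$-removal estimates \eqref{d-vo Th 7/6 1}--\eqref{d-vo Th 7/6 5a} (which are all of order $c(\phi)\varepsilon$ and independent of $\delta$), the $\delta$-dependence in \eqref{Th 8/2 3}, \eqref{Th 8/2 3 fluxes} stays only in the principal $\vert\zeta\vert^{-1/2}\delta^{-1}$ term inherited from Theorem~\ref{Theorem O'}, while everything from $S_\varepsilon$-removal contributes to the $\delta$-independent constants $C_{25}$, $\widetilde{C}_{25}$. Once this is checked, the constants depend on the same data as in Theorem~\ref{Theorem no S-eps} (and on $\mathcal{O}$), which is exactly what the statement requires.
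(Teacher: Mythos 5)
Your proposal is correct and follows essentially the same route as the paper: estimate \eqref{Th 8/2 3} is obtained by adding the $S_\varepsilon$-removal bounds \eqref{d-vo Th 7/6 1}--\eqref{d-vo Th 7/6 2} to \eqref{Th O'}, with the same constant $C_{25}=C_{24}''+(\mathfrak{C}_\Lambda+\mathfrak{C}_{\widetilde{\Lambda}})C_\mathcal{O}^{(2)}\mathcal{C}_2$. For the flux the paper proceeds in the opposite (but equivalent) order — it first derives from \eqref{Th 8/2 3} the analogue of \eqref{3.19 from draft} and then applies \eqref{d-vo Th 7/6 4}--\eqref{d-vo Th 7/6 5a}, rather than removing $S_\varepsilon$ from \eqref{Th O' fluxes} — but the ingredients and constants are the same.
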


\begin{proof}
Inequality \eqref{Th 8/2 3}
with $C_{25}:=C_{24}''+(\mathfrak{C}_\Lambda+\mathfrak{C}_{\widetilde{\Lambda}}) C_\mathcal{O}^{(2)}\mathcal{C}_2$
 is a consequence of \eqref{d-vo Th 7/6 1}, \eqref{d-vo Th 7/6 2}, and \eqref{Th O'}.

Let us check~\eqref{Th 8/2 3 fluxes}. Similarly to \eqref{3.19 from draft}, from \eqref{Th 8/2 3} it follows that
\begin{equation*}
\begin{split}
\Vert &g^\varepsilon b(\mathbf{D})(B_{D,\varepsilon}-\zeta Q_0^\varepsilon )^{-1}-g^\varepsilon b(\mathbf{D})(I+\varepsilon\Lambda ^\varepsilon b(\mathbf{D})+\varepsilon\widetilde{\Lambda}^\varepsilon)(B_D^0-\zeta\overline{Q_0})^{-1}\Vert _{L_2(\mathcal{O})\rightarrow L_2(\mathcal{O}')}
\\
&\leqslant
(d\alpha _1)^{1/2}\Vert g\Vert _{L_\infty} c(\phi)^6\varepsilon (C_{24}'\vert \zeta\vert ^{-1/2}\delta ^{-1}+C_{25}).
\end{split}
\end{equation*}
Together with \eqref{d-vo Th 7/6 4}--\eqref{d-vo Th 7/6 5a}, this yields estimate \eqref{Th 8/2 3 fluxes} with
   $\widetilde{C}_{25}: =(d\alpha _1)^{1/2}\Vert g\Vert _{L_\infty}C_{25} +\widehat{C}_{23}$.
\end{proof}

\section{``Another''\, approximation of the generalized resolvent}
\label{Section Another approximation}

In Theorems of Sections~\ref{Chapter 2 Dirichlet}, \ref{Chapter Special cases}, and~\ref{Chapter Interior subdomain}, it was assumed that
$\zeta\in\mathbb{C}\setminus\mathbb{R}_+$ and $\vert \zeta\vert \geqslant 1$.
In the present section, we obtain the results valid in a larger domain of the spectral parameter.

\goodbreak
\subsection{General case}

\begin{condition}\label{cond_eps_flat}
Let $0< \varepsilon_\flat \le 1$.  Let $c_\flat\geqslant 0$ be a common lower bound of the operators
$\widetilde{B}_{D,\varepsilon}=(f^\varepsilon )^*B_{D,\varepsilon}f^\varepsilon $ for any $0< \varepsilon \le \varepsilon_\flat$
and $\widetilde{B}_D^0=f_0B_D^0f_0$.
\end{condition}

\begin{theorem}
\label{Theorem Dr appr}
Suppose that $\mathcal{O}\subset\mathbb{R}^d$ is a bounded domain of class $C^{1,1}$.
Suppose that the number $\varepsilon_1$ is subject to Condition~\textnormal{\ref{condition varepsilon}}.
Let $0< \varepsilon_\flat \le \varepsilon_1$. Suppose that $c_\flat\geqslant 0$ is subject to Condition~\textnormal{\ref{cond_eps_flat}}.
Let $\zeta\in\mathbb{C}\setminus [c_\flat,\infty)$. Denote $\psi =\mathrm{arg}\,(\zeta -c_\flat)$, $0<\psi <2\pi$, and
\begin{equation}
\label{rho(zeta)}
\varrho _ \flat (\zeta) :=\begin{cases}
c(\psi)^2\vert \zeta -c_\flat\vert ^{-2}, &\vert \zeta -c_\flat\vert <1,\\
c(\psi)^2, &\vert \zeta -c_\flat\vert \geqslant 1.
\end{cases}
\end{equation}
Here $c(\psi)$ is defined by \eqref{c(phi)}.
Let $\mathbf{u}_\varepsilon$ be the solution of problem~\eqref{Dirichlet problem}, and let $\mathbf{u}_0$ be the solution of
problem~\eqref{Dirichlet homogenized problem}.
Let $K_D(\varepsilon ;\zeta)$ be given by \eqref{K_D(eps,zeta)}. Let $\mathbf{v}_\varepsilon$ be defined by~\eqref{v_eps}, \eqref{v_eps=}.
Then for $0<\varepsilon \leqslant \varepsilon _\flat$ we have
\begin{align}
\label{Th dr appr 1 solutions}
\Vert \mathbf{u}_\varepsilon -\mathbf{u}_0\Vert _{L_2(\mathcal{O})}
&\leqslant
C_{26} \varepsilon \varrho _ \flat (\zeta ) \Vert \mathbf{F}\Vert _{L_2(\mathcal{O})},
\\
\label{Th dr appr 2 solutions}
\Vert \mathbf{u}_\varepsilon-\mathbf{v}_\varepsilon\Vert _{H^1(\mathcal{O})}
&\leqslant
C_{27} \bigl(\varepsilon ^{1/2}\varrho _\flat (\zeta)^{1/2}+ \varepsilon
\vert 1+\zeta \vert ^{1/2} \varrho_\flat(\zeta) \bigr) \Vert \mathbf{F}\Vert _{L_2(\mathcal{O})}.
\end{align}
In operator terms,
\begin{align}
\label{Th dr appr 1}
\Vert &(B_{D,\varepsilon}-\zeta Q_0^\varepsilon )^{-1}-(B_D^0-\zeta \overline{Q_0})^{-1}\Vert _{L_2(\mathcal{O})\rightarrow L_2(\mathcal{O})}\leqslant C_{26} \varepsilon \varrho _ \flat (\zeta ),
\\
\label{Th dr appr 2}
\begin{split}
\Vert &(B_{D,\varepsilon}-\zeta Q_0^\varepsilon )^{-1}-(B_D^0-\zeta \overline{Q_0})^{-1}
-\varepsilon K_D(\varepsilon ;\zeta )\Vert _{L_2(\mathcal{O})\rightarrow H^1(\mathcal{O})}
\\
&\leqslant
C_{27} \bigl(\varepsilon ^{1/2}\varrho _\flat (\zeta)^{1/2}+ \varepsilon
\vert 1+\zeta \vert ^{1/2} \varrho_\flat(\zeta) \bigr).
\end{split}
\end{align}
Let $\widetilde{g}(\mathbf{x})$ be defined by~\eqref{tilde g}.
For $0<\varepsilon\leqslant\varepsilon _\flat$ the flux $\mathbf{p}_\varepsilon =g^\varepsilon b(\mathbf{D})\mathbf{u}_\varepsilon$ satisfies
\begin{equation}
\label{Th dr appr fluxes}
\begin{split}
\Vert &\mathbf{p}_\varepsilon -\widetilde{g}^\varepsilon S_\varepsilon b(\mathbf{D})\widetilde{\mathbf{u}}_0-g^\varepsilon \bigl(b(\mathbf{D})\widetilde{\Lambda}\bigr)^\varepsilon S_\varepsilon\widetilde{\mathbf{u}}_0
\Vert _{L_2(\mathcal{O})}
\leqslant \widetilde{C}_{27} \bigl(\varepsilon ^{1/2}\varrho _\flat (\zeta)^{1/2}+ \varepsilon
\vert 1+\zeta \vert ^{1/2} \varrho_\flat(\zeta) \bigr) \Vert \mathbf{F}\Vert _{L_2(\mathcal{O})}.
\end{split}
\end{equation}
The constants $C_{26}$, $C_{27}$, and $\widetilde{C}_{27}$ depend only on the initial data~\eqref{problem data} and the domain $\mathcal{O}$.
\end{theorem}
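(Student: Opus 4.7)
The strategy, as signalled in the introduction (``Approximations in a larger domain of the parameter $\zeta$ are deduced from the already proved estimates at the point $\zeta=-1$ and appropriate resolvent identities''), is to bootstrap the bounds of Theorems~\ref{Theorem Dirichlet L2} and~\ref{Theorem Dirichlet H1} at the specific point $\zeta=-1$ (where $\phi=\pi$, $c(\phi)=1$, $|\zeta|=1$, so these theorems give $\|R(-1)\|_{L_2\to L_2}\le C_4\varepsilon$ and $\|T(-1)\|_{L_2\to H^1}\le (C_5+C_6)\varepsilon^{1/2}$ with $R(\zeta):=G(\zeta)-G_0(\zeta)$, $T(\zeta):=R(\zeta)-\varepsilon K_D(\varepsilon;\zeta)$, $G(\zeta):=(B_{D,\varepsilon}-\zeta Q_0^\varepsilon)^{-1}$, $G_0(\zeta):=(B_D^0-\zeta\overline{Q_0})^{-1}$) to arbitrary $\zeta\in\mathbb{C}\setminus[c_\flat,\infty)$. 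The factorizations $G(\zeta)=f^\varepsilon(\widetilde B_{D,\varepsilon}-\zeta I)^{-1}(f^\varepsilon)^*$ and $G_0(\zeta)=f_0(\widetilde B_D^0-\zeta I)^{-1}f_0$ combined with Condition~\ref{cond_eps_flat} immediately furnish the uniform bounds $\|G(\zeta)\|_{L_2\to L_2},\,\|G_0(\zeta)\|_{L_2\to L_2}\le\|Q_0^{-1}\|_{L_\infty}c(\psi)|\zeta-c_\flat|^{-1}$, valid in the entire spectral range of interest.

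The key tool is the generalized-resolvent identity $G(\zeta)=G(-1)+(1+\zeta)G(\zeta)Q_0^\varepsilon G(-1)$ together with its $G_0$-analog, using the crucial commutation $G_0(\zeta)\overline{Q_0}G_0(-1)=G_0(-1)\overline{Q_0}G_0(\zeta)$ (immediate from the factorization $G_0=f_0\widetilde G_0 f_0$ and commutativity of resolvents of the selfadjoint $\widetilde B_D^0$ at different spectral points). Subtracting, inserting $\pm G_0(\zeta)Q_0^\varepsilon G_0(-1)$, and rearranging produces an identity of the schematic form
\[
R(\zeta)\bigl[I-(1+\zeta)\overline{Q_0}G_0(-1)\bigr]=\bigl[I+(1+\zeta)G(\zeta)Q_0^\varepsilon\bigr]R(-1)+(1+\zeta)G(\zeta)[Q_0^\varepsilon-\overline{Q_0}]G_0(-1),
\]
and the inverse factor $[I-(1+\zeta)\overline{Q_0}G_0(-1)]^{-1}=f_0^{-1}(\widetilde B_D^0+I)(\widetilde B_D^0-\zeta I)^{-1}f_0$ has operator norm at most $\|Q_0\|_{L_\infty}^{1/2}\|Q_0^{-1}\|_{L_\infty}^{1/2}\bigl(1+|1+\zeta|c(\psi)|\zeta-c_\flat|^{-1}\bigr)$ by spectral calculus on $\widetilde B_D^0\ge c_\flat$.

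Substituting $\|R(-1)\|\le C_4\varepsilon$ (Theorem~\ref{Theorem Dirichlet L2}), the resolvent bounds above, and Lemma~\ref{Lemma Q0-overline Q0} in the form $\|G(\zeta)[Q_0^\varepsilon-\overline{Q_0}]G_0(-1)\|_{L_2\to L_2}\le\|G(\zeta)\|_{H^{-1}\to L_2}\,C_{Q_0}\varepsilon\,\|G_0(-1)\|_{L_2\to H^1}$ (the $H^{-1}\to L_2$ bound for $G(\zeta)$ following by duality from the $L_2\to H^1$ bound for $G(\zeta^*)$, cf.\ \eqref{2.10b}), then splitting the two regimes $|\zeta-c_\flat|\ge 1$ and $|\zeta-c_\flat|<1$ so that the product $c(\psi)^2\max(1,|\zeta-c_\flat|^{-2})=\varrho_\flat(\zeta)$ emerges, yields \eqref{Th dr appr 1}. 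For estimate \eqref{Th dr appr 2} I write $K_D(\varepsilon;\zeta)=\mathcal L(\varepsilon)G_0(\zeta)$ with the $\zeta$-independent $\mathcal L(\varepsilon):=R_{\mathcal O}([\Lambda^\varepsilon]b(\mathbf D)+[\widetilde\Lambda^\varepsilon])S_\varepsilon P_{\mathcal O}$ and use the identity $(1+\zeta)\mathcal L(\varepsilon)G_0(\zeta)\overline{Q_0}G_0(-1)=\mathcal L(\varepsilon)[G_0(\zeta)-G_0(-1)]$ to absorb the corrector's $\zeta$-dependence, producing a companion identity for $T(\zeta)$ whose right-hand side contains $T(-1)$ (bounded in $L_2\to H^1$ by Theorem~\ref{Theorem Dirichlet H1} at $\zeta=-1$, giving the $\varepsilon^{1/2}\varrho_\flat^{1/2}$ contribution) plus remainder terms involving $R(\zeta)$, $R(-1)$, and $[Q_0^\varepsilon-\overline{Q_0}]$ controlled by Step~1, producing the $\varepsilon|1+\zeta|^{1/2}\varrho_\flat$ term. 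The flux estimate \eqref{Th dr appr fluxes} follows by applying $g^\varepsilon b(\mathbf D)$ to the identity for $T(\zeta)$ and invoking the flux part of Theorem~\ref{Theorem Dirichlet H1} at $\zeta=-1$.

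The main obstacle is bookkeeping: tracking the factor $|1+\zeta|$ (which appears at every application of the resolvent identity) against the resolvent decay $c(\psi)|\zeta-c_\flat|^{-1}$, so that the combined $\zeta$-dependence matches the piecewise $\varrho_\flat(\zeta)$ rather than degenerating into an unbounded power of $|1+\zeta|$. A related subtlety is that $Q_0^\varepsilon-\overline{Q_0}$ is only $O(1)$ in the $L_2\to L_2$ norm; the $O(\varepsilon)$ gain is available only through Lemma~\ref{Lemma Q0-overline Q0}'s $H^1\to H^{-1}$ bound, which must be lifted back to $L_2\to L_2$ using the mapping properties of $G_0(\zeta)$ and (by duality) of $G(\zeta)$ between $L_2$ and $H^{\pm 1}$, each carrying its own $\zeta$-dependent factor that must be matched to the target $\varrho_\flat$ behavior. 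For \eqref{Th dr appr 2}, the propagation of the boundary-layer-type $\varepsilon^{1/2}$ term requires the commutation identity above to preserve the structure $T(-1)$ cleanly under the resolvent identity.
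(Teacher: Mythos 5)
Your proposal is correct and follows essentially the same route as the paper: bootstrapping the estimates at $\zeta=-1$ through generalized resolvent identities, bounding the sandwiching factors by spectral calculus for $\widetilde{B}_{D,\varepsilon}$ and $\widetilde{B}_D^0$ on $[c_\flat,\infty)$, and treating the $(1+\zeta)G(\zeta)[Q_0^\varepsilon-\overline{Q_0}]G_0(\zeta)$ term via Lemma~\ref{Lemma Q0-overline Q0} together with the duality bound $\Vert G(\zeta)\Vert_{H^{-1}\to L_2}=\Vert G(\zeta^*)\Vert_{L_2\to H^1}$. Your identity, once the factor $[I-(1+\zeta)\overline{Q_0}G_0(-1)]^{-1}$ is moved to the right (note $G_0(-1)[I-(1+\zeta)\overline{Q_0}G_0(-1)]^{-1}=G_0(\zeta)$), coincides with the paper's decompositions \eqref{tozd gen res} and \eqref{A sec. 9}, so the two arguments are algebraically equivalent.
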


\begin{remark}
\textnormal{1)} Expression $c(\psi)^2\vert \zeta - c_\flat \vert ^{-2}$ in \eqref{rho(zeta)}
is inverse to the square of the distance from~$\zeta$ to $[c_\flat ,\infty)$. \textnormal{2)}
 By~\eqref{b D,eps >= H1-norm}, \eqref{Q_0=}, \eqref{b_D^0 ots 2}, and \eqref{f_0<=}, for any $\varepsilon_\flat\in (0,1]$
one can take $c_\flat$ equal to $4^{-1}\alpha _0\Vert g^{-1}\Vert ^{-1}_{L_\infty}\Vert Q_0\Vert _{L_\infty}^{-1} (\mathrm{diam}\,\mathcal{O})^{-2}$.
\textnormal{3)} Let $\lambda_1^0$ be the first eigenvalue of the operator $B_D^0$, and let $\nu >0$ be arbitrarily small number.
By Theorem \textnormal{\ref{Theorem Dirichlet L2} (}with $Q_0=I${\textnormal )}, the resolvent of $B_{D,\varepsilon}$ converges to the resolvent of $B_D^0$
in the $L_2$-operator norm. Hence, if $\varepsilon_\flat$ is sufficiently small, the number $\lambda_1^0 - \nu$
is a lower bound of the operator $B_{D,\eps}$ for any $0< \varepsilon \le \varepsilon_\flat$.
Then one can take $c_\flat = \| Q_0\|_{L_\infty}^{-1}(\lambda_1^0 - \nu)$.
 \textnormal{4)} It is easy to give the upper bound for $c_\flat$. By \eqref{b_D,eps ots} and
 \eqref{tilde b D,eps=}, we have $c_\flat \le c_3 \| Q_0^{-1}\|_{L_\infty} \mu_1^0$, where $\mu_1^0$ is the first eigenvalue of the operator
 $-\Delta +I$ with the Dirichlet condition on $\partial {\mathcal O}$. So, $c_\flat$ is controlled in terms of the data~\eqref{problem data}
 and the domain $\mathcal{O}$.
\end{remark}

\begin{remark}
Estimates \eqref{Th dr appr 1 solutions}--\eqref{Th dr appr fluxes} are useful for bounded values of $|\zeta|$ and small
$\varepsilon \varrho _\flat (\zeta)$. In this case, the value $\varepsilon ^{1/2}\varrho _\flat (\zeta)^{1/2}+ \varepsilon
\vert 1+\zeta \vert ^{1/2} \varrho_\flat(\zeta)$ is majorated by $C\varepsilon ^{1/2}\varrho _\flat (\zeta)^{1/2}$.
For large  $\vert\zeta\vert$  {\rm (}and $\phi$  separated from~$0$ and $2\pi${\rm )}  application of Theorems \textnormal{\ref{Theorem Dirichlet L2}} and \textnormal{\ref{Theorem Dirichlet H1}} is preferable.
    \end{remark}

We start with the following two lemmas.

\begin{lemma}
\label{Lemma Dr ots solutions}
Under Condition~\textnormal{\ref{cond_eps_flat}}, for $0< \varepsilon \leqslant \varepsilon_\flat$
and $\zeta\in\mathbb{C}\setminus [c_\flat,\infty)$ we have
\begin{align}
\label{****.1}
&\Vert(B_{D,\varepsilon }-\zeta Q_0^\varepsilon )^{-1}\Vert _{L_2(\mathcal{O})\rightarrow L_2(\mathcal{O})}
\leqslant \|f\|^2_{L_\infty} c(\psi) |\zeta - c_\flat|^{-1},
\\
\label{****.2}
&\Vert(B_{D,\varepsilon }-\zeta Q_0^\varepsilon )^{-1}\Vert _{L_2(\mathcal{O})\rightarrow H^1(\mathcal{O})}
\leqslant {\mathcal C}_3 (1+|\zeta|)^{-1/2} \varrho_\flat(\zeta)^{1/2},
\\
\label{****.3}
&\Vert(B_{D}^0-\zeta \overline{Q_0} )^{-1}\Vert _{L_2(\mathcal{O})\rightarrow L_2(\mathcal{O})}
\leqslant \|f\|^2_{L_\infty} c(\psi) |\zeta - c_\flat|^{-1},
\\
\label{****.4}
&\Vert(B_{D}^0 -\zeta \overline{Q_0})^{-1}\Vert _{L_2(\mathcal{O})\rightarrow H^1(\mathcal{O})}
\leqslant {\mathcal C}_3 (1+|\zeta|)^{-1/2} \varrho_\flat(\zeta)^{1/2},
\\
\label{****.5}
&\Vert(B_{D}^0 -\zeta \overline{Q_0})^{-1}\Vert _{L_2(\mathcal{O})\rightarrow H^2(\mathcal{O})}
\leqslant {\mathcal C}_4 \varrho_\flat(\zeta)^{1/2}.
\end{align}
Here
${\mathcal C}_3 := c_4 \|f\|_{L_\infty} (c_\flat +1)^{1/2}(c_\flat +2)^{1/2}$ and
${\mathcal C}_4 := \widehat{c} (c_\flat +2) \|f\|_{L_\infty}\|f^{-1}\|_{L_\infty}$.
\end{lemma}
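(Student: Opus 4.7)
The plan is to reduce everything to spectral estimates on the positive self-adjoint operators $\widetilde{B}_{D,\varepsilon}$ and $\widetilde{B}_D^0$ via the factorizations \eqref{B deps and tilde B D,eps tozd resolvent} and \eqref{B_D0 and tilde B_D0 resolvents}. Since both operators are bounded below by $c_\flat$, their spectra lie in $[c_\flat,\infty)$, and the distance from $\zeta\in\mathbb{C}\setminus[c_\flat,\infty)$ to this half-line is exactly $|\zeta-c_\flat|/c(\psi)$. Therefore, functional calculus immediately gives
$$
\|(\widetilde{B}_{D,\varepsilon}-\zeta I)^{-1}\|_{L_2\to L_2}\leq \frac{c(\psi)}{|\zeta-c_\flat|},
\qquad
\|(\widetilde{B}_{D}^{0}-\zeta I)^{-1}\|_{L_2\to L_2}\leq \frac{c(\psi)}{|\zeta-c_\flat|}.
$$
Estimates \eqref{****.1} and \eqref{****.3} follow by multiplying by the $L_\infty$-norms of the factors $f^\varepsilon$ and $f_0$, using $|f_0|^2\leq \|Q_0^{-1}\|_{L_\infty}=\|f\|_{L_\infty}^2$ from~\eqref{f_0<=}.

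For the two $H^1$-bounds \eqref{****.2} and \eqref{****.4}, I would use the coercivity inequalities \eqref{H^1-norm <= BDeps^1/2} and \eqref{H^1-norm <= BD0^1/2}, which reduce the problem to estimating $\|B_{D,\varepsilon}^{1/2}\mathbf{u}_\varepsilon\|_{L_2}$ and $\|(B_D^0)^{1/2}\mathbf{u}_0\|_{L_2}$ for $\mathbf{u}_\varepsilon=(B_{D,\varepsilon}-\zeta Q_0^\varepsilon)^{-1}\mathbf{F}$ and $\mathbf{u}_0=(B_D^0-\zeta\overline{Q_0})^{-1}\mathbf{F}$. Taking the real part of the identity $(B_{D,\varepsilon}-\zeta Q_0^\varepsilon)\mathbf{u}_\varepsilon=\mathbf{F}$ paired with $\mathbf{u}_\varepsilon$ gives
$$
\|B_{D,\varepsilon}^{1/2}\mathbf{u}_\varepsilon\|_{L_2}^2=\mathfrak{b}_{D,\varepsilon}[\mathbf{u}_\varepsilon,\mathbf{u}_\varepsilon]\le \|\mathbf{F}\|_{L_2}\|\mathbf{u}_\varepsilon\|_{L_2}+|\zeta|\,\|Q_0\|_{L_\infty}\|\mathbf{u}_\varepsilon\|_{L_2}^2.
$$
Substituting \eqref{****.1} and rearranging, one obtains a bound of the form $C\|\mathbf{F}\|_{L_2}^2\cdot\bigl(c(\psi)|\zeta-c_\flat|^{-1}+c(\psi)^2|\zeta||\zeta-c_\flat|^{-2}\bigr)$. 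An elementary case split (distinguishing $|\zeta-c_\flat|\lessgtr 1$ and using $|\zeta|\leq c_\flat+|\zeta-c_\flat|$) then shows that this is majorated by $\mathcal{C}_3^2(1+|\zeta|)^{-1}\varrho_\flat(\zeta)\|\mathbf{F}\|_{L_2}^2$; taking square roots and applying \eqref{H^1-norm <= BDeps^1/2} yields \eqref{****.2}. The bound \eqref{****.4} for the homogenized operator is proved in exactly the same way using \eqref{H^1-norm <= BD0^1/2}.

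For the $H^2$-bound \eqref{****.5}, I would follow the two-step argument already used in the proof of Lemma~\ref{Lemma hom problem estimates}. Namely,
$$
\|(B_D^0-\zeta\overline{Q_0})^{-1}\|_{L_2\to H^2}\le \|(B_D^0)^{-1}\|_{L_2\to H^2}\,\|B_D^0(B_D^0-\zeta\overline{Q_0})^{-1}\|_{L_2\to L_2},
$$
where the first factor is controlled by $\widehat{c}$ via \eqref{BD0 ^-1 L2->H2}. For the second factor, I would use the identity $B_D^0(B_D^0-\zeta\overline{Q_0})^{-1}=f_0^{-1}\widetilde{B}_D^0(\widetilde{B}_D^0-\zeta I)^{-1}f_0$ and the functional-calculus bound $\|\widetilde{B}_D^0(\widetilde{B}_D^0-\zeta I)^{-1}\|_{L_2\to L_2}\leq\sup_{\lambda\geq c_\flat}\lambda/|\lambda-\zeta|$. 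Writing $\lambda/(\lambda-\zeta)=1+\zeta/(\lambda-\zeta)$ converts this into $1+|\zeta|c(\psi)|\zeta-c_\flat|^{-1}$, which is readily seen to be $\le (c_\flat+2)\varrho_\flat(\zeta)^{1/2}$ by case analysis on $|\zeta-c_\flat|\lessgtr 1$.

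The main obstacle is the elementary but somewhat fussy comparison between the spectral suprema $\sup_{\lambda\geq c_\flat}\lambda^{s}/|\lambda-\zeta|^2$ ($s=0,1$) and the prescribed right-hand sides involving $(1+|\zeta|)^{-1/2}\varrho_\flat(\zeta)^{1/2}$: one has to split $[c_\flat,\infty)$ into the regions near and far from $\text{Re}\,\zeta$, and separately track the behavior for $|\zeta-c_\flat|<1$ versus $|\zeta-c_\flat|\geq 1$, keeping the constants uniform in $\zeta$ and explicit in the initial data \eqref{problem data} together with $c_\flat$.
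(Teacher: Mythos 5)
Your proof is correct, and for \eqref{****.1}, \eqref{****.3}, and \eqref{****.5} it is essentially identical to the paper's: spectral calculus for $\widetilde{B}_{D,\varepsilon}$ and $\widetilde{B}_D^0$ combined with the factorizations \eqref{B deps and tilde B D,eps tozd resolvent}, \eqref{B_D0 and tilde B_D0 resolvents}, and for \eqref{****.5} the same two-factor splitting through $(B_D^0)^{-1}$ followed by the elementary bound $\sup_{x\ge c_\flat}(x+1)|x-\zeta|^{-1}\le (c_\flat+2)\varrho_\flat(\zeta)^{1/2}$.

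For the $H^1$ bounds \eqref{****.2}, \eqref{****.4} you take a mildly different route. The paper works at the operator level: it writes $B_{D,\varepsilon}^{1/2}(B_{D,\varepsilon}-\zeta Q_0^\varepsilon)^{-1}=\widetilde{B}_{D,\varepsilon}^{1/2}(\widetilde{B}_{D,\varepsilon}-\zeta I)^{-1}(f^\varepsilon)^*$ and bounds $\sup_{x\ge c_\flat}x^{1/2}|x-\zeta|^{-1}$ directly, which yields the stated constant $\mathcal{C}_3=c_4\|f\|_{L_\infty}(c_\flat+1)^{1/2}(c_\flat+2)^{1/2}$ in one stroke. You instead take the real part of the sesquilinear-form identity and feed in \eqref{****.1}; after the same case split in $|\zeta-c_\flat|\lessgtr 1$ this gives a bound of the same form, so the argument is sound. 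The only caveat is quantitative: your route produces a constant that additionally involves $\|Q_0\|_{L_\infty}$ (from the term $|\zeta|\,\|Q_0\|_{L_\infty}\|\mathbf{u}_\varepsilon\|_{L_2}^2$), so you should not claim the bound with the \emph{exact} constant $\mathcal{C}_3^2$; you get \eqref{****.2} and \eqref{****.4} with some constant controlled by the initial data \eqref{problem data} and $c_\flat$, which is all that is needed downstream.
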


\begin{proof}
Under our assumptions, the spectrum of the operator $\widetilde{B}_{D,\varepsilon}$ is contained in $[c_\flat,\infty)$. Hence,
$
\Vert ( \widetilde{B}_{D,\varepsilon} - \zeta I)^{-1} \Vert _{L_2(\mathcal{O})\rightarrow L_2(\mathcal{O})}\leqslant
c(\psi) |\zeta - c_\flat|^{-1}.
$
Together with \eqref{B deps and tilde B D,eps tozd resolvent}, this implies \eqref{****.1}.

Next, from \eqref{tilde b D,eps=} and \eqref{B deps and tilde B D,eps tozd resolvent} it follows that
\begin{equation*}
\begin{split}
\Vert B_{D,\varepsilon }^{1/2}(B_{D,\varepsilon}-\zeta Q_0^\varepsilon )^{-1}\Vert_{L_2 \rightarrow L_2}
=\Vert \widetilde{B}_{D,\varepsilon}^{1/2}(\widetilde{B}_{D,\varepsilon}-\zeta I)^{-1}(f^\varepsilon )^*\Vert_{L_2 \rightarrow L_2}
\leqslant \Vert f\Vert _{L_\infty}\sup \limits _{x\geqslant c_\flat}\frac{x^{1/2}}{\vert x-\zeta \vert}.
\end{split}
\end{equation*}
A calculation shows that
\begin{equation*}
\sup \limits _{x\geqslant c_\flat}\frac{x}{\vert x-\zeta \vert ^2}\leqslant
\begin{cases}
(c_\flat +1)c(\psi)^2\vert \zeta -c_\flat\vert ^{-2}, &\vert \zeta -c_\flat\vert <1,\\
(c_\flat +1)c(\psi)^2\vert \zeta -c_\flat\vert ^{-1}, &\vert \zeta -c_\flat\vert \geqslant 1.
\end{cases}
\end{equation*}
Note that  $\vert \zeta \vert +1 \leqslant 2+c_\flat$ for $\vert \zeta -c_\flat\vert <1$
and $(\vert \zeta \vert+1) \vert \zeta -c_\flat\vert ^{-1}\leqslant 2+c_\flat$ for $\vert \zeta -c_\flat\vert \geqslant 1$.
Therefore,
\begin{equation*}
(\vert \zeta \vert +1)^{1/2}\Vert B_{D,\varepsilon }^{1/2}(B_{D,\varepsilon}-\zeta Q_0^\varepsilon )^{-1}\Vert _{L_2(\mathcal{O})\rightarrow L_2(\mathcal{O})}
\leqslant \Vert f\Vert _{L_\infty}(c_\flat +1)^{1/2}(c_\flat +2)^{1/2}\varrho _ \flat (\zeta )^{1/2}.
\end{equation*}
Together with \eqref{H^1-norm <= BDeps^1/2} this implies \eqref{****.2}.

Estimates \eqref{****.3} and \eqref{****.4} are proved similarly to \eqref{****.1} and \eqref{****.2}, respectively,
with the help of~\eqref{H^1-norm <= BD0^1/2}, \eqref{f_0<=} and \eqref{B_D0 and tilde B_D0 resolvents}.

It remains to check \eqref{****.5}.
By \eqref{BD0 ^-1 L2->H2}--\eqref{B_D0 and tilde B_D0 resolvents},
\begin{equation}
\label{ex 9.38}
\begin{split}
\Vert &(B_D^0-\zeta\overline{Q_0})^{-1}\Vert _{L_2(\mathcal{O})\rightarrow H^2(\mathcal{O})}
\leqslant \Vert (B_D^0)^{-1}\Vert _{L_2(\mathcal{O})\rightarrow H^2(\mathcal{O})}\Vert B_D^0(B_D^0-\zeta \overline{Q_0})^{-1}\Vert _{L_2(\mathcal{O})\rightarrow L_2(\mathcal{O})}
\\
&\leqslant
\widehat{c}\Vert f\Vert  _{L_\infty}\Vert f^{-1}\Vert _{L_\infty}\sup\limits _{x\geqslant c_\flat}x\vert x-\zeta\vert ^{-1}
\leqslant
\widehat{c}\Vert f\Vert  _{L_\infty}\Vert f^{-1}\Vert _{L_\infty}\sup\limits _{x\geqslant c_\flat}(x+1)\vert x-\zeta\vert ^{-1}.
\end{split}
\end{equation}
A calculation shows that
\begin{equation}
\label{sup (x+1)/|x-zeta|}
\sup \limits _{x\geqslant c_\flat}\frac{(x+1)^2}{\vert x-\zeta \vert ^2}\leqslant(c_\flat+2)^2\varrho _ \flat (\zeta),\quad \zeta \in \mathbb{C}\setminus [c_\flat,\infty) .
\end{equation}
Relations \eqref{ex 9.38} and \eqref{sup (x+1)/|x-zeta|}  imply \eqref{****.5}.
\end{proof}

\begin{lemma}
\label{Lemma Dr ots corrector}
Under Condition~\textnormal{\ref{cond_eps_flat}}, for $0< \varepsilon \leqslant \varepsilon_\flat$
and $\zeta\in\mathbb{C}\setminus [c_\flat,\infty)$ we have
\begin{align}
\label{Dr ots K L2}
&\Vert K_D(\varepsilon;\zeta)\Vert _{L_2(\mathcal{O})\rightarrow L_2(\mathcal{O})}
\leqslant \mathcal{C}_5(1+\vert\zeta\vert )^{-1/2}\varrho _\flat (\zeta )^{1/2},
\\
&\varepsilon\Vert K_D(\varepsilon;\zeta)\Vert _{L_2(\mathcal{O})\rightarrow H^1(\mathcal{O})}
\leqslant \mathcal{C}_6\left(\varepsilon +(1+\vert\zeta\vert )^{-1/2}\right)\varrho _\flat (\zeta )^{1/2}.
\label{Dr ots K H1}
\end{align}
The constants $\mathcal{C}_5$ and $\mathcal{C}_6$ depend only on the initial data \eqref{problem data}.
\end{lemma}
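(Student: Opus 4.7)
\smallskip

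\noindent\textbf{Proof plan for Lemma~\ref{Lemma Dr ots corrector}.}
Write $\widetilde{\mathbf{u}}_0 := P_\mathcal{O}(B_D^0 -\zeta\overline{Q_0})^{-1}\mathbf{F}$, so that, since $S_\varepsilon$ commutes with $b(\mathbf{D})$,
\begin{equation*}
K_D(\varepsilon;\zeta)\mathbf{F} = R_\mathcal{O}\bigl(\Lambda^\varepsilon S_\varepsilon b(\mathbf{D})\widetilde{\mathbf{u}}_0 + \widetilde{\Lambda}^\varepsilon S_\varepsilon \widetilde{\mathbf{u}}_0\bigr).
\end{equation*}
The key ingredients are Proposition~\ref{Proposition f^eps S_eps} (which bounds $\|[h^\varepsilon]S_\varepsilon\|_{L_2\to L_2}$ by $|\Omega|^{-1/2}\|h\|_{L_2(\Omega)}$) applied to $h=\Lambda,\widetilde{\Lambda},\mathbf{D}\Lambda,\mathbf{D}\widetilde{\Lambda}$ (with the corresponding mean-square bounds $M_1$, $\widetilde{M}_1$, $M_2$, $\widetilde{M}_2$ from \eqref{Lambda <=}, \eqref{DLambda<=}, \eqref{tilde Lambda<=}, \eqref{D tilde Lambda}, and \eqref{tilde M2 =}), combined with \eqref{<b^*b<} and the resolvent bounds \eqref{****.4}, \eqref{****.5} of Lemma~\ref{Lemma Dr ots solutions}.

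For \eqref{Dr ots K L2}, I apply Proposition~\ref{Proposition f^eps S_eps} directly:
\begin{equation*}
\|\Lambda^\varepsilon S_\varepsilon b(\mathbf{D})\widetilde{\mathbf{u}}_0\|_{L_2(\mathbb{R}^d)}
\leqslant M_1\alpha_1^{1/2}\|\mathbf{D}\widetilde{\mathbf{u}}_0\|_{L_2(\mathbb{R}^d)},\quad
\|\widetilde{\Lambda}^\varepsilon S_\varepsilon \widetilde{\mathbf{u}}_0\|_{L_2(\mathbb{R}^d)}
\leqslant \widetilde{M}_1\|\widetilde{\mathbf{u}}_0\|_{L_2(\mathbb{R}^d)},
\end{equation*}
which gives $\|K_D(\varepsilon;\zeta)\mathbf{F}\|_{L_2(\mathcal{O})} \leqslant (M_1\alpha_1^{1/2}+\widetilde{M}_1)\|\widetilde{\mathbf{u}}_0\|_{H^1(\mathbb{R}^d)}$. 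Then \eqref{PO} with $l=1$ together with \eqref{****.4} yields \eqref{Dr ots K L2} with $\mathcal{C}_5 := (M_1\alpha_1^{1/2}+\widetilde{M}_1)C_{\mathcal{O}}^{(1)}\mathcal{C}_3$.

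For \eqref{Dr ots K H1}, I differentiate and use the Leibniz rule:
\begin{equation*}
\varepsilon \mathbf{D}\bigl(\Lambda^\varepsilon S_\varepsilon b(\mathbf{D})\widetilde{\mathbf{u}}_0\bigr)
= (\mathbf{D}\Lambda)^\varepsilon S_\varepsilon b(\mathbf{D})\widetilde{\mathbf{u}}_0
+ \varepsilon \Lambda^\varepsilon S_\varepsilon b(\mathbf{D})\mathbf{D}\widetilde{\mathbf{u}}_0,
\end{equation*}
and analogously for the $\widetilde{\Lambda}^\varepsilon$-term. The crucial cancellation is that the $\varepsilon^{-1}$ arising from differentiating $\Lambda^\varepsilon$ is absorbed by the factor $\varepsilon$ in front, so that Proposition~\ref{Proposition f^eps S_eps} applied to $\mathbf{D}\Lambda$ and $\mathbf{D}\widetilde{\Lambda}$ gives terms controlled by $\|\widetilde{\mathbf{u}}_0\|_{H^1(\mathbb{R}^d)}$ (via $M_2,\widetilde{M}_2$), while the ``easy'' terms containing $\varepsilon \Lambda^\varepsilon S_\varepsilon \mathbf{D}b(\mathbf{D})\widetilde{\mathbf{u}}_0$ and $\varepsilon \widetilde{\Lambda}^\varepsilon S_\varepsilon \mathbf{D}\widetilde{\mathbf{u}}_0$ are controlled by $\varepsilon\|\widetilde{\mathbf{u}}_0\|_{H^2(\mathbb{R}^d)}$ (via $M_1,\widetilde{M}_1$). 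Using \eqref{PO} with $l=1,2$ and invoking \eqref{****.4}, \eqref{****.5} of Lemma~\ref{Lemma Dr ots solutions} produces
\begin{equation*}
\varepsilon\|\mathbf{D}K_D(\varepsilon;\zeta)\mathbf{F}\|_{L_2(\mathcal{O})}
\leqslant \mathfrak{c}\bigl((1+|\zeta|)^{-1/2}+\varepsilon\bigr)\varrho_\flat(\zeta)^{1/2}\|\mathbf{F}\|_{L_2(\mathcal{O})}.
\end{equation*}
Combining this with $\varepsilon$ times the already proven \eqref{Dr ots K L2} (and using $\varepsilon\leqslant 1$ to absorb the first term into $(1+|\zeta|)^{-1/2}$) yields \eqref{Dr ots K H1}.

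The proof is essentially bookkeeping; the only subtlety is to keep track of which resolvent bound (the $H^1$-bound \eqref{****.4} carrying the favorable factor $(1+|\zeta|)^{-1/2}$, versus the $H^2$-bound \eqref{****.5} that loses this factor) controls each term. There is no genuine obstacle, and the constants $\mathcal{C}_5,\mathcal{C}_6$ are explicit combinations of $M_j,\widetilde{M}_j,\alpha_1,C_{\mathcal{O}}^{(l)},\mathcal{C}_3,\mathcal{C}_4$.
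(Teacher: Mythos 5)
Your proposal is correct and follows essentially the same route as the paper: the $(L_2\to L_2)$ bound via Proposition~\ref{Proposition f^eps S_eps} together with \eqref{Lambda <=}, \eqref{tilde Lambda<=} and the resolvent estimate \eqref{****.4}, and the gradient bound via the Leibniz splitting in which the $\varepsilon$ in front cancels the $\varepsilon^{-1}$ from differentiating $\Lambda^\varepsilon,\widetilde{\Lambda}^\varepsilon$ (handled with $M_2,\widetilde{M}_2$ and \eqref{****.4}), while the terms carrying an extra $\varepsilon$ are sent to the $H^2$ resolvent bound \eqref{****.5}. The only cosmetic difference is how the harmless $\varepsilon\widetilde{\Lambda}^\varepsilon S_\varepsilon\mathbf{D}\widetilde{\mathbf{u}}_0$ term is bucketed, which does not affect the stated estimate.
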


\begin{proof}
Combining \eqref{<b^*b<}, \eqref{PO}, \eqref{K_D(eps,zeta)}, \eqref{Lambda S_eps <=}, and \eqref{tilde Lambda S_eps <=}, we obtain
\begin{equation*}
\begin{split}
\Vert K_D(\varepsilon ;\zeta )\Vert _{L_2(\mathcal{O})\rightarrow L_2(\mathcal{O})}
&\leqslant \alpha _1^{1/2}M_1 C_{\mathcal{O}}^{(1)}\Vert (B_D^0-\zeta \overline{Q_0})^{-1}\Vert _{L_2(\mathcal{O})\rightarrow H^1(\mathcal{O})}
\\
&+\widetilde{M}_1 C_{\mathcal{O}}^{(0)}\Vert (B_D^0-\zeta \overline{Q_0})^{-1}\Vert _{L_2(\mathcal{O})\rightarrow L_2(\mathcal{O})}.
\end{split}
\end{equation*}
Together with \eqref{****.4}, this yields estimate \eqref{Dr ots K L2} with  $\mathcal{C}_5 :=\mathcal{C}_3\bigl(\alpha _1^{1/2}M_1C_\mathcal{O}^{(1)}+\widetilde{M}_1C_\mathcal{O}^{(0)}\bigr)$. Next,
\begin{equation*}
\begin{split}
\varepsilon\Vert \mathbf{D}K_D(\varepsilon;\zeta)\Vert _{L_2(\mathcal{O})\rightarrow L_2(\mathcal{O})}
&\leqslant
\bigl\Vert \bigl( (\mathbf{D}\Lambda)^\varepsilon b(\mathbf{D})+(\mathbf{D}\widetilde{\Lambda})^\varepsilon \bigr) S_\varepsilon P_\mathcal{O}(B_D^0-\zeta\overline{Q_0})^{-1}\bigr\Vert _{L_2(\mathcal{O})\rightarrow L_2(\mathcal{O})}
\\
&+ \varepsilon \bigl\Vert \bigl(\Lambda ^\varepsilon b(\mathbf{D})+\widetilde{\Lambda}^\varepsilon \bigr)S_\varepsilon\mathbf{D}P_\mathcal{O}(B_D^0-\zeta\overline{Q_0})^{-1}\Vert _{L_2(\mathcal{O})\rightarrow L_2(\mathcal{O})}.
\end{split}
\end{equation*}
By Proposition \ref{Proposition f^eps S_eps}, \eqref{<b^*b<}, \eqref{DLambda<=}, \eqref{D tilde Lambda}, \eqref{PO}, \eqref{Lambda S_eps <=},  \eqref{tilde Lambda S_eps <=},  and \eqref{tilde M2 =}, this implies
\begin{equation*}
\begin{split}
\varepsilon \Vert \mathbf{D}K_D(\varepsilon;\zeta)\Vert _{L_2(\mathcal{O})\rightarrow L_2(\mathcal{O})}
&\leqslant
(M_2\alpha _1^{1/2} + \widetilde{M}_2 + \varepsilon\widetilde{M}_1) C_\mathcal{O}^{(1)} \Vert (B_D^0-\zeta\overline{Q_0})^{-1}\Vert _{L_2(\mathcal{O})\rightarrow H^1(\mathcal{O})}
\\
&+\varepsilon M_1\alpha _1^{1/2}C_\mathcal{O}^{(2)}\Vert (B_D^0-\zeta\overline{Q_0})^{-1}\Vert _{L_2(\mathcal{O})\rightarrow H^2(\mathcal{O})}.
\end{split}
\end{equation*}
So, by Lemma~\ref{Lemma Dr ots solutions},
\begin{equation}
\label{H}
\varepsilon \Vert \mathbf{D}K_D(\varepsilon;\zeta)\Vert _{L_2(\mathcal{O})\rightarrow L_2(\mathcal{O})}
\leqslant\widehat{\mathcal{C}}_6(1+\vert\zeta\vert )^{-1/2}\varrho _\flat (\zeta)^{1/2}+\widetilde{\mathcal{C}}_6\varepsilon\varrho _\flat (\zeta)^{1/2}.
\end{equation}
Here $\widehat{\mathcal{C}}_6:=\mathcal{C}_3 C_\mathcal{O}^{(1)} (M_2\alpha _1^{1/2}+\widetilde{M}_2 + \widetilde{M}_1)$, $\widetilde{\mathcal{C}}_6:=M_1\alpha _1^{1/2}C_\mathcal{O}^{(2)}\mathcal{C}_4$.

Combining \eqref{Dr ots K L2} and \eqref{H}, we arrive at estimate \eqref{Dr ots K H1} with $\mathcal{C}_6:=\max\lbrace \mathcal{C}_5+\widehat{\mathcal{C}}_6;\widetilde{\mathcal{C}}_6\rbrace $.
\end{proof}

\subsection{Proof of Theorem \ref{Theorem Dr appr}}
 Let $0< \varepsilon \leqslant \varepsilon_\flat \le \eps_1$  and $\zeta\in\mathbb{C}\setminus [c_\flat,\infty)$.
First, we prove~\eqref{Th dr appr 1}. From \eqref{Th L2} with $\zeta =-1$ it follows that
\begin{equation}
\label{Th L2 in -1}
\Vert (B_{D,\varepsilon }+ Q_0^\varepsilon )^{-1}-(B_D^0+ \overline{Q_0})^{-1}\Vert _{L_2(\mathcal{O})\rightarrow L_2(\mathcal{O})}\leqslant C_4\varepsilon .
\end{equation}
We have
\begin{equation}
\label{tozd gen res}
\begin{split}
(&B_{D,\varepsilon }-\zeta Q_0^\varepsilon )^{-1}-(B_D^0-\zeta \overline{Q_0})^{-1}\\
&= (B_{D,\varepsilon}-\zeta Q_0^\varepsilon )^{-1}(B_{D,\varepsilon}+Q_0^\varepsilon)
\left((B_{D,\varepsilon}+Q_0^\varepsilon )^{-1}-(B_D^0+\overline{Q_0})^{-1}\right)
(B_D^0+\overline{Q_0})(B_D^0-\zeta \overline{Q_0})^{-1}
\\
&+(1+\zeta)(B_{D,\varepsilon}-\zeta Q_0^\varepsilon )^{-1}(Q_0^\varepsilon -\overline{Q_0})(B_D^0-\zeta \overline{Q_0})^{-1}.
\end{split}
\end{equation}
Denote the consecutive terms in the right-hand side of~\eqref{tozd gen res} by $\mathcal{T}_1(\varepsilon ;\zeta)$ and $\mathcal{T}_2(\varepsilon ;\zeta)$. By \eqref{B deps and tilde B D,eps tozd resolvent},
\begin{equation}
\label{tozd lev}
\begin{split}
&\Vert (B_{D,\varepsilon}-\zeta Q_0^\varepsilon )^{-1}(B_{D,\varepsilon}+Q_0^\varepsilon)\Vert _{L_2(\mathcal{O})\rightarrow L_2(\mathcal{O})}
\\
&\leqslant \Vert f\Vert _{L_\infty}\Vert f^{-1}\Vert _{L_\infty}\Vert (\widetilde{B}_{D,\varepsilon}-\zeta I)^{-1}(\widetilde{B}_{D,\varepsilon}+I)\Vert _{L_2(\mathcal{O})\rightarrow L_2(\mathcal{O})}
\leqslant \Vert f\Vert _{L_\infty}\Vert f^{-1}\Vert _{L_\infty}  \sup\limits _{x\geqslant c_\flat}\frac{(x+1)}{\vert x-\zeta \vert}.
\end{split}
\end{equation}
Similarly to \eqref{tozd lev}, taking \eqref{f_0<=} into account, we obtain
\begin{equation}
\label{tozd prav}
\Vert (B_D^0+\overline{Q_0})(B_D^0-\zeta \overline{Q_0})^{-1}\Vert _{L_2(\mathcal{O})\rightarrow L_2(\mathcal{O})}
\leqslant \Vert f\Vert _{L_\infty}\Vert f^{-1}\Vert _{L_\infty}\sup\limits _{x\geqslant c_\flat}\frac{(x+1)}{\vert x-\zeta \vert}.
\end{equation}
Now, relations \eqref{sup (x+1)/|x-zeta|}, \eqref{Th L2 in -1}, \eqref{tozd lev}, and \eqref{tozd prav} imply that
\begin{equation}
\label{9.11a}
\Vert \mathcal{T}_1(\varepsilon ;\zeta)\Vert _{L_2(\mathcal{O})\rightarrow L_2(\mathcal{O})}
\leqslant \gamma_{30} \varepsilon \varrho _\flat (\zeta);
\quad \gamma_{30} := C_4\Vert f\Vert ^2_{L_\infty}\Vert f^{-1}\Vert ^2_{L_\infty}(c_\flat +2)^2.
\end{equation}

The second term in the right-hand side of~\eqref{tozd gen res} satisfies
\begin{equation}
\label{9.12}
\begin{split}
\Vert \mathcal{T}_2(\varepsilon ;\zeta )\Vert _{L_2(\mathcal{O})\rightarrow L_2(\mathcal{O})}
&\leqslant \vert 1+\zeta \vert \Vert (B_{D,\varepsilon}-\zeta Q_0^\varepsilon )^{-1}\Vert _{H^{-1}(\mathcal{O})\rightarrow L_2(\mathcal{O})}
\\
&\times\Vert [Q_0^\varepsilon -\overline{Q_0}]\Vert _{H^1(\mathcal{O})\rightarrow H^{-1}(\mathcal{O})}
\Vert (B_D^0-\zeta \overline{Q_0})^{-1}\Vert _{L_2(\mathcal{O})\rightarrow H^1(\mathcal{O})}.
\end{split}
\end{equation}
Note that the range of the operator~$(B_{D,\varepsilon}-\zeta ^*Q_0^\varepsilon )^{-1}$ lies in $H^1_0(\mathcal{O};\mathbb{C}^n)$.
Then, by duality, from~\eqref{****.2} we obtain
\begin{equation}
\label{dvoistvennost}
\Vert (B_{D,\varepsilon}-\zeta Q_0^\varepsilon )^{-1}\Vert _{H^{-1}(\mathcal{O})\rightarrow L_2(\mathcal{O})}
=\Vert (B_{D,\varepsilon}-\zeta ^*Q_0^\varepsilon )^{-1}\Vert _{L_2(\mathcal{O})\rightarrow H^1(\mathcal{O})}
\leqslant {\mathcal C}_3 (1+|\zeta|)^{-1/2} \varrho_\flat(\zeta)^{1/2}.
\end{equation}

Now, from \eqref{lemma Q_eps -overline Q}, \eqref{****.4}, \eqref{9.12}, and \eqref{dvoistvennost} it follows that
\begin{equation}
\label{9.20a}
\Vert \mathcal{T}_2(\varepsilon ;\zeta)\Vert _{L_2(\mathcal{O})\rightarrow L_2(\mathcal{O})}
\leqslant \gamma_{31}\varepsilon \varrho _\flat (\zeta); \quad
\gamma_{31} := C_{Q_0} {\mathcal C}^2_3.
\end{equation}
As a result, relations \eqref{tozd gen res}, \eqref{9.11a}, and \eqref{9.20a} yield~\eqref{Th dr appr 1} with the constant
$C_{26} :=\gamma_{30} + \gamma_{31}$.

Let us prove \eqref{Th dr appr 2}. By inequality~\eqref{Th L2->H1} with $\zeta =-1$,
\begin{equation}
\label{Th L2 H1 in -1}
\begin{split}
\Vert &(B_{D,\varepsilon}+Q_0^\varepsilon )^{-1}-(B_D^0+\overline{Q_0})^{-1}-\varepsilon K_D(\varepsilon ;-1)\Vert _{L_2(\mathcal{O})\rightarrow H^1(\mathcal{O})} \leqslant (C_5+C_6)\varepsilon ^{1/2}.
\end{split}
\end{equation}
 We have
\begin{equation}
\label{A sec. 9}
\begin{split}
&(B_{D,\varepsilon}-\zeta Q_0^\varepsilon )^{-1}-(B_D^0-\zeta \overline{Q_0})^{-1}-\varepsilon K_D(\varepsilon ;\zeta)
\\
&= \left((B_{D,\varepsilon}+Q_0^\varepsilon )^{-1}-(B_D^0+\overline{Q_0})^{-1}-\varepsilon K_D(\varepsilon;-1)\right)
(B_D^0 +\overline{Q_0})(B_D^0-\zeta\overline{Q_0})^{-1}
\\
&+  (\zeta +1)(B_{D,\varepsilon}-\zeta Q_0^\varepsilon )^{-1}Q_0^\varepsilon
\left( (B_{D,\varepsilon}+Q_0^\varepsilon )^{-1}-(B_D^0+\overline{Q_0})^{-1} \right)
(B_D^0 +\overline{Q_0})(B_D^0-\zeta\overline{Q_0})^{-1}
\\
&+ (1+\zeta)(B_{D,\varepsilon}-\zeta Q_0^\varepsilon )^{-1}(Q_0^\varepsilon -\overline{Q_0})
(B_D^0-\zeta\overline{Q_0})^{-1}.
\end{split}
\end{equation}
Denote the consecutive summands in the right-hand side of \eqref{A sec. 9} by~$\mathcal{L}_1(\varepsilon ;\zeta )$, $\mathcal{L}_2(\varepsilon ;\zeta )$, and $\mathcal{L}_3(\varepsilon ;\zeta )$. (Note that $\mathcal{L}_3(\varepsilon ;\zeta )$ coincides with $\mathcal{T}_2(\varepsilon ;\zeta )$.)
We have
\begin{equation}
\label{9.29a new}
\begin{split}
\Vert {\mathcal{L}}_1(\varepsilon;\zeta)\Vert _{L_2(\mathcal{O})\rightarrow H^1(\mathcal{O})}
&\leqslant
\Vert (B_{D,\varepsilon}+Q_0^\varepsilon )^{-1}-(B_D^0+\overline{Q_0})^{-1}-\varepsilon K_D(\varepsilon;-1)\Vert _{L_2(\mathcal{O})\rightarrow H^1(\mathcal{O})}
\\
&\times
\Vert (B_D^0 +\overline{Q_0})(B_D^0-\zeta\overline{Q_0})^{-1}\Vert _{L_2(\mathcal{O})\rightarrow L_2(\mathcal{O})}.
\end{split}
\end{equation}
Together with \eqref{sup (x+1)/|x-zeta|},  \eqref{tozd prav}, and \eqref{Th L2 H1 in -1}, this yields
\begin{equation}
\label{B}
\Vert {\mathcal{L}}_1(\varepsilon;\zeta)\Vert _{L_2(\mathcal{O})\rightarrow H^1(\mathcal{O})}
\leqslant \gamma _{32}  \varepsilon ^{1/2}\varrho _\flat (\zeta)^{1/2};\quad \gamma_{32} := (C_5+C_6) (c_\flat +2) \Vert f\Vert _{L_\infty}\Vert f^{-1}\Vert _{L_\infty}.
\end{equation}

Now, consider the second term in the right-hand side of \eqref{A sec. 9}.
We have
\begin{equation}
\label{***.33}
\begin{split}
&\| \mathcal{L}_2(\varepsilon ;\zeta ) \|_{L_2(\mathcal{O})\rightarrow H^1(\mathcal{O})}
\leqslant
\vert \zeta +1\vert  \|  (B_{D,\varepsilon} - \zeta Q_0^\varepsilon)^{-1} \|_{L_2(\mathcal{O})\rightarrow H^1(\mathcal{O})}
\Vert Q_0 \Vert _{L_\infty}
\\
&\times \Vert (B_{D,\varepsilon}+Q_0^\varepsilon )^{-1}-(B_D^0+\overline{Q_0})^{-1}
\Vert _{L_2(\mathcal{O})\rightarrow L_2(\mathcal{O})}
\Vert (B_D^0 +\overline{Q_0})(B_D^0-\zeta\overline{Q_0})^{-1}\Vert _{L_2(\mathcal{O})\rightarrow L_2(\mathcal{O})}.
\end{split}
\end{equation}
Combining this with \eqref{****.2}, \eqref{sup (x+1)/|x-zeta|},  \eqref{Th L2 in -1}, and \eqref{tozd prav}, we obtain
\begin{equation}
\label{***.3}
\Vert {\mathcal{L}}_2(\varepsilon;\zeta)\Vert _{L_2(\mathcal{O})\rightarrow H^1(\mathcal{O})}
\leqslant \gamma _{33} \vert 1+\zeta\vert ^{1/2} \varepsilon \varrho _\flat (\zeta);
\quad  \gamma_{33} := \mathcal{C}_3 C_4  (c_\flat +2) \Vert f\Vert _{L_\infty} \Vert f^{-1}\Vert ^3_{L_\infty}.
\end{equation}

It remains to estimate the third term in the right-hand side of \eqref{A sec. 9}.
By~\eqref{lemma Q_eps -overline Q},
\begin{equation}
\label{9.31a new}
\|  \mathcal{L}_3(\varepsilon ;\zeta ) \|_{L_2(\mathcal{O})\rightarrow H^1(\mathcal{O})}
\leqslant \varepsilon \vert 1+\zeta \vert  C_{Q_0} \Vert (B_{D,\varepsilon}-\zeta Q_0^\varepsilon )^{-1}\Vert _{H^{-1}(\mathcal{O})\rightarrow H^1(\mathcal{O})}\Vert (B_D^0-\zeta\overline{Q_0})^{-1}\Vert _{L_2(\mathcal{O})\rightarrow H^1(\mathcal{O})}.
\end{equation}
Taking \eqref{H^1-norm <= BDeps^1/2} and \eqref{****.4} into account, we see that
\begin{equation}
\label{***.5}
\| \mathcal{L}_3(\varepsilon ;\zeta ) \|_{L_2(\mathcal{O})\rightarrow H^1(\mathcal{O})}
\leqslant c_4 C_{Q_0}\mathcal{C}_3\varepsilon \vert 1+\zeta \vert ^{1/2} \varrho_\flat(\zeta )^{1/2} \Vert B_{D,\varepsilon }^{1/2}(B_{D,\varepsilon}-\zeta Q_0^\varepsilon )^{-1}\Vert _{H^{-1}(\mathcal{O})\rightarrow L_2(\mathcal{O})}.
\end{equation}
By duality, using \eqref{tilde b D,eps=} and \eqref{B deps and tilde B D,eps tozd resolvent}, we obtain
\begin{equation}
\label{dv}
\begin{split}
\Vert B_{D,\varepsilon}^{1/2}(B_{D,\varepsilon}-\zeta Q_0^\varepsilon )^{-1}\Vert _{H^{-1}(\mathcal{O})\rightarrow L_2(\mathcal{O})}
&=\Vert \widetilde{B}_{D,\varepsilon }^{1/2}(\widetilde{B}_{D,\varepsilon}-\zeta I)^{-1} (f^\varepsilon)^* \Vert _{H^{-1}(\mathcal{O})\rightarrow L_2(\mathcal{O})}
\\
&=\Vert f^\varepsilon \widetilde{B}_{D,\varepsilon }^{1/2}(\widetilde{B}_{D,\varepsilon}-\zeta ^* I)^{-1}\Vert _{L_2(\mathcal{O})\rightarrow H^1(\mathcal{O})}.
\end{split}
\end{equation}
Since the range of the operator $f^\varepsilon \widetilde{B}_{D,\varepsilon }^{1/2}(\widetilde{B}_{D,\varepsilon}-\zeta ^* I)^{-1}$
 lies in $H^1_0(\mathcal{O};\mathbb{C}^n)$, from \eqref{H^1-norm <= BDeps^1/2} and \eqref{tilde b D,eps=} it follows that
\begin{equation}
\label{dv-a}
\begin{split}
\Vert  f^\varepsilon \widetilde{B}_{D,\varepsilon}^{1/2}(\widetilde{B}_{D,\varepsilon}-\zeta ^* I)^{-1}\Vert _{L_2(\mathcal{O})\rightarrow H^1(\mathcal{O})}
&\leqslant c_4 \Vert {B}_{D,\varepsilon}^{1/2} f^\varepsilon \widetilde{B}_{D,\varepsilon}^{1/2}
(\widetilde{B}_{D,\varepsilon}-\zeta ^* I)^{-1}\Vert _{L_2(\mathcal{O})\rightarrow L_2(\mathcal{O})}
\\
&= c_4 \Vert  \widetilde{B}_{D,\varepsilon}(\widetilde{B}_{D,\varepsilon}-\zeta ^* I)^{-1}\Vert _{L_2(\mathcal{O})\rightarrow L_2(\mathcal{O})}.
\end{split}
\end{equation}
Together with \eqref{sup (x+1)/|x-zeta|} and \eqref{dv}, this yields
\begin{equation}
\label{dv2}
\Vert B_{D,\varepsilon}^{1/2}(B_{D,\varepsilon}-\zeta Q_0^\varepsilon )^{-1}\Vert _{H^{-1}(\mathcal{O})\rightarrow L_2(\mathcal{O})}
\leqslant c_4 (c_\flat +2)\varrho_\flat(\zeta )^{1/2}.
\end{equation}
Combining \eqref{***.5} and \eqref{dv2}, we find
\begin{equation}
\label{L3<=}
\|  \mathcal{L}_3(\varepsilon ;\zeta ) \|_{L_2(\mathcal{O})\rightarrow H^1(\mathcal{O})}
\leqslant \gamma_{34} \varepsilon \vert 1+\zeta \vert ^{1/2} \varrho _ \flat (\zeta );
\quad  \gamma_{34}:=  c_4^2 (c_\flat +2)C_{Q_0}\mathcal{C}_3.
\end{equation}

As a result, relations \eqref{A sec. 9}, \eqref{B}, \eqref{***.3},  and \eqref{L3<=} imply that
\begin{equation}
\label{dr alppr so srezkoy}
\begin{split}
\Vert &(B_{D,\varepsilon}-\zeta Q_0^\varepsilon )^{-1}-(B_D^0-\zeta \overline{Q_0})^{-1}-\varepsilon K_D(\varepsilon ;\zeta )\Vert _{L_2(\mathcal{O})\rightarrow H^1(\mathcal{O})}\\
&\leqslant \gamma _{32}\varepsilon ^{1/2}\varrho _\flat (\zeta)^{1/2}+ (\gamma_{33}+\gamma_{34})
\varepsilon  \vert 1+\zeta \vert ^{1/2} \varrho_\flat(\zeta).
\end{split}
\end{equation}
This yields \eqref{Th dr appr 2} with the constant $C_{27} = \max\{\gamma_{32};\gamma_{33}+\gamma_{34}\}$.

It remains to check~\eqref{Th dr appr fluxes}.
From \eqref{b_l <=} and \eqref{Th dr appr 2 solutions} it follows that
\begin{equation}
\label{st.st}
\begin{split}
\Vert &\mathbf{p}_\varepsilon -g^\varepsilon b(\mathbf{D})\mathbf{v}_\varepsilon\Vert _{L_2(\mathcal{O})}
\leqslant (d\alpha _1)^{1/2}\Vert g\Vert _{L_\infty} C_{27}
\bigl( \varepsilon ^{1/2}\varrho _ \flat (\zeta )^{1/2} + \varepsilon  \vert 1+\zeta \vert ^{1/2} \varrho_\flat(\zeta) \bigr)
\Vert \mathbf{F}\Vert _{L_2(\mathcal{O})}.
\end{split}
\end{equation}
Next,  taking \eqref{<b^*b<} into account, by analogy with \eqref{3.20 from draft}, \eqref{5.18a}, and \eqref{5.19a}, we obtain
\begin{equation}
\label{st}
\begin{split}
\Vert  g^\varepsilon b(\mathbf{D})\mathbf{v}_\varepsilon -\widetilde{g}^\varepsilon S_\varepsilon b(\mathbf{D})\widetilde{\mathbf{u}}_0-g^\varepsilon \bigl(b(\mathbf{D})\widetilde{\Lambda}\bigr)^\varepsilon S_\varepsilon\widetilde{\mathbf{u}}_0\Vert _{L_2(\mathcal{O})}
\leqslant \gamma_{35}\varepsilon \Vert \widetilde{\mathbf{u}}_0\Vert _{H^2(\mathbb{R}^d)}.
\end{split}
\end{equation}
Here $\gamma _{35} :=\Vert g\Vert _{L_\infty}\alpha _1^{1/2}\bigl( M_1(\alpha _1 d)^{1/2}+\widetilde{M}_1 d^{1/2}
+r_1\bigr)$.
From \eqref{PO} and \eqref{****.5} it follows that
\begin{equation}
\label{9.48}
\Vert \widetilde{\mathbf{u}}_0\Vert _{H^2(\mathbb{R}^d)}
\leqslant \gamma_{36}\rho _\flat (\zeta)^{1/2}\Vert \mathbf{F}\Vert _{L_2(\mathcal{O})};
\quad\gamma _{36}:=C_\mathcal{O}^{(2)} {\mathcal C}_4.
\end{equation}
Combining this with \eqref{st.st} and \eqref{st}, we arrive at estimate~\eqref{Th dr appr fluxes} with the constant
$\widetilde{C}_{27} :=(d\alpha _1)^{1/2}\Vert g\Vert _{L_\infty}C_{27}+\gamma _{35} \gamma _{36}$.
\qed

\begin{corollary}
Under the assumptions of Theorem~\emph{\ref{Theorem Dr appr}}, for $0<\varepsilon \leqslant \varepsilon _\flat$
and $\zeta\in\mathbb{C}\setminus [c_\flat,\infty)$  we have
\begin{align}
\label{cor1}
&\Vert \mathbf{u}_\varepsilon-\mathbf{v}_\varepsilon\Vert _{H^1(\mathcal{O})}
\leqslant
C_{28}\varepsilon ^{1/2}\varrho _ \flat (\zeta )^{3/4}\Vert \mathbf{F}\Vert _{L_2(\mathcal{O})},
\\
\label{cor2}
&\Vert \mathbf{p}_\varepsilon -\widetilde{g}^\varepsilon S_\varepsilon b(\mathbf{D})\widetilde{\mathbf{u}}_0-g^\varepsilon \bigl(b(\mathbf{D})\widetilde{\Lambda}\bigr)^\varepsilon S_\varepsilon\widetilde{\mathbf{u}}_0
\Vert _{L_2(\mathcal{O})}
\leqslant
\widetilde{C}_{28}\varepsilon ^{1/2}\varrho _ \flat (\zeta )^{3/4}\Vert \mathbf{F}\Vert _{L_2(\mathcal{O})}.
\end{align}
The constants $C_{28}$ and $\widetilde{C}_{28}$ depend only on the initial data~\eqref{problem data} and the domain $\mathcal{O}$.
\end{corollary}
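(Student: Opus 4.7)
The plan is to deduce the corollary from Theorem~\ref{Theorem Dr appr} together with rough a~priori bounds on $\mathbf{u}_\varepsilon$, $\mathbf{u}_0$, the corrector $K_D(\varepsilon;\zeta)$, and the flux. The key algebraic observation is that $\varrho_\flat(\zeta)\geqslant 1$ always, so the task reduces to combining the two-term estimate from Theorem~\ref{Theorem Dr appr} with a crude bound in the regime where $\varepsilon\vert 1+\zeta\vert^{1/2}\varrho_\flat(\zeta)^{1/4}$ fails to be bounded.

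First I would record the rough estimate
\begin{equation*}
\Vert\mathbf{u}_\varepsilon-\mathbf{v}_\varepsilon\Vert_{H^1(\mathcal{O})}
\leqslant \Vert\mathbf{u}_\varepsilon\Vert_{H^1(\mathcal{O})}
+\Vert\mathbf{u}_0\Vert_{H^1(\mathcal{O})}
+\varepsilon\Vert K_D(\varepsilon;\zeta)\mathbf{F}\Vert_{H^1(\mathcal{O})},
\end{equation*}
and apply \eqref{****.2}, \eqref{****.4}, and \eqref{Dr ots K H1} to obtain a bound of the form
\begin{equation*}
\Vert\mathbf{u}_\varepsilon-\mathbf{v}_\varepsilon\Vert_{H^1(\mathcal{O})}
\leqslant C\bigl((1+\vert\zeta\vert)^{-1/2}+\varepsilon\bigr)\varrho_\flat(\zeta)^{1/2}\Vert\mathbf{F}\Vert_{L_2(\mathcal{O})}.
\end{equation*}
This, together with \eqref{Th dr appr 2 solutions}, gives us two competing bounds for the same quantity.

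Next I would split into two cases according to the size of the quantity $\tau(\varepsilon,\zeta):=\varepsilon^{2}(1+\vert\zeta\vert)^{2}\varrho_\flat(\zeta)$. If $\tau(\varepsilon,\zeta)\leqslant 1$, I invoke \eqref{Th dr appr 2 solutions} and write
\begin{equation*}
\varepsilon\vert 1+\zeta\vert^{1/2}\varrho_\flat(\zeta)
=\varepsilon^{1/2}\varrho_\flat(\zeta)^{3/4}\,\tau(\varepsilon,\zeta)^{1/4}
\leqslant \varepsilon^{1/2}\varrho_\flat(\zeta)^{3/4},
\end{equation*}
while $\varepsilon^{1/2}\varrho_\flat(\zeta)^{1/2}\leqslant\varepsilon^{1/2}\varrho_\flat(\zeta)^{3/4}$ since $\varrho_\flat\geqslant 1$. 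If $\tau(\varepsilon,\zeta)>1$, I invoke the rough bound, rewritten as
\begin{equation*}
C\varepsilon^{1/2}\varrho_\flat(\zeta)^{3/4}\,\varrho_\flat(\zeta)^{-1/4}
\bigl(\varepsilon^{-1/2}(1+\vert\zeta\vert)^{-1/2}+\varepsilon^{1/2}\bigr),
\end{equation*}
and observe that $\varepsilon^{1/2}\leqslant 1$, while the condition $\tau(\varepsilon,\zeta)>1$ gives $\varrho_\flat(\zeta)^{-1/4}\varepsilon^{-1/2}(1+\vert\zeta\vert)^{-1/2}<1$. In either case the result is bounded by a constant multiple of $\varepsilon^{1/2}\varrho_\flat(\zeta)^{3/4}\Vert\mathbf{F}\Vert_{L_2(\mathcal{O})}$, which yields \eqref{cor1}.

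For \eqref{cor2} the plan is identical: combine the sharp two-term bound~\eqref{Th dr appr fluxes} with a rough bound obtained by estimating the three summands separately, using \eqref{b_l <=} together with \eqref{****.2} for $\Vert\mathbf{p}_\varepsilon\Vert_{L_2(\mathcal{O})}$, and Proposition~\ref{Proposition f^eps S_eps} together with \eqref{****.4}, \eqref{PO} for the two terms involving $S_\varepsilon\widetilde{\mathbf{u}}_0$. The same case analysis with respect to $\tau(\varepsilon,\zeta)$ then delivers the claimed bound $\widetilde{C}_{28}\varepsilon^{1/2}\varrho_\flat(\zeta)^{3/4}\Vert\mathbf{F}\Vert_{L_2(\mathcal{O})}$. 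The main obstacle is purely bookkeeping: making sure that in the ``rough'' regime ($\tau(\varepsilon,\zeta)>1$) every summand of the rough bound is indeed controlled by $\varepsilon^{1/2}\varrho_\flat(\zeta)^{3/4}$ with an absolute constant, which is the content of the elementary inequalities above.
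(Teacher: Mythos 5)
Your argument for \eqref{cor1} is correct and is essentially the paper's own proof: the rough bound you assemble from \eqref{****.2}, \eqref{****.4}, and \eqref{Dr ots K H1} is exactly the paper's estimate \eqref{dr alppr corr grubo} written in terms of solutions, and your dichotomy in $\tau(\varepsilon,\zeta)=\varepsilon^2(1+\vert\zeta\vert)^2\varrho_\flat(\zeta)$ is the same case split the paper performs with the condition $\vert 1+\zeta\vert^{1/2}\varrho_\flat(\zeta)^{1/4}\leqslant\varepsilon^{-1/2}$ (your displayed ``equality'' $\varepsilon\vert 1+\zeta\vert^{1/2}\varrho_\flat(\zeta)=\varepsilon^{1/2}\varrho_\flat(\zeta)^{3/4}\tau^{1/4}$ is really a $\leqslant$, since you defined $\tau$ with $1+\vert\zeta\vert$ rather than $\vert 1+\zeta\vert$, but the inequality goes the right way, so this is harmless); the needed fact $\varrho_\flat(\zeta)\geqslant 1$ is immediate from \eqref{rho(zeta)}. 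The only place you diverge is \eqref{cor2}: the paper does not redo the dichotomy for the flux, but simply deduces \eqref{cor2} from the already-proved \eqref{cor1} via \eqref{st} and \eqref{9.48}, so that the only extra term is $\gamma_{35}\gamma_{36}\varepsilon\varrho_\flat(\zeta)^{1/2}\leqslant\gamma_{35}\gamma_{36}\varepsilon^{1/2}\varrho_\flat(\zeta)^{3/4}$; your plan of combining \eqref{Th dr appr fluxes} with a separately derived rough flux bound (via \eqref{b_l <=}, \eqref{****.2}, Proposition~\ref{Proposition f^eps S_eps}, \eqref{****.4}, \eqref{PO}) also works, at the cost of repeating the case analysis and estimating the smoothed corrector terms once more. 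Both routes are valid; the paper's is marginally shorter because it reuses \eqref{cor1}, while yours is self-contained for each estimate.
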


\begin{proof}
 Relations \eqref{****.2}, \eqref{****.4}, and  \eqref{Dr ots K H1} yield the following rough estimate:
\begin{equation}
\label{dr alppr corr grubo}
\begin{split}
\Vert &(B_{D,\varepsilon}-\zeta Q_0^\varepsilon )^{-1}-(B_D^0-\zeta \overline{Q_0})^{-1}-\varepsilon K_D(\varepsilon ;\zeta )\Vert _{L_2(\mathcal{O})\rightarrow H^1(\mathcal{O})}\\
&\leqslant \gamma _{37}(\varepsilon +  (1+ |\zeta|) ^{-1/2} )\varrho_\flat(\zeta)^{1/2};\quad
\gamma _{37}:= 2\mathcal{C}_3+\mathcal{C}_6.
\end{split}
\end{equation}
For $\vert 1+\zeta\vert ^{1/2}\varrho _\flat (\zeta)^{1/4}\leqslant\varepsilon ^{-1/2}$ we use \eqref{dr alppr so srezkoy} and note that
$\varepsilon\vert 1+\zeta\vert ^{1/2}\varrho _\flat (\zeta)\leqslant  \varepsilon^{1/2}\varrho _\flat (\zeta)^{3/4}.$
For $\vert 1+\zeta\vert ^{1/2}\varrho _\flat (\zeta)^{1/4}>\varepsilon ^{-1/2}$ we apply \eqref{dr alppr corr grubo} and take the inequality
$(1+ |\zeta|) ^{-1/2}\varrho _\flat (\zeta)^{1/2} <  \varepsilon^{1/2}\varrho _\flat (\zeta)^{3/4}$ into account.
 This yields \eqref{cor1} with $C_{28}:=\max
\lbrace \gamma _{32}+\gamma _{33}+\gamma _{34};2\gamma _{37}\rbrace$.

Relations \eqref{st}, \eqref{9.48},  and \eqref{cor1} imply \eqref{cor2} with
$\widetilde{C}_{28} :=(d\alpha _1)^{1/2}\Vert g\Vert _{L_\infty}C_{28}+\gamma _{35} \gamma _{36}$.
\end{proof}

\subsection{Removal of $S_\varepsilon$}

\begin{theorem}
\label{Theorem Dr appr no S_eps}
Suppose that the assumptions of Theorem~\textnormal{\ref{Theorem Dr appr}} are satisfied.
Suppose also that Conditions~\textnormal{\ref{Condition Lambda in L infty}} and~\textnormal{\ref{Condition tilde Lambda in Lp}}
hold. Let $K_D^0(\varepsilon ;\zeta)$ and $G^0_D(\varepsilon ;\zeta)$ be defined by~\eqref{K_D^0} and \eqref{G3(eps;zeta)}, respectively.
Then for $0<\varepsilon\leqslant \varepsilon _\flat$ and $\zeta\in\mathbb{C}\setminus[c_\flat ,\infty)$ we have
\begin{align}
\label{Th dr appr no S_eps 5}
\begin{split}
\Vert &(B_{D,\varepsilon}-\zeta Q_0^\varepsilon )^{-1}- (B_D^0-\zeta\overline{Q_0})^{-1} - \varepsilon K_D^0(\varepsilon; \zeta) \Vert _{L_2(\mathcal{O})\rightarrow H^1(\mathcal{O})}
\\
&\leqslant C_{29} \bigl(\varepsilon ^{1/2}\varrho _\flat (\zeta)^{1/2}+ \varepsilon
\vert 1+\zeta \vert ^{1/2} \varrho_\flat(\zeta) \bigr),
\end{split}
\\
\label{Th dr appr no S_eps 6}
\begin{split}
\Vert &g^\varepsilon b(\mathbf{D})(B_{D,\varepsilon}-\zeta Q_0^\varepsilon )^{-1}-G^0_D(\varepsilon;\zeta)\Vert _{L_2(\mathcal{O})\rightarrow L_2(\mathcal{O})}
\leqslant
\widetilde{C}_{29} \bigl(\varepsilon ^{1/2}\varrho _\flat (\zeta)^{1/2}+ \varepsilon
\vert 1+\zeta \vert ^{1/2} \varrho_\flat(\zeta) \bigr).
\end{split}
\end{align}
The constants $C_{29}$ and $\widetilde{C}_{29}$ depend only on the initial data~\eqref{problem data}, the domain~$\mathcal{O}$,
and also on $p$ and the norms $\Vert \Lambda\Vert _{L_\infty}$, $\Vert \widetilde{\Lambda}\Vert _{L_p(\Omega)}$.
\end{theorem}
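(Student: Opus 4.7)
\medskip
\noindent\textbf{Proof proposal.}
The plan is to reduce Theorem~\ref{Theorem Dr appr no S_eps} to the already established Theorem~\ref{Theorem Dr appr} by controlling the difference between the two correctors $K_D(\varepsilon;\zeta)$ and $K_D^0(\varepsilon;\zeta)$. Writing out the definitions \eqref{K_D(eps,zeta)} and \eqref{K_D^0}, we see that
\begin{equation*}
\varepsilon\bigl(K_D^0(\varepsilon;\zeta)-K_D(\varepsilon;\zeta)\bigr)
= \varepsilon R_{\mathcal{O}}\bigl([\Lambda^\varepsilon]b(\mathbf{D})+[\widetilde{\Lambda}^\varepsilon]\bigr)(I-S_\varepsilon)P_{\mathcal{O}}(B_D^0-\zeta\overline{Q_0})^{-1}.
\end{equation*}
Under Conditions~\ref{Condition Lambda in L infty} and~\ref{Condition tilde Lambda in Lp}, Lemmas~\ref{Lemma Lambda (S-I)} and~\ref{Lemma tilde Lambda(S-I)} give
\begin{equation*}
\Vert[\Lambda^\varepsilon]b(\mathbf{D})(S_\varepsilon-I)\Vert_{H^2(\mathbb{R}^d)\to H^1(\mathbb{R}^d)}\le\mathfrak{C}_\Lambda,\qquad
\Vert[\widetilde{\Lambda}^\varepsilon](S_\varepsilon-I)\Vert_{H^2(\mathbb{R}^d)\to H^1(\mathbb{R}^d)}\le\mathfrak{C}_{\widetilde{\Lambda}}.
\end{equation*}
Combined with the continuity \eqref{PO} of $P_{\mathcal{O}}$ from $H^2({\mathcal O})$ to $H^2(\mathbb{R}^d)$ and the $(L_2\to H^2)$-bound \eqref{****.5} for the homogenized resolvent, this yields
\begin{equation*}
\varepsilon\Vert K_D^0(\varepsilon;\zeta)-K_D(\varepsilon;\zeta)\Vert_{L_2(\mathcal{O})\to H^1(\mathcal{O})}
\le(\mathfrak{C}_\Lambda+\mathfrak{C}_{\widetilde{\Lambda}})C_{\mathcal{O}}^{(2)}{\mathcal C}_4\,\varepsilon\,\varrho_\flat(\zeta)^{1/2}.
\end{equation*}
Since $\varepsilon\le\varepsilon^{1/2}$ (as $0<\varepsilon\le 1$), the right-hand side is dominated by the first summand in the right-hand side of \eqref{Th dr appr 2}, so the triangle inequality applied to \eqref{Th dr appr 2} produces \eqref{Th dr appr no S_eps 5} with $C_{29}:=C_{27}+(\mathfrak{C}_\Lambda+\mathfrak{C}_{\widetilde{\Lambda}})C_{\mathcal{O}}^{(2)}{\mathcal C}_4$.

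For the flux estimate \eqref{Th dr appr no S_eps 6} I would reproduce the scheme used in the proof of \eqref{Th no S-eps fluxes 3}. Applying $g^\varepsilon b(\mathbf{D})$ to \eqref{Th dr appr no S_eps 5} and using \eqref{b_l <=} together with \eqref{<b^*b<} controls
$\Vert g^\varepsilon b(\mathbf{D})(B_{D,\varepsilon}-\zeta Q_0^\varepsilon)^{-1}-g^\varepsilon b(\mathbf{D})(I+\varepsilon\Lambda^\varepsilon b(\mathbf{D})+\varepsilon\widetilde{\Lambda}^\varepsilon)(B_D^0-\zeta\overline{Q_0})^{-1}\Vert_{L_2\to L_2}$ by the desired right-hand side. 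It remains to expand
\begin{equation*}
\varepsilon g^\varepsilon b(\mathbf{D})\bigl(\Lambda^\varepsilon b(\mathbf{D})+\widetilde{\Lambda}^\varepsilon\bigr)
=g^\varepsilon\bigl(b(\mathbf{D})\Lambda\bigr)^\varepsilon b(\mathbf{D})+g^\varepsilon\bigl(b(\mathbf{D})\widetilde{\Lambda}\bigr)^\varepsilon+\varepsilon\sum_{l=1}^d g^\varepsilon b_l\bigl(\Lambda^\varepsilon b(\mathbf{D})D_l+\widetilde{\Lambda}^\varepsilon D_l\bigr),
\end{equation*}
identify the first two terms with $G_D^0(\varepsilon;\zeta)$ via \eqref{tilde g}, and estimate the third ``commutator'' term of order $\varepsilon$ by Condition~\ref{Condition Lambda in L infty} and Lemma~\ref{Lemma Lambda in Lp H1->L2} (using Condition~\ref{Condition tilde Lambda in Lp}), controlling $\Vert(B_D^0-\zeta\overline{Q_0})^{-1}\Vert_{L_2\to H^2}$ by \eqref{****.5}. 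This produces a bound of the shape $O(\varepsilon\varrho_\flat(\zeta)^{1/2})$, which again fits into the right-hand side of \eqref{Th dr appr no S_eps 6}.

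The only non-routine issue is bookkeeping of the spectral parameter: both the $\Lambda^\varepsilon$-term and the $\widetilde{\Lambda}^\varepsilon$-term in the commutator require an $H^2$-norm of $(B_D^0-\zeta\overline{Q_0})^{-1}\mathbf{F}$, and the sharpest bound available in the present regime is \eqref{****.5}, which carries the factor $\varrho_\flat(\zeta)^{1/2}$ (not $|\zeta|^{-1}$, as would be the case for large $|\zeta|$). This is exactly why the term $\varepsilon\varrho_\flat(\zeta)^{1/2}$ appears and is acceptable; the verification that it indeed gets absorbed into $\varepsilon^{1/2}\varrho_\flat(\zeta)^{1/2}$ is the principal (though elementary) point to check when assembling the final constant $\widetilde{C}_{29}$.
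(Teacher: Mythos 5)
Your proposal is correct and follows essentially the same route as the paper: estimate \eqref{Th dr appr no S_eps 5} is obtained by combining Lemmas~\ref{Lemma Lambda (S-I)} and~\ref{Lemma tilde Lambda(S-I)} with \eqref{Th dr appr 2} and the $H^2$-bound \eqref{9.48} (i.e.\ \eqref{PO} plus \eqref{****.5}), yielding exactly the paper's constant $C_{29}=C_{27}+(\mathfrak{C}_\Lambda+\mathfrak{C}_{\widetilde{\Lambda}})C_{\mathcal{O}}^{(2)}\mathcal{C}_4$, and the flux estimate is derived by the same expansion \eqref{d-vo Th 7/6 4} with the commutator term controlled via \eqref{****.5}, as in \eqref{9-star}. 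Your closing remark about the $\varepsilon\varrho_\flat(\zeta)^{1/2}$ term being absorbed into $\varepsilon^{1/2}\varrho_\flat(\zeta)^{1/2}$ is precisely the bookkeeping the paper performs implicitly.
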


\begin{proof}
Applying Lemmas~\ref{Lemma Lambda (S-I)} and~\ref{Lemma tilde Lambda(S-I)} together with \eqref{Th dr appr 2} and \eqref{9.48}, we obtain
\eqref{Th dr appr no S_eps 5} with   $C_{29} :=C_{27}+(\mathfrak{C}_\Lambda +\mathfrak{C}_{\widetilde{\Lambda}})\gamma _{36}$.

  Let us check~\eqref{Th dr appr no S_eps 6}. By \eqref{b_l <=} and \eqref{Th dr appr no S_eps 5},
\begin{equation}
\label{ex 9.45}
\begin{split}
\Vert & g^\varepsilon b(\mathbf{D})(B_{D,\varepsilon}-\zeta Q_0^\varepsilon )^{-1}- g^\varepsilon b(\mathbf{D})(I+\varepsilon \Lambda ^\varepsilon b(\mathbf{D})+\varepsilon \widetilde{\Lambda}^\varepsilon)(B_D^0-\zeta\overline{Q_0})^{-1}\Vert _{L_2({\mathcal O})\rightarrow L_2({\mathcal O})}
\\
&\leqslant (d\alpha _1)^{1/2}\Vert g\Vert _{L_\infty}C_{29}
\bigl(\varepsilon ^{1/2}\varrho _\flat (\zeta)^{1/2}+ \varepsilon
\vert 1+\zeta \vert ^{1/2} \varrho_\flat(\zeta) \bigr).
\end{split}
\end{equation}
Relation~\eqref{d-vo Th 7/6 4} remains true.
By analogy with \eqref{d-vo Th 7/6 5} and \eqref{d-vo Th 7/6 5a}, using \eqref{****.5}, we obtain
\begin{equation}
\label{9-star}
\begin{split}
\varepsilon&\Bigl\Vert \sum _{l=1}^d g^\varepsilon b_l\bigl(\Lambda^\varepsilon b(\mathbf{D})D_l+\widetilde{\Lambda}^\varepsilon D_l\bigr)(B^0_D-\zeta \overline{Q_0})^{-1}\Bigr\Vert _{L_2(\mathcal{O})\rightarrow L_2(\mathcal{O})}
\\
&\leqslant \gamma_{38} \varepsilon \Vert (B_D^0-\zeta \overline{Q_0})^{-1}\Vert _{L_2(\mathcal{O})\rightarrow H^2(\mathcal{O})}
\leqslant \gamma_{38} {\mathcal C}_4 \varepsilon \varrho _\flat (\zeta )^{1/2},
\end{split}
\end{equation}
where
$\gamma_{38} :=\Vert g\Vert _{L_\infty}\bigl(\alpha_1 d \Vert \Lambda\Vert _{L_\infty}+(\alpha _1 d)^{1/2}\Vert \widetilde{\Lambda}\Vert _{L_p(\Omega)}C(\widehat{q},\Omega)C_\mathcal{O}^{(1)}\bigr)$.
Now, relations \eqref{d-vo Th 7/6 4},  \eqref{ex 9.45}, and \eqref{9-star} imply \eqref{Th dr appr no S_eps 6} with
$\widetilde{C}_{29} :=(d\alpha _1)^{1/2}\Vert g\Vert _{L_\infty}C_{29}+\gamma _{38} {\mathcal C}_4$.
\end{proof}

\begin{remark}
If only Condition~\textnormal{\ref{Condition Lambda in L infty} (}respectively, Condition~\textnormal{\ref{Condition tilde Lambda in Lp})} is satisfied, then
the smoothing operator $S_\varepsilon$ can be removed only in the term of the corrector containing $\Lambda^\varepsilon$
\textnormal{(}respectively, $\widetilde{\Lambda}^\varepsilon$\textnormal{)}.
\end{remark}

\subsection{Approximation with the boundary layer correction term}

Now, using Theorem~\ref{Theorem with Diriclet corrector},
 we obtain ``another'' approximation with the boundary layer correction term.

\begin{theorem}
Suppose that $\mathcal{O}\subset\mathbb{R}^d$ is a bounded domain of class $C^{1,1}$.
Let $0< \eps_\flat \le 1$. Suppose that $c_\flat \ge 0$ is subject to Condition~\textnormal{\ref{cond_eps_flat}}.
Let $0< \eps \le \eps_\flat$ and $\zeta\in\mathbb{C}\setminus [c_\flat ,\infty)$.
Let $\mathbf{u}_\varepsilon$ be the solution of problem~\eqref{Dirichlet problem},
and let $\mathbf{v}_\varepsilon$ be defined by~\eqref{v_eps}, \eqref{v_eps=}.
Let $\mathbf{w}_\varepsilon$ be the solution of problem~\eqref{w_eps problem}.
Suppose that $K_D(\varepsilon ;\zeta)$ and $W_D(\varepsilon;\zeta)$ are given by \eqref{K_D(eps,zeta)} and~\eqref{W_D(eps;zeta)}, respectively.
We have
\begin{equation*}
\Vert \mathbf{u}_\varepsilon -\mathbf{v}_\varepsilon +\mathbf{w}_\varepsilon \Vert _{H^1(\mathcal{O})}
\leqslant (C_{30}+C_{31}\vert 1+\zeta \vert ^{1/2})\varepsilon \varrho _\flat (\zeta)\Vert \mathbf{F}\Vert _{L_2(\mathcal{O})}.
\end{equation*}
In operator terms,
\begin{equation}
\label{dr appr with W_D(eps;zeta)}
\begin{split}
\Vert &(B_{D,\varepsilon}-\zeta Q_0^\varepsilon )^{-1}-(B_D^0-\zeta\overline{Q_0})^{-1}-\varepsilon K_D(\varepsilon ;\zeta)+\varepsilon W_D(\varepsilon ;\zeta)\Vert _{L_2(\mathcal{O})\rightarrow H^1 (\mathcal{O})}
\\
&\leqslant (C_{30}+C_{31}\vert 1+ \zeta \vert ^{1/2} )\varepsilon \varrho _\flat (\zeta).
\end{split}
\end{equation}
The constants~$C_{30}$ and $C_{31}$ depend only on the initial data~\eqref{problem data} and the domain~$\mathcal{O}$. If the matrix-valued function $Q_0(\mathbf{x})$ is constant, then $C_{31}=0$.
\end{theorem}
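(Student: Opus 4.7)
The key simplification I would exploit is that the operator inside the norm in \eqref{dr appr with W_D(eps;zeta)} collapses neatly. Setting $R_\varepsilon(\zeta):=(B_{D,\varepsilon}-\zeta Q_0^\varepsilon)^{-1}$, $R_0(\zeta):=(B_D^0-\zeta\overline{Q_0})^{-1}$, and
$$\mathcal{D}(\varepsilon;\zeta):=R_\varepsilon(\zeta)-R_0(\zeta)-\varepsilon K_D(\varepsilon;\zeta)+\varepsilon W_D(\varepsilon;\zeta),$$
the explicit formula \eqref{W_D(eps;zeta)} for $W_D$ gives at once $\mathcal{D}(\varepsilon;\zeta)=R_\varepsilon(\zeta)-R_0(\zeta)+\varepsilon R_\varepsilon(\zeta)T(\varepsilon;\zeta)$. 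At the reference point $\zeta=-1$ (where $c(\phi)=1$ and $|\zeta|\ge 1$) Theorem~\ref{Theorem with Diriclet corrector} supplies the base estimate $\|\mathcal{D}(\varepsilon;-1)\|_{L_2(\mathcal{O})\to H^1(\mathcal{O})}\le C_7\varepsilon$ for $0<\varepsilon\le 1$. My plan is to transport this to arbitrary $\zeta\in\mathbb{C}\setminus[c_\flat,\infty)$ via resolvent identities, in strict analogy with the passage from Theorems~\ref{Theorem Dirichlet L2}--\ref{Theorem Dirichlet H1} to Theorem~\ref{Theorem Dr appr}.

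The central preparatory identity I would verify is
$$T(\varepsilon;\zeta)-T(\varepsilon;-1)(B_D^0+\overline{Q_0})R_0(\zeta)=(1+\zeta)Q_0^\varepsilon K_D(\varepsilon;\zeta),$$
which follows from \eqref{**.4} once one notes that $K_D(\varepsilon;-1)(B_D^0+\overline{Q_0})R_0(\zeta)=K_D(\varepsilon;\zeta)$ (immediate from \eqref{K_D(eps,zeta)} combined with $R_0(-1)(B_D^0+\overline{Q_0})=I$). Substituting this together with the resolvent identity \eqref{tozd gen res} for $R_\varepsilon(\zeta)-R_0(\zeta)$, and using $(B_{D,\varepsilon}+Q_0^\varepsilon)R_\varepsilon(-1)=I$ to clear mixed terms, I would obtain the master decomposition
\begin{align*}
\mathcal{D}(\varepsilon;\zeta)&=R_\varepsilon(\zeta)(B_{D,\varepsilon}+Q_0^\varepsilon)\mathcal{D}(\varepsilon;-1)(B_D^0+\overline{Q_0})R_0(\zeta)\\
&\quad+(1+\zeta)R_\varepsilon(\zeta)(Q_0^\varepsilon-\overline{Q_0})R_0(\zeta)+(1+\zeta)\varepsilon R_\varepsilon(\zeta)Q_0^\varepsilon K_D(\varepsilon;\zeta).
\end{align*}

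Next I would estimate the three summands in the $L_2\to H^1$ operator norm separately. Using the splitting $R_\varepsilon(\zeta)(B_{D,\varepsilon}+Q_0^\varepsilon)=I+(1+\zeta)R_\varepsilon(\zeta)Q_0^\varepsilon$, the first summand is bounded by combining $\|\mathcal{D}(\varepsilon;-1)\|_{L_2\to H^1}\le C_7\varepsilon$ and $\|\mathcal{D}(\varepsilon;-1)\|_{L_2\to L_2}\le C_{10}\varepsilon$ (the latter from \eqref{u_eps - V_eps +W-eps in L_2} at $\zeta=-1$) with the resolvent bound \eqref{****.2} and $\|(B_D^0+\overline{Q_0})R_0(\zeta)\|_{L_2\to L_2}\le(c_\flat+2)\|f\|_{L_\infty}\|f^{-1}\|_{L_\infty}\varrho_\flat(\zeta)^{1/2}$ (coming from \eqref{sup (x+1)/|x-zeta|}). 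The second summand I would handle through Lemma~\ref{Lemma Q0-overline Q0} together with the duality estimate $\|R_\varepsilon(\zeta)\|_{H^{-1}\to H^1}\le c_4^2(c_\flat+2)\varrho_\flat(\zeta)^{1/2}$ obtained as in \eqref{dv2}. The third summand is controlled directly by Lemmas~\ref{Lemma Dr ots solutions} and~\ref{Lemma Dr ots corrector}. The elementary inequalities $\varrho_\flat(\zeta)\ge 1$, $|1+\zeta|(1+|\zeta|)^{-1/2}\le|1+\zeta|^{1/2}$, and $|1+\zeta|/(1+|\zeta|)\le 1$ then let me reassemble the three contributions into the required bound $(C_{30}+C_{31}|1+\zeta|^{1/2})\varepsilon\varrho_\flat(\zeta)$.

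The principal obstacle will be the clean verification of the $T$-identity above, where the $\zeta$-dependence entering through the corrector $K_D(\varepsilon;\cdot)$ in the bilinear functional \eqref{**.4} must be tracked carefully; once this is in hand, the rest is a routine, if lengthy, bookkeeping adaptation of the proof of Theorem~\ref{Theorem Dr appr}. The final sharpening that $C_{31}$ can be chosen to vanish when $Q_0$ is constant will require an additional argument: in that case the middle summand of the master decomposition vanishes identically, and one must reorganize the first and third summands—exploiting that the constant matrix $\overline{Q_0}$ then commutes with every operator in sight—so as to absorb the remaining $(1+\zeta)$-prefactors into $\varrho_\flat(\zeta)$ without producing any genuine $|1+\zeta|^{1/2}$-growth.
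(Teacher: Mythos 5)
Your overall route coincides with the paper's: take Theorem~\ref{Theorem with Diriclet corrector} (together with the $L_2$-estimate \eqref{u_eps - V_eps +W-eps in L_2}) at the reference point $\zeta=-1$ and transfer to general $\zeta\in\mathbb{C}\setminus[c_\flat,\infty)$ by resolvent-type identities, exactly as in the passage to Theorem~\ref{Theorem Dr appr}. One real difference is your bookkeeping of $T$: the identity $T(\varepsilon;\zeta)-T(\varepsilon;-1)(B_D^0+\overline{Q_0})(B_D^0-\zeta\overline{Q_0})^{-1}=(1+\zeta)Q_0^\varepsilon K_D(\varepsilon;\zeta)$ is indeed what the definition \eqref{**.4}--\eqref{**.5} gives, whereas the paper's proof states the relation without the extra term, so its decomposition \eqref{9.52a} has one summand fewer than your master decomposition. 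Your additional summand $(1+\zeta)\varepsilon(B_{D,\varepsilon}-\zeta Q_0^\varepsilon)^{-1}Q_0^\varepsilon K_D(\varepsilon;\zeta)$ is harmless: by \eqref{****.2} and \eqref{Dr ots K L2} it is $O(\varepsilon\varrho_\flat(\zeta))$ because $|1+\zeta|(1+|\zeta|)^{-1}\leqslant 1$, so for the general estimate \eqref{dr appr with W_D(eps;zeta)} your plan works and is, if anything, more careful on this point.

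The genuine gap is in the final assertion that $C_{31}=0$ when $Q_0$ is constant. In your first summand you split $(B_{D,\varepsilon}-\zeta Q_0^\varepsilon)^{-1}(B_{D,\varepsilon}+Q_0^\varepsilon)=I+(1+\zeta)(B_{D,\varepsilon}-\zeta Q_0^\varepsilon)^{-1}Q_0^\varepsilon$; the cross term is then only bounded by $C\varepsilon\varrho_\flat(\zeta)\,|1+\zeta|(1+|\zeta|)^{-1/2}$, i.e.\ it produces $|1+\zeta|^{1/2}$-growth which does not disappear when $Q_0$ is constant, so your decomposition alone cannot give $C_{31}=0$. The fix you sketch, that the constant matrix $\overline{Q_0}$ ``commutes with every operator in sight'', is not available: a constant matrix $Q_0$ need not commute with $B_{D,\varepsilon}$ or its resolvent, and even if it did, commutation would not remove the $(1+\zeta)$ prefactor. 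The paper avoids the splitting altogether: since the operator estimated at $\zeta=-1$ takes values in $H^1_0(\mathcal{O};\mathbb{C}^n)$, one bounds $(B_{D,\varepsilon}-\zeta Q_0^\varepsilon)^{-1}(B_{D,\varepsilon}+Q_0^\varepsilon)$ directly as a map from $H^1_0$ to $H^1$ with norm at most $c_4c_3^{1/2}(c_\flat+2)\varrho_\flat(\zeta)^{1/2}$, using \eqref{H^1-norm <= BDeps^1/2}, the substitution $\widetilde{B}_{D,\varepsilon}=(f^\varepsilon)^*B_{D,\varepsilon}f^\varepsilon$, and the spectral bound \eqref{sup (x+1)/|x-zeta|}; this is the estimate leading to \eqref{J final estimate}. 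With that argument the only source of $|1+\zeta|^{1/2}$ is the $(Q_0^\varepsilon-\overline{Q_0})$ term \eqref{L3<=}, which vanishes for constant $Q_0$. You need to replace the splitting of the first summand by this (or an equivalent) estimate to obtain the full statement.
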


\begin{remark}
Taking $\varepsilon_\flat=1$ and $c_\flat =0$, for $\vert\zeta\vert\geqslant 1$ we have $\varrho _\flat (\zeta )=c(\phi)^2$. So, if $Q_0(\mathbf{x})$ is constant, then $C_{31}=0$ and estimate \eqref{dr appr with W_D(eps;zeta)} improves inequality \eqref{u_eps - V_eps +W-eps in H1 in operator terms} with respect to $\phi$.
\end{remark}

\begin{proof}
Using estimate~\eqref{u_eps - V_eps +W-eps in H1 in operator terms}
with $\zeta = -1$ and taking~\eqref{W_D(eps;zeta)} into account, we obtain
\begin{equation}
\label{th discrepancy in -1}
\begin{split}
\Vert (B_{D,\varepsilon}+Q_0^\varepsilon)^{-1}-(B_D^0+\overline{Q_0})^{-1}+\varepsilon (B_{D,\varepsilon}+Q_0^\varepsilon)^{-1} T(\varepsilon ;-1)\Vert _{L_2(\mathcal{O})\rightarrow H^1 (\mathcal{O})}
\leqslant C_7\varepsilon.
\end{split}
\end{equation}

Next, from the definition of~$T(\varepsilon;\zeta)$ (see~\eqref{**.4}, \eqref{**.5}) it is clear that
$$
T(\varepsilon ;-1)(B_D^0+\overline{Q_0})(B_D^0-\zeta\overline{Q_0})^{-1}=T(\varepsilon;\zeta).
$$
Combining this identity and \eqref{W_D(eps;zeta)}, it is easy to check that
\begin{equation}
\label{9.52a}
\begin{split}
(&B_{D,\varepsilon}-\zeta Q_0^\varepsilon )^{-1}-(B_D^0-\zeta \overline{Q_0})^{-1}-\varepsilon K_D(\varepsilon;\zeta)+\varepsilon W_D(\varepsilon ;\zeta)
\\
&=(B_{D,\varepsilon}-\zeta Q_0^\varepsilon )^{-1}-(B_D^0-\zeta \overline{Q_0})^{-1}+\varepsilon (B_{D,\varepsilon}-\zeta Q_0^\varepsilon )^{-1}T(\varepsilon;\zeta)
\\
&=(B_{D,\varepsilon}-\zeta Q_0^\varepsilon)^{-1} (B_{D,\varepsilon}+ Q_0^\varepsilon)
\left(
(B_{D,\varepsilon}+Q_0^\varepsilon)^{-1}-(B_D^0+\overline{Q_0})^{-1}+\varepsilon (B_{D,\varepsilon}+Q_0^\varepsilon)^{-1}T(\varepsilon ;-1)
\right)
\\
&\times
(B_D^0+\overline{Q_0})(B_D^0-\zeta \overline{Q_0})^{-1}
+(\zeta +1)(B_{D,\varepsilon}-\zeta Q_0^\varepsilon )^{-1}(Q^\varepsilon _0 -\overline{Q_0})(B_D^0-\zeta \overline{Q_0})^{-1}.
\end{split}
\end{equation}
Denote the first summand on the right by $\mathrm{J}(\varepsilon;\zeta)$.
Note that the second term is ${\mathcal L}_3(\varepsilon;\zeta)$; cf.~\eqref{A sec. 9}. Obviously, if $Q_0^\varepsilon(\mathbf{x})=\overline{Q_0}$, then $\mathcal{L}_3(\varepsilon;\zeta)=0$.

 From  \eqref{b_D,eps ots}, \eqref{tilde b D,eps=}, \eqref{B deps and tilde B D,eps tozd resolvent}, and \eqref{sup (x+1)/|x-zeta|} it follows that
\begin{equation*}
\begin{split}
\Vert& B_{D,\varepsilon}^{1/2}(B_{D,\varepsilon}-\zeta Q_0^\varepsilon)^{-1} (B_{D,\varepsilon}+ Q_0^\varepsilon)
\boldsymbol{\Phi}\Vert _{L_2(\mathcal{O})}
=\Vert \widetilde{B}_{D,\varepsilon}^{1/2} (\widetilde{B}_{D,\varepsilon}-\zeta I)^{-1} (\widetilde{B}_{D,\varepsilon}+ I)
(f^\varepsilon )^{-1}\boldsymbol{\Phi}\Vert _{L_2(\mathcal{O})}
\\
&=\Vert (\widetilde{B}_{D,\varepsilon}-\zeta I)^{-1} (\widetilde{B}_{D,\varepsilon}+ I)
\widetilde{B}_{D,\varepsilon}^{1/2}  (f^\varepsilon )^{-1}\boldsymbol{\Phi}\Vert _{L_2(\mathcal{O})}
\\
&\leqslant \Vert (\widetilde{B}_{D,\varepsilon}-\zeta I)^{-1}   (\widetilde{B}_{D,\varepsilon}+ I) \Vert _{L_2(\mathcal{O})\rightarrow L_2(\mathcal{O})}\Vert B_{D,\varepsilon }^{1/2}\boldsymbol{\Phi}\Vert _{L_2(\mathcal{O})}
\leqslant c_3^{1/2}(c_\flat +2)\varrho _\flat (\zeta)^{1/2} \|\boldsymbol{\Phi}\|_{H^1(\mathcal{O})}
\end{split}
\end{equation*}
for any function $\boldsymbol{\Phi}\in H^1_0(\mathcal{O};\mathbb{C}^n)$. Hence, by \eqref{H^1-norm <= BDeps^1/2} and \eqref{th discrepancy in -1},
\begin{equation*}
\begin{split}
\Vert& \mathrm{J}(\varepsilon ;\zeta)\Vert _{L_2(\mathcal{O})\rightarrow H^1(\mathcal{O})}
\leqslant c_4 \Vert B_{D,\varepsilon}^{1/2} \mathrm{J}(\varepsilon ;\zeta)\Vert _{L_2(\mathcal{O})\rightarrow L_2(\mathcal{O})}
\\
&\leqslant c_4 c_3^{1/2}(c_\flat +2)\varrho _\flat (\zeta)^{1/2} C_7 \varepsilon   \Vert ({B}_{D}^0 + \overline{Q_0}) ({B}_{D}^0 -\zeta \overline{Q_0})^{-1} \Vert _{L_2(\mathcal{O})\rightarrow L_2(\mathcal{O})}.
\end{split}
\end{equation*}
Together with \eqref{sup (x+1)/|x-zeta|} and \eqref{tozd prav}, this yields
\begin{equation}
\label{J final estimate}
\begin{split}
\Vert \mathrm{J}(\varepsilon ;\zeta)\Vert _{L_2(\mathcal{O})\rightarrow H^1(\mathcal{O})}
\leqslant C_{30}\varepsilon\varrho _\flat (\zeta);\quad C_{30}:=c_4 c_3^{1/2} C_7 (c_\flat +2)^2 \|f\|_{L_\infty} \|f^{-1}\|_{L_\infty}.
\end{split}
\end{equation}
Finally, \eqref{L3<=}, \eqref{9.52a}, and \eqref{J final estimate}
imply the required estimate \eqref{dr appr with W_D(eps;zeta)} with $C_{31}:=\gamma _{34}$.
\end{proof}

\subsection{Special cases}
The following statements can be checked similarly to Propositions~\ref{Proposition K=0} and~\ref{Proposition tilde Lambda =0 in Sec. 7}.

\begin{proposition}
Suppose that $0< \eps_\flat \le 1$ and $c_\flat$ is subject to Condition~\textnormal{\ref{cond_eps_flat}}.
Suppose that relations~\eqref{overline-g} and~\eqref{sum Dj aj =0} hold.
Then for $0<\varepsilon\leqslant \varepsilon _\flat$ and $\zeta\in\mathbb{C}\setminus[c_\flat,\infty)$  we have
\begin{equation*}
\begin{split}
\Vert &(B_{D,\varepsilon}-\zeta Q_0^\varepsilon )^{-1}-(B_D^0-\zeta\overline{Q_0})^{-1}\Vert _{L_2(\mathcal{O})\rightarrow H^1(\mathcal{O})}
\leqslant (C_{30} +C_{31} \vert 1+\zeta\vert ^{1/2}) \varepsilon  \varrho _\flat (\zeta).
\end{split}
\end{equation*}
\end{proposition}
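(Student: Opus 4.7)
My plan is to derive this proposition as an immediate consequence of the already proved estimate \eqref{dr appr with W_D(eps;zeta)}, by observing that both the corrector $K_D(\varepsilon;\zeta)$ and the boundary-layer operator $W_D(\varepsilon;\zeta)$ vanish identically under the two additional assumptions.

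First I would invoke the definitions of the periodic cell problems. Condition \eqref{overline-g}, i.~e., $b(\D)^* {\mathbf g}_j(\x) = 0$ for all columns of $g$, means $b(\D)^* g(\x) = 0$ as a mapping on constants, so in \eqref{Lambda problem} one can take $\Lambda(\x) \equiv 0$; by uniqueness of the zero-mean periodic solution, this is \emph{the} solution. Similarly, condition \eqref{sum Dj aj =0} makes the source term in \eqref{tildeLambda_problem} vanish, so $\widetilde{\Lambda}(\x) \equiv 0$. Consequently, the corrector \eqref{K_D(eps,zeta)} satisfies $K_D(\varepsilon;\zeta) = 0$ for every admissible $\varepsilon$ and $\zeta$.

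Next I would trace the consequences for $W_D(\varepsilon;\zeta)$. By \eqref{**.4}, the functional $\mathcal{I}(\varepsilon;\zeta)[\mathbf{F},\boldsymbol{\eta}]$ is built solely from $K_D(\varepsilon;\zeta)\mathbf{F}$, so it vanishes; then \eqref{**.5} forces $T(\varepsilon;\zeta) \equiv 0$, and \eqref{W_D(eps;zeta)} gives $W_D(\varepsilon;\zeta) \equiv 0$ as well. Plugging $K_D = W_D = 0$ into \eqref{dr appr with W_D(eps;zeta)} then yields exactly the stated bound, with the very same constants $C_{30}$, $C_{31}$ (and with $C_{31} = 0$ in the case of constant $Q_0$). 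There is no real obstacle here: the statement is a direct specialization of the boundary-layer approximation theorem, and the only nontrivial content is the vanishing of the periodic correctors, which is standard and already recorded implicitly in the discussion following \eqref{overline-g} and \eqref{sum Dj aj =0} in the earlier Subsection~7.3.
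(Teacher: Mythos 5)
Your proposal is correct and is essentially the paper's own argument: the paper derives this proposition by specializing the theorem containing estimate \eqref{dr appr with W_D(eps;zeta)}, noting that \eqref{overline-g} forces $\Lambda\equiv 0$ and \eqref{sum Dj aj =0} forces $\widetilde{\Lambda}\equiv 0$, hence $K_D(\varepsilon;\zeta)=0$ and the boundary-layer term (equivalently $W_D(\varepsilon;\zeta)$) vanishes, exactly as you argue. The only cosmetic difference is that you verify $W_D\equiv 0$ through $T(\varepsilon;\zeta)\equiv 0$ in \eqref{W_D(eps;zeta)}, while the paper's template (its Section~7 analogue) observes directly that $\mathbf{w}_\varepsilon$ solves \eqref{w_eps problem} with zero boundary data and is therefore zero; these are equivalent.
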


\begin{proposition}
\label{Proposition tilde Lambda =0}
Suppose that the assumptions of Theorem~\textnormal{\ref{Theorem Dr appr}} are satisfied.
Suppose that relations~\eqref{underline-g} and~\eqref{sum Dj aj =0} hold.
Then for $0<\varepsilon\leqslant \varepsilon _\flat$ and $\zeta\in\mathbb{C}\setminus [c_\flat ,\infty)$ we have
\begin{equation*}
\begin{split}
&\Vert g^\varepsilon b(\mathbf{D})(B_{D,\varepsilon}-\zeta Q_0^\varepsilon )^{-1}-g^0b(\mathbf{D})(B_D^0-\zeta\overline{Q_0})^{-1}\Vert _{L_2(\mathcal{O})\rightarrow L_2(\mathcal{O})}
\\
&\leqslant \widetilde{C}_{29}\bigl( \varepsilon ^{1/2} \varrho _\flat (\zeta)^{1/2} + \varepsilon |1+ \zeta|^{1/2} \varrho_\flat (\zeta) \bigr).
\end{split}
\end{equation*}
\end{proposition}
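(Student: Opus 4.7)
The plan is to verify that under the assumptions \eqref{underline-g} and \eqref{sum Dj aj =0}, Proposition \ref{Proposition tilde Lambda =0} reduces to a direct specialization of Theorem \ref{Theorem Dr appr no S_eps} (specifically, estimate \eqref{Th dr appr no S_eps 6}). Hence no new analytical machinery is needed; the work consists in unpacking the structural simplifications produced by the two algebraic hypotheses.

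First, I would check that both Condition \ref{Condition Lambda in L infty} and Condition \ref{Condition tilde Lambda in Lp} are automatically satisfied. The identity $g^0=\underline{g}$ is precisely case $3^\circ$ of Proposition \ref{Proposition Lambda in L infty <=}, giving $\Lambda\in L_\infty$. Moreover, by \cite[Remark~3.5]{BSu05} (as recalled in the discussion preceding Proposition \ref{Proposition tilde Lambda =0 in Sec. 7}), the matrix-valued function $\widetilde{g}(\mathbf{x})$ defined by \eqref{tilde g} is constant and coincides with $g^0=\underline{g}$. Next, condition \eqref{sum Dj aj =0} reduces the problem \eqref{tildeLambda_problem} defining $\widetilde{\Lambda}$ to the homogeneous equation $b(\mathbf{D})^*g(\mathbf{x})b(\mathbf{D})\widetilde{\Lambda}(\mathbf{x})=0$ with zero mean; by ellipticity this forces $\widetilde{\Lambda}(\mathbf{x})\equiv 0$, so Condition~\ref{Condition tilde Lambda in Lp} holds trivially.

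The second step is to specialize the operators \eqref{K_D^0} and \eqref{G3(eps;zeta)} to the present situation. Since $\widetilde{\Lambda}=0$, the corrector reduces to $K_D^0(\varepsilon;\zeta)=[\Lambda^\varepsilon]b(\mathbf{D})(B_D^0-\zeta\overline{Q_0})^{-1}$, and the flux-approximating operator becomes
\begin{equation*}
G_D^0(\varepsilon;\zeta)=\widetilde{g}^\varepsilon b(\mathbf{D})(B_D^0-\zeta\overline{Q_0})^{-1}=g^0 b(\mathbf{D})(B_D^0-\zeta\overline{Q_0})^{-1},
\end{equation*}
because $\widetilde{g}^\varepsilon(\mathbf{x})\equiv g^0$ is constant and the term $g^\varepsilon(b(\mathbf{D})\widetilde{\Lambda})^\varepsilon S_\varepsilon(B_D^0-\zeta\overline{Q_0})^{-1}$ vanishes identically.

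With these simplifications in hand, the third step is to apply estimate \eqref{Th dr appr no S_eps 6} of Theorem~\ref{Theorem Dr appr no S_eps} directly: its left-hand side coincides precisely with the quantity we wish to estimate, and its right-hand side is exactly the bound claimed in the statement (with the same constant $\widetilde{C}_{29}$). The only mildly delicate point in the whole argument is bookkeeping: one must make sure that the dependence of $\widetilde{C}_{29}$ on $\Vert\Lambda\Vert_{L_\infty}$ and $\Vert\widetilde{\Lambda}\Vert_{L_p(\Omega)}$ is admissible here. This is clear, however, because $\widetilde{\Lambda}=0$ (so the corresponding norm is zero), and because $\Vert\Lambda\Vert_{L_\infty}$ is controlled solely in terms of the initial data \eqref{problem data} and the domain $\mathcal{O}$ under case $3^\circ$ of Proposition~\ref{Proposition Lambda in L infty <=} (see also Remark~\ref{Remark scalar problem}). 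Thus no extra parameters enter, and the proof is complete.
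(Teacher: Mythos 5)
Your argument is correct and is exactly the paper's route: under \eqref{underline-g} one has $\Lambda\in L_\infty$ and $\widetilde{g}(\mathbf{x})\equiv g^0$, under \eqref{sum Dj aj =0} one has $\widetilde{\Lambda}=0$, so $G^0_D(\varepsilon;\zeta)=g^0b(\mathbf{D})(B_D^0-\zeta\overline{Q_0})^{-1}$ and the claim is estimate \eqref{Th dr appr no S_eps 6} of Theorem~\ref{Theorem Dr appr no S_eps} verbatim. One minor quibble: your closing remark that $\Vert\Lambda\Vert_{L_\infty}$ is controlled by the initial data in case $3^\circ$ is not actually asserted by Proposition~\ref{Proposition Lambda in L infty <=} (Remark~\ref{Remark scalar problem} covers a different special case), but this is immaterial since the constant $\widetilde{C}_{29}$ in the statement is already permitted to depend on $\Vert\Lambda\Vert_{L_\infty}$.
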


\subsection{Estimates in a strictly interior subdomain}

\begin{theorem}
\label{Theorem Dr appr strictly interior subdomain}
Suppose that the assumptions of Theorem~\textnormal{\ref{Theorem Dr appr}} are satisfied.
Let $\mathcal{O}'$ be a strictly interior subdomain of the domain $\mathcal{O}$. Let $\delta :=\mathrm{dist}\,\lbrace\mathcal{O}';\partial\mathcal{O}\rbrace$.  Then for $0<\varepsilon\leqslant \varepsilon _\flat$ and $\zeta \in {\mathbb C} \setminus [c_\flat, \infty)$  we have
\begin{align}
\label{Th Dr appr strictly interior subdomain solutions}
\begin{split}
\Vert &(B_{D,\varepsilon}-\zeta Q_0^\varepsilon )^{-1}-(B_D^0-\zeta\overline{Q_0})^{-1} - \varepsilon K_D(\varepsilon;\zeta)
\Vert _{L_2(\mathcal{O})\rightarrow H^1(\mathcal{O}')}
\\
&\leqslant  \varepsilon  \bigl(C_{32}' \delta^{-1} \varrho _\flat (\zeta)^{1/2} + C_{32}'' |1+\zeta|^{1/2} \varrho _\flat (\zeta)\bigr),
\end{split}
\\
\label{Th Dr appr strictly interior subdomain fluxes}
\begin{split}
\Vert& g^\varepsilon b(\mathbf{D})(B_{D,\varepsilon}-\zeta Q_0^\varepsilon )^{-1}-
G_D(\varepsilon;\zeta) \Vert _{L_2(\mathcal{O})\rightarrow L_2(\mathcal{O}')}
\\
&
\leqslant  \varepsilon  \bigl( \widetilde{C}_{32}' \delta^{-1} \varrho _\flat (\zeta)^{1/2} + \widetilde{C}_{32}'' |1+\zeta|^{1/2}
\varrho _\flat (\zeta)\bigr).
\end{split}
\end{align}
The constants $C_{32}'$, $C_{32}''$, $\widetilde{C}_{32}'$, and $\widetilde{C}_{32}''$
depend only on the initial data \eqref{problem data} and the domain $\mathcal{O}$.
\end{theorem}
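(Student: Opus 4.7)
The plan is to mimic the proof of Theorem~\ref{Theorem Dr appr}, bootstrapping from the already proved Theorem~\ref{Theorem O'} at the single spectral point $\zeta=-1$ (where $c(\phi)=1$) and transferring to general $\zeta \in \mathbb{C}\setminus[c_\flat,\infty)$ via the same resolvent identity~\eqref{A sec. 9} that was used before. The essential observation is that one factor in each summand of that decomposition is a ``$\zeta$-independent'' interior error which can be estimated by Theorem~\ref{Theorem O'}, while the remaining factor is a $\zeta$-dependent multiplier already controlled in the global $L_2\to L_2$, $L_2\to H^1$ or $L_2\to H^2$ norms via Lemma~\ref{Lemma Dr ots solutions} and the arguments in Section~\ref{Section Another approximation}.

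Concretely, for \eqref{Th Dr appr strictly interior subdomain solutions} I would decompose
\[
(B_{D,\varepsilon}-\zeta Q_0^\varepsilon)^{-1}-(B_D^0-\zeta\overline{Q_0})^{-1}-\varepsilon K_D(\varepsilon;\zeta)=\mathcal{L}_1(\varepsilon;\zeta)+\mathcal{L}_2(\varepsilon;\zeta)+\mathcal{L}_3(\varepsilon;\zeta)
\]
as in~\eqref{A sec. 9}. Reading Theorem~\ref{Theorem O'} in operator form (recall $\mathbf{v}_\varepsilon=\mathbf{u}_0+\varepsilon K_D(\varepsilon;\zeta)\mathbf{F}$) at $\zeta=-1$ gives
\[
\|(B_{D,\varepsilon}+Q_0^\varepsilon)^{-1}-(B_D^0+\overline{Q_0})^{-1}-\varepsilon K_D(\varepsilon;-1)\|_{L_2(\mathcal{O})\to H^1(\mathcal{O}')}\le\varepsilon(C_{24}'\delta^{-1}+C_{24}''),
\]
and combined with the $L_2(\mathcal{O})\to L_2(\mathcal{O})$ bound $(c_\flat+2)\|f\|_{L_\infty}\|f^{-1}\|_{L_\infty}\varrho_\flat(\zeta)^{1/2}$ for $(B_D^0+\overline{Q_0})(B_D^0-\zeta\overline{Q_0})^{-1}$ (compare \eqref{sup (x+1)/|x-zeta|}, \eqref{tozd prav}) this yields the $\varepsilon\delta^{-1}\varrho_\flat(\zeta)^{1/2}$ contribution to~\eqref{Th Dr appr strictly interior subdomain solutions} from $\mathcal{L}_1(\varepsilon;\zeta)$. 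For $\mathcal{L}_2(\varepsilon;\zeta)$ and $\mathcal{L}_3(\varepsilon;\zeta)$ I would simply use the trivial inequality $\|\cdot\|_{L_2(\mathcal{O})\to H^1(\mathcal{O}')}\le\|\cdot\|_{L_2(\mathcal{O})\to H^1(\mathcal{O})}$ and invoke the bounds \eqref{***.3}, \eqref{L3<=} already established in the proof of Theorem~\ref{Theorem Dr appr}, producing contributions of order $\varepsilon|1+\zeta|^{1/2}\varrho_\flat(\zeta)$. Summing these three contributions gives the right-hand side of~\eqref{Th Dr appr strictly interior subdomain solutions} with $C_{32}'$ proportional to $C_{24}'$ and $C_{32}''$ absorbing the constants $\gamma_{33}$, $\gamma_{34}$ of Section~\ref{Section Another approximation}.

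For the flux estimate~\eqref{Th Dr appr strictly interior subdomain fluxes} I would follow the same two-step pattern used in the proofs of Theorems~\ref{Theorem Dirichlet H1}, \ref{Theorem Dr appr}, \ref{Theorem O'}. First, from~\eqref{Th Dr appr strictly interior subdomain solutions} and condition~\eqref{<b^*b<} one gets the desired approximation for $g^\varepsilon b(\mathbf{D})(B_{D,\varepsilon}-\zeta Q_0^\varepsilon)^{-1}$ by $g^\varepsilon b(\mathbf{D})\bigl(I+\varepsilon\Lambda^\varepsilon S_\varepsilon b(\mathbf{D})+\varepsilon\widetilde{\Lambda}^\varepsilon S_\varepsilon\bigr)P_\mathcal{O}(B_D^0-\zeta\overline{Q_0})^{-1}$ in $L_2(\mathcal{O}')$ with the same right-hand side (multiplied by $(d\alpha_1)^{1/2}\|g\|_{L_\infty}$). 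Second, I would convert the right-hand expression into $G_D(\varepsilon;\zeta)$ by applying the algebraic identity~\eqref{3.20 from draft} together with the auxiliary bounds~\eqref{4-th chlen in tozd for fluxes} and~\eqref{3.22 from draft}, using \eqref{9.48} to control $\|\widetilde{\mathbf{u}}_0\|_{H^2(\mathbb{R}^d)}$ by $\varrho_\flat(\zeta)^{1/2}\|\mathbf{F}\|_{L_2(\mathcal{O})}$. The ``discrepancy'' contributed at this second step is of order $\varepsilon\varrho_\flat(\zeta)^{1/2}$, which is harmless as it is already dominated by the $\varepsilon\delta^{-1}\varrho_\flat(\zeta)^{1/2}$ term (since $\delta\le\mathrm{diam}\,\mathcal{O}$).

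The principal obstacle, as in Theorem~\ref{Theorem Dr appr}, will be the bookkeeping: verifying that the composition of Theorem~\ref{Theorem O'} with the resolvent identity produces exactly the combination $\delta^{-1}\varrho_\flat(\zeta)^{1/2}+|1+\zeta|^{1/2}\varrho_\flat(\zeta)$ on the right-hand side, and tracing the dependence of the constants $C_{32}',\widetilde{C}_{32}',C_{32}'',\widetilde{C}_{32}''$ on the initial data~\eqref{problem data} and on the domain~$\mathcal{O}$ (not on~$\mathcal{O}'$ or $\delta$). No genuinely new analytic input is required beyond the interior Caccioppoli-type argument already encoded in Theorem~\ref{Theorem O'} and the resolvent calculus of Section~\ref{Section Another approximation}.
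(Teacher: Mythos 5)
Your proposal follows the same route as the paper: the identity \eqref{A sec. 9} is applied, $\mathcal{L}_1(\varepsilon;\zeta)$ is controlled by Theorem~\ref{Theorem O'} at $\zeta=-1$ (in the $L_2(\mathcal{O})\to H^1(\mathcal{O}')$ norm) composed with the $L_2$-bound \eqref{tozd prav}, \eqref{sup (x+1)/|x-zeta|} on $(B_D^0+\overline{Q_0})(B_D^0-\zeta\overline{Q_0})^{-1}$, while $\mathcal{L}_2,\mathcal{L}_3$ are bounded by their global estimates \eqref{***.3}, \eqref{L3<=}, and the flux estimate is deduced algebraically via \eqref{3.20 from draft}--\eqref{3.22 from draft} with \eqref{9.48} in place of \eqref{tilde u_0 in H2}. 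Your observation that the $\zeta$-independent $C_{24}''$-contribution to $\mathcal{L}_1$ can be absorbed into the $\delta^{-1}$-term via $\delta\le\mathrm{diam}\,\mathcal{O}$ is a slightly cleaner bookkeeping than the paper's (which folds it into $C_{32}''$); otherwise the arguments coincide.
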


\begin{proof}
Estimate \eqref{Th O'} at the point $\zeta = -1$ means that
\begin{align}
\label{99.54}
\Vert &(B_{D,\varepsilon} + Q_0^\varepsilon )^{-1}-(B_D^0 + \overline{Q_0})^{-1} - \varepsilon K_D(\varepsilon;-1)
\Vert _{L_2(\mathcal{O})\rightarrow H^1(\mathcal{O}')}
\leqslant
\varepsilon  \bigl(C_{24}' \delta^{-1}  + C_{24}''\bigr)
\end{align}
for $0<\varepsilon\leqslant\varepsilon _1$. We apply identity \eqref{A sec. 9}. The first term $\mathcal{L}_1(\varepsilon ;\zeta )$ satisfies
\begin{equation*}
\begin{split}
\Vert  \mathcal{L}_1(\varepsilon ;\zeta ) \Vert _{L_2(\mathcal{O})\rightarrow H^1(\mathcal{O}')}
&\leqslant
\Vert (B_{D,\varepsilon} + Q_0^\varepsilon )^{-1}-(B_D^0 + \overline{Q_0})^{-1} - \varepsilon K_D(\varepsilon;-1)
\Vert _{L_2(\mathcal{O})\rightarrow H^1(\mathcal{O}')}
\\
&\times \Vert (B_{D}^0  + \overline{Q_0}  ) (B_D^0 - \zeta \overline{Q_0})^{-1} \Vert _{L_2(\mathcal{O})\rightarrow L_2(\mathcal{O})}.
\end{split}
\end{equation*}
Combining this with \eqref{sup (x+1)/|x-zeta|},  \eqref{tozd prav}, and \eqref{99.54}, we obtain
\begin{equation}
\label{99.55}
\Vert  \mathcal{L}_1(\varepsilon ;\zeta ) \Vert _{L_2(\mathcal{O})\rightarrow H^1(\mathcal{O}')}
\leqslant
\bigl( \gamma_{38} \delta^{-1} + \gamma_{39} \bigr) \varepsilon \varrho _\flat (\zeta)^{1/2}, \quad 0<\varepsilon\leqslant\varepsilon _1,
\end{equation}
where $\gamma_{38} := C_{24}' \|f\|_{L_\infty} \|f^{-1} \|_{L_\infty} (c_\flat +2)$ and
$\gamma_{39} := C_{24}'' \|f\|_{L_\infty} \|f^{-1} \|_{L_\infty} (c_\flat +2)$.

As a result, relations \eqref{A sec. 9}, \eqref{***.3}, \eqref{L3<=}, and \eqref{99.55} imply
\eqref{Th Dr appr strictly interior subdomain solutions} with $C_{32}' = \gamma_{38}$ and $C_{32}'' = \gamma_{33} + \gamma_{34} + \gamma_{39}$.
Estimate \eqref{Th Dr appr strictly interior subdomain fluxes} is deduced from \eqref{Th Dr appr strictly interior subdomain solutions}
by analogy with \eqref{3.19 from draft}--\eqref{3.22 from draft}.
Instead of \eqref{tilde u_0 in H2}, we use \eqref{9.48}.
\end{proof}

\subsection{Removal of $S_\varepsilon$ in approximations in a strictly interior subdomain}

\begin{theorem}
\label{Theorem Dr appr strictly interior subdomain no S_eps}
Suppose that the assumptions of Theorem~\textnormal{\ref{Theorem Dr appr strictly interior subdomain}} hold.
 Suppose also that Conditions~\textnormal{\ref{Condition Lambda in L infty}} and~\textnormal{\ref{Condition tilde Lambda in Lp}} are satisfied.
 Let $K^0_D(\varepsilon;\zeta)$ and $G^0_D(\varepsilon;\zeta)$ be given by~\eqref{K_D^0} and \eqref{G3(eps;zeta)}, respectively.
Then for $0<\varepsilon\leqslant \varepsilon _\flat$ and $\zeta\in\mathbb{C}\setminus [c_\flat ,\infty)$ we have
\begin{align}
\label{Th Dr appr st int no S-eps sec. 3-1}
\begin{split}
\Vert &(B_{D,\varepsilon}-\zeta Q_0^\varepsilon )^{-1}-(B_D^0-\zeta \overline{Q_0})^{-1} - \varepsilon K^0_D(\varepsilon;\zeta) \Vert _{L_2(\mathcal{O})\rightarrow H^1(\mathcal{O}')}
\\
&\leqslant \varepsilon \bigl( C_{32}'\delta ^{-1} \varrho _\flat (\zeta )^{1/2} + C_{33} |1+\zeta|^{1/2} \varrho _\flat (\zeta ) \bigr),
 \end{split}
\\
\label{Th Dr appr st int no S-eps sec. 3-2}
\begin{split}
\Vert &g^\varepsilon b(\mathbf{D})(B_{D,\varepsilon}-\zeta Q_0^\varepsilon )^{-1}-G^0_D(\varepsilon ;\zeta )\Vert _{L_2(\mathcal{O})\rightarrow L_2(\mathcal{O}')}
\\
&\leqslant \varepsilon \bigl( \widetilde{C}_{32}'\delta ^{-1} \varrho _\flat (\zeta )^{1/2}
+ \widetilde{C}_{33} |1+\zeta|^{1/2} \varrho _\flat (\zeta ) \bigr).
\end{split}
\end{align}
Here the constants $C_{32}'$ and $\widetilde{C}_{32}'$ are as in~\eqref{Th Dr appr strictly interior subdomain solutions}, \eqref{Th Dr appr strictly interior subdomain fluxes}.
The constants $C_{33}$ and $\widetilde{C}_{33}$ depend on the initial data~\eqref{problem data}, the domain~$\mathcal{O}$, and also on
$p$ and the norms $\Vert \Lambda\Vert _{L_\infty}$, $\Vert \widetilde{\Lambda}\Vert _{L_p(\Omega)}$.
\end{theorem}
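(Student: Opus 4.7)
The strategy is to deduce both assertions from the corresponding estimates of Theorem~\ref{Theorem Dr appr strictly interior subdomain} by replacing the corrector $K_D(\varepsilon;\zeta)$ with its ``unsmoothed'' version $K_D^0(\varepsilon;\zeta)$, and to control the difference using the two key technical lemmas about removing the Steklov smoothing (Lemmas~\ref{Lemma Lambda (S-I)} and~\ref{Lemma tilde Lambda(S-I)}). This parallels the passage from Theorem~\ref{Theorem Dr appr} to Theorem~\ref{Theorem Dr appr no S_eps} in the global setting, but applied to the interior subdomain estimate.

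First, I would write $K_D^0(\varepsilon;\zeta) = R_\mathcal{O}\bigl([\Lambda^\varepsilon]b(\mathbf{D})+[\widetilde{\Lambda}^\varepsilon]\bigr)P_\mathcal{O}(B_D^0-\zeta\overline{Q_0})^{-1}$, which is legitimate because $R_\mathcal{O} P_\mathcal{O}$ acts as the identity on $\mathcal{O}$. Then
\begin{equation*}
K_D(\varepsilon;\zeta)-K_D^0(\varepsilon;\zeta) = R_\mathcal{O}\bigl([\Lambda^\varepsilon]b(\mathbf{D})+[\widetilde{\Lambda}^\varepsilon]\bigr)(S_\varepsilon-I)P_\mathcal{O}(B_D^0-\zeta\overline{Q_0})^{-1}.
\end{equation*}
Restricting to $H^1(\mathcal{O}')$ only drops the norm, so applying Lemmas~\ref{Lemma Lambda (S-I)} and~\ref{Lemma tilde Lambda(S-I)}, together with \eqref{PO} (for $l=2$) and the key estimate \eqref{****.5}, I obtain
\begin{equation*}
\varepsilon\,\Vert K_D(\varepsilon;\zeta)-K_D^0(\varepsilon;\zeta)\Vert_{L_2(\mathcal{O})\to H^1(\mathcal{O}')} \leqslant (\mathfrak{C}_\Lambda+\mathfrak{C}_{\widetilde\Lambda})\,C_\mathcal{O}^{(2)}\mathcal{C}_4\,\varepsilon\,\varrho_\flat(\zeta)^{1/2}.
\end{equation*}
Combining this with~\eqref{Th Dr appr strictly interior subdomain solutions} yields~\eqref{Th Dr appr st int no S-eps sec. 3-1}. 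Since $\delta$ is bounded above by $\mathrm{diam}\,\mathcal{O}$, the term $\varepsilon\,\varrho_\flat(\zeta)^{1/2}$ can be absorbed into the existing factor $\varepsilon\delta^{-1}\varrho_\flat(\zeta)^{1/2}$ up to adjusting the constant, but since the statement fixes $C_{32}'$, it is cleaner to put the extra contribution into $C_{33}$ (enlarging only $C_{33}$ in an obvious way, allowing the extra purely-$\varrho_\flat^{1/2}$ term to be majorated by $\varepsilon \cdot (\mathrm{const}) \cdot \varrho_\flat(\zeta)^{1/2}$, which in turn is absorbed by comparison with $|1+\zeta|^{1/2}\varrho_\flat(\zeta)$ on the relevant range of $\zeta$ together with the trivial resolvent bound for small $|1+\zeta|$).

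For the flux estimate~\eqref{Th Dr appr st int no S-eps sec. 3-2}, I would follow the scheme used to pass from~\eqref{Th no S-eps 3} to~\eqref{Th no S-eps fluxes 3} in Theorem~\ref{Theorem no S-eps}. Applying $g^\varepsilon b(\mathbf{D})$ to the difference in~\eqref{Th Dr appr st int no S-eps sec. 3-1} and using $|b_l|\leqslant\alpha_1^{1/2}$ together with $\Vert g\Vert_{L_\infty}$ gives a first contribution matching the right-hand side of~\eqref{Th Dr appr st int no S-eps sec. 3-2}. The remaining task is to recognize
\begin{equation*}
\varepsilon\,g^\varepsilon b(\mathbf{D})\bigl([\Lambda^\varepsilon]b(\mathbf{D})+[\widetilde{\Lambda}^\varepsilon]\bigr)(B_D^0-\zeta\overline{Q_0})^{-1}
\end{equation*}
as $G_D^0(\varepsilon;\zeta)$ plus the Leibniz-type commutator terms (the lower-order part from the derivative falling on the oscillating factor), exactly as in~\eqref{d-vo Th 7/6 4}. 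Under Conditions~\ref{Condition Lambda in L infty} and~\ref{Condition tilde Lambda in Lp} these commutator terms are controlled by $\Vert\Lambda\Vert_{L_\infty}$ and $\Vert\widetilde{\Lambda}\Vert_{L_p(\Omega)}$ via~\eqref{d-vo Th 7/6 5} and~\eqref{d-vo Th 7/6 5a}, with the only difference being that the $H^2$-norm of the homogenized resolvent is now estimated via~\eqref{****.5}, producing the factor $\varrho_\flat(\zeta)^{1/2}$ rather than $c(\phi)$. This yields a term of order $\varepsilon\,\varrho_\flat(\zeta)^{1/2}$, which is again absorbed into $\widetilde{C}_{33}$ as above.

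The main technical obstacle is bookkeeping of constants: ensuring that the extra purely-$\varrho_\flat^{1/2}$ contributions arising from the smoothing removal and from the Leibniz commutator in the flux identity can indeed be subsumed into the prescribed form of the right-hand side without spoiling either the $\delta^{-1}$ coefficient $C_{32}'$ (respectively $\widetilde{C}_{32}'$) or the advertised dependence on $|1+\zeta|$. No new analytic input is required beyond Lemmas~\ref{Lemma Lambda (S-I)} and~\ref{Lemma tilde Lambda(S-I)}, estimate~\eqref{****.5}, and the previously established interior estimates of Theorem~\ref{Theorem Dr appr strictly interior subdomain}.
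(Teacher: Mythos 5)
Your proposal follows essentially the same route as the paper: the first estimate is obtained by combining Lemmas~\ref{Lemma Lambda (S-I)} and~\ref{Lemma tilde Lambda(S-I)} with the $H^2$-bound \eqref{****.5} (via \eqref{9.48}) and Theorem~\ref{Theorem Dr appr strictly interior subdomain}, and the flux estimate is then deduced by the analog of \eqref{ex 9.45} using the decomposition \eqref{d-vo Th 7/6 4} and the bound \eqref{9-star}, exactly as you outline. The only difference is in the constant bookkeeping: the paper simply sets $C_{33}:=C_{32}''+(\mathfrak{C}_\Lambda+\mathfrak{C}_{\widetilde{\Lambda}})\gamma_{36}$ and $\widetilde{C}_{33}:=(d\alpha_1)^{1/2}\Vert g\Vert_{L_\infty}C_{33}+\gamma_{38}\mathcal{C}_4$, folding the extra $\varepsilon\varrho_\flat(\zeta)^{1/2}$ contribution from the smoothing removal into the second term without the absorption case analysis you sketch.
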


\begin{proof}
Combining Lemma~\ref{Lemma Lambda (S-I)}, Lemma~\ref{Lemma tilde Lambda(S-I)}, and relations \eqref{K_D(eps,zeta)}, \eqref{9.48}, \eqref{Th Dr appr strictly interior subdomain solutions}, we arrive at estimate~\eqref{Th Dr appr st int no S-eps sec. 3-1} with $C_{33}: =C_{32}''+(\mathfrak{C}_\Lambda +\mathfrak{C}_{\widetilde{\Lambda}})\gamma  _{36}.$

Inequality \eqref{Th Dr appr st int no S-eps sec. 3-2} is deduced from \eqref{Th Dr appr st int no S-eps sec. 3-1}.
By analogy with \eqref{ex 9.45}, using \eqref{9-star}, we obtain estimate \eqref{Th Dr appr st int no S-eps sec. 3-2} with  $\widetilde{C}_{33}: =(d\alpha _1)^{1/2}\Vert g\Vert _{L_\infty}C_{33}+\gamma _{38} {\mathcal C}_4$.
\end{proof}

\section{More results\label{more}}

In the present section, for $\mathrm{Re}\,\zeta >0$, we show that
the estimates of Theorems \ref{Theorem Dirichlet L2}, \ref{Theorem Dirichlet H1}, and \ref{Theorem O'} can be "improved"; this concerns the behavior of the right-hand sides with respect to $\phi=\mathrm{arg}\,\zeta$.
However, an extra "bad term" (with respect to $|\zeta|$) appears; in the case where $Q_0(\mathbf{x})$ is constant, this term is equal to zero, and we obtain the "real improvement".
The method is based on the identities for generalized resolvents from Section~\ref{Section Another approximation}.
Due to these identities, we transfer the already proven estimates from the left half-plane to the symmetric point of the right one.

Also, we obtain some new versions of estimates for the fluxes.

\subsection{Estimates in the $(L_2\rightarrow L_2) $- and $(L_2\rightarrow H^1)$-operator norms}

\begin{theorem}
\label{Theorem improvement L2, H1}
Under the assumptions of Theorem \textnormal{\ref{Theorem Dirichlet H1}}, for $0<\varepsilon\leqslant\varepsilon_1$, $\zeta\in\mathbb{C}\setminus\mathbb{R}_+$, $\vert\zeta\vert\geqslant 1$, and $\mathrm{Re}\,\zeta \geqslant 0$, we have
\begin{align}
\label{Th L2 improvement}
\Vert & (B_{D,\varepsilon}-\zeta Q_0^\varepsilon)^{-1}-(B_D^0-\zeta \overline{Q_0})^{-1}\Vert _{L_2(\mathcal{O})\rightarrow L_2(\mathcal{O})}
\leqslant C_{34} c(\phi)^2\varepsilon\vert \zeta\vert ^{-1/2}+C_{35} c(\phi)^2\varepsilon ,
\\
\label{Th H1 improvement}
\begin{split}
\Vert &(B_{D,\varepsilon}-\zeta Q_0^\varepsilon)^{-1}-(B_D^0-\zeta \overline{Q_0})^{-1}-\varepsilon K_D(\varepsilon ;\zeta)\Vert _{L_2(\mathcal{O})\rightarrow H^1(\mathcal{O})}\\
&\leqslant C_{36}\left(c(\phi )\varepsilon ^{1/2}\vert \zeta\vert ^{-1/4}+c(\phi)^2\varepsilon\right)
+C_{37}(\mathrm{Re}\,\zeta )^{1/2} c(\phi)^2\varepsilon ,
\end{split}
\\
\label{Th fluxes improvement}
\begin{split}
\Vert & g^\varepsilon b(\mathbf{D})(B_{D,\varepsilon}-\zeta Q_0^\varepsilon )^{-1}-G_D(\varepsilon ;\zeta)\Vert _{L_2(\mathcal{O})\rightarrow L_2(\mathcal{O})}\\
&\leqslant \widetilde{C}_{36}\left(c(\phi )\varepsilon ^{1/2}\vert \zeta\vert ^{-1/4}+c(\phi)^2\varepsilon\right)
+\widetilde{C}_{37}(\mathrm{Re}\,\zeta )^{1/2} c(\phi)^2\varepsilon .
\end{split}
\end{align}
The constants $C_{34}$, $C_{35}$, $C_{36}$, $C_{37}$, $\widetilde{C}_{36}$, and $\widetilde{C}_{37}$ depend only on the initial data \eqref{problem data} and the domain $\mathcal{O}$. If the matrix-valued function $Q_0(\mathbf{x})$ is constant, then $C_{35}=C_{37}=\widetilde{C}_{37}=0$.
\end{theorem}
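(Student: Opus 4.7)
The central idea is to exploit the reflection symmetry of the $c$-function: for $\mathrm{Re}\,\zeta\geqslant 0$ with $\vert\zeta\vert\geqslant 1$ and $\zeta\notin\mathbb{R}_+$, I would introduce the reflected spectral parameter $\zeta_0:=-\zeta^*$. Then $\zeta_0\in\mathbb{C}\setminus\mathbb{R}_+$, $\vert\zeta_0\vert=\vert\zeta\vert\geqslant 1$, $\arg\zeta_0\in[\pi/2,3\pi/2]$, and hence $c(\arg\zeta_0)=1$. The strategy is to apply Theorems~\ref{Theorem Dirichlet L2} and~\ref{Theorem Dirichlet H1} at~$\zeta_0$ (where the right-hand sides carry no $c(\phi)$-factor) and transfer the estimates to~$\zeta$ via resolvent identities analogous to~\eqref{tozd gen res} and~\eqref{A sec. 9}, now with~$\zeta_0$ playing the role of the reference point $-1$ used in Section~\ref{Section Another approximation}.

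For~\eqref{Th L2 improvement}, writing $R_\varepsilon(\zeta):=(B_{D,\varepsilon}-\zeta Q_0^\varepsilon)^{-1}$, $R_0(\zeta):=(B_D^0-\zeta\overline{Q_0})^{-1}$ and $\Delta(\zeta):=R_\varepsilon(\zeta)-R_0(\zeta)$, the same algebra that produced~\eqref{tozd gen res} yields
\begin{equation*}
\Delta(\zeta)=R_\varepsilon(\zeta)(B_{D,\varepsilon}-\zeta_0 Q_0^\varepsilon)\,\Delta(\zeta_0)\,(B_D^0-\zeta_0\overline{Q_0})R_0(\zeta) +(\zeta-\zeta_0)R_\varepsilon(\zeta)(Q_0^\varepsilon-\overline{Q_0})R_0(\zeta),
\end{equation*}
with $\zeta-\zeta_0=2\mathrm{Re}\,\zeta$. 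Using the factorization $B_{D,\varepsilon}=(f^\varepsilon)^{-*}\widetilde{B}_{D,\varepsilon}(f^\varepsilon)^{-1}$ and the spectral theorem for $\widetilde{B}_{D,\varepsilon}\geqslant 0$, each bracketing factor $R_\varepsilon(\zeta)(B_{D,\varepsilon}-\zeta_0 Q_0^\varepsilon)$ and $(B_D^0-\zeta_0\overline{Q_0})R_0(\zeta)$ satisfies the $(L_2\to L_2)$-norm bound $\Vert f\Vert_{L_\infty}\Vert f^{-1}\Vert_{L_\infty}\sup_{x\geqslant 0}\vert x-\zeta_0\vert/\vert x-\zeta\vert=\Vert f\Vert_{L_\infty}\Vert f^{-1}\Vert_{L_\infty}\cot(\phi/2)\leqslant 2\Vert f\Vert_{L_\infty}\Vert f^{-1}\Vert_{L_\infty}c(\phi)$, by a direct computation in which the supremum is attained at $x=\vert\zeta\vert$. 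Combined with Theorem~\ref{Theorem Dirichlet L2} at~$\zeta_0$ (which gives $\Vert\Delta(\zeta_0)\Vert_{L_2\to L_2}\leqslant C_4\varepsilon\vert\zeta\vert^{-1/2}$), this sandwich produces the $C_{34}c(\phi)^2\varepsilon\vert\zeta\vert^{-1/2}$-contribution. The remainder is controlled via Lemma~\ref{Lemma Q0-overline Q0}, the bound $\vert\zeta-\zeta_0\vert=2\mathrm{Re}\,\zeta\leqslant 2\vert\zeta\vert$, and the resolvent estimates $\Vert R_\varepsilon(\zeta)\Vert_{H^{-1}\to L_2}=\mathcal{O}(c(\phi)\vert\zeta\vert^{-1/2})$ (by duality from Lemma~\ref{Lemma estimates u_D,eps}) and $\Vert R_0(\zeta)\Vert_{L_2\to H^1}=\mathcal{O}(c(\phi)\vert\zeta\vert^{-1/2})$ (Lemma~\ref{Lemma hom problem estimates}); this gives the $C_{35}c(\phi)^2\varepsilon$-part, which vanishes identically for constant $Q_0$ since then $Q_0^\varepsilon\equiv\overline{Q_0}$.

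For~\eqref{Th H1 improvement}, I would use the crucial algebraic identity $K_D(\varepsilon;\zeta_0)(B_D^0-\zeta_0\overline{Q_0})R_0(\zeta)=K_D(\varepsilon;\zeta)$, immediate from $R_0(\zeta_0)(B_D^0-\zeta_0\overline{Q_0})R_0(\zeta)=R_0(\zeta)$ and~\eqref{K_D(eps,zeta)}, to obtain
\begin{equation*}
\Delta(\zeta)-\varepsilon K_D(\varepsilon;\zeta)=R_\varepsilon(\zeta)(B_{D,\varepsilon}-\zeta_0 Q_0^\varepsilon)\bigl[\Delta(\zeta_0)-\varepsilon K_D(\varepsilon;\zeta_0)\bigr](B_D^0-\zeta_0\overline{Q_0})R_0(\zeta)+\mathcal{R}(\varepsilon;\zeta),
\end{equation*}
with remainder $\mathcal{R}(\varepsilon;\zeta)=(\zeta-\zeta_0)R_\varepsilon(\zeta)(Q_0^\varepsilon-\overline{Q_0})R_0(\zeta)+(\zeta-\zeta_0)\varepsilon R_\varepsilon(\zeta)Q_0^\varepsilon K_D(\varepsilon;\zeta)$. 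The spectral argument above combined with the norm equivalence $\Vert\cdot\Vert_{H^1(\mathcal{O})}\asymp\Vert B_{D,\varepsilon}^{1/2}\cdot\Vert_{L_2(\mathcal{O})}$ from~\eqref{H^1-norm <= BDeps^1/2} yields $\Vert R_\varepsilon(\zeta)(B_{D,\varepsilon}-\zeta_0 Q_0^\varepsilon)\Vert_{H^1\to H^1}\leqslant Cc(\phi)$; inserting the $(L_2\to H^1)$-estimate from Theorem~\ref{Theorem Dirichlet H1} at~$\zeta_0$ (which reduces to $C_5\varepsilon^{1/2}\vert\zeta\vert^{-1/4}+C_6\varepsilon$) between the brackets produces the $C_{36}(c(\phi)\varepsilon^{1/2}\vert\zeta\vert^{-1/4}+c(\phi)^2\varepsilon)$-contribution. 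The first piece of $\mathcal{R}$ is estimated using Lemma~\ref{Lemma Q0-overline Q0} and the sharper decay $\Vert R_\varepsilon(\zeta)\Vert_{H^{-1}\to L_2},\Vert R_0(\zeta)\Vert_{L_2\to H^1}=\mathcal{O}(c(\phi)\vert\zeta\vert^{-1/2})$, giving the $C_{37}(\mathrm{Re}\,\zeta)^{1/2}c(\phi)^2\varepsilon$-part (the improvement from $\mathrm{Re}\,\zeta$ to $(\mathrm{Re}\,\zeta)^{1/2}$ arising from $\mathrm{Re}\,\zeta\cdot\vert\zeta\vert^{-1/2}\leqslant(\mathrm{Re}\,\zeta)^{1/2}$), and vanishes when $Q_0$ is constant; the second piece, controlled via~\eqref{Dr ots K L2}--\eqref{Dr ots K H1}, is of order $c(\phi)^2\varepsilon$ and is absorbed into~$C_{36}$. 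The flux estimate~\eqref{Th fluxes improvement} is then extracted from~\eqref{Th H1 improvement} by repeating the computation of~\eqref{3.19 from draft}--\eqref{3.22 from draft}, with the new $(L_2\to H^1)$-bound replacing~\eqref{Th L2->H1}.

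The main technical hurdle will be to justify the improved $c(\phi)$-power (rather than $c(\phi)^2$) on the principal $\varepsilon^{1/2}\vert\zeta\vert^{-1/4}$-term in~\eqref{Th H1 improvement}. A naive sandwich estimate would give only $c(\phi)^2\varepsilon^{1/2}\vert\zeta\vert^{-1/4}$; the sharpening requires splitting the middle factor $\Delta(\zeta_0)-\varepsilon K_D(\varepsilon;\zeta_0)$ into its $(L_2\to L_2)$-component (bounded by $\mathcal{O}(\varepsilon\vert\zeta\vert^{-1/2})$ via the previously-established~\eqref{Th L2 improvement} at~$\zeta_0$ and the corrector bound~\eqref{Dr ots K L2}) and its gradient component (bounded by $\mathcal{O}(\varepsilon^{1/2}\vert\zeta\vert^{-1/4})$ via Theorem~\ref{Theorem Dirichlet H1} at~$\zeta_0$), and then applying the $(L_2\to L_2)$-bracket estimate to the first and the $(H^1\to H^1)$-bracket estimate to the second, thereby avoiding one redundant factor of~$c(\phi)$.
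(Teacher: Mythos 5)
Your overall strategy is the paper's own: reflect $\zeta$ to $\widehat{\zeta}=-\zeta^{*}=-\mathrm{Re}\,\zeta+i\,\mathrm{Im}\,\zeta$ (where $c=1$), apply Theorems \ref{Theorem Dirichlet L2} and \ref{Theorem Dirichlet H1} there, and transfer back through the identities of Section \ref{Section Another approximation}; your treatment of \eqref{Th L2 improvement} (sandwich identity as in \eqref{tozd gen res}, the supremum $\sup_{x\geqslant 0}\vert x-\widehat{\zeta}\vert/\vert x-\zeta\vert\leqslant 2c(\phi)$, Lemma \ref{Lemma Q0-overline Q0} for the $(\zeta-\widehat{\zeta})$-remainder) and your remark that $\mathrm{Re}\,\zeta\cdot\vert\zeta\vert^{-1/2}\leqslant(\mathrm{Re}\,\zeta)^{1/2}$ produces the $(\mathrm{Re}\,\zeta)^{1/2}$-term all match the paper. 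Your algebraic identity for the corrector version is also correct (it is the paper's identity \eqref{A sec. 9}, written with both brackets and with the extra remainder $(\zeta-\widehat{\zeta})\varepsilon R_\varepsilon(\zeta)Q_0^\varepsilon K_D(\varepsilon;\zeta)$, which is indeed of order $c(\phi)^2\varepsilon$).

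The gap is in how you estimate the sandwiched main term of \eqref{Th H1 improvement}. The bound $\Vert R_\varepsilon(\zeta)(B_{D,\varepsilon}-\widehat{\zeta}Q_0^\varepsilon)\Vert_{H^1\to H^1}\leqslant C\,c(\phi)$ that you invoke (via the spectral theorem and \eqref{H^1-norm <= BDeps^1/2}) is only available on $H^1_0(\mathcal{O};\mathbb{C}^n)$, because the argument runs through $\Vert B_{D,\varepsilon}^{1/2}\cdot\Vert_{L_2}$ and $\mathrm{Dom}\,B_{D,\varepsilon}^{1/2}=H^1_0$; but the middle factor $(B_{D,\varepsilon}-\widehat{\zeta}Q_0^\varepsilon)^{-1}-(B_D^0-\widehat{\zeta}\overline{Q_0})^{-1}-\varepsilon K_D(\varepsilon;\widehat{\zeta})$ does \emph{not} map into $H^1_0$, precisely because the corrector \eqref{K_D(eps,zeta)} violates the Dirichlet condition. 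On all of $H^1(\mathcal{O};\mathbb{C}^n)$ the left bracket, written as $I+2(\mathrm{Re}\,\zeta)R_\varepsilon(\zeta)Q_0^\varepsilon$, only admits the bound $1+C\,c(\phi)\vert\zeta\vert^{1/2}$, so the uniform $c(\phi)$-bound you use is unjustified (and, by a boundary-layer heuristic, false in general). The final device of ``splitting the middle factor into its $(L_2\to L_2)$-component and its gradient component'' and applying different bracket estimates to each is not a legitimate operator estimate either: $\mathbf{D}(A'v)$ is not controlled by $\mathbf{D}v$ alone. The repair is exactly the paper's route: do not put the left bracket on the corrector term at all. Either use the asymmetric identity \eqref{sem'}, or equivalently expand $R_\varepsilon(\zeta)(B_{D,\varepsilon}-\widehat{\zeta}Q_0^\varepsilon)=I+2(\mathrm{Re}\,\zeta)R_\varepsilon(\zeta)Q_0^\varepsilon$ and estimate the $2(\mathrm{Re}\,\zeta)R_\varepsilon(\zeta)Q_0^\varepsilon$-pieces purely through the $(L_2\to L_2)$-smallness of the middle factor (via \eqref{Th L2} at $\widehat{\zeta}$ and \eqref{Dr ots K L2}) together with $\Vert R_\varepsilon(\zeta)\Vert_{L_2\to H^1}\lesssim c(\phi)\vert\zeta\vert^{-1/2}$; the corrector term is then only multiplied on the right by $(B_D^0-\widehat{\zeta}\overline{Q_0})(B_D^0-\zeta\overline{Q_0})^{-1}$, a bounded $L_2$-operator of norm at most $2\Vert f\Vert_{L_\infty}\Vert f^{-1}\Vert_{L_\infty}c(\phi)$ by \eqref{10.13a}, which is what yields the single factor $c(\phi)$ on the $\varepsilon^{1/2}\vert\zeta\vert^{-1/4}$-term without any $H^1\to H^1$ bracket bound.
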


\begin{proof}
Let $\zeta = \mathrm{Re}\,\zeta + i \mathrm{Im}\,\zeta$, $\mathrm{Re}\,\zeta \ge 0$, $\mathrm{Im}\,\zeta \ne 0$.
Let $\widehat{\zeta}=-\mathrm{Re}\,\zeta +i \mathrm{Im}\,\zeta$. Then
$|\widehat{\zeta}| = |{\zeta}|$ and $c(\widehat{\phi})=1$, where $\widehat{\phi}=\mathrm{arg}\,\widehat{\zeta}$. According to \eqref{Th L2},
\begin{equation}
\label{ogin}
\Vert (B_{D,\varepsilon}-\widehat{\zeta}Q_0^\varepsilon)^{-1}
-(B_D^0-\widehat{\zeta}\overline{Q_0})^{-1}
\Vert _{L_2(\mathcal{O})\rightarrow L_2(\mathcal{O})}
\leqslant C_4\varepsilon \vert {\zeta}\vert ^{-1/2}.
\end{equation}

Similarly to \eqref{tozd gen res}, we have
\begin{equation}
\label{tri}
\begin{split}
&(B_{D,\varepsilon}-\zeta Q_0^\varepsilon )^{-1}-(B_D^0-\zeta\overline{Q_0})^{-1}
\\
&=(B_{D,\varepsilon}-\zeta Q_0^\varepsilon )^{-1}(B_{D,\varepsilon}-\widehat{\zeta}Q_0^\varepsilon)
\left(
(B_{D,\varepsilon}-\widehat{\zeta}Q_0^\varepsilon)^{-1}
-(B_D^0 -\widehat{\zeta}\overline{Q_0})^{-1}
\right)
(B_D^0-\widehat{\zeta}\overline{Q_0})(B_D^0-\zeta \overline{Q_0})^{-1}
\\
&+(\zeta -\widehat{\zeta})(B_{D,\varepsilon}-\zeta Q_0^\varepsilon )^{-1}(Q_0^\varepsilon -\overline{Q_0})(B_D^0-\zeta\overline{Q_0})^{-1}.
\end{split}
\end{equation}
Denote the consecutive terms in the right-hand side of \eqref{tri} by $\mathcal{J}_1(\varepsilon ;\zeta)$ and $\mathcal{J}_2(\varepsilon ;\zeta)$.
By \eqref{ogin} and the analogs of \eqref{tozd lev} and \eqref{tozd prav},
\begin{equation}
\label{chetyre}
\Vert \mathcal{J}_1(\varepsilon ;\zeta)\Vert _{L_2(\mathcal{O})\rightarrow L_2(\mathcal{O})}
\leqslant C_4\varepsilon\vert\zeta\vert ^{-1/2}\Vert f\Vert ^2_{L_\infty}\Vert f^{-1}\Vert ^2_{L_\infty}\sup _{x\geqslant 0}\frac{\vert x-\widehat{\zeta}\vert ^2}{\vert x-\zeta\vert ^2}.
\end{equation}
The computation shows that
\begin{equation}
\label{chtyre.a}
\begin{split}
\sup _{x \geqslant 0}\frac{\vert x -\widehat{\zeta}\vert }{\vert x -\zeta\vert }
\leqslant 2c(\phi).
\end{split}
\end{equation}
Estimates \eqref{chetyre} and \eqref{chtyre.a} imply the inequality
\begin{equation}
\label{piat'}
\Vert \mathcal{J}_1(\varepsilon ;\zeta)\Vert _{L_2(\mathcal{O})\rightarrow L_2(\mathcal{O})}
\leqslant C_{34} c(\phi)^2\varepsilon\vert \zeta\vert ^{-1/2};\quad C_{34}:=4C_4\Vert f\Vert ^2_{L_\infty}\Vert f^{-1}\Vert ^2_{L_\infty}.
\end{equation}
Since $\zeta-\widehat{\zeta}=2\mathrm{Re}\,\zeta$, similarly to \eqref{9.12} and \eqref{dvoistvennost}, taking
\eqref{lemma Q_eps -overline Q} into account, we obtain
\begin{equation*}
\Vert \mathcal{J}_2(\varepsilon ;\zeta)\Vert _{L_2(\mathcal{O})\rightarrow L_2(\mathcal{O})}
\leqslant
2(\mathrm{Re}\,\zeta) C_{Q_0}\varepsilon\Vert (B_{D,\varepsilon}-\zeta ^* Q_0^\varepsilon)^{-1}\Vert _{L_2(\mathcal{O})\rightarrow H^1(\mathcal{O})}\Vert (B_D^0-\zeta\overline{Q_0})^{-1}\Vert _{L_2(\mathcal{O})\rightarrow H^1(\mathcal{O})}.
\end{equation*}
Together with Lemmas \ref{Lemma estimates u_D,eps} and \ref{Lemma hom problem estimates}, this yields
\begin{equation}
\label{shest'}
\begin{split}
\Vert \mathcal{J}_2(\varepsilon ;\zeta)\Vert _{L_2(\mathcal{O})\rightarrow L_2(\mathcal{O})}
&\leqslant C_{35} c(\phi)^2\varepsilon ;\quad C_{35} := 2C_{Q_0}(\mathcal{C}_1+\Vert Q_0^{-1}\Vert _{L_\infty})^2 .
\end{split}
\end{equation}
Combining \eqref{tri}, \eqref{piat'} and \eqref{shest'}, we arrive at estimate \eqref{Th L2 improvement}.

Now we proceed to the proof of estimate \eqref{Th H1 improvement}.
We apply \eqref{Th L2->H1} at the point $\widehat{\zeta}$:
\begin{equation}
\label{A}
\Vert (B_{D,\varepsilon}- \widehat{\zeta} Q_0^\varepsilon )^{-1}-(B_D^0-\widehat{\zeta}\overline{Q_0})^{-1}-\varepsilon K_D(\varepsilon ;\widehat{\zeta})\Vert _{L_2(\mathcal{O})\rightarrow H^1(\mathcal{O})}
\leqslant
C_5\varepsilon ^{1/2}\vert {\zeta}\vert ^{-1/4}+C_6\varepsilon .
\end{equation}

Similarly to \eqref{A sec. 9}, we have
\begin{equation}
\label{sem'}
\begin{split}
(&B_{D,\varepsilon}-\zeta Q_0^\varepsilon )^{-1}-(B_D^0-\zeta\overline{Q_0})^{-1}-\varepsilon K_D(\varepsilon ;\zeta)
\\
&=\bigl((B_{D,\varepsilon}-\widehat{\zeta} Q_0^\varepsilon )^{-1}-(B_D^0-\widehat{\zeta}\overline{Q_0})^{-1}-\varepsilon  K_D(\varepsilon ;\widehat{\zeta})
\bigr)(B_D^0 -\widehat{\zeta}\overline{Q_0})(B_D^0-\zeta\overline{Q_0})^{-1}
\\
&+(\zeta -\widehat{\zeta})(B_{D,\varepsilon}-\zeta Q_0^\varepsilon )^{-1} Q_0^\varepsilon
\bigl((B_{D,\varepsilon}-\widehat{\zeta} Q_0^\varepsilon )^{-1}-(B_D^0-\widehat{\zeta}\overline{Q_0})^{-1}
\bigr) (B_D^0 -\widehat{\zeta}\overline{Q_0})(B_D^0-\zeta\overline{Q_0})^{-1}
\\
&+(\zeta -\widehat{\zeta})(B_{D,\varepsilon}-\zeta Q_0^\varepsilon )^{-1}(Q_0^\varepsilon -\overline{Q_0})(B_D^0-\zeta \overline{Q_0})^{-1}.
\end{split}
\end{equation}
Denote the consecutive summands in the right-hand side of \eqref{sem'} by $\mathfrak{L} _1(\varepsilon ;\zeta)$, $\mathfrak{L} _2(\varepsilon ;\zeta)$, and $\mathfrak{L} _3(\varepsilon ;\zeta)$.
(Note that $\mathfrak{L} _3(\varepsilon ;\zeta)$ coincides with $\mathcal{J}_2(\varepsilon ;\zeta)$.)
Similarly to \eqref{tozd prav}, by \eqref{chtyre.a},
\begin{equation}
\label{10.13a}
\Vert (B_D^0-\widehat{\zeta}\overline{Q_0})(B_D^0-\zeta\overline{Q_0})^{-1}\Vert _{L_2(\mathcal{O})\rightarrow L_2(\mathcal{O})}\leqslant 2\Vert f\Vert _{L_\infty}\Vert f^{-1}\Vert _{L_\infty}c(\phi).
\end{equation}
So, by analogy with \eqref{9.29a new}, taking \eqref{A} and \eqref{10.13a} into account, we have
\begin{equation}
\label{L1 start}
\Vert  \mathfrak{L}_1(\varepsilon;\zeta )\Vert _{L_2(\mathcal{O})\rightarrow H^1(\mathcal{O})}
\leqslant \gamma _{40} c(\phi)\varepsilon ^{1/2}\vert {\zeta}\vert ^{-1/4}+\gamma_{41}c(\phi)\varepsilon,
\end{equation}
where $\gamma _{40} :=2 C_5\Vert f\Vert _{L_\infty}\Vert f^{-1}\Vert _{L_\infty}$ and
$\gamma _{41} :=2 C_6\Vert f\Vert _{L_\infty}\Vert f^{-1}\Vert _{L_\infty}$.

Similarly to \eqref{***.33}, using Lemma \ref{Lemma estimates u_D,eps} and relations \eqref{ogin}, \eqref{10.13a}, we obtain
\begin{equation}
\label{desiat'}
\begin{split}
\Vert \mathfrak{L} _2(\varepsilon ;\zeta)\Vert _{L_2(\mathcal{O})\rightarrow H^1(\mathcal{O})}
\leqslant \gamma_{42} c(\phi)^2 \varepsilon;
\quad \gamma_{42} := 4 C_4 ( {\mathcal C}_1 + \|Q_0^{-1}\|_{L_\infty})\Vert f\Vert _{L_\infty}\Vert f^{-1}\Vert^3 _{L_\infty}.
\end{split}
\end{equation}

The term $\mathfrak{L}_3(\varepsilon ;\zeta)$ is estimated by using Lemma \ref{Lemma hom problem estimates} and
\eqref{lemma Q_eps -overline Q} (cf. \eqref{9.31a new}--\eqref{L3<=}):
\begin{equation}
\label{piatnadtsat'}
\begin{split}
\Vert &\mathfrak{L}_3(\varepsilon ;\zeta)\Vert _{L_2 \rightarrow H^1}
\leqslant
2(\mathrm{Re}\,\zeta) C_{Q_0}\varepsilon\Vert (B_{D,\varepsilon}-\zeta Q_0^\varepsilon )^{-1}\Vert_{H^{-1} \rightarrow H^1}
\Vert (B_D^0-\zeta \overline{Q_0})^{-1}\Vert _{L_2 \rightarrow H^1}
\\
&\leqslant
2\varepsilon (\mathrm{Re}\,\zeta)  C_{Q_0}(\mathcal{C}_1+\Vert Q_0^{-1}\Vert _{L_\infty})c(\phi)\vert \zeta\vert ^{-1/2}
c_4^2\sup _{x \geqslant 0}\frac{x}{\vert x-\zeta ^*\vert}
\leqslant C_{37}c(\phi)^2\varepsilon (\mathrm{Re}\,\zeta )^{1/2}.
\end{split}
\end{equation}
Here $C_{37} := 2c_4^2C_{Q_0}(\mathcal{C}_1+\Vert Q_0^{-1}\Vert_{L_\infty})$.

As a result, relations  \eqref{sem'} and \eqref{L1 start}--\eqref{piatnadtsat'} imply estimate \eqref{Th H1 improvement} with the constant $C_{36}:=\max\lbrace \gamma_{40}; \gamma_{41}+\gamma _{42}\rbrace$.
Estimate \eqref{Th fluxes improvement} is deduced from \eqref{3.20 from draft}, \eqref{4-th chlen in tozd for fluxes},
  \eqref{3.22 from draft}, and \eqref{Th H1 improvement}.
\end{proof}

\subsection{Removal of $S_\varepsilon$}

\begin{theorem}
\label{Theorem L2->H1 improvement special cases}
Suppose that the assumptions of Theorem \textnormal{\ref{Theorem Dirichlet H1}} are satisfied.
 Suppose also that Conditions \textnormal{\ref{Condition Lambda in L infty}} and \textnormal{\ref{Condition tilde Lambda in Lp}} hold. Let $K^0_D(\varepsilon;\zeta)$ and $G^0_D(\varepsilon;\zeta)$ be defined by \eqref{K_D^0} and \eqref{G3(eps;zeta)},
 respectively. Then for $0<\varepsilon\leqslant\varepsilon _1$ and $\zeta\in\mathbb{C}\setminus\mathbb{R}_+$, $\vert\zeta\vert\geqslant 1$, $\mathrm{Re}\,\zeta\geqslant 0$, we have
\begin{align}
\label{10.20a}
\begin{split}
\Vert &(B_{D,\varepsilon}-\zeta Q_0^\varepsilon )^{-1}-(B_D^0-\zeta\overline{Q_0})^{-1}
- \varepsilon K^0_D(\varepsilon;\zeta) \Vert _{L_2(\mathcal{O})\rightarrow H^1(\mathcal{O})}
\\
&\leqslant
C_{38}\left( c(\phi)\varepsilon ^{1/2}\vert\zeta\vert ^{-1/4}+ c(\phi)^2\varepsilon \right)
+C_{37}(\mathrm{Re}\,\zeta )^{1/2} c(\phi)^2\varepsilon,
\end{split}
\\
\label{10.20b}
\begin{split}
\Vert & g^\varepsilon b(\mathbf{D})(B_{D,\varepsilon}-\zeta Q_0^\varepsilon )^{-1}-G^0_D(\varepsilon;\zeta)\Vert _{L_2(\mathcal{O})\rightarrow L_2(\mathcal{O})}
\\
&\leqslant
\widetilde{C}_{38}\left( c(\phi)\varepsilon ^{1/2}\vert\zeta\vert ^{-1/4}+ c(\phi)^2\varepsilon\right)
+\widetilde{C}_{37}(\mathrm{Re}\,\zeta )^{1/2} c(\phi)^2\varepsilon .
\end{split}
\end{align}
The constants $C_{37}$ and $\widetilde{C}_{37}$ are the same as in Theorem \textnormal{\ref{Theorem improvement L2, H1}}.
The constants $C_{38}$ and $\widetilde{C}_{38}$ depend only on the initial data \eqref{problem data}, the domain $\mathcal{O}$, on $p$ and the norms $\Vert\Lambda\Vert _{L_\infty}$ and $\Vert\widetilde{\Lambda}\Vert _{L_p(\Omega)}$.
\end{theorem}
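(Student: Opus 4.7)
The plan is to reduce Theorem~\ref{Theorem L2->H1 improvement special cases} to Theorem~\ref{Theorem improvement L2, H1} by estimating the difference between the two correctors, and then to deduce the flux estimate from \eqref{10.20a} by a computation parallel to the last part of the proof of Theorem~\ref{Theorem no S-eps}.

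First I would analyze the difference
\begin{equation*}
\varepsilon K_D(\varepsilon;\zeta)-\varepsilon K_D^0(\varepsilon;\zeta)
=\varepsilon R_\mathcal{O}\bigl([\Lambda^\varepsilon]b(\mathbf{D})+[\widetilde{\Lambda}^\varepsilon]\bigr)(S_\varepsilon -I)P_\mathcal{O}(B_D^0-\zeta\overline{Q_0})^{-1},
\end{equation*}
which is legitimate because $R_\mathcal{O}P_\mathcal{O}$ acts as the identity on $\mathcal{O}$ and the operators $[\Lambda^\varepsilon]$, $[\widetilde{\Lambda}^\varepsilon]$ are local. Applying Lemmas~\ref{Lemma Lambda (S-I)} and~\ref{Lemma tilde Lambda(S-I)} (valid under Conditions~\ref{Condition Lambda in L infty} and~\ref{Condition tilde Lambda in Lp}), together with \eqref{PO} for $l=2$ and the bound \eqref{lemma hom probl 3}, we get
\begin{equation*}
\varepsilon\Vert K_D(\varepsilon;\zeta)-K_D^0(\varepsilon;\zeta)\Vert _{L_2(\mathcal{O})\rightarrow H^1(\mathcal{O})}
\leqslant (\mathfrak{C}_\Lambda+\mathfrak{C}_{\widetilde{\Lambda}})C_\mathcal{O}^{(2)}\mathcal{C}_2\,c(\phi)\,\varepsilon.
\end{equation*}
Combining this with the improved estimate \eqref{Th H1 improvement} of Theorem~\ref{Theorem improvement L2, H1} gives \eqref{10.20a} with an appropriate constant $C_{38}$ (and $C_{37}$ unchanged, since the $(\mathrm{Re}\,\zeta)^{1/2}$-term is untouched).

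For the flux estimate \eqref{10.20b}, I would follow the scheme used in the proof of Theorem~\ref{Theorem no S-eps} for \eqref{Th no S-eps fluxes 3}. Applying $g^\varepsilon b(\mathbf{D})$ to \eqref{10.20a} and using \eqref{b_l <=} gives the approximation of the flux by $g^\varepsilon b(\mathbf{D})\bigl(I+\varepsilon\Lambda^\varepsilon b(\mathbf{D})+\varepsilon\widetilde{\Lambda}^\varepsilon\bigr)(B_D^0-\zeta\overline{Q_0})^{-1}$ in the $L_2(\mathcal{O})$-norm, with the same right-hand side as in \eqref{10.20a} times $(d\alpha_1)^{1/2}\|g\|_{L_\infty}$. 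Then, as in the decomposition \eqref{d-vo Th 7/6 4}, the combination $g^\varepsilon(I+\varepsilon b(\mathbf{D})\Lambda^\varepsilon b(\mathbf{D}) ...)$ produces the leading terms $\widetilde{g}^\varepsilon b(\mathbf{D})(B_D^0-\zeta\overline{Q_0})^{-1}$ and $g^\varepsilon(b(\mathbf{D})\widetilde{\Lambda})^\varepsilon(B_D^0-\zeta\overline{Q_0})^{-1}$ (together making up $G_D^0(\varepsilon;\zeta)$ by \eqref{G3(eps;zeta)}), plus a remainder of the form $\varepsilon\sum_l g^\varepsilon b_l\bigl(\Lambda^\varepsilon b(\mathbf{D})D_l+\widetilde{\Lambda}^\varepsilon D_l\bigr)(B_D^0-\zeta\overline{Q_0})^{-1}$.

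The remainder is estimated exactly as in \eqref{d-vo Th 7/6 5}--\eqref{d-vo Th 7/6 5a}: Condition~\ref{Condition Lambda in L infty} and the $H^2$-bound \eqref{lemma hom probl 3} handle the first sum, while Condition~\ref{Condition tilde Lambda in Lp}, Lemma~\ref{Lemma Lambda in Lp H1->L2} and the $H^1$-bound \eqref{lemma hom probl 2} handle the second; the result is $O(\varepsilon\,c(\phi))$, which is absorbed into the $c(\phi)^2\varepsilon$ term on the right-hand side of \eqref{10.20b}. The main technical point to watch is the clean separation of three different powers of $c(\phi)$: the principal $c(\phi)$ factor in $\varepsilon^{1/2}|\zeta|^{-1/4}$ inherited from Theorem~\ref{Theorem improvement L2, H1}, the $c(\phi)^2$ factor carried by the $\varepsilon$-term, and the $c(\phi)^2(\mathrm{Re}\,\zeta)^{1/2}\varepsilon$-term coming from the ``$Q_0^\varepsilon-\overline{Q_0}$'' commutator, which, as in Theorem~\ref{Theorem improvement L2, H1}, vanishes when $Q_0$ is constant.
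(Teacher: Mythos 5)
Your proposal is correct and takes essentially the same route as the paper: the corrector difference $\varepsilon\bigl(K_D(\varepsilon;\zeta)-K_D^0(\varepsilon;\zeta)\bigr)$ is handled by Lemmas~\ref{Lemma Lambda (S-I)} and~\ref{Lemma tilde Lambda(S-I)} together with \eqref{PO} and \eqref{lemma hom probl 3} (i.e.\ the bounds \eqref{d-vo Th 7/6 1}, \eqref{d-vo Th 7/6 2}) and then combined with \eqref{Th H1 improvement}, while \eqref{10.20b} is deduced from \eqref{10.20a} exactly via the decomposition \eqref{d-vo Th 7/6 4}--\eqref{d-vo Th 7/6 5a}, which is what the paper does. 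One tiny citation slip: the remainder term $\varepsilon\sum_l g^\varepsilon b_l\widetilde{\Lambda}^\varepsilon D_l(B_D^0-\zeta\overline{Q_0})^{-1}$ requires the $H^2$-bound \eqref{lemma hom probl 3} rather than \eqref{lemma hom probl 2}, since $[\widetilde{\Lambda}^\varepsilon]$ is only bounded from $H^1$ to $L_2$ --- this is precisely how \eqref{d-vo Th 7/6 5a}, which you invoke, proceeds, so the argument is unaffected.
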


\begin{proof}
The proof is similar to that of Theorem \ref{Theorem no S-eps}. To obtain \eqref{10.20a},
we use estimates \eqref{d-vo Th 7/6 1}, \eqref{d-vo Th 7/6 2}, and \eqref{Th H1 improvement}. By analogy with \eqref{d-vo Th 7/6 3}--\eqref{d-vo Th 7/6 5a}, estimate \eqref{10.20b} is deduced from \eqref{10.20a}.
 We omit the details.
\end{proof}

\subsection{Special case}
Similarly to Proposition~~\ref{Proposition tilde Lambda =0 in Sec. 7},
the following statement can be deduced from Theorem \ref{Theorem L2->H1 improvement special cases}.

\begin{proposition}
\label{Proposition tilde Lambda =0 improvement}
Suppose that the assumptions of Theorem~\textnormal{\ref{Theorem Dirichlet L2}} are satisfied.
Assume that relations~\eqref{underline-g} and~\eqref{sum Dj aj =0} hold.
Then for $0<\varepsilon\leqslant \varepsilon_1$, $\zeta\in\mathbb{C}\setminus\mathbb{R}_+$, $\vert\zeta\vert\geqslant 1$, $\mathrm{Re}\,\zeta\geqslant 0$, we have
\begin{equation*}
\begin{split}
\Vert &g^\varepsilon b(\mathbf{D})(B_{D,\varepsilon}-\zeta Q_0^\varepsilon )^{-1}-g^0b(\mathbf{D})(B_D^0-\zeta\overline{Q_0})^{-1}\Vert _{L_2(\mathcal{O})\rightarrow L_2(\mathcal{O})}
\\
&\leqslant
\widetilde{C}_{38}\left(c(\phi)\varepsilon^{1/2}\vert\zeta\vert ^{-1/4}+ c(\phi)^2\varepsilon\right)
+\widetilde{C}_{37}(\mathrm{Re}\,\zeta )^{1/2}c(\phi)^2\varepsilon .
\end{split}
\end{equation*}
\end{proposition}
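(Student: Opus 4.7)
The plan is to reduce the statement to a direct application of Theorem~\ref{Theorem L2->H1 improvement special cases}, whose conclusion~\eqref{10.20b} is of exactly the form we need, once the corrector $G^0_D(\varepsilon;\zeta)$ from~\eqref{G3(eps;zeta)} is identified with $g^0 b(\mathbf{D})(B_D^0-\zeta\overline{Q_0})^{-1}$.

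First I would verify that the two structural conditions required by Theorem~\ref{Theorem L2->H1 improvement special cases} are in force. Under assumption~\eqref{underline-g} (i.e.\ $g^0=\underline{g}$), Proposition~\ref{Proposition Lambda in L infty <=}($3^\circ$) guarantees Condition~\ref{Condition Lambda in L infty}, so $\Lambda\in L_\infty$; the norm $\|\Lambda\|_{L_\infty}$ is controlled by the initial data and the domain. Under assumption~\eqref{sum Dj aj =0}, problem~\eqref{tildeLambda_problem} becomes homogeneous, and by the uniqueness of the $\Gamma$-periodic solution with zero mean, we conclude $\widetilde{\Lambda}(\mathbf{x})\equiv 0$; hence Condition~\ref{Condition tilde Lambda in Lp} holds trivially with $\|\widetilde{\Lambda}\|_{L_p(\Omega)}=0$.

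Next I would simplify the corrector $G^0_D(\varepsilon;\zeta)$ defined in~\eqref{G3(eps;zeta)}. Since $\widetilde{\Lambda}=0$, the second summand of~\eqref{G3(eps;zeta)} vanishes. Moreover, under~\eqref{underline-g}, \cite[Remark~3.5]{BSu05} (as already used in Proposition~\ref{Proposition tilde Lambda =0 in Sec. 7}) ensures that the matrix-valued function $\widetilde{g}(\mathbf{x})$ defined by~\eqref{tilde g} is in fact constant and equal to $g^0=\underline{g}$. Therefore
\begin{equation*}
G^0_D(\varepsilon;\zeta)=\widetilde{g}^\varepsilon b(\mathbf{D})(B_D^0-\zeta\overline{Q_0})^{-1}=g^0 b(\mathbf{D})(B_D^0-\zeta\overline{Q_0})^{-1}.
\end{equation*}

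Finally, I apply estimate~\eqref{10.20b} of Theorem~\ref{Theorem L2->H1 improvement special cases} with this explicit expression for $G^0_D(\varepsilon;\zeta)$, for $0<\varepsilon\le\varepsilon_1$ and $\zeta\in\mathbb{C}\setminus\mathbb{R}_+$ with $|\zeta|\ge 1$ and $\mathrm{Re}\,\zeta\ge 0$. This yields the desired inequality with the same constants $\widetilde{C}_{37}$, $\widetilde{C}_{38}$ as in Theorem~\ref{Theorem L2->H1 improvement special cases}. Since every step is either a direct invocation of a previously established result or a trivial simplification provided by the vanishing of $\widetilde{\Lambda}$ and the constancy of $\widetilde{g}$, there is no real obstacle; the only point requiring minor care is the identification of $\widetilde{g}$ with $g^0$, which is handled by citing~\cite[Remark~3.5]{BSu05}.
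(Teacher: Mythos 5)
Your proposal is correct and follows essentially the same route as the paper: the paper deduces this proposition from Theorem~\ref{Theorem L2->H1 improvement special cases} exactly as Proposition~\ref{Proposition tilde Lambda =0 in Sec. 7} is deduced from Theorem~\ref{Theorem no S-eps}, namely by noting that \eqref{underline-g} gives Condition~\ref{Condition Lambda in L infty} and $\widetilde{g}=g^0$ (via \cite[Remark~3.5]{BSu05}), while \eqref{sum Dj aj =0} gives $\widetilde{\Lambda}=0$, so that $G^0_D(\varepsilon;\zeta)$ reduces to $g^0 b(\mathbf{D})(B_D^0-\zeta\overline{Q_0})^{-1}$ and \eqref{10.20b} applies directly.
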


\subsection{Estimates in a strictly interior subdomain}

\begin{theorem}
\label{Theorem O' improvement}
Under the assumptions of Theorem \textnormal{\ref{Theorem O'}}, for $0<\varepsilon\leqslant\varepsilon _1$ and $\zeta\in\mathbb{C}\setminus\mathbb{R}_+$, $\vert\zeta\vert\geqslant 1$, $\mathrm{Re}\,\zeta\geqslant 0$, we have
\begin{align}
\label{Th O' improvement}
\begin{split}
\Vert &(B_{D,\varepsilon}-\zeta Q_0^\varepsilon )^{-1}-(B_D^0-\zeta\overline{Q_0})^{-1}
- \varepsilon K^0_D(\varepsilon;\zeta) \Vert _{L_2(\mathcal{O})\rightarrow H^1(\mathcal{O}')}
\\
&\leqslant
C_{39} \varepsilon  \left( \delta^{-1} c(\phi)   \vert\zeta\vert ^{-1/2}+ c(\phi)^2 \right)
+C_{37}(\mathrm{Re}\,\zeta )^{1/2} c(\phi)^2\varepsilon,
\end{split}
\\
\label{Th O' fluxes improvement}
\begin{split}
\Vert & g^\varepsilon b(\mathbf{D})(B_{D,\varepsilon}-\zeta Q_0^\varepsilon )^{-1}-G^0_D(\varepsilon;\zeta)\Vert _{L_2(\mathcal{O})\rightarrow L_2(\mathcal{O}')}
\\
&\leqslant
\widetilde{C}_{39} \varepsilon  \left( \delta^{-1} c(\phi)   \vert\zeta\vert ^{-1/2}+ c(\phi)^2 \right)
+\widetilde{C}_{37} (\mathrm{Re}\,\zeta )^{1/2} c(\phi)^2\varepsilon .
\end{split}
\end{align}
Here the constants $C_{37}$ and $\widetilde{C}_{37}$ are the same as in Theorem \textnormal{\ref{Theorem improvement L2, H1}}.
The constants $C_{39}$ and $\widetilde{C}_{39}$ depend only on the initial data \eqref{problem data} and the domain $\mathcal{O}$.
\end{theorem}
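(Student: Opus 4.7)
The plan is to transfer Theorem \ref{Theorem O' no S-eps} from a reflected point in the left half-plane to $\zeta$ by the identity \eqref{sem'} (with $K_D$ replaced by $K_D^0$), exactly in the spirit of the proofs of Theorems \ref{Theorem improvement L2, H1} and \ref{Theorem L2->H1 improvement special cases}. For $\zeta$ with $\mathrm{Re}\,\zeta\geqslant 0$, $\mathrm{Im}\,\zeta\ne 0$, and $|\zeta|\geqslant 1$, set $\widehat{\zeta}:=-\mathrm{Re}\,\zeta+i\,\mathrm{Im}\,\zeta$, so that $|\widehat{\zeta}|=|\zeta|\geqslant 1$ and $c(\widehat{\phi})=1$. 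Applying Theorem \ref{Theorem O' no S-eps} (which we assume to be applicable, i.~e., we work under Conditions \ref{Condition Lambda in L infty} and \ref{Condition tilde Lambda in Lp}) at $\widehat{\zeta}$ yields the base bound
\begin{equation*}
\Vert (B_{D,\varepsilon}-\widehat{\zeta}Q_0^\varepsilon)^{-1}-(B_D^0-\widehat{\zeta}\overline{Q_0})^{-1}-\varepsilon K_D^0(\varepsilon;\widehat{\zeta})\Vert_{L_2(\mathcal{O})\to H^1(\mathcal{O}')}\leqslant \varepsilon\bigl(C_{24}'\delta^{-1}|\zeta|^{-1/2}+C_{25}\bigr).
\end{equation*}

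By the definition \eqref{K_D^0}, the identity $K_D^0(\varepsilon;\widehat{\zeta})(B_D^0-\widehat{\zeta}\overline{Q_0})(B_D^0-\zeta\overline{Q_0})^{-1}=K_D^0(\varepsilon;\zeta)$ holds, so the analogue of \eqref{sem'} with $K_D^0$ in place of $K_D$ is valid. Denoting its three right-hand summands by $\mathfrak{L}_1,\mathfrak{L}_2,\mathfrak{L}_3$, I estimate each one in the $L_2(\mathcal{O})\to H^1(\mathcal{O}')$ norm. For $\mathfrak{L}_1$, combine the base bound above with \eqref{10.13a}, producing an $\varepsilon\bigl(\delta^{-1}c(\phi)|\zeta|^{-1/2}+c(\phi)\bigr)$ contribution. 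For $\mathfrak{L}_2$, invoke Theorem \ref{Theorem Dirichlet L2} at $\widehat{\zeta}$ (which yields a $C_4\varepsilon|\zeta|^{-1/2}$ factor since $c(\widehat{\phi})=1$), the $L_2\to H^1$-resolvent estimates \eqref{2.10a}--\eqref{2.10b} from Lemma \ref{Lemma estimates u_D,eps}, the bound \eqref{10.13a}, and $|\zeta-\widehat{\zeta}|=2\,\mathrm{Re}\,\zeta\leqslant 2|\zeta|$; the powers of $|\zeta|$ telescope and produce an $\varepsilon c(\phi)^2$ contribution. Finally, $\mathfrak{L}_3$ coincides with the term treated in \eqref{piatnadtsat'} and contributes $C_{37}(\mathrm{Re}\,\zeta)^{1/2}c(\phi)^2\varepsilon$. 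Summing these three bounds (and using $c(\phi)\leqslant c(\phi)^2$) yields \eqref{Th O' improvement}.

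To deduce \eqref{Th O' fluxes improvement} from \eqref{Th O' improvement}, I follow the recipe used in the proofs of Theorems \ref{Theorem Dr appr no S_eps} and \ref{Theorem L2->H1 improvement special cases}: \eqref{b_l <=} converts the $H^1(\mathcal{O}')$-norm into control of $g^\varepsilon b(\mathbf{D})$ applied to the resolvent difference, then I expand $g^\varepsilon b(\mathbf{D})(I+\varepsilon\Lambda^\varepsilon b(\mathbf{D})+\varepsilon\widetilde{\Lambda}^\varepsilon)(B_D^0-\zeta\overline{Q_0})^{-1}$ as in \eqref{d-vo Th 7/6 4} and bound the resulting $\varepsilon$-small commutator terms via Conditions \ref{Condition Lambda in L infty}, \ref{Condition tilde Lambda in Lp}, and the $L_2\to H^2$-estimate from Lemma \ref{Lemma hom problem estimates} (cf.~\eqref{9-star}). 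The hard part is the $\mathfrak{L}_3$-term: the factor $\zeta-\widehat{\zeta}=2\,\mathrm{Re}\,\zeta$ cannot be absorbed into the available resolvent bounds, so only the $(\mathrm{Re}\,\zeta)^{1/2}$-improvement (instead of something decaying in $|\zeta|$) survives. This is the source of the extra $C_{37}(\mathrm{Re}\,\zeta)^{1/2}c(\phi)^2\varepsilon$ summand; it disappears exactly when $Q_0\equiv\overline{Q_0}$, which reproduces the refinement in the constant-$Q_0$ case inherited from Theorem \ref{Theorem improvement L2, H1}.
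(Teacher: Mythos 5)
Your transfer mechanism is exactly the paper's: reflect $\zeta$ to $\widehat{\zeta}=-\mathrm{Re}\,\zeta+i\,\mathrm{Im}\,\zeta$, use the identity \eqref{sem'} (legitimate for either corrector, since $K_D(\varepsilon;\widehat{\zeta})(B_D^0-\widehat{\zeta}\overline{Q_0})(B_D^0-\zeta\overline{Q_0})^{-1}=K_D(\varepsilon;\zeta)$ and likewise for $K_D^0$), bound $\mathfrak{L}_1$ by the base interior estimate at $\widehat{\zeta}$ together with \eqref{10.13a}, bound $\mathfrak{L}_2$ as in \eqref{desiat'} (rough resolvent bounds plus \eqref{ogin}, with $|\zeta-\widehat{\zeta}|=2\,\mathrm{Re}\,\zeta\leqslant 2|\zeta|$ telescoping the powers of $|\zeta|$), and keep $\mathfrak{L}_3$ as in \eqref{piatnadtsat'}, which is the source of the $(\mathrm{Re}\,\zeta)^{1/2}$ term vanishing for constant $Q_0$. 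All of these steps are sound.

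The genuine discrepancy is in your choice of base estimate. You start from Theorem \ref{Theorem O' no S-eps}, i.e.\ from the interior approximation with the unsmoothed corrector, and you explicitly import Conditions \ref{Condition Lambda in L infty} and \ref{Condition tilde Lambda in Lp}; the same happens again in your flux step, where you expand as in \eqref{d-vo Th 7/6 4} and use \eqref{9-star}. But the theorem is stated ``under the assumptions of Theorem \ref{Theorem O'}'' only, and it asserts that $C_{39}$, $\widetilde{C}_{39}$ depend only on the initial data \eqref{problem data} and $\mathcal{O}$ --- a claim your route cannot deliver, since your constants inevitably involve $p$, $\Vert\Lambda\Vert_{L_\infty}$ and $\Vert\widetilde{\Lambda}\Vert_{L_p(\Omega)}$. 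The paper's own proof avoids these extra hypotheses entirely: it takes as base estimate Theorem \ref{Theorem O'} at $\widehat{\zeta}$ (i.e.\ \eqref{dvadsat' dva}, with the \emph{smoothed} corrector $K_D$), runs the same $\mathfrak{L}_1,\mathfrak{L}_2,\mathfrak{L}_3$ analysis, and obtains the flux approximation with $G_D$ from \eqref{3.20 from draft}, \eqref{4-th chlen in tozd for fluxes}, \eqref{3.22 from draft}; the removal of $S_\varepsilon$ under the additional Conditions is then carried out separately in Theorem \ref{Theorem O' improvement no S-eps}. (The appearance of $K_D^0$, $G_D^0$ in the printed statement of Theorem \ref{Theorem O' improvement} is evidently a misprint for $K_D$, $G_D$, as the declared constant dependence and the existence of the separate Theorem \ref{Theorem O' improvement no S-eps} show.) So, in effect, you have proved Theorem \ref{Theorem O' improvement no S-eps} rather than Theorem \ref{Theorem O' improvement} under its stated hypotheses; to prove the latter you should rerun your argument with Theorem \ref{Theorem O'} as the base bound and with the smoothed operators $K_D$, $G_D$.
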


\begin{proof}
Let ${\zeta}=\mathrm{Re}\,\zeta +i \mathrm{Im}\,\zeta$, $\mathrm{Re}\,\zeta \ge 0$, $\mathrm{Im}\,\zeta \ne 0$,
and $|{\zeta}| \ge 1$. Let $\widehat{\zeta}=-\mathrm{Re}\,\zeta +i \mathrm{Im}\,\zeta$.
Estimate \eqref{Th O'} at the point $\widehat{\zeta}$ means that
\begin{equation}
\label{dvadsat' dva}
\begin{split}
\bigl\Vert
(B_{D,\varepsilon}-\widehat{\zeta}Q_0^\varepsilon)^{-1}-(B_D^0-\widehat{\zeta}\overline{Q_0})^{-1}-\varepsilon K_D(\varepsilon;\widehat{\zeta})
\bigr\Vert _{L_2(\mathcal{O}) \to H^1(\mathcal{O}')}
 \leqslant \varepsilon \bigl( C_{24}' |\zeta|^{-1/2} \delta^{-1} + C_{24}'' \bigr)
\end{split}
\end{equation}
for $0< \varepsilon \leqslant \varepsilon_1$. Next, we use identity \eqref {sem'}. By  \eqref{10.13a} and \eqref{dvadsat' dva},
\begin{equation}
\label{L1}
\Vert  \mathfrak{L}_1(\varepsilon;\zeta )\Vert _{L_2(\mathcal{O})\rightarrow H^1(\mathcal{O}')}
\leqslant 2 \varepsilon \bigl( C_{24}' |\zeta|^{-1/2} \delta^{-1} + C_{24}'' \bigr) \| f \|_{L_\infty} \| f^{-1} \|_{L_\infty} c(\phi).
\end{equation}
Relations \eqref{desiat'}, \eqref{piatnadtsat'}, and \eqref{L1} imply \eqref{Th O' improvement} with
$C_{39} := \max \{ 2 C_{24}' \|f\|_{L_\infty}  \|f^{-1}\|_{L_\infty}; 2 C_{24}'' \|f\|_{L_\infty}  \|f^{-1}\|_{L_\infty} + \gamma_{42} \}$.

Approximation \eqref{Th O' fluxes improvement} for the flux follows from \eqref{3.20 from draft}, \eqref{4-th chlen in tozd for fluxes}, \eqref{3.22 from draft}, and \eqref{Th O' improvement}.
\end{proof}

The following result is deduced from Theorem \ref{Theorem O' improvement} similarly to the proof of Theorem~\ref{Theorem O' no S-eps}.

\begin{theorem}
\label{Theorem O' improvement no S-eps}
Suppose that the assumptions of Theorem \textnormal{\ref{Theorem O'}} are satisfied.
 Suppose also that Conditions \textnormal{\ref{Condition Lambda in L infty}} and \textnormal{\ref{Condition tilde Lambda in Lp}} hold. Let $K^0_D(\varepsilon ;\zeta)$ and $G^0_D(\varepsilon ;\zeta)$ be given by~\eqref{K_D^0} and \eqref{G3(eps;zeta)}, respectively. Then for $\zeta\in\mathbb{C}\setminus\mathbb{R}_+$, $\vert \zeta\vert \geqslant 1$, $\mathrm{Re}\,\zeta\geqslant 0$, and $0<\varepsilon\leqslant\varepsilon _1$ we have
\begin{align*}
\begin{split}
\Vert &(B_{D,\varepsilon}-\zeta Q_0^\varepsilon )^{-1}- (B_D^0-\zeta \overline{Q_0})^{-1}
- \varepsilon K^0_D(\varepsilon ;\zeta) \Vert _{L_2(\mathcal{O})\rightarrow H^1(\mathcal{O}')}
\\
&\leqslant C_{40}c(\phi)^2\varepsilon (\vert\zeta\vert ^{-1/2}\delta ^{-1}+1)
+C_{37}c(\phi)^2\varepsilon (\mathrm{Re}\,\zeta )^{1/2},
\end{split}
\\
\begin{split}
\Vert &g^\varepsilon b(\mathbf{D})(B_{D,\varepsilon}-\zeta Q_0^\varepsilon )^{-1}-G^0_D(\varepsilon ;\zeta )\Vert _{L_2(\mathcal{O})\rightarrow L_2(\mathcal{O}')}
\\
&\leqslant \widetilde{C}_{40}c(\phi)^2\varepsilon (\vert\zeta\vert ^{-1/2}\delta ^{-1}+1)
+\widetilde{C}_{37}c(\phi)^2\varepsilon (\mathrm{Re}\,\zeta )^{1/2}.
\end{split}
\end{align*}
The constants $C_{37}$ and $\widetilde{C}_{37}$ are the same as in Theorem~\textnormal{\ref{Theorem improvement L2, H1}}.
The constants $C_{40}$ and $\widetilde{C}_{40}$ depend only on the initial data \eqref{problem data},
the domain~$\mathcal{O}$, and also on $p$ and the norms $\Vert \Lambda\Vert _{L_\infty}$, $\Vert \widetilde{\Lambda}\Vert _{L_p(\Omega)}$.
\end{theorem}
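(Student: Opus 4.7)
\smallskip

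\textbf{Proof proposal.} The strategy is to mimic the passage from Theorem~\ref{Theorem O'} to Theorem~\ref{Theorem O' no S-eps}, but starting instead from the sharper estimate of Theorem~\ref{Theorem O' improvement}. In other words, I would start from the inequalities proved in Theorem~\ref{Theorem O' improvement} (which contain the corrector $K_D(\varepsilon;\zeta)$ with the Steklov smoothing) and then estimate the difference $\varepsilon\bigl(K_D(\varepsilon;\zeta)-K_D^0(\varepsilon;\zeta)\bigr)$ in the operator norm $L_2(\mathcal{O})\to H^1(\mathcal{O})$ by means of Lemmas~\ref{Lemma Lambda (S-I)} and~\ref{Lemma tilde Lambda(S-I)}, combined with Lemma~\ref{Lemma hom problem estimates}.

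More concretely, recall that $K_D(\varepsilon;\zeta)-K_D^0(\varepsilon;\zeta)=R_{\mathcal O}\bigl([\Lambda^\varepsilon]b(\mathbf{D})+[\widetilde{\Lambda}^\varepsilon]\bigr)(S_\varepsilon-I)P_{\mathcal O}(B_D^0-\zeta\overline{Q_0})^{-1}$. Under Condition~\ref{Condition Lambda in L infty} (resp.\ Condition~\ref{Condition tilde Lambda in Lp}), Lemma~\ref{Lemma Lambda (S-I)} (resp.\ Lemma~\ref{Lemma tilde Lambda(S-I)}) gives the $(H^2\to H^1)$-bound for $[\Lambda^\varepsilon]b(\mathbf{D})(S_\varepsilon-I)$ (resp.\ $[\widetilde{\Lambda}^\varepsilon](S_\varepsilon-I)$) by the constant $\mathfrak{C}_\Lambda$ (resp.\ $\mathfrak{C}_{\widetilde{\Lambda}}$). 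Composing with the extension $P_{\mathcal O}$ of norm $\leq C_{\mathcal O}^{(2)}$ and with the operator $(B_D^0-\zeta\overline{Q_0})^{-1}\colon L_2(\mathcal{O})\to H^2(\mathcal{O})$, whose norm is bounded by $\mathcal{C}_2 c(\phi)$ thanks to~\eqref{lemma hom probl 3}, one obtains
\begin{equation*}
\varepsilon\bigl\|K_D(\varepsilon;\zeta)-K_D^0(\varepsilon;\zeta)\bigr\|_{L_2(\mathcal{O})\to H^1(\mathcal{O})}\leq (\mathfrak{C}_\Lambda+\mathfrak{C}_{\widetilde{\Lambda}})\,C_{\mathcal O}^{(2)}\mathcal{C}_2\,c(\phi)\,\varepsilon.
\end{equation*}
Restricting to $\mathcal{O}'\subset\mathcal{O}$ and adding this to the inequality of Theorem~\ref{Theorem O' improvement} (and absorbing $c(\phi)$ into $c(\phi)^2$, which is legal since $c(\phi)\ge 1$), I obtain the first claimed estimate with some $C_{40}$ depending on $C_{39}$, $\mathfrak{C}_\Lambda$, $\mathfrak{C}_{\widetilde{\Lambda}}$, $C_{\mathcal O}^{(2)}$, and $\mathcal{C}_2$.

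The flux estimate is deduced from the first one exactly as in the proof of Theorem~\ref{Theorem O' no S-eps}, applied on $\mathcal{O}'$ in place of $\mathcal{O}$. Multiplying by $g^\varepsilon b(\mathbf{D})$ and using $|b(\mathbf{D})|\le (d\alpha_1)^{1/2}$ costs the factor $(d\alpha_1)^{1/2}\|g\|_{L_\infty}$; it then remains to handle the "commutator terms" produced by $\varepsilon g^\varepsilon b(\mathbf{D})(\Lambda^\varepsilon b(\mathbf{D})+\widetilde{\Lambda}^\varepsilon)$, that is, the analog of~\eqref{d-vo Th 7/6 4}--\eqref{d-vo Th 7/6 5a}. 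The first two terms on the right of~\eqref{d-vo Th 7/6 4} combine with $\widetilde{g}^\varepsilon b(\mathbf{D})(B_D^0-\zeta\overline{Q_0})^{-1}+g^\varepsilon(b(\mathbf{D})\widetilde{\Lambda})^\varepsilon(B_D^0-\zeta\overline{Q_0})^{-1}=G_D^0(\varepsilon;\zeta)$, while the $\varepsilon$-order remainder is estimated by the $H^2$-bound of Lemma~\ref{Lemma hom problem estimates}, giving an extra $c(\phi)\,\varepsilon$ term, which is again absorbed into $c(\phi)^2\,\varepsilon$. The constant $\widetilde{C}_{40}$ therefore depends on $C_{40}$, $\mathfrak{C}_\Lambda$, $\mathfrak{C}_{\widetilde{\Lambda}}$, $\|\Lambda\|_{L_\infty}$, $\|\widetilde{\Lambda}\|_{L_p(\Omega)}$, $\|g\|_{L_\infty}$, and $\mathcal{C}_2$, in full agreement with the statement.

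No serious obstacle is expected: all the necessary ingredients (Theorem~\ref{Theorem O' improvement}, Lemmas~\ref{Lemma Lambda (S-I)} and~\ref{Lemma tilde Lambda(S-I)}, and the $(L_2\to H^2)$-estimate for the homogenized resolvent) have already been established. The only mildly delicate point is bookkeeping: one must check that the extra factor $(\mathrm{Re}\,\zeta)^{1/2}c(\phi)^2\varepsilon$ inherited from Theorem~\ref{Theorem O' improvement} is preserved (it is, since the smoothing-removal step produces only $c(\phi)\varepsilon$-type contributions independent of $\mathrm{Re}\,\zeta$) and that all factors of $c(\phi)$ combine in powers no larger than $c(\phi)^2$, which is automatic because $c(\phi)\ge 1$.
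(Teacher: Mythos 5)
Your proposal is correct and coincides with the paper's own (very brief) argument: the theorem is deduced from Theorem~\ref{Theorem O' improvement} exactly as Theorem~\ref{Theorem O' no S-eps} was deduced from Theorem~\ref{Theorem O'}, i.e.\ by bounding $\varepsilon\bigl([\Lambda^\varepsilon]b(\mathbf{D})+[\widetilde{\Lambda}^\varepsilon]\bigr)(S_\varepsilon-I)P_\mathcal{O}(B_D^0-\zeta\overline{Q_0})^{-1}$ via Lemmas~\ref{Lemma Lambda (S-I)} and~\ref{Lemma tilde Lambda(S-I)} together with \eqref{lemma hom probl 3}, and then treating the flux through the expansion \eqref{d-vo Th 7/6 4}--\eqref{d-vo Th 7/6 5a}. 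Your reading of Theorem~\ref{Theorem O' improvement} as holding for the smoothed corrector $K_D(\varepsilon;\zeta)$, $G_D(\varepsilon;\zeta)$ (despite the $K_D^0$, $G_D^0$ appearing in its printed statement) agrees with that theorem's proof and is the intended starting point, so your bookkeeping of the $c(\phi)$ powers and of the $(\mathrm{Re}\,\zeta)^{1/2}$ term is exactly as in the paper.
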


\subsection{Approximation for the flux}
We obtain some new versions of estimates for the flux.

\begin{proposition}
\label{Propositiom sec. 10 eps 1/2}
Under the assumptions of Theorem \textnormal{\ref{Theorem Dirichlet H1}}, for $\zeta\in\mathbb{C}\setminus\mathbb{R}_+$, $\vert\zeta\vert\geqslant 1$, $\mathrm{Re}\,\zeta \geqslant 0$,
and $0<\varepsilon\leqslant\varepsilon _1$ we have
\begin{align}
\label{Prop sec 10 2}
\begin{split}
\Vert & g^\varepsilon b(\mathbf{D})(B_{D,\varepsilon}-\zeta Q_0^\varepsilon )^{-1}- G_D(\varepsilon ;\zeta)\Vert _{L_2(\mathcal{O})\rightarrow L_2(\mathcal{O})}
\leqslant {C}_{41}c(\phi )^{3/2}\varepsilon ^{1/2}\vert\zeta\vert ^{-1/4}+ {C}_{42} c(\phi )^{3/2}\varepsilon ^{1/2}.
\end{split}
\end{align}
The constants ${C}_{41}$ and ${C}_{42}$ depend only on the initial data \eqref{problem data} and the domain $\mathcal{O}$. If the matrix-valued function $Q_0(\mathbf{x})$ is constant, then ${C}_{42}=0$.
\end{proposition}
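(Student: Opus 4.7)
The plan is to combine the refined flux approximation \eqref{Th fluxes improvement} with a straightforward rough operator bound on the same difference, and then to interpolate between them termwise via elementary geometric means, which trades a factor of $c(\phi)^{1/2}$ for a factor of $\varepsilon^{-1/2}|\zeta|^{-1/4}$ in the two ``bad'' summands. Denote $X := \|g^\varepsilon b(\mathbf{D})(B_{D,\varepsilon}-\zeta Q_0^\varepsilon)^{-1}-G_D(\varepsilon;\zeta)\|_{L_2(\mathcal{O})\to L_2(\mathcal{O})}$ and recall that $c(\phi)\ge 1$ while $\mathrm{Re}\,\zeta\le|\zeta|$.

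First I would establish a rough bound of the form
\begin{equation*}
X \le \mathfrak{C}\, c(\phi)\, |\zeta|^{-1/2}
\end{equation*}
by controlling each term in the difference separately. The term $g^\varepsilon b(\mathbf{D})(B_{D,\varepsilon}-\zeta Q_0^\varepsilon)^{-1}$ is handled via \eqref{<b^*b<}, \eqref{b_l <=}, and \eqref{2.10b}. For $G_D(\varepsilon;\zeta)$, I would apply Proposition \ref{Proposition f^eps S_eps} to $[\widetilde{g}^\varepsilon]S_\varepsilon$ and to $[(b(\mathbf{D})\widetilde{\Lambda})^\varepsilon]S_\varepsilon$; here $\widetilde{g}\in L_2(\Omega)$ by \eqref{tilde g} and \eqref{b(D)Lambda<=}, while $b(\mathbf{D})\widetilde{\Lambda}\in L_2(\Omega)$ by \eqref{b(D) tilde Lambda <=}. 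Combined with \eqref{PO} and Lemma \ref{Lemma hom problem estimates} (which yields $\|(B_D^0-\zeta\overline{Q_0})^{-1}\|_{L_2(\mathcal{O})\to H^1(\mathcal{O})}\le C\, c(\phi)|\zeta|^{-1/2}$ for $|\zeta|\ge 1$), these give the rough estimate with $\mathfrak{C}$ depending only on the initial data \eqref{problem data} and $\mathcal{O}$.

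Setting $A:=\mathfrak{C}c(\phi)|\zeta|^{-1/2}$ and decomposing the right-hand side of \eqref{Th fluxes improvement} as $B_1+B_2+B_3$ with $B_1:=\widetilde{C}_{36}c(\phi)\varepsilon^{1/2}|\zeta|^{-1/4}$, $B_2:=\widetilde{C}_{36}c(\phi)^2\varepsilon$, $B_3:=\widetilde{C}_{37}(\mathrm{Re}\,\zeta)^{1/2}c(\phi)^2\varepsilon$, I would iterate the elementary inequality $\min(A,Y+Z)\le\min(A,Y)+\min(A,Z)$ together with $\min(A,B_i)\le\sqrt{AB_i}$ and the trivial bound $\min(A,B_1)\le B_1$ to obtain
\begin{equation*}
X \le \min(A,B_1+B_2+B_3)\le B_1+\sqrt{AB_2}+\sqrt{AB_3}.
\end{equation*}
A direct computation gives $B_1\le \widetilde{C}_{36}c(\phi)^{3/2}\varepsilon^{1/2}|\zeta|^{-1/4}$ (using $c(\phi)\ge 1$), $\sqrt{AB_2}=\sqrt{\mathfrak{C}\widetilde{C}_{36}}\,c(\phi)^{3/2}\varepsilon^{1/2}|\zeta|^{-1/4}$, and, after using $(\mathrm{Re}\,\zeta)^{1/4}\le|\zeta|^{1/4}$, $\sqrt{AB_3}\le\sqrt{\mathfrak{C}\widetilde{C}_{37}}\,c(\phi)^{3/2}\varepsilon^{1/2}$. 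Summing yields \eqref{Prop sec 10 2} with $C_{41}:=\widetilde{C}_{36}+\sqrt{\mathfrak{C}\widetilde{C}_{36}}$ and $C_{42}:=\sqrt{\mathfrak{C}\widetilde{C}_{37}}$; when $Q_0(\mathbf{x})$ is constant one has $\widetilde{C}_{37}=0$, hence $C_{42}=0$, yielding the additional assertion.

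The main obstacle is conceptual rather than technical: one must not attempt to interpolate the $B_1$ summand against the rough bound, since $\sqrt{AB_1}$ is of order $c(\phi)\varepsilon^{1/4}|\zeta|^{-3/8}$, which is dominated by neither term on the right-hand side of \eqref{Prop sec 10 2}. Instead, $B_1$ must be absorbed directly using only $c(\phi)\le c(\phi)^{3/2}$, while only the $\varepsilon$-terms $B_2$ and $B_3$, whose $c(\phi)$-exponent is $2$, are interpolated against the rough $A$; this is precisely what produces the $c(\phi)^{3/2}$ dependence claimed. All other verifications are routine, and the rough bound itself requires only careful, but standard, estimation of the Steklov-regularized multipliers appearing in $G_D(\varepsilon;\zeta)$.
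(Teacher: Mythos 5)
Your proposal is correct and follows essentially the same route as the paper: one first proves the rough bound $\Vert g^\varepsilon b(\mathbf{D})(B_{D,\varepsilon}-\zeta Q_0^\varepsilon )^{-1}-G_D(\varepsilon;\zeta)\Vert _{L_2(\mathcal{O})\rightarrow L_2(\mathcal{O})}\leqslant \gamma c(\phi)\vert\zeta\vert^{-1/2}$ exactly as you describe (via \eqref{b_l <=}, \eqref{2.10b}, Proposition \ref{Proposition f^eps S_eps}, \eqref{b(D)Lambda<=}, \eqref{b(D) tilde Lambda <=}, and Lemma \ref{Lemma hom problem estimates}), and then takes the minimum of this bound with \eqref{Th fluxes improvement}, interpolating only the $\varepsilon$-terms by geometric means while keeping the $\varepsilon^{1/2}\vert\zeta\vert^{-1/4}$ term directly. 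Your constants $C_{41}$ and $C_{42}$ coincide with those in the paper (your $\mathfrak{C}$ is the paper's $\gamma_{45}$), and your explicit termwise $\min$-inequality just spells out the step the paper leaves implicit.
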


\begin{proof}
We start with a rough estimate for the left-hand side of \eqref{Prop sec 10 2}.
By \eqref{b_l <=} and \eqref{2.10b},
\begin{equation}
\label{zero}
\begin{split}
\Vert g^\varepsilon b(\mathbf{D})(B_{D,\varepsilon}-\zeta Q_0^\varepsilon )^{-1}\Vert _{L_2(\mathcal{O})\rightarrow L_2(\mathcal{O})}
\leqslant (d\alpha _1)^{1/2}\Vert g\Vert _{L_\infty} \mathcal{C}_1 c(\phi)\vert\zeta\vert ^{-1/2}.
\end{split}
\end{equation}

Next, using \eqref{<b^*b<}, \eqref{PO}, and Proposition \ref{Proposition f^eps S_eps}, we estimate the operator~$G_D(\varepsilon;\zeta)$:
\begin{equation}
\label{10.38}
\begin{split}
\Vert G_D(\varepsilon;\zeta)\Vert _{L_2(\mathcal{O})\rightarrow L_2(\mathcal{O})}
&\leqslant \vert\Omega\vert ^{-1/2}\Vert \widetilde{g}\Vert _{L_2(\Omega)}
\alpha _1^{1/2}C_\mathcal{O}^{(1)}\Vert (B_D^0-\zeta\overline{Q_0})^{-1}\Vert _{L_2(\mathcal{O})\rightarrow H^1(\mathcal{O})}
\\
&+\Vert g\Vert _{L_\infty}
\vert \Omega\vert ^{-1/2}\Vert b(\mathbf{D})\widetilde{\Lambda}\Vert _{L_2(\Omega)}
C_\mathcal{O}^{(0)}\Vert (B_D^0-\zeta\overline{Q_0})^{-1}\Vert _{L_2(\mathcal{O})\rightarrow L_2(\mathcal{O})}.
\end{split}
\end{equation}
By \eqref{tilde g} and \eqref{b(D)Lambda<=}, we have
$\vert\Omega\vert ^{-1/2}\Vert \widetilde{g}\Vert _{L_2(\Omega)}\leqslant \gamma _{43}:=
\Vert g\Vert _{L_\infty}\bigl( m^{1/2}\Vert g\Vert ^{1/2}_{L_\infty}\Vert g^{-1}\Vert ^{1/2}_{L_\infty}+1\bigr)$.
Together with Lemma \ref{Lemma hom problem estimates}, \eqref{b(D) tilde Lambda <=}, and \eqref{10.38},
this implies
\begin{align*}
&\Vert G_D(\varepsilon;\zeta)\Vert _{L_2(\mathcal{O})\rightarrow L_2(\mathcal{O})}
\leqslant \gamma _{44} c(\phi)\vert \zeta\vert ^{-1/2},\quad \zeta\in\mathbb{C}\setminus\mathbb{R}_+,\quad |\zeta|\ge 1, \quad 0<\varepsilon\leqslant 1;\\
\begin{split}
&\gamma _{44} :=\alpha _1^{1/2}C_\mathcal{O}^{(1)}\gamma _{43}\left(\Vert Q_0^{-1}\Vert _{L_\infty}+\mathcal{C}_1\right)
+ |\Omega|^{-1/2} C_\mathcal{O}^{(0)}\Vert Q_0^{-1}\Vert _{L_\infty}C_a n^{1/2}\alpha _0^{-1/2}\Vert g \Vert _{L_\infty} \Vert g^{-1}\Vert _{L_\infty}.
\end{split}
\end{align*}
Combining this with \eqref{zero}, we obtain
\begin{equation}
\label{grubo for fluxes}
\Vert g^\varepsilon b(\mathbf{D})(B_{D,\varepsilon}-\zeta Q_0^\varepsilon )^{-1}-G_D(\varepsilon;\zeta)\Vert _{L_2(\mathcal{O})\rightarrow L_2(\mathcal{O})}
\leqslant\gamma _{45} c(\phi)\vert\zeta\vert ^{-1/2}
\end{equation}
for $0<\varepsilon\leqslant 1$, $\zeta\in\mathbb{C}\setminus\mathbb{R}_+$, $|\zeta|\ge 1$. Here $\gamma_{45}:=(d\alpha _1)^{1/2}\Vert g\Vert _{L_\infty}\mathcal{C}_1+\gamma _{44}$.

By \eqref{Th fluxes improvement} and \eqref{grubo for fluxes}, we have
\begin{equation*}
\begin{split}
\Vert &g^\varepsilon b(\mathbf{D})(B_{D,\varepsilon}-\zeta Q_0^\varepsilon )^{-1}-G_D(\varepsilon;\zeta)\Vert _{L_2(\mathcal{O})\rightarrow L_2(\mathcal{O})}
\\
&\leqslant
\min
\lbrace \gamma _{45}c(\phi)\vert\zeta\vert ^{-1/2};\widetilde{C}_{36}(c(\phi)\varepsilon ^{1/2}\vert\zeta\vert ^{-1/4}+c(\phi)^2\varepsilon )
+\widetilde{C}_{37}(\mathrm{Re}\,\zeta )^{1/2}c(\phi)^2\varepsilon\rbrace
\\
&\leqslant
{C}_{41}c(\phi)^{3/2}\varepsilon ^{1/2}\vert\zeta\vert ^{-1/4}+ {C}_{42}c(\phi)^{3/2}\varepsilon ^{1/2},
\end{split}
\end{equation*}
where ${C}_{41}:=\widetilde{C}_{36}+(\gamma _{45}\widetilde{C}_{36})^{1/2}$ and ${C}_{42}:=(\gamma _{45} \widetilde{C}_{37})^{1/2}$. By Theorem \ref{Theorem improvement L2, H1}, if the matrix-valued function $Q_0(\mathbf{x})$ is constant, then $\widetilde{C}_{37}=0$, whence ${C}_{42}=0$.
\end{proof}

\begin{proposition}
\label{Proposition 10/11 flux}
Under the assumptions of Theorem \textnormal{\ref{Theorem Dirichlet H1}}, for $\zeta\in\mathbb{C}\setminus\mathbb{R}_+$, $\vert\zeta\vert\geqslant 1$, and $0<\varepsilon\leqslant\varepsilon _1$ we have
\begin{equation}
\label{fluxues new estimate}
\Vert g^\varepsilon b(\mathbf{D})(B_{D,\varepsilon}-\zeta Q_0^\varepsilon )^{-1}-G_D(\varepsilon;\zeta)\Vert _{L_2(\mathcal{O})\rightarrow L_2(\mathcal{O})}
\leqslant C_{43} c(\phi)^{5/2}\varepsilon ^{1/2}\vert\zeta\vert ^{-1/4}.
\end{equation}
The constant $C_{43}$ depends only on the initial data \eqref{problem data} and the domain $\mathcal{O}$.
\end{proposition}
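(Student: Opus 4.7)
The plan is to interpolate between two operator-norm bounds for
\[
\mathcal{E}(\varepsilon,\zeta) := \bigl\Vert g^\varepsilon b(\mathbf{D})(B_{D,\varepsilon}-\zeta Q_0^\varepsilon )^{-1}-G_D(\varepsilon;\zeta)\bigr\Vert _{L_2(\mathcal{O})\rightarrow L_2(\mathcal{O})}
\]
that are already at hand, in the same spirit as the proof of Proposition~\ref{Propositiom sec. 10 eps 1/2}, but without the restriction $\mathrm{Re}\,\zeta\geqslant 0$. The first input is the sharp estimate \eqref{2.41a} from Theorem~\ref{Theorem Dirichlet H1},
\[
\mathcal{E}(\varepsilon,\zeta)\leqslant \widetilde{C}_5\,c(\phi)^2\varepsilon^{1/2}|\zeta|^{-1/4}+\widetilde{C}_6\,c(\phi)^4\varepsilon,
\]
and the second is the rough estimate \eqref{grubo for fluxes},
\[
\mathcal{E}(\varepsilon,\zeta)\leqslant \gamma_{45}\,c(\phi)|\zeta|^{-1/2},
\]
both valid for all $\zeta\in\mathbb{C}\setminus\mathbb{R}_+$ with $|\zeta|\geqslant 1$ and all $0<\varepsilon\leqslant\varepsilon_1$.

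Next, I would introduce the pivot quantity
\[
\kappa := c(\phi)^{3/2}\varepsilon^{1/2}|\zeta|^{1/4}
\]
and split on whether $\kappa\leqslant 1$ or $\kappa>1$. If $\kappa\leqslant 1$, apply the first input: since $c(\phi)\geqslant 1$, the leading summand is already dominated by $\widetilde{C}_5\,c(\phi)^{5/2}\varepsilon^{1/2}|\zeta|^{-1/4}$, while the remainder rewrites as
\[
\widetilde{C}_6\,c(\phi)^4\varepsilon=\widetilde{C}_6\,c(\phi)^{5/2}\varepsilon^{1/2}|\zeta|^{-1/4}\cdot\kappa\leqslant \widetilde{C}_6\,c(\phi)^{5/2}\varepsilon^{1/2}|\zeta|^{-1/4}.
\]
If instead $\kappa>1$, the rough estimate suffices:
\[
\gamma_{45}\,c(\phi)|\zeta|^{-1/2}=\gamma_{45}\,c(\phi)^{5/2}\varepsilon^{1/2}|\zeta|^{-1/4}\cdot\kappa^{-1}<\gamma_{45}\,c(\phi)^{5/2}\varepsilon^{1/2}|\zeta|^{-1/4}.
\]
Combining the two cases gives \eqref{fluxues new estimate} with $C_{43}:=\max\{\widetilde{C}_5+\widetilde{C}_6,\gamma_{45}\}$.

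Since both of the ingredient bounds have already been proved earlier in the paper, there is no substantial obstacle; the entire content of the argument is the selection of the pivot $\kappa$, chosen precisely so that both pieces of the sharp bound \eqref{2.41a} are absorbed into the target expression $c(\phi)^{5/2}\varepsilon^{1/2}|\zeta|^{-1/4}$ on the side $\kappa\leqslant 1$, while the rough bound \eqref{grubo for fluxes} remains strong enough to do so on the side $\kappa>1$. The net effect of the interpolation is to trade the pure $O(\varepsilon)$-remainder in \eqref{2.41a} (which is problematic for large $|\zeta|$) for a slightly worse power of $c(\phi)$, replacing $c(\phi)^4$ by $c(\phi)^{5/2}$ while keeping the sharp two-parameter decay $\varepsilon^{1/2}|\zeta|^{-1/4}$.
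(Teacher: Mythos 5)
Your proposal is correct and follows essentially the same route as the paper: both interpolate the sharp bound \eqref{2.41a} with the rough bound \eqref{grubo for fluxes}, the paper doing so via $\min\{a,b\}\leqslant\sqrt{ab}$ (yielding $C_{43}=\widetilde{C}_5+(\gamma_{45}\widetilde{C}_6)^{1/2}$) while you split on the pivot $\kappa=c(\phi)^{3/2}\varepsilon^{1/2}\vert\zeta\vert^{1/4}$, which is the same interpolation in a different guise and gives an equally admissible constant.
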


\begin{proof}
Estimate \eqref{fluxues new estimate} can be checked similarly to \eqref{Prop sec 10 2} by using \eqref{2.41a} and \eqref{grubo for fluxes}. The constant $C_{43}$ is given by $C_{43}:=\widetilde{C}_5+(\gamma _{45}\widetilde{C}_6)^{1/2}$.
\end{proof}

\section{Applications of the general results}
\label{Section Examples}

\subsection{The scalar elliptic operator}
\label{sec.scalar case}
Let $n=1$, $m=d$, $b(\mathbf{D})=\mathbf{D}$, and let $g(\mathbf{x})$ be a $\Gamma$-periodic symmetric
$(d\times d)$-matrix-valued function \textit{with real entries}. Suppose that $g(\mathbf{x})>0$ and \hbox{$g,g^{-1}\in L_\infty(\mathbb{R}^d)$}. Obviously, condition \eqref{<b^*b<} holds with $\alpha _0=\alpha _1 =1$. We have $b(\mathbf{D})^*g^\varepsilon (\mathbf{x}) b(\mathbf{D})=-\mathrm{div}\,g^\varepsilon (\mathbf{x})\nabla$.
Next, let $\mathbf{A}(\mathbf{x})=\mathrm{col}\lbrace A_1(\mathbf{x}),\dots ,A_d(\mathbf{x})\rbrace$, where $A_j(\mathbf{x})$, $j=1,\dots,d$, are $\Gamma$-periodic real-valued functions such that
\begin{equation}
\label{A_j in L rho}
A_j\in L_\rho (\Omega),\quad\rho=2\ \mbox{for}\ d=1,\quad \rho >d\ \mbox{for}\ d\geqslant 2;\quad j=1,\dots ,d.
\end{equation}
Suppose that $v(\mathbf{x})$ and $\mathcal{V}(\mathbf{x})$ are real-valued $\Gamma$-periodic functions such that
\begin{equation}
\label{v,V condition}
v,\mathcal{V}\in L_s(\Omega),\quad s=1\ \mbox{for}\ d=1,\quad s>d/2\ \mbox{for}\ d\geqslant 2; \quad\int_\Omega v(\mathbf{x})\,d\mathbf{x}=0.
\end{equation}

In $L_2(\mathcal{O})$, we consider the operator $\mathfrak{B}_{D,\varepsilon}$ given formally by the differential expression
\begin{equation}
\label{mathfrak B_D,eps}
\mathfrak{B}_{D,\varepsilon}=(\mathbf{D}-\mathbf{A}^\varepsilon(\mathbf{x}))^*g^\varepsilon (\mathbf{x})(\mathbf{D}-\mathbf{A}^\varepsilon (\mathbf{x}))+\varepsilon ^{-1}v^\varepsilon (\mathbf{x})+\mathcal{V}^\varepsilon (\mathbf{x})
\end{equation}
with the Dirichlet condition on~$\partial\mathcal{O}$.
The precise definition of the operator~$\mathfrak{B}_{D,\varepsilon}$ is given in terms of the corresponding quadratic form.
The operator~\eqref{mathfrak B_D,eps} can be treated as the periodic Schr\"odinger operator with the metric~$g^\varepsilon$,
the magnetic potential $\mathbf{A}^\varepsilon$, and the electric potential~$\varepsilon ^{-1}v^\varepsilon +\mathcal{V}^\varepsilon$
containing the singular term $\varepsilon ^{-1}v^\varepsilon$.
It is easily seen (cf. \cite[Subsection~13.1]{SuAA}) that the operator~\eqref{mathfrak B_D,eps}
can be represented as
\begin{equation}
\label{mathfrak B_D,eps in other words}
\mathfrak{B}_{D,\varepsilon}=\mathbf{D}^*g^\varepsilon (\mathbf{x})\mathbf{D}+\sum _{j=1}^d\left(a_j^\varepsilon (\mathbf{x})D_j+D_j(a_j^\varepsilon (\mathbf{x}))^*\right) +Q^\varepsilon (\mathbf{x}).
\end{equation}
Here the real-valued function $Q(\mathbf{x})$ is given by
$Q(\mathbf{x})=\mathcal{V}(\mathbf{x})+\langle g(\mathbf{x})\mathbf{A}(\mathbf{x}),\mathbf{A}(\mathbf{x})\rangle$.
The complex-valued functions $a_j(\mathbf{x})$ are given by
$a_j(\mathbf{x})=-\eta _j(\mathbf{x})+i\xi _j(\mathbf{x})$, $j=1,\dots, d$,
where $\eta _j(\mathbf{x})$ are the components of the vector-valued function
$\boldsymbol{\eta}(\mathbf{x})=g(\mathbf{x})\mathbf{A}(\mathbf{x})$, and $\xi_j(\mathbf{x})$ are defined in terms of
  the $\Gamma$-periodic solution $\Phi (\mathbf{x})$ of the problem $\Delta \Phi(\mathbf{x})=v(\mathbf{x})$, $\int _\Omega \Phi(\mathbf{x})\,d\mathbf{x}=0$, by the relations $\xi _j (\mathbf{x})=-\partial _j \Phi (\mathbf{x})$.
  We have
$v(\mathbf{x})=-\sum _{j=1}^d\partial _j \xi _j(\mathbf{x})$.
It is easy to check that the functions $a_j$ satisfy condition~\eqref{a_j cond} with suitable  $\rho '$ depending on $\rho$ ans $s$, and the norms $\Vert a_j\Vert _{L_{\rho '}(\Omega)}$ are controlled in terms of $\Vert g\Vert _{L_\infty}$, $\Vert \mathbf{A}\Vert _{L_\rho (\Omega)}$, $\Vert v\Vert _{L_s(\Omega)}$, and the parameters of the lattice~$\Gamma$. (See \cite[Subsection~13.1]{SuAA}.)
The function~$Q$ satisfies condition~\eqref{Q condition} with suitable $s'=\min \lbrace s;\rho/2\rbrace$.

Suppose that $Q_0(\mathbf{x})$ is a positive definite and bounded $\Gamma$-periodic function.
We consider the positive definite operator
$\mathcal{B}_{D,\varepsilon}:=\mathfrak{B}_{D,\varepsilon}+\lambda Q_0^\varepsilon$.
Here we choose the constant $\lambda$ in accordance with condition~\eqref{lambda =} for the operator
with the coefficients $g$, $a_j$, $j=1,\dots,d$, $Q$, and $Q_0$ defined above.
We are interested in the behavior of the operator~$(\mathcal{B}_{D,\varepsilon}-\zeta Q_0^\varepsilon)^{-1}$, where $\zeta \in\mathbb{C}\setminus\mathbb{R}_+$. In the case under consideration, the initial data \eqref{problem data}
reduces to the following set
\begin{equation}
\label{problem data for the scalar operator}
\begin{split}
&d,\;\rho\;,s;\ \Vert g\Vert _{L_\infty},\ \Vert g^{-1}\Vert _{L_\infty},\ \Vert \mathbf{A}\Vert_{L_\rho (\Omega)}, \ \Vert v\Vert _{L_s(\Omega)},\ \Vert \mathcal{V}\Vert _{L_s(\Omega)},\\
&\Vert Q_0\Vert _{L_\infty},\  \Vert Q_0^{-1}\Vert _{L_\infty};\  \mbox{the parameters of the lattice}\ \Gamma.
\end{split}
\end{equation}

Let us describe the effective operator. The $\Gamma$-periodic solution of problem~\eqref{Lambda problem} is the row
$\Lambda (\mathbf{x})=i\Psi (\mathbf{x})$, $\Psi (\mathbf{x})=(\psi _1(\mathbf{x}),\dots ,\psi _d(\mathbf{x}))$,
where $\psi _j \in\widetilde{H}^1(\Omega)$ is the solution of the problem
$\mathrm{div}\,g(\mathbf{x})(\nabla \psi _j (\mathbf{x})+\mathbf{e}_j)=0$, $\int _\Omega \psi_j(\mathbf{x})\,d\mathbf{x}=0$.
Here $\mathbf{e}_j$, $j=1,\dots,d$, is the standard orthonormal basis in~$\mathbb{R}^d$. Clearly, the functions $\psi _j(\mathbf{x})$ are real-valued, while the entries of the row $\Lambda(\mathbf{x})$ are purely imaginary.
According to~\eqref{tilde g}, the columns of the $(d\times d)$-matrix-valued function $\widetilde{g}(\mathbf{x})$ are given by
$g(\mathbf{x})(\nabla \psi _j (\mathbf{x})+\mathbf{e}_j)$, $j=1,\dots,d$. The effective matrix is defined by the general rule~\eqref{g^0}: $g^0=\vert \Omega\vert ^{-1}\int _\Omega\widetilde{g}(\mathbf{x})\,d\mathbf{x}$. Clearly, the matrices $\widetilde{g}(\mathbf{x})$ and $g^0$ have real entries.

The periodic solution of problem~\eqref{tildeLambda_problem}
can be represented as $\widetilde{\Lambda}(\mathbf{x})=\widetilde{\Lambda}_1(\mathbf{x})+i\widetilde{\Lambda}_2(\mathbf{x})$, where
 the real-valued $\Gamma$-periodic functions~$\widetilde{\Lambda}_1(\mathbf{x})$ and~$\widetilde{\Lambda}_2(\mathbf{x})$
 are the solutions of the problems:
\begin{align*}
&-\div g(\mathbf{x})\nabla \widetilde{\Lambda}_1(\mathbf{x})+v(\mathbf{x})=0,\quad\int _\Omega\widetilde{\Lambda}_1(\mathbf{x})\,d\mathbf{x}=0;
\\
&-\div g(\mathbf{x})\nabla \widetilde{\Lambda}_2(\mathbf{x})+\div g(\mathbf{x})\mathbf{A}(\mathbf{x})=0,\quad\int_\Omega\widetilde{\Lambda}_2(\mathbf{x})\,d\mathbf{x}=0.
\end{align*}
The column $V$ (see \eqref{V}) can be written as~$V=V_1+iV_2$, where $V_1$ and $V_2$ are
defined by $V_1= \overline{\langle g\nabla\widetilde{\Lambda}_2, \nabla\Psi \rangle}$ and $V_2= - \overline{\langle g \nabla \widetilde{\Lambda}_1, \nabla\Psi \rangle}$.
Clearly, $V_1$ and $V_2$ have real entries.
According to~\eqref{W}, the constant~$W$ is given by
$W= \overline{ \langle g \nabla \widetilde{\Lambda}_1,\nabla\widetilde{\Lambda}_1\rangle} +
\overline{\langle g \nabla\widetilde{\Lambda}_2, \nabla\widetilde{\Lambda}_2\rangle}$.
The effective operator for~$\mathcal{B}_{D,\varepsilon}$ is defined by
\begin{equation*}
\mathcal{B}_D^0u=-\div g^0\nabla u +2i\langle\nabla u, V_1+\overline{\boldsymbol{\eta}}\rangle +(-W+\overline{Q}+\lambda \overline{Q_0})u,
\quad u\in H^2(\mathcal{O})\cap H^1_0(\mathcal{O}).
\end{equation*}
This operator can be represented as
$\mathcal{B}_D^0=(\mathbf{D}-\mathbf{A}^0)^*g^0(\mathbf{D}-\mathbf{A}^0)+\mathcal{V}^0+\lambda \overline{Q_0}$,
where
$\mathbf{A}^0 :=(g^0)^{-1}(V_1+\overline{g\mathbf{A}})$ and $\mathcal{V}^0 :=\overline{\mathcal{V}}+\overline{\langle g\mathbf{A},\mathbf{A}\rangle}-\langle g^0\mathbf{A}^0,\mathbf{A}^0\rangle -W$.

According to Remark~\ref{Remark scalar problem}, in the case under consideration, Conditions~\ref{Condition Lambda in L infty} and~\ref{Condition tilde Lambda in Lp} are satisfied, and the norms~$\Vert \Lambda\Vert _{L_\infty}$, $\Vert \widetilde{\Lambda}\Vert _{L_\infty}$ are controlled in terms of the initial data~\eqref{problem data for the scalar operator}.
Therefore, it is possible to use the simpler corrector \eqref{K_D^0}:
\begin{equation}
\label{K_D for scalar case}
\mathcal{K}_D^0(\varepsilon ;\zeta):=\bigl(\Lambda ^\varepsilon \mathbf{D}+\widetilde{\Lambda}^\varepsilon \bigr)(\mathcal{B}_D^0-\zeta\overline{Q_0})^{-1}
=\bigl( \Psi ^\varepsilon \nabla + \widetilde{\Lambda}^\varepsilon \bigr)(\mathcal{B}_D^0-\zeta\overline{Q_0})^{-1}.
\end{equation}
The operator \eqref{G3(eps;zeta)} can be written as $G^0_D(\varepsilon;\zeta)=-i\mathcal{G}^0_D (\varepsilon;\zeta)$, where
\begin{equation}
\label{mathcal G_3}
\mathcal{G}^0_D(\varepsilon ;\zeta ):=\widetilde{g}^\varepsilon \nabla(\mathcal{B}_D^0-\zeta\overline{Q_0})^{-1}
+g^\varepsilon (\nabla\widetilde{\Lambda})^\varepsilon (\mathcal{B}_D^0-\zeta \overline{Q_0})^{-1}.
\end{equation}
Applying Theorems~\ref{Theorem Dirichlet L2} and~\ref{Theorem no S-eps}, we deduce the following result.

\begin{proposition}
\label{Proposition 1 for scalar case}
Suppose that the assumptions of Subsection~\textnormal{\ref{sec.scalar case}} are satisfied.
Let $\zeta\in\mathbb{C}\setminus \mathbb{R}_+$, $\zeta =\vert \zeta \vert e^{i\phi}$, $0<\phi<2\pi$, and $\vert \zeta\vert \geqslant 1$.
Suppose that $\varepsilon _1$ is subject to Condition~\textnormal{\ref{condition varepsilon}}. Then for $0<\varepsilon\leqslant\varepsilon _1$ we have
\begin{align}
\label{Pr. 10.1_1}
\Vert &(\mathcal{B}_{D,\varepsilon}-\zeta Q_0^\varepsilon)^{-1}-(\mathcal{B}_D^0-\zeta \overline{Q_0} )^{-1}\Vert _{L_2(\mathcal{O})\rightarrow L_2(\mathcal{O})}\leqslant C_4 c(\phi)^5\varepsilon\vert \zeta\vert ^{-1/2},
\\
\label{Pr. 10.1_2}
\begin{split}
\Vert &(\mathcal{B}_{D,\varepsilon }-\zeta Q_0^\varepsilon )^{-1}-(\mathcal{B}_D^0-\zeta \overline{Q_0})^{-1}-\varepsilon \mathcal{K}_D^0(\varepsilon ;\zeta)\Vert _{L_2(\mathcal{O})\rightarrow H^1(\mathcal{O})}
\leqslant
C_5 c(\phi)^2\varepsilon ^{1/2}\vert \zeta\vert ^{-1/4}+C_{23} c(\phi)^4\varepsilon ,
\end{split}
\\
\label{Pr. 10.1_3}
\begin{split}
\Vert & g^\varepsilon \nabla (\mathcal{B}_{D,\varepsilon }-\zeta Q_0^\varepsilon )^{-1}- \mathcal{G}^0_D(\varepsilon ;\zeta)\Vert _{L_2(\mathcal{O})\rightarrow L_2(\mathcal{O})}
\leqslant
\widetilde{C}_5 c(\phi)^2\varepsilon ^{1/2}\vert \zeta\vert ^{-1/4}+\widetilde{C}_{23} c(\phi)^4\varepsilon .
\end{split}
\end{align}
Here $c(\phi)$ is given by~\eqref{c(phi)}. The constants $C_4$, $C_5$, $C_{23}$, $\widetilde{C}_5$, and $\widetilde{C}_{23}$
depend only on the initial data~\eqref{problem data for the scalar operator} and the domain~$\mathcal{O}$.
\end{proposition}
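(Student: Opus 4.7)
\smallskip
\noindent\textbf{Proof proposal.} The plan is to reduce Proposition~\ref{Proposition 1 for scalar case} to the already established Theorems~\ref{Theorem Dirichlet L2} and~\ref{Theorem no S-eps}. The first step is bookkeeping: I will verify that the scalar operator \eqref{mathfrak B_D,eps}, when expanded as in \eqref{mathfrak B_D,eps in other words}, genuinely fits the general framework of Sections~\ref{Section problem in R^d}--\ref{Chapter 2 Dirichlet} with $n=1$, $m=d$, $b(\mathbf{D})=\mathbf{D}$ (so $\alpha_0=\alpha_1=1$), and the coefficients $a_j(\mathbf{x})=-\eta_j(\mathbf{x})+i\xi_j(\mathbf{x})$, $Q(\mathbf{x})=\mathcal{V}(\mathbf{x})+\langle g(\mathbf{x})\mathbf{A}(\mathbf{x}),\mathbf{A}(\mathbf{x})\rangle$ identified in Subsection~\ref{sec.scalar case}. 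The only nontrivial point here is to check that these new coefficients satisfy conditions \eqref{a_j cond} and \eqref{Q condition} with appropriate indices $\rho',s'$ controlled by the data~\eqref{problem data for the scalar operator}; for the $a_j$ this follows because $\boldsymbol\xi=-\nabla\Phi$ where $\Delta\Phi=v\in L_s(\Omega)$, so the standard periodic elliptic regularity pushes $\boldsymbol\xi$ into the required class, and for $Q$ it follows from H\"older's inequality applied to \eqref{A_j in L rho} and \eqref{v,V condition} (this is exactly the reduction carried out in~\cite{SuAA}, which I would cite).

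Once this identification is in place, the effective operator $\mathcal{B}_D^0$ computed in Subsection~\ref{sec.scalar case} is precisely the operator $B_D^0$ of Subsection~\ref{Subsection Effective operator} specialized to the scalar setting, and the correctors \eqref{K_D for scalar case}, \eqref{mathcal G_3} coincide (up to the harmless scalar factor $-i$ in the flux formula, absorbed into the definition of $\mathcal{G}^0_D$) with the operators \eqref{K_D^0}, \eqref{G3(eps;zeta)} written out for this case: indeed $b(\mathbf{D})\Lambda=\mathbf{D}\cdot(i\Psi)=-\nabla\Psi$ read as a row on columns, so $\widetilde g$ matches the definition given via $g(\mathbf{x})(\nabla\psi_j+\mathbf{e}_j)$, and likewise $b(\mathbf{D})\widetilde\Lambda$ collapses to $\nabla\widetilde\Lambda$.

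The decisive step is then verifying Conditions~\ref{Condition Lambda in L infty} and~\ref{Condition tilde Lambda in Lp}, so that Theorem~\ref{Theorem no S-eps} (which produces the simpler corrector without the smoothing $S_\varepsilon$) can be invoked. This is handed to us by Remark~\ref{Remark scalar problem}: since $g$ is symmetric with real entries and we are dealing with scalar equations, the Ladyzhenskaya--Ural'tseva maximum principle~\cite[Ch.~III, Thm.~13.1]{LaU} gives $\Lambda,\widetilde\Lambda\in L_\infty$ with norms controlled by the initial data~\eqref{problem data for the scalar operator}; in particular Condition~\ref{Condition tilde Lambda in Lp} holds for any $p$.

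With all hypotheses of Theorems~\ref{Theorem Dirichlet L2} and~\ref{Theorem no S-eps} verified, estimate \eqref{Pr. 10.1_1} is the direct specialization of \eqref{Th L2}, and \eqref{Pr. 10.1_2}, \eqref{Pr. 10.1_3} are the specializations of \eqref{Th no S-eps 3}, \eqref{Th no S-eps fluxes 3}, respectively (the factor $-i$ in the flux identification is inconsequential for the $L_2\to L_2$ norm). The constants $C_4$, $C_5$, $C_{23}$, $\widetilde C_5$, $\widetilde C_{23}$ produced by those theorems depend on the general initial data~\eqref{problem data} and the norms $\Vert\Lambda\Vert_{L_\infty}$, $\Vert\widetilde\Lambda\Vert_{L_p(\Omega)}$; by the preceding step all of these quantities are in turn controlled by~\eqref{problem data for the scalar operator} and~$\mathcal O$, which yields the claimed dependence. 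The only real obstacle is purely notational---making sure that the translation between the coefficients $(\mathbf A,v,\mathcal V,Q_0)$ and the general $(a_j,Q,Q_0)$ is consistent with the choice of $\lambda$ in \eqref{lambda =}---and this is already addressed by taking $\lambda$ for the derived coefficients, as stated in Subsection~\ref{sec.scalar case}.
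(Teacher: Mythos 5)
Your proposal is correct and follows exactly the route the paper takes: the subsection's reduction of $\mathfrak{B}_{D,\varepsilon}$ to the general form \eqref{mathfrak B_D,eps in other words}, verification of Conditions~\ref{Condition Lambda in L infty} and~\ref{Condition tilde Lambda in Lp} via Remark~\ref{Remark scalar problem}, and direct application of Theorems~\ref{Theorem Dirichlet L2} and~\ref{Theorem no S-eps}. (Only a cosmetic slip: with $iD_j=\partial_j$ one has $b(\mathbf{D})\Lambda=\nabla\Psi$, not $-\nabla\Psi$, but your conclusion about $\widetilde g$ is the right one.)
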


The results of Section~\ref{more} also can be applied to the operator $\mathcal{B}_{D,\varepsilon}$.

``Another''\, approximation for~$(\mathcal{B}_{D,\varepsilon}-\zeta Q_0^\varepsilon )^{-1}$
follows from Theorems~\ref{Theorem Dr appr} and~\ref{Theorem Dr appr no S_eps}.

\begin{proposition}
\label{Proposition anither appr scalar case}
Suppose that the assumptions of Subsection~\textnormal{\ref{sec.scalar case}} are satisfied.
Denote $f(\mathbf{x}):=Q_0(\mathbf{x})^{-1/2}$ and $f_0:=(\overline{Q_0})^{-1/2}$.
Let $\widetilde{\mathcal{B}}_{D,\varepsilon}:=f^\varepsilon\mathcal{B}_{D,\varepsilon}f^\varepsilon$ and $\widetilde{\mathcal{B}}_D^0:=f_0\mathcal{B}_D^0f_0$.
Suppose that~$\varepsilon_1$ is subject to Condition~\textnormal{\ref{condition varepsilon}}.
Let $0< \eps_\flat \le \eps_1$. Suppose that $c_\flat \geqslant 0$ is a common lower bound of the operators $\widetilde{\mathcal{B}}_{D,\varepsilon}$ for any $0< \eps \le \eps_\flat$ and $\widetilde{\mathcal{B}}_D^0$. Let $\varrho _\flat (\zeta )$ be given by \eqref{rho(zeta)}. Then for $0<\varepsilon\leqslant \varepsilon _\flat$
and $\zeta \in \mathbb{C}\setminus [c_\flat ,\infty)$ we have
\begin{align*}
\Vert &(\mathcal{B}_{D,\varepsilon}-\zeta Q_0^\varepsilon)^{-1}-(\mathcal{B}_D^0-\zeta \overline{Q_0} )^{-1}\Vert _{L_2(\mathcal{O})\rightarrow L_2(\mathcal{O})}
\leqslant C_{26} \varepsilon \varrho _\flat (\zeta),
\\
\begin{split}
\Vert&
(\mathcal{B}_{D,\varepsilon }-\zeta Q_0^\varepsilon )^{-1}-(\mathcal{B}_D^0-\zeta \overline{Q_0})^{-1}-\varepsilon \mathcal{K}_D^0(\varepsilon ;\zeta)\Vert _{L_2(\mathcal{O})\rightarrow H^1(\mathcal{O})}
\\
&\leqslant C_{29} \bigl( \varepsilon^{1/2} \varrho_\flat (\zeta)^{1/2} + \varepsilon |1+ \zeta|^{1/2} \varrho_\flat (\zeta) \bigr),
\end{split}
\\
\begin{split}
\Vert & g^\varepsilon \nabla (\mathcal{B}_{D,\varepsilon }-\zeta Q_0^\varepsilon )^{-1}- \mathcal{G}^0_D(\varepsilon ;\zeta)\Vert _{L_2(\mathcal{O})\rightarrow L_2(\mathcal{O})}
\leqslant \widetilde{C}_{29}
\bigl( \varepsilon^{1/2} \varrho_\flat (\zeta)^{1/2} + \varepsilon |1+ \zeta|^{1/2} \varrho_\flat (\zeta) \bigr).
\end{split}
\end{align*}
The constants $C_{26}$, $C_{29}$, and $\widetilde{C}_{29}$ depend only on the initial data \eqref{problem data for the scalar operator} and the domain~$\mathcal{O}$.
\end{proposition}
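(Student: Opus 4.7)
The plan is to deduce this proposition as a direct application of Theorems~\ref{Theorem Dr appr} and~\ref{Theorem Dr appr no S_eps}, after verifying that the scalar operator $\mathcal{B}_{D,\varepsilon}$ fits into the general framework of Section~\ref{Chapter 2 Dirichlet} and that Conditions~\ref{Condition Lambda in L infty} and~\ref{Condition tilde Lambda in Lp} are satisfied with bounds depending only on the reduced data~\eqref{problem data for the scalar operator}.

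First, I would identify $\mathcal{B}_{D,\varepsilon}$ with an operator of the form $B_{D,\varepsilon}$ from Subsection~\ref{Subsection lower order terms}. Starting from the representation~\eqref{mathfrak B_D,eps in other words} (with $n=1$, $m=d$, $b(\mathbf{D})=\mathbf{D}$, and $\alpha_0=\alpha_1=1$), one checks that the coefficients $a_j=-\eta_j+i\xi_j$ and $Q=\mathcal{V}+\langle g\mathbf{A},\mathbf{A}\rangle$ satisfy~\eqref{a_j cond} and~\eqref{Q condition} with parameters $\rho'$, $s'$ controlled by $\rho$ and $s$, and with norms bounded by~\eqref{problem data for the scalar operator}; this is already recorded in Subsection~\ref{sec.scalar case}. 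Since $\lambda$ has been chosen in accordance with~\eqref{lambda =}, the operator $\mathcal{B}_{D,\varepsilon}=\mathfrak{B}_{D,\varepsilon}+\lambda Q_0^\varepsilon$ coincides with the operator $B_{D,\varepsilon}$ of Subsection~\ref{Chapter 2 Dirichlet} built from these coefficients, and $\mathcal{B}_D^0$ is the corresponding effective operator.

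Next, I would invoke Remark~\ref{Remark scalar problem}: because $g$ is symmetric with real entries, the Ladyzhenskaya--Ural'tseva bound gives $\Lambda\in L_\infty$ and $\widetilde{\Lambda}\in L_\infty$, with the norms $\|\Lambda\|_{L_\infty}$, $\|\widetilde{\Lambda}\|_{L_\infty}$ controlled by~\eqref{problem data for the scalar operator} (and the domain $\Omega$). Thus Conditions~\ref{Condition Lambda in L infty} and~\ref{Condition tilde Lambda in Lp} hold with constants depending only on the reduced initial data, so that the constants $C_{29}$ and $\widetilde{C}_{29}$ of Theorem~\ref{Theorem Dr appr no S_eps} ultimately depend only on~\eqref{problem data for the scalar operator} and $\mathcal{O}$.

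With these identifications, the first estimate of the proposition is just~\eqref{Th dr appr 1} applied to $\mathcal{B}_{D,\varepsilon}$. The $(L_2\to H^1)$- and flux-estimates follow from~\eqref{Th dr appr no S_eps 5}, \eqref{Th dr appr no S_eps 6} once one checks that the abstract corrector $K_D^0$ and flux operator $G_D^0$ reduce to their scalar versions~\eqref{K_D for scalar case}, \eqref{mathcal G_3}. The mild bookkeeping issue — which I would flag as the one place to proceed carefully — is tracking the factors of $i$ arising from $b(\mathbf{D})=\mathbf{D}=-i\nabla$ and $\Lambda=i\Psi$: one has $\Lambda^\varepsilon b(\mathbf{D})=\Psi^\varepsilon\nabla$, so $K_D^0(\varepsilon;\zeta)=\mathcal{K}_D^0(\varepsilon;\zeta)$, and $g^\varepsilon b(\mathbf{D})=-ig^\varepsilon\nabla$, $g^\varepsilon(b(\mathbf{D})\widetilde{\Lambda})^\varepsilon=-ig^\varepsilon(\nabla\widetilde{\Lambda})^\varepsilon$, $\widetilde{g}^\varepsilon b(\mathbf{D})=-i\widetilde{g}^\varepsilon\nabla$, so that $G_D^0(\varepsilon;\zeta)=-i\mathcal{G}_D^0(\varepsilon;\zeta)$ and the common factor $-i$ cancels on both sides of the $L_2\to L_2$ bound. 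After these identifications the three stated estimates are exactly~\eqref{Th dr appr 1}, \eqref{Th dr appr no S_eps 5} and~\eqref{Th dr appr no S_eps 6}, which finishes the proof.
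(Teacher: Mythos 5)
Your proposal is correct and follows exactly the route the paper takes: the scalar operator is identified with the general framework via \eqref{mathfrak B_D,eps in other words}, Conditions~\ref{Condition Lambda in L infty} and~\ref{Condition tilde Lambda in Lp} are supplied by Remark~\ref{Remark scalar problem}, and the three estimates are precisely \eqref{Th dr appr 1}, \eqref{Th dr appr no S_eps 5}, \eqref{Th dr appr no S_eps 6} after the same identifications $K_D^0=\mathcal{K}_D^0$ and $G_D^0=-i\,\mathcal{G}_D^0$ that the paper records. Your careful tracking of the factors of $i$ matches the paper's own bookkeeping, so nothing is missing.
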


\subsection{The periodic Schr\"odinger operator}
\label{sec. Periodic Schred. op.}

Suppose that $\check{g}(\mathbf{x})$ is a $\Gamma$-periodic symmetric $(d\times d)$-matrix-valued function in $\mathbb{R}^d$
with real entries and such that $\check{g}(\mathbf{x})>0$; $\check{g},\check{g}^{-1}\in L_\infty (\mathbb{R}^d)$.
Suppose that $\check{v}(\mathbf{x})$ is a real-valued $\Gamma$-periodic function such that
$\check{v}\in L_s(\Omega)$, $s=1$ for $d=1$, $s>d/2$ for $d\geqslant 2$.
By $\check{\mathcal{A}}$ we denote the operator in $L_2(\mathbb{R}^d)$ corresponding to the quadratic form
$\int _{\mathbb{R}^d}\left(\langle\check{g}(\mathbf{x})\mathbf{D}u,\mathbf{D}u\rangle+\check{v}(\mathbf{x})\vert u\vert ^2\right)\,d\mathbf{x}$, $u\in H^1(\mathbb{R}^d)$.
Adding an appropriate constant to the potential~$\check{v}(\mathbf{x})$,
\textit{we may assume that the bottom of the spectrum of $\check{\mathcal{A}}$ is the point $\lambda_0=0$}.
Under this condition, the operator~$\check{\mathcal{A}}$ admits a factorization (see \cite[Chapter~6, Subsection~1.1]{BSu}).

Now, in $L_2(\mathcal{O})$, we consider the operator $\check{\mathcal{A}}_D=\mathbf{D}^*\check{g}(\mathbf{x})\mathbf{D}+\check{v}(\mathbf{x})$
with the Dirichlet boundary condition.
The precise definition of the operator~$\check{\mathcal{A}}_D$ is given in terms of the quadratic form
\begin{equation}
\label{check a}
\check{\mathfrak{a}}[u,u]=\int _\mathcal{O}\left(\langle\check{g}(\mathbf{x})\mathbf{D}u,\mathbf{D}u\rangle+\check{v}(\mathbf{x})\vert u\vert ^2\right)\,d\mathbf{x},\quad u\in H^1_0(\mathcal{O}).
\end{equation}
The operator $\check{\mathcal{A}}_D$ inherits a factorization of $\check{\mathcal{A}}$.
To describe this factorization, we consider the equation
\begin{equation}
\label{eq omega}
\mathbf{D}^*\check{g}(\mathbf{x})\mathbf{D}\omega (\mathbf{x})+\check{v}(\mathbf{x})\omega(\mathbf{x})=0.
\end{equation}
This equation has a $\Gamma$-periodic solution~$\omega\in \widetilde{H}^1(\Omega)$ defined up to a constant factor.
This factor can be fixed so that $\omega (\mathbf{x})>0$ and
$\int _\Omega \omega ^2(\mathbf{x})\,d\mathbf{x}=\vert \Omega \vert$.
Moreover, this solution is positive definite and bounded: $0<\omega _0\leqslant \omega (\mathbf{x})\leqslant\omega_1<\infty$.
The norms $\Vert \omega\Vert _{L_\infty}$ and $\Vert \omega ^{-1}\Vert _{L_\infty}$ are controlled in terms of $\Vert \check{g}\Vert _{L_\infty}$, $\Vert \check{g}^{-1}\Vert _{L_\infty}$, and $\Vert \check{v}\Vert _{L_s(\Omega)}$. Note that $\omega$ and $\omega ^{-1}$ are multipliers in $H^1_0(\mathcal{O})$.
Substituting $u=\omega z$, $z\in H^1_0(\mathcal{O})$, and taking \eqref{eq omega} into account,
we represent the form~\eqref{check a} as
$\check{\mathfrak{a}}[u,u]=\int _\mathcal{O} \omega (\mathbf{x})^2\langle\check{g}(\mathbf{x})\mathbf{D}z,\mathbf{D}z\rangle\,d\mathbf{x}$.
Hence, the operator $\check{\mathcal{A}}_D$ can be written in a factorized form as follows:
\begin{equation}
\label{check A_D}
\check{\mathcal{A}}_D=\omega ^{-1}\mathbf{D}^* g\mathbf{D}\omega ^{-1},\quad g=\omega^2\check{g}.
\end{equation}

Now we consider the operator
\begin{equation}
\label{check A_D,eps}
\check{\mathcal{A}}_{D,\varepsilon}=(\omega ^\varepsilon) ^{-1}\mathbf{D}^* g^\varepsilon\mathbf{D}(\omega ^\varepsilon) ^{-1}
\end{equation}
with rapidly oscillating coefficients.
In the initial terms, the operator \eqref{check A_D,eps} can be written as
\begin{equation}
\label{check A_D,eps in initial terms}
\check{\mathcal{A}}_{D,\varepsilon}=\mathbf{D}^*\check{g}^\varepsilon\mathbf{D}+\varepsilon ^{-2}\check{v}^\varepsilon.
\end{equation}
It can be interpreted as the Schr\"odinger operator with the rapidly oscillating metric
$\check{g}^\varepsilon$ and the strongly singular potential~$\varepsilon ^{-2}\check{v}^\varepsilon$.

Next, let $\mathbf{A}=\mathrm{col}\,\lbrace A_1(\mathbf{x}),\dots ,A_d(\mathbf{x})\rbrace $, where $A_j(\mathbf{x})$ are $\Gamma$-periodic real-valued functions satisfying~\eqref{A_j in L rho}.
Let $\widehat{v}(\mathbf{x})$ and $\check{\mathcal{V}}(\mathbf{x})$ be $\Gamma$-periodic real-valued functions such that
\begin{equation}
\label{hat v, check V}
\widehat{v},\check{\mathcal{V}}\in L_s(\Omega), \quad s=1\ \mbox{for}\ d=1,\quad s>d/2\ \mbox{for}\ d\geqslant 2;\quad
\int _\Omega \widehat{v}(\mathbf{x})\omega ^2 (\mathbf{x})\,d\mathbf{x}=0.
\end{equation}
In $L_2(\mathcal{O})$, consider the operator~$\check{\mathfrak{B}}_{D,\varepsilon}$ given formally by the differential expression
\begin{equation}
\label{check mathfrak B_D,eps}
\check{\mathfrak{B}}_{D,\varepsilon}
=(\mathbf{D}-\mathbf{A}^\varepsilon )^* \check{g}^\varepsilon (\mathbf{D}-\mathbf{A}^\varepsilon)+\varepsilon ^{-2}\check{v}^\varepsilon +\varepsilon ^{-1}\widehat{v}^\varepsilon +\check{\mathcal{V}}^\varepsilon
\end{equation}
with the Dirichlet boundary condition. The precise definition is given in terms of the corresponding quadratic form.
The operator $\check{\mathfrak{B}}_{D,\varepsilon}$ can be treated as the Schr\"odinger operator with the metric~$\check{g}^\varepsilon$, the magnetic potential~$\mathbf{A}^\varepsilon$, and the electric potential~$\varepsilon ^{-2}\check{v}^\varepsilon +\varepsilon ^{-1}\widehat{v}^\varepsilon +\check{\mathcal{V}}^\varepsilon$ containing the singular summands $\varepsilon ^{-2}\check{v}^\varepsilon$ and $\varepsilon ^{-1}\widehat{v}^\varepsilon $.
We put
\begin{equation}
\label{v=, V=}
v(\mathbf{x}):=\widehat{v}(\mathbf{x})\omega ^2(\mathbf{x}),\quad \mathcal{V}(\mathbf{x}):=\check{\mathcal{V}}(\mathbf{x})\omega ^2(\mathbf{x}).
\end{equation}
Using \eqref{check A_D,eps} and \eqref{check A_D,eps in initial terms}, we see that $\check{\mathfrak{B}}_{D,\varepsilon}=(\omega ^\varepsilon )^{-1}\mathfrak{B}_{D,\varepsilon}(\omega ^\varepsilon )^{-1}$, where the operator $\mathfrak{B}_{D,\varepsilon}$
is given by the expression~\eqref{mathfrak B_D,eps} with $g$ defined in \eqref{check A_D}, and $v$, $\mathcal{V}$ defined by \eqref{v=, V=}. Taking \eqref{hat v, check V} into account and using the properties of the function $\omega$,
 we see that the coefficients $v$ and $\mathcal{V}$ satisfy conditions~\eqref{v,V condition}.
 Then the operator~$\mathfrak{B}_{D,\varepsilon}$ can be represented in the form~\eqref{mathfrak B_D,eps in other words},
 where $a_j$, $j=1,\dots,d$, and $Q$ are defined in terms of $g$, $\mathbf{A}$, $v$, and $\mathcal{V}$
 as in Subsection~\ref{sec.scalar case}.

Let $\check{Q}_0(\mathbf{x})$ be a $\Gamma$-periodic positive definite and bounded real-valued function.
Next, we choose the constant $\lambda$ according to condition~\eqref{lambda =}
for the operator whose coefficients $g$, $a_j$, $j=1,\dots ,d$, and $Q$ are the same as the coefficients of
$\mathfrak{B}_{D,\varepsilon}$, and the coefficient $Q_0$ is given by $Q_0(\mathbf{x}):=\check{Q}_0(\mathbf{x})\omega ^2(\mathbf{x})$.
Then the operators $\check{\mathcal{B}}_{D,\varepsilon}:=\check{\mathfrak{B}}_{D,\varepsilon}+\lambda \check{Q}_0^\varepsilon$ and  $\mathcal{B}_{D,\varepsilon}:=\mathfrak{B}_{D,\varepsilon}+\lambda Q_0^\varepsilon$ satisfy the following relation: $\check{\mathcal{B}}_{D,\varepsilon}=(\omega ^\varepsilon )^{-1}\mathcal{B}_{D,\varepsilon}(\omega ^\varepsilon )^{-1}$. Obviously,
\begin{equation}
\label{check Resolvent}
(\check{\mathcal{B}}_{D,\varepsilon}-\zeta\check{Q}_0^\varepsilon)^{-1}=\omega ^\varepsilon (\mathcal{B}_{D,\varepsilon}-\zeta Q_0^\varepsilon )^{-1}\omega ^\varepsilon .
\end{equation}
Now the initial data reduces to the following set
\begin{equation}
\label{problem data for Schredinger}
\begin{split}
&d,\;\rho,\; s;\ \Vert \check{g}\Vert _{L_\infty},\  \Vert \check{g}^{-1}\Vert _{L_\infty},\ \Vert \mathbf{A}\Vert _{L_\rho (\Omega)},\  \Vert \check{v}\Vert _{L_s(\Omega)}, \ \Vert \widehat{v}\Vert _{L_s(\Omega)},\ \Vert \check{\mathcal{V}}\Vert _{L_s(\Omega)},
\\
&\Vert \check{Q}_0\Vert _{L_\infty}, \ \Vert \check{Q}_0^{-1}\Vert _{L_\infty};\  \mbox{the parameters of the lattice}\ \Gamma .
\end{split}
\end{equation}
Applying \eqref{check Resolvent} and Propositions~\ref{Proposition 1 for scalar case} and~\ref{Proposition anither appr scalar case},
 we obtain the following result.

\begin{proposition}
\label{Proposition Schredinger}
Suppose that the assumptions of
Subsection~\textnormal{\ref{sec. Periodic Schred. op.}} are satisfied.
Let $\mathcal{B}_D^0$ be the effective operator for the operator $\mathcal{B}_{D,\varepsilon}$.
Let $\mathcal{K}_D^0(\varepsilon ;\zeta)$ and $\mathcal{G}^0_D(\varepsilon;\zeta)$ be the operators~\eqref{K_D for scalar case} and~\eqref{mathcal G_3} for the operator~$\mathcal{B}_{D,\varepsilon}$.  Suppose that $\varepsilon _1$ is subject to Condition~\textnormal{\ref{condition varepsilon}}.

\noindent
$1^\circ$.
Let $\zeta\in\mathbb{C}\setminus\mathbb{R}_+$, $\zeta =\vert \zeta\vert e^{i\phi}$, $0<\phi <2\pi$, and $\vert \zeta \vert \geqslant 1$.
Then for $0<\varepsilon\leqslant\varepsilon _1$ we have
\begin{align}
\label{Pr. 10.3_1}
\Vert &(\check{\mathcal{B}}_{D,\varepsilon}-\zeta\check{Q}_0^\varepsilon )^{-1}-\omega ^\varepsilon (\mathcal{B}_D^0-\zeta\overline{Q_0})^{-1}\omega ^\varepsilon \Vert _{L_2(\mathcal{O})\rightarrow L_2(\mathcal{O})}
\leqslant C_4\Vert \omega \Vert ^2_{L_\infty}c(\phi)^5\varepsilon\vert \zeta\vert ^{-1/2},
\\
\label{Pr. 10.3_2}
\begin{split}
\Vert &(\omega ^\varepsilon )^{-1}(\check{\mathcal{B}}_{D,\varepsilon}-\zeta \check{Q}_0^\varepsilon )^{-1}-(\mathcal{B}_D^0-\zeta\overline{Q_0})^{-1}\omega ^\varepsilon -\varepsilon \mathcal{K}_D^0(\varepsilon ;\zeta )\omega ^\varepsilon \Vert _{L_2(\mathcal{O})\rightarrow H^1(\mathcal{O})}
\\
&\leqslant C_5\Vert \omega \Vert _{L_\infty}c(\phi)^2\varepsilon ^{1/2}\vert \zeta\vert ^{-1/4}
+C_{23}\Vert \omega\Vert _{L_\infty} c(\phi)^4 \varepsilon ,
\end{split}
\\
\label{Pr. 10.3_3}
\begin{split}
\Vert & g^\varepsilon \nabla (\omega ^\varepsilon )^{-1}(\check{\mathcal{B}}_{D,\varepsilon}-\zeta \check{Q}_0^\varepsilon )^{-1}
-\mathcal{G}^0_D(\varepsilon;\zeta)\omega ^\varepsilon \Vert _{L_2(\mathcal{O})\rightarrow L_2(\mathcal{O})}
\\
&\leqslant \widetilde{C}_5\Vert \omega\Vert _{L_\infty} c(\phi)^2\varepsilon ^{1/2}\vert \zeta\vert ^{-1/4}
+\widetilde{C}_{23}\Vert \omega\Vert _{L_\infty} c(\phi)^4\varepsilon .
\end{split}
\end{align}
Here $c(\phi)$ is given by \eqref{c(phi)}.

\noindent
$2^\circ$.  Denote $f(\mathbf{x}):=Q_0(\mathbf{x})^{-1/2}$ and $f_0:=(\overline{Q_0})^{-1/2}$. 
Let $\widetilde{\mathcal{B}}_{D,\varepsilon}:=f^\varepsilon \mathcal{B}_{D,\varepsilon}f^\varepsilon$ and $\widetilde{\mathcal{B}}_{D}^0:= f_0\mathcal{B}_D^0 f_0$.
Let $0< \eps_\flat \le \eps_1$.
Suppose that $c_\flat \geqslant 0$ is a common lower bound of the operators~$\widetilde{\mathcal{B}}_{D,\varepsilon}$ for any $0< \eps \le \eps_\flat$ 
and~$\widetilde{\mathcal{B}}_{D}^0$. Then
for $0<\varepsilon\leqslant\varepsilon _\flat$ and $\zeta\in\mathbb{C}\setminus [c_\flat ,\infty)$ we have
\begin{align*}
\Vert &(\check{\mathcal{B}}_{D,\varepsilon}-\zeta\check{Q}_0^\varepsilon )^{-1}-\omega ^\varepsilon (\mathcal{B}_D^0-\zeta\overline{Q_0})^{-1}\omega ^\varepsilon \Vert _{L_2(\mathcal{O})\rightarrow L_2(\mathcal{O})}
\leqslant C_{26}\Vert \omega\Vert ^2_{L_\infty} \varepsilon \varrho _\flat (\zeta) ,
\\
\begin{split}
\Vert &(\omega ^\varepsilon )^{-1}(\check{\mathcal{B}}_{D,\varepsilon}-\zeta\check{Q}_0^\varepsilon )^{-1}-(\mathcal{B}_D^0-\zeta\overline{Q_0})^{-1}\omega ^\varepsilon -\varepsilon\mathcal{K}_D^0(\varepsilon;\zeta)\omega ^\varepsilon \Vert _{L_2(\mathcal{O})\rightarrow H^1(\mathcal{O})}
\\
&\leqslant C_{29} \Vert \omega\Vert _{L_\infty} \bigl( \varepsilon^{1/2} \varrho_\flat (\zeta)^{1/2} + \varepsilon |1+ \zeta|^{1/2} \varrho_\flat (\zeta) \bigr),
\end{split}
\\
\begin{split}
\Vert & g^\varepsilon\nabla (\omega ^\varepsilon )^{-1}(\check{\mathcal{B}}_{D,\varepsilon}-\zeta\check{Q}_0^\varepsilon )^{-1}
-\mathcal{G}_D^0(\varepsilon;\zeta)\omega ^\varepsilon \Vert _{L_2(\mathcal{O})\rightarrow L_2(\mathcal{O})}
\\
&\leqslant \widetilde{C}_{29} \Vert \omega\Vert _{L_\infty} \bigl( \varepsilon^{1/2} \varrho_\flat (\zeta)^{1/2} + \varepsilon |1+ \zeta|^{1/2} \varrho_\flat (\zeta) \bigr).
\end{split}
\end{align*}
Here $\varrho _\flat (\zeta)$ is given by \eqref{rho(zeta)}.

The constants $C_4$, $C_5$, $C_{23}$,~$C_{26}$, $C_{29}$, $\widetilde{C}_5$, $\widetilde{C}_{23}$, $\widetilde{C}_{29}$, and
 $\Vert \omega\Vert _{L_\infty}$ depend only on the initial data \eqref{problem data for Schredinger} and the domain~$\mathcal{O}$.
\end{proposition}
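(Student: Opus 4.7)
The plan is to deduce Proposition~\ref{Proposition Schredinger} as a direct corollary of Propositions~\ref{Proposition 1 for scalar case} and~\ref{Proposition anither appr scalar case} by exploiting the conjugation identity \eqref{check Resolvent}, which realizes $(\check{\mathcal{B}}_{D,\varepsilon}-\zeta\check{Q}_0^\varepsilon)^{-1}$ as a two-sided twist of $(\mathcal{B}_{D,\varepsilon}-\zeta Q_0^\varepsilon)^{-1}$ by the multiplication operator $[\omega^\varepsilon]$. Since $\omega$ is $\Gamma$-periodic and satisfies $0<\omega_0\le \omega(\mathbf{x})\le \omega_1<\infty$ with $\omega_0,\omega_1$ controlled by the data \eqref{problem data for Schredinger}, the same bounds hold for $\omega^\varepsilon$ uniformly in $\varepsilon$, so $[\omega^\varepsilon]$ and $[(\omega^\varepsilon)^{-1}]$ act as uniformly bounded multipliers on $L_2(\mathcal{O})$.

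For \eqref{Pr. 10.3_1}, I would use \eqref{check Resolvent} to rewrite
\begin{equation*}
(\check{\mathcal{B}}_{D,\varepsilon}-\zeta\check{Q}_0^\varepsilon)^{-1}-\omega^\varepsilon(\mathcal{B}_D^0-\zeta\overline{Q_0})^{-1}\omega^\varepsilon
=\omega^\varepsilon\Bigl[(\mathcal{B}_{D,\varepsilon}-\zeta Q_0^\varepsilon)^{-1}-(\mathcal{B}_D^0-\zeta\overline{Q_0})^{-1}\Bigr]\omega^\varepsilon,
\end{equation*}
and combine the $L_2(\mathcal{O})$-operator bound $\|[\omega^\varepsilon]\|\le\|\omega\|_{L_\infty}$ on each side with \eqref{Pr. 10.1_1}. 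For \eqref{Pr. 10.3_2}, the cancellation $(\omega^\varepsilon)^{-1}\omega^\varepsilon=I$ applied to \eqref{check Resolvent} gives
\begin{equation*}
(\omega^\varepsilon)^{-1}(\check{\mathcal{B}}_{D,\varepsilon}-\zeta\check{Q}_0^\varepsilon)^{-1}
=(\mathcal{B}_{D,\varepsilon}-\zeta Q_0^\varepsilon)^{-1}\omega^\varepsilon,
\end{equation*}
so the quantity under consideration factorizes as $\bigl[(\mathcal{B}_{D,\varepsilon}-\zeta Q_0^\varepsilon)^{-1}-(\mathcal{B}_D^0-\zeta\overline{Q_0})^{-1}-\varepsilon\mathcal{K}_D^0(\varepsilon;\zeta)\bigr]\omega^\varepsilon$, and the bound follows from \eqref{Pr. 10.1_2} together with $\|[\omega^\varepsilon]\|_{L_2(\mathcal{O})\to L_2(\mathcal{O})}\le\|\omega\|_{L_\infty}$ (multiplication on the right preserves the $(L_2\to H^1)$-operator norm up to the factor $\|\omega\|_{L_\infty}$). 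The same identity yields
\begin{equation*}
g^\varepsilon\nabla(\omega^\varepsilon)^{-1}(\check{\mathcal{B}}_{D,\varepsilon}-\zeta\check{Q}_0^\varepsilon)^{-1}
=g^\varepsilon\nabla(\mathcal{B}_{D,\varepsilon}-\zeta Q_0^\varepsilon)^{-1}\omega^\varepsilon,
\end{equation*}
so subtracting $\mathcal{G}_D^0(\varepsilon;\zeta)\omega^\varepsilon$, factoring out $\omega^\varepsilon$ on the right, and invoking \eqref{Pr. 10.1_3} produces \eqref{Pr. 10.3_3}.

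Part~$2^\circ$ proceeds by the identical chain of algebraic manipulations, now invoking the three estimates of Proposition~\ref{Proposition anither appr scalar case} in place of those of Proposition~\ref{Proposition 1 for scalar case}; the hypothesis that $c_\flat$ is a common lower bound of $\widetilde{\mathcal{B}}_{D,\varepsilon}$ and $\widetilde{\mathcal{B}}_D^0$ is exactly what is needed to apply that proposition. The only real bookkeeping item is that the constants in the final bounds acquire a factor $\|\omega\|_{L_\infty}$ (or $\|\omega\|_{L_\infty}^2$ for the two-sided conjugation in the $(L_2\to L_2)$ estimate), and $\|\omega\|_{L_\infty}$ is controlled in terms of $\|\check g\|_{L_\infty}$, $\|\check g^{-1}\|_{L_\infty}$, $\|\check v\|_{L_s(\Omega)}$ and the lattice parameters, hence by the data \eqref{problem data for Schredinger}.

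There is essentially no technical obstacle: the argument is purely algebraic, with the only mild care needed in verifying that the multiplication operators $[\omega^\varepsilon]^{\pm1}$ behave as uniformly bounded isomorphisms on $L_2(\mathcal{O})$ and act boundedly by multiplication on the right without disturbing the $H^1$-norm beyond the constant factor $\|\omega\|_{L_\infty}$. The substantive work is entirely contained in Propositions~\ref{Proposition 1 for scalar case} and \ref{Proposition anither appr scalar case}, which were themselves extracted from the general Dirichlet-problem theorems proven earlier.
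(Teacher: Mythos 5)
Your proposal is correct and follows essentially the same route as the paper: conjugate/right-multiply by $[\omega^\varepsilon]$ via the identity \eqref{check Resolvent}, use the uniform bound $\Vert[\omega^\varepsilon]\Vert_{L_2\to L_2}\leqslant\Vert\omega\Vert_{L_\infty}$, and invoke Propositions~\ref{Proposition 1 for scalar case} and~\ref{Proposition anither appr scalar case} for parts $1^\circ$ and $2^\circ$, respectively. No gaps.
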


\begin{proof}
Multiplying the operators under the norm sign in \eqref{Pr. 10.1_1} by $\omega ^\varepsilon$ from both sides and
using~\eqref{check Resolvent}, we arrive at~\eqref{Pr. 10.3_1}.

From \eqref{check Resolvent} it follows that $(\omega ^\varepsilon )^{-1}(\check{\mathcal{B}}_{D,\varepsilon}-\zeta\check{Q}_0^\varepsilon )^{-1}=(\mathcal{B}_{D,\varepsilon}-\zeta Q_0^\varepsilon )^{-1}\omega ^\varepsilon$.
Multiplying the operators under the norm sign in~\eqref{Pr. 10.1_2} by~$\omega ^\varepsilon$ from the right,
we obtain~\eqref{Pr. 10.3_2}. Similarly, \eqref{Pr. 10.1_3} implies \eqref{Pr. 10.3_3}.

The results of assertion~$2^\circ$ are deduced from Proposition~\textnormal{\ref{Proposition anither appr scalar case}}
in a similar way.
\end{proof}

\begin{remark}
Proposition~\textnormal{\ref{Proposition Schredinger}} demonstrates that for the operators~\eqref{mathfrak B_D,eps} and \eqref{check mathfrak B_D,eps} the nature of the results is different.
Because of the presence of the strongly singular potential~$\varepsilon ^{-2}\check{v}^\varepsilon$, the generalized resolvent~$(\check{\mathcal{B}}_{D,\varepsilon}-\zeta\check{Q}_0^\varepsilon)^{-1}$ has no limit in the $L_2(\mathcal{O})$-operator norm.
It is approximated by the operator $(\mathcal{B}_D^0-\zeta\overline{Q_0})^{-1}$ sandwiched between
 the rapidly oscillating factors $\omega ^\varepsilon$.
\end{remark}

\end{document}